\documentclass[oneside,a4paper,11pt]{article}
\usepackage{mathtools}
\usepackage{amsmath}
\allowdisplaybreaks[4]
\usepackage{csquotes}
\usepackage{amsthm}
\usepackage{amssymb}
\usepackage{mathrsfs}
\usepackage{color}
\usepackage{bm}
\usepackage{comment}
\usepackage{fancyhdr}
\pagestyle{plain} 
\usepackage{geometry}
\usepackage{hyperref}
\usepackage{enumerate}

\usepackage{times}
\usepackage{graphicx}
\usepackage{subfig}
\usepackage{float}
\usepackage{hyperref} 
\usepackage{verbatim}

\theoremstyle{definition}

\newcommand{\pp}[2]{\frac{\partial#1}{\partial#2}}
\newtheorem{thm}{Theorem}[section]
\newtheorem{cor}[thm]{Corollary}
\newtheorem{lem}[thm]{Lemma}
\newtheorem{prop}[thm]{Proposition}

\newtheorem{defn}[thm]{Definition}
\newtheorem{rem}[thm]{Remark}
\newtheorem{ex}[thm]{Example}
\newtheorem{question}[thm]{Problem}
\numberwithin{equation}{section}

\title{Characterizations of infinite circle patterns and convex polyhedra in hyperbolic 3-space}
\author{Huabin Ge, Longsong Jia, Hao Yu, Puchun Zhou
}

\newcommand{\HH}{\mathbb{H}}

\newcommand{\vel}{\mathrm{VEL}}
\newcommand{\cond}{\mathrm{COND}}
\newcommand{\modc}{\mathrm{MOD}}
\newcommand{\res}{\mathrm{RES}}
\newcommand{\carrier}{\mathrm{Carr}}
\newcommand{\pac}{\mathcal{P}}
\newcommand{\area}{\mathrm{area}}

\newcommand{\mT}{\mathcal{T}}

\date{}
\usepackage{fancyhdr}
\pagestyle{fancy}
\fancyhf{}
\fancyhead{}
\fancyhead[LO]{Characterizations of infinite circle patterns and convex polyhedra in hyperbolic 3-space}
\fancyfoot[C]{\thepage}


\begin{document}
	\maketitle
	\begin{abstract}	
		Since Thurston pioneered the connection between circle packing (abbr. CP) and three-dimensional geometric topology, the characterization of CPs and hyperbolic polyhedra has become increasingly profound. Some milestones have been achieved, for example, Rodin-Sullivan \cite{Rodin-Sullivan} and Schramm \cite{schramm91} proved the rigidity of infinite CPs with the intersection angle $\Theta=0$. Rivin-Hodgson \cite{RH93} fully characterized the existence and rigidity of compact convex polyhedra in $\mathbb{H}^3$. He \cite{He} proved the rigidity and uniformization theorem for infinite CPs with $0\leq\Theta\leq \pi/2$. Therefore, the remaining unresolved issues are the rigidity and uniformization theorems for infinite CPs with $0\leq\Theta<\pi$, as well as for infinite hyperbolic polyhedra. In fact, He specifically claimed in the abstract of \cite{He} that ``in a future paper, the techniques of this paper will be extended to the case when $0\leq\Theta<\pi$. In particular, we will show a rigidity property for a class of infinite convex polyhedra in the 3-dimensional hyperbolic space".
		
		\medskip
		
        The objective of the article is to accomplish the work claimed in \cite{He} by proving the rigidity and uniformization theorem for infinite CPs with $0\leq\Theta<\pi$, as well as infinite trivalent hyperbolic polyhedra. We will pay special attention to CPs whose contact graphs are disk triangulation graphs. Such CPs are called regular because they exclude some singular configurations and correspond well to hyperbolic polyhedra. We will establish the existence and rigidity of infinite regular CPs. Moreover, we will prove a uniformization theorem for regular CPs, which solves the classification problem for regular CPs. Thereby, the existence and rigidity of infinite convex trivalent polyhedra are obtained. 
		\\[5pt]
		
	\end{abstract}
	\tableofcontents
	\section{Introduction}
	A \textbf{circle pattern}\footnote{Our article does not distinguish between circle packing, circle pattern, and disk pattern. In other literature, circle packing sometimes refers specifically to tangent types, while the circle (or disk) pattern allows circles to overlap.} $\mathcal{P}$ on the sphere $\hat{\mathbb{C}}=\partial\mathbb{H}^3$ is a collection of closed disks in $\hat{\mathbb{C}}$ where each disk intersects only a finite number of disks. The \textbf{contact graph} of $\mathcal{P}$ is the graph $G=G(\pac)$ whose vertices correspond to the centers of the disks, and an edge appears in $G$ if the corresponding disks intersect each other. Let $E$ be the set of edges in $G$, for each edge $e\in E$ connecting two disks $D_1$ and $D_2$, denote $\Theta(e)\in[0,\pi)$ by the intersection angle between $\partial D_1$ and $\partial D_2$. $\Theta(e)$ is also called the dihedral angle, as it is exactly the dihedral angle between two geodesic planes $\Pi_1$ and $\Pi_2$ with $\Pi_i\cap\hat{\mathbb{C}}=\partial D_i$, $i=1,2$. See \cite{He, st} for more background on circle patterns. 
	
	Each $\mathcal{P}$ produces an angled graph $(G,\Theta)$ with $0\leq\Theta<\pi$, in which case we say that $\pac$ \textbf{realizes} $(G,\Theta)$. Conversely, given an angled graph $(G,\Theta)$, can it be the contact graph of some circle patterns? And if it does, is the circle pattern unique? In case $G$ is finite and $0\leq\Theta\leq \pi/2$, Koebe-Andreev-Thurston's circle packing theorem answers this question quite well. Their theorem for $\Theta=0$ was discovered by Koebe \cite{Ko36}, and the theorem for $\Theta\in[0,\pi/2]$ comes from Thurston's interpretation of Andreev's characterization \cite{An70A} about compact hyperbolic polyhedra with non-obtuse dihedral angles (see e.g. \cite{Hodgson,RHD07,Zhou23} for proofs and generalizations).
	In case $G$ is infinite, the related problems are extremely difficult to handle and the methods are completely different. Some great milestones have been achieved. Rodin-Sullivan \cite{Rodin-Sullivan}, Schramm \cite{schramm91} proved the rigidity of infinite circle patterns with $\Theta=0$. He-Schramm \cite{He1} extended KAT's theorem to infinite triangulation graphs with $\Theta=0$. He \cite{He} further proved the rigidity and uniformization theorem for infinite circle patterns with $0\leq\Theta\leq \pi/2$. Rivin-Hodgson \cite{RH93} fully characterized the existence and rigidity of compact convex polyhedra in $\mathbb{H}^3$, which greatly generalizes Andreev's theorem to allow the prescribed dihedral angles $\Theta \in (0, \pi)^E$. Rivin \cite{Ri96} further characterized the existence and rigidity of finite ideal polyhedra in $\mathbb{H}^3$. Bao-Banahon \cite{Bao} generalized Rivin's theorem to hyperideal polyhedra. Our work was inspired by \cite{He}, where He mentioned that ``in a future paper, the techniques of this paper will be extended to the case when $0\leq\Theta<\pi$. In particular, we will show a rigidity property for a class of infinite convex polyhedra in the 3-dimensional hyperbolic space". In this article, we aim to finish He's unpublished work. 
	
	$\pac$ is called \textbf{regular} if its contact graph $G=(V,E)$ is a disk triangulation graph, that is, $G$ is induced by an infinite and locally finite triangulation $\mathcal{T}= (V, E, F)$  of \textbf{the open unit disk} $\mathbb{U}$. An arc $\Gamma$ formed by some edges in $E$ is called \textbf{homologically adjacent} if there is an edge in $E$ that connects the two endpoints of $\Gamma$. Conversely, we call it \textbf{homologically non-adjacent}.
	Given a disk triangulation graph $(G,\Theta)$ with angle $\Theta\in[0,\pi)^E$, we will use the following conditions:
	\begin{itemize}
		\item[($Z_1$)] If \( e_{1}, e_{2}, e_{3} \) forms the boundary of a triangle, 
		and \( \sum_{i=1}^{3} \Theta(e_{i}) > \pi \), then 
		$\Theta(e_{i}) + \Theta(e_{j}) < \pi + \Theta(e_{k})$ hold for all permutations $\{i,j,k\}=\{1,2,3\}$.\\[-15pt]
		\item[($Z_2$)] If \( e_{1}, e_{2}, \cdots, e_{s} \) form a simple closed curve that is not the boundary of any triangle face, then
		$\sum_{i=1}^{s} \Theta(e_{i}) < (s - 2)\pi.$	\\[-15pt]
		\item[($Z_3$)] If \( e_{1} \) and \( e_{2} \) are homologically non-adjacent, then
		$\Theta(e_{1}) + \Theta(e_{2}) < \pi.$\\[-15pt]
		\item[($Z_4$)] If \( e_{1}, e_{2}, e_{3} \) forms the boundary of a triangle, then
		$\cos\Theta(e_i) + \cos\Theta(e_j) \cos\Theta(e_k) \geq 0$
		hold for all permutations $\{i,j,k\}=\{1,2,3\}$.
	\end{itemize}
  From Thurston \cite{Th76} and He \cite{He}, if $(G,\Theta)$ is realized by some $\pac$ and $\Theta<\pi/2$, all conditions ($Z_1$)-($Z_4$) are satisfied. If $\Theta$ is still allowed to take $\pi/2$, then they are satisfied except in one case, that is, He's reducible example (see Figure \ref{subgraphgivingextra}). In addition, these conditions are closely related, for example, ($Z_4$) implies  ($Z_1$) (see Section \ref{section:cp-notation}). 
	
	
	\begin{thm}[Existence of RCPs]\label{infinite_existence}
		Let $G= (V, E)$ be a disk triangulation graph with an angle function $\Theta \in [0, \pi)^E$. Assume that $\sup_{e\in E}\Theta(e)<\pi$. If conditions ($Z_1$)-($Z_3$) hold, then there is a \textbf{regular circle pattern} (abbr. \textbf{RCP}) realizing $(G,\Theta)$. If conditions ($Z_2$) and ($Z_4$) hold, then there is an embedded circle pattern $\pac$ weakly realizing $(G,\Theta)$, that is, $G$ is a subgraph of the contact graph $G(\pac)$. 
	\end{thm}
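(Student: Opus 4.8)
The plan is to realize $(G,\Theta)$ as a limit of finite circle patterns, via the duality between regular circle patterns and trivalent convex polyhedra in $\HH^3$. I would first fix an exhaustion $\mT_1\subset\mT_2\subset\cdots$ of $\mT$ by finite, simply connected sub-triangulations with $\bigcup_n\mT_n=\mT$, arranged so that every vertex of $\mT$ eventually lies in the \emph{interior} of some $\mT_n$. The structural observation driving everything is that a regular circle pattern on $\hat{\mathbb{C}}=\partial\HH^3$ is dual to a convex polyhedron: the bounding geodesic plane $\Pi_v$ of each disk $D_v$ is a face, the disks $D_u,D_v$ meet at angle $\Theta(uv)$ precisely when $\Pi_u,\Pi_v$ meet at dihedral angle $\Theta(uv)$, and each triangular face of $\mT$ corresponds to a trivalent vertex. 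Under this dictionary, conditions $(Z_1)$--$(Z_4)$ become the classical compatibility conditions: $(Z_1)$ is exactly the spherical-triangle inequality for the link of a finite vertex (its three angles $\Theta_i$ satisfy $\sum\Theta_i>\pi$ and $\Theta_i+\Theta_j<\pi+\Theta_k$), $(Z_2)$ is the prismatic-circuit inequality $\sum\Theta_i<(s-2)\pi$, and $(Z_4)$ forces the link sides to be non-obtuse.

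Next I would solve the finite problem on each $\mT_n$. Since $\mT_n$ has boundary, I would cap it off to a triangulation of $S^2$ --- for example by adjoining one exterior vertex joined to all boundary vertices, or by doubling across $\partial\mT_n$ --- assigning the new edges angles small enough that the extended angled graph still inherits the finite realizability hypotheses from $(Z_1)$--$(Z_4)$. Feeding this into the finite characterization theorem (Rivin--Hodgson \cite{RH93}, Andreev \cite{An70A} when $\Theta\le\pi/2$, and the ideal/hyperideal extensions \cite{Ri96,Bao} to accommodate the endpoint $\Theta=0$) produces a finite convex polyhedron, hence a finite embedded circle pattern $\pac_n$ realizing $(\mT_n,\Theta)$ together with the auxiliary capping circles. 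I would then normalize $\pac_n$ by a M\"obius transformation, say so that a fixed interior face of $\mT_1$ is carried to a standard reference triangle of disks, removing the conformal ambiguity and anchoring the sequence.

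The heart of the argument is the passage to the limit, and this is where I expect the main obstacle. A priori the disks of $\pac_n$ could degenerate, shrinking to points or escaping to the boundary, as $n\to\infty$. To prevent this I would establish a uniform, combinatorially \emph{local} estimate: an angled analogue of the Rodin--Sullivan ring lemma \cite{Rodin-Sullivan}, giving for each interior vertex $v$ a two-sided bound on the ratio of the radius of $D_v$ to those of its neighbours, depending only on the degree of $v$ and on the uniform gap $\Theta\le\pi-\epsilon_G$ keeping the angles bounded away from $\pi$. Because every vertex eventually lies in the interior of $\mT_n$, these bounds together with the normalization let me extract, by a diagonal/normal-family argument, a subsequence along which all centres and radii converge to a family of genuine nondegenerate disks $\pac_\infty$. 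By continuity each prescribed adjacent pair of disks then meets at exactly the angle $\Theta(e)$, so $\pac_\infty$ realizes $(G,\Theta)$ at least on the prescribed edges.

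Finally I would separate the two conclusions. Under $(Z_1)$--$(Z_3)$, the hypothesis $(Z_3)$ on homologically non-adjacent edges rules out every unprescribed intersection in the limit, so $G$ is precisely the contact graph and $\pac_\infty$ is an honest RCP realizing $(G,\Theta)$. Under $(Z_2)$ and $(Z_4)$, the inequality $(Z_4)$ (non-obtuse link sides) is exactly what guarantees that each triangular face is realized with the correct orientation, from which I would deduce that $\pac_\infty$ is embedded; here one only concludes that $G$ is a subgraph of $G(\pac_\infty)$, i.e.\ a weak realization, since unprescribed overlaps are no longer excluded. The two principal difficulties I anticipate are (i) arranging the capping so that the finite realizability conditions are genuinely inherited from $(Z_1)$--$(Z_4)$ at the new boundary edges, and especially (ii) proving the uniform ring-lemma estimate in the angled regime up to $\Theta=\pi-\epsilon_G$, since the classical ring lemma is tailored to tangencies and must be upgraded to handle large intersection angles while retaining dependence only on local combinatorial data.
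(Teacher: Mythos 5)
Your skeleton matches the paper's proof almost exactly: exhaustion of $\mT$ by combinatorial balls, capping each finite piece to a triangulation of the sphere by one exterior vertex joined to all boundary vertices, a finite existence theorem, an obtuse-angle Ring Lemma giving two-sided local bounds on radius ratios, a diagonal argument to extract a nondegenerate limit, and condition ($Z_3$) together with the containment lemmas to rule out unprescribed intersections in the limit. The two places where your plan diverges from (and falls short of) the paper are in the finite existence step. First, the theorems you invoke there do not cover the setting you need: Andreev requires $\Theta\le\pi/2$; Rivin--Hodgson characterizes compact polyhedra by the dual de Sitter metric, not by angle conditions of the form ($Z_1$)--($Z_4$) on an angled graph; and Bao--Bonahon handles only the all-ideal/hyperideal case. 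What is actually needed is an existence theorem for finite sphere triangulations with $\Theta\in[0,\pi)$ under exactly the ($Z_1$)--($Z_3$)-type hypotheses, which is Zhou's theorem (Theorem \ref{RCP_finite} in the paper); the paper caps with angle $0$ on the new edges and verifies Zhou's conditions (1)--(4) directly. Second, and more seriously, for the weak-realization conclusion under ($Z_2$) and ($Z_4$) alone, your capping strategy cannot get off the ground: no loop condition like ($Z_3$) is assumed, so no finite sphere theorem applies to the capped triangulation. The paper instead solves a hyperbolic Dirichlet boundary value problem on each finite piece (boundary circles prescribed as horocycles tangent to $\partial\mathbb{U}$) via the variational principle of Lemma \ref{vari}, for which ($Z_4$) is precisely what guarantees that every three-circle configuration exists for all radii; embeddedness is then obtained from the developing map and domain invariance, not from an orientation count on faces. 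You correctly identified the Ring Lemma in the obtuse regime as the other key difficulty, and the paper's Lemma \ref{ringlemma} does exactly what you describe, using ($Z_1$), ($Z_2$) and the gap $\Theta\le\pi-\epsilon_G$ to bound the combinatorial size of a degenerating cluster.
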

	
	By the correspondence between RCPs and hyperbolic polyhedra (see Section \ref{section-correspond-cp-hcp}), we have the following existence result for infinite trivalent hyperbolic polyhedra.
	
	\begin{thm}[Existence of THP]\label{thm-exist-IP}
	Let $\mathcal{T}=(V, E, F)$ be a disk triangulation with an angle function $\Theta \in (0, \pi)^E$. Assume that $\sup_{e\in E}\Theta(e)<\pi$. If conditions ($Z_1$)-($Z_3$) hold, then there is an infinite \textbf{trivalent hyperbolic polyhedron} (abbr. \textbf{THP}) $P$ in $\mathbb{H}^3$ which is combinatorially equivalent to the Poincar\'e dual of $\mathcal{T}$, with dihedral angles satisfying $\Theta(e^*) = \Theta(e)$ for $e\in E$.
	\end{thm}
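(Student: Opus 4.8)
The plan is to obtain the polyhedron $P$ directly from the regular circle pattern produced by Theorem~\ref{infinite_existence}, transported through the dictionary between circle patterns on $\partial\HH^3$ and convex polyhedra in $\HH^3$ established in Section~\ref{section-correspond-cp-hcp}. Since $\Theta\in(0,\pi-\epsilon_{\mT}]^E\subset[0,\pi-\epsilon_G]^E$ (with $\epsilon_G=\epsilon_{\mT}$) and conditions ($Z_1$)--($Z_3$) are assumed, Theorem~\ref{infinite_existence} first yields an RCP $\pac=\{D_v\}_{v\in V}$ realizing $(G,\Theta)$, where $G$ is the $1$-skeleton of the triangulation $\mT$. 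The remaining task is purely to dualize this pattern into a polyhedron and to check that its combinatorial type and dihedral angles are the prescribed ones.

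To dualize, I would assign to each disk $D_v$ the totally geodesic plane $\Pi_v\subset\HH^3$ with $\partial_\infty\Pi_v=\partial D_v$, together with the closed half-space $H_v$ on the side of $\Pi_v$ fixed by the correspondence (the one whose ideal boundary is $\hat{\mathbb{C}}\setminus\intrior D_v$), and set $P=\bigcap_{v\in V}H_v$. The combinatorics can then be read off from the contact structure of $\pac$. Each plane $\Pi_v$ supports a face $F_v=P\cap\Pi_v$, so faces of $P$ correspond to vertices of $\mT$; for every edge $e=\{u,v\}\in E$ the overlapping disks $D_u,D_v$ force $\Pi_u$ and $\Pi_v$ to cross along a geodesic contributing an edge $e^{*}$ of $P$, and the angle-preservation of the correspondence makes its dihedral angle equal to $\Theta(e)$; at every triangle $\{u,v,w\}\in F$ the three pairwise-overlapping disks make $\Pi_u,\Pi_v,\Pi_w$ concurrent, producing a vertex of $P$. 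Because each face of $\mT$ is a triangle, exactly three faces of $P$ meet at each vertex, so $P$ is trivalent and its face lattice is the Poincar\'e dual $\mT^{*}$ with $\Theta(e^{*})=\Theta(e)$.

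The step I expect to be the main obstacle is certifying that $P$ is a genuine convex polyhedron of exactly this combinatorial type, with no redundant supporting plane and with every prescribed edge and vertex present and non-degenerate. Here the hypotheses enter precisely: the strict positivity $\Theta(e)>0$ guarantees that adjacent faces meet transversally along an honest edge rather than tangentially at a single ideal point, the upper bound $\pi-\epsilon_{\mT}$ keeps the dihedral angles bounded away from $\pi$ so no two faces collapse together, and condition ($Z_1$)---being the spherical-triangle inequalities for the three dihedral angles around a triangular face---ensures that each vertex figure is an admissible spherical triangle and thereby treats the finite, ideal, and hyperideal vertices uniformly. Finally, local finiteness of $\mT$ together with the embeddedness of $\pac$ from Theorem~\ref{infinite_existence} ensures that only finitely many half-spaces $H_v$ are active near any point, so $\bigcap_v H_v$ is locally finite and the faces $F_v$ tile $\partial P$ according to $\mT^{*}$; granting the correspondence of Section~\ref{section-correspond-cp-hcp}, the theorem follows, with its analytic substance carried entirely by Theorem~\ref{infinite_existence}.
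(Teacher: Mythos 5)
Your proposal is correct and follows essentially the same route as the paper, whose proof of Theorem~\ref{thm-exist-IP} is exactly ``apply Theorem~\ref{infinite_existence} to get an RCP realizing $(G,\Theta)$, then apply the RCP-to-THP correspondence (Theorem~\ref{RCPTHP})''; your sketch of the dualization is in effect a compressed version of the proof of Theorem~\ref{RCPTHP}. The only point worth tightening is that trivalence and the absence of redundant supporting planes are not consequences of $\Theta>0$, the upper bound, and ($Z_1$) alone, but rest on ($Z_2$) through Lemma~\ref{mostthree} (no four disks share a point) and Lemma~\ref{indispensable} ($D_i\not\subset\bigcup_{j\neq i}D_j$), which is precisely why the correspondence theorem assumes both ($Z_1$) and ($Z_2$).
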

	It should be noted that ($Z_1$) and ($Z_2$) are necessary conditions for the existence of THP (see Proposition \ref{prop-thp-z1+z2}), but not
    necessary conditions for the existence of RCPs (see Remark \ref{remark-rcp-notimply-z1z2}). For more precise and detailed relationships between conditions ($Z_1$)-($Z_3$), RCPs, and THP, we refer to Section \ref{section-correspond-cp-hcp}. Let $\pac$ be a circle pattern in $\mathbb{C}$, $z\in\hat{\mathbb{C}}$ is called an accumulation point of $\pac$ if every neighborhood of $z$ intersects infinitely many disks of $\pac$. If $\pac$ is regular, one can directly verify that there is a unique domain $\Omega \subset \hat{\mathbb{C}}$ such that $\pac$ is \textbf{locally finite} (abbr. \textbf{l.f.}) in $\Omega$, that is, every disk in $\pac$ is contained in $\Omega$ and every compact subset of $\Omega$ intersects only finitely many disks of $\pac$. Moreover, $\partial \Omega$ coincides with the set of accumulation points of $\pac$ and $\Omega = \carrier(\pac)$.

	\begin{thm}[Rigidity of CPs]\label{thm-intro-rigidity-cp}
	Let $G=(V,E)$ be a disk triangulation graph with an angle function $\Theta\in [0 ,\pi)^E$ that satisfies ($Z_2$) and ($Z_4$). Let $\mathcal{P}$ and $\mathcal{P}'$ be two CPs that realize $(G,\Theta)$ (by the definition of RCP, both $\mathcal{P}$ and $\mathcal{P}'$ are automatically RCPs). 
		\begin{itemize}
			\item [(1)] If $\pac$ and $\pac'$ are l.f. in $\mathbb{U}$, there is a M\"obius transformation $h$ such that $\mathcal{P}'=h(\mathcal{P})$.\\[-18pt]
			\item [(2)]If $\pac$ is l.f. in $\mathbb{C}$ and $\sup\limits_{e\in E}\Theta<\pi$, there is an Euclidean similarity $h$ such that $\mathcal{P}'=h(\mathcal{P}).$
		\end{itemize}
	\end{thm}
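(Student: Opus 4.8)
The plan is to prove both parts from a single engine — a discrete maximum principle for a radius-discrepancy function — and then to close the two cases by different global arguments. First I would record, for each vertex $v$, the hyperbolic radius $\rho_v$ of the disk $D_v$ in $\mathcal{P}$ (for case (1)) or its euclidean radius $r_v$ (for case (2)), together with $\rho'_v, r'_v$ for $\mathcal{P}'$. The consistency of each flower is the closing-up condition that the angles of the triangles around every interior vertex sum to $2\pi$. The key local statement — and the technical heart — is a monotonicity lemma: for a triangle with fixed intersection angles satisfying ($Z_4$), the angle at a vertex is strictly decreasing in that vertex's own radius and strictly increasing in each neighbour's radius. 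Condition ($Z_4$), namely $\cos\Theta(e_i)+\cos\Theta(e_j)\cos\Theta(e_k)\ge 0$, is precisely what keeps this monotonicity valid across the full range $\Theta<\pi$, where the angle can otherwise degenerate; this is the point where the generalization beyond He's range $\Theta\le\pi/2$ must be earned.

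From this lemma I would derive the maximum principle in its usual form. Let $s_v$ denote the discrepancy ($\rho'_v-\rho_v$ in case (1), or $\log(r'_v/r_v)$ in case (2)). If $s_v\ge s_w$ for every neighbour $w$ of an interior vertex $v$, then subtracting the two angle-sum identities $\sum\alpha=2\pi$ at $v$ and invoking strict monotonicity forces $s_w=s_v$ for all neighbours. Hence $s$ has no strict local extremum at an interior vertex, and if it attains its supremum or infimum interiorly it is globally constant. Condition ($Z_2$) enters here to guarantee that the flowers are genuinely embedded and that the triangulation of $\mathbb{U}$ carries the expected topology, so that the angle-sum identities hold without degeneracy.

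For case (1) I would work hyperbolically. Since $\mathcal{P}$ and $\mathcal{P}'$ are locally finite in $\mathbb{U}$, their disks accumulate only on the ideal boundary $\partial\Omega\subset\partial\mathbb{U}$, which is infinitely far in the hyperbolic metric, so I would normalize $\mathcal{P}'$ by a hyperbolic isometry (a Möbius map preserving $\mathbb{U}$). Setting $M=\sup_v(\rho'_v-\rho_v)$, if $M>0$ I would take a sequence of near-maximizing vertices and pass to a limit of the corresponding flowers, which stay non-degenerate away from the ideal boundary; the limit realizes the maximum interiorly and contradicts strict monotonicity. Thus $M\le 0$, by symmetry $\rho'_v\equiv\rho_v$, and $\mathcal{P}'=h(\mathcal{P})$ for a Möbius $h$. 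Note this case needs no uniform lower bound on $\pi-\Theta$, since the ideal boundary itself supplies the rigidity, which is why only ($Z_2$) and ($Z_4$) are assumed.

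For case (2) — the parabolic case, where $\mathcal{P}$ fills $\mathbb{C}$ and $G$ is recurrent — the maximum principle alone does not suffice, and the main obstacle lies here. I would run the length–area (extremal length) method of Schramm and He: use the radii to define a metric on $\carrier(\mathcal{P})$, bound the oscillation of $s$ across a combinatorial annulus by an $L^2$-energy, and sum over a nested exhaustion of annuli whose total modulus diverges because the pattern fills all of $\mathbb{C}$. The hypothesis $\Theta\le\pi-\epsilon_G$ is exactly what delivers a ring lemma with constants depending only on $G$ — uniform bounded geometry of every flower — which is indispensable for the area comparisons. Divergence of the summed modulus then forces the oscillation to vanish, so $s$ is constant, $r'_v/r_v\equiv\lambda$, and the two patterns differ by a euclidean similarity $h(z)=az+b$ with $|a|=\lambda$. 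I expect the delicate points to be the ring lemma for overlapping disks with large intersection angles and the precise annulus estimates, where the bulk of the effort will concentrate.
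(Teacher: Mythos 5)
Your overall strategy (a maximum principle from the angle-monotonicity under ($Z_4$) for the hyperbolic case, and an extremal-length/recurrence argument for the parabolic case) matches the paper's high-level plan, but both closing arguments have genuine gaps.

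For part (1), your globalization step does not go through as stated. You set $M=\sup_v(\rho'_v-\rho_v)$ and propose to ``take a sequence of near-maximizing vertices and pass to a limit of the corresponding flowers.'' Since no bounded-degree hypothesis is available, the flowers along such a sequence have varying combinatorics and varying angle data, and the near-maximizers may escape to the ideal boundary where the hyperbolic radii are not uniformly controlled; there is no single limiting configuration in which the maximum is ``attained interiorly.'' The paper avoids this entirely by a scaling trick: assuming $\rho_{\mathrm{hyp}}(D(v_0))<\rho_{\mathrm{hyp}}(D'(v_0))$, it replaces $\mathcal{P}$ by $\delta\mathcal{P}$ with $\delta>1$; local finiteness in $\mathbb{U}$ then makes the subpattern of $\delta\mathcal{P}$ meeting $\overline{\mathbb{U}}$ \emph{finite}, and a finite maximum principle applies. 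This forces a second ingredient you omit: after scaling, disks need not lie inside $\mathbb{U}$, so one must work with generalized cycles (hyperbolic circles, horocycles, hypercycles) parametrized by geodesic curvature $g$ rather than by hyperbolic radius, and prove the monotonicity $dg/dr<0$ and the corresponding angle-monotonicity in the $g$-variables (the paper's Lemmas \ref{dgdr}--\ref{gvari} and Theorem \ref{MaximumPrincipleHyper}). Without this extension your discrepancy $\rho'_v-\rho_v$ is not even defined for the boundary circles of the comparison subpattern.

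For part (2), the assertion that ``divergence of the summed modulus then forces the oscillation to vanish'' is the whole difficulty, and you do not supply the mechanism. The paper's route is: first establish a uniform two-sided bound $C^{-1}\le r'_v/r_v\le C$ (Lemmas \ref{rigidity_bound_tau_finite}--\ref{uniform_bound_infinite}); this is itself delicate and splits into cases according to whether the distortion coefficient $\tau(\mathcal{P})$ is finite or infinite, combining the VEL annulus estimates with the \emph{hyperbolic} maximum principle applied to rescaled subpatterns, plus a circle-counting lemma. Only then does one interpolate $u(t)$ between the two radius functions via boundary-value problems on an exhaustion, show that $f=du/dt$ is a bounded harmonic function on the weighted network $(\mathcal{T},\omega)$ with $\omega_{ij}=-\partial K_i/\partial u_j$, and invoke recurrence (VEL-parabolic $\Rightarrow$ recurrent) together with the Liouville property to conclude $f$ is constant. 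You also misplace the role of the hypothesis $\Theta\le\pi-\epsilon_G$: it is not needed for a ring lemma here but for the derivative estimate $0\le r_j\,\partial\vartheta_i/\partial r_j\le C(\epsilon)\vartheta_i$ (Lemma \ref{partialthetabound}), which bounds the total conductance $\sum_{j\sim i}\omega_{ij}\le 2\pi C_0$ and is exactly what makes ``VEL-parabolic implies recurrent'' (Lemma \ref{VEL_current}) applicable. Similarly, ($Z_2$) enters not to make flowers embedded but through Lemma \ref{mostthree} (no point is covered by more than three disks), which underlies the area estimates in every extremal-length computation. As written, your sketch for part (2) is a correct heuristic but is missing the uniform ratio bound and the harmonicity-plus-recurrence step that turn it into a proof.
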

	
	The \textbf{carrier} of $\pac$, denoted by $\carrier(\pac)$, is defined as the union of all disks in $\pac$ together with all closed geometric triangles $\Delta v_i v_j v_k$ associated with triangular faces of the underlying triangulation.
	By He-Schramm \cite[$\Theta=0$]{He2} and He \cite[$0\leq\Theta\leq\pi/2$]{He}, every disk triangulation graph can be circle packed in $\mathbb{C}$ with carrier either $\mathbb{C}$ (called CP-parabolic) or the open unit disk $\mathbb{U}$ (called CP-hyperbolic), but not both. An angled disk triangulation graph $(G,\Theta)$ with $\Theta\in [0 ,\pi)^E$ is called \textbf{RCP-parabolic} (\textbf{RCP-hyperbolic} resp.), if there is a regular circle pattern $\pac$ with $\carrier(\pac)= \mathbb C$ ($=\mathbb U$ resp.) that realizes $(G,\Theta)$. Similarly, consider an infinite convex THP $P$ in $\mathbb{H}^3$. Let $P^{\text{trun}}$ be its truncation (that is, we truncate $P$ at each hyperideal vertex and ideal vertex), then $P$ is called \textbf{parabolic} if $P^{\text{trun}}\setminus K_n$ tends to a unique point in $\partial\mathbb{H}^3$, and called \textbf{hyperbolic} if $P^{\text{trun}}\setminus K_n$ tends to a half sphere in $\partial\mathbb{H}^3$, where $\{K_n\}$ are compact in $\mathbb{H}^3$ and exhaust $\mathbb{H}^3$ (see Section \ref{polyhedra_section} for more explanations). The following uniformization theorem distinguishes the different types of RCPs and THPs.

	\begin{thm}[Uniformization of RCP/THP]\label{uniformization}
		Let $\mathcal{T}= (V, E, F)$ be a disk triangulation, and let $\Theta\in[0 ,\pi)^E$ be an angle function. Let $G=(V,E)$ and consider the following four properties:
		\begin{enumerate}
			\item [($U_1$)] $G$ is VEL-parabolic (VEL-hyperbolic resp.).
			\item [($U_2$)] $(G,\Theta)$ is RCP parabolic (RCP hyperbolic resp.).
			\item [($U_3$)] $\mathcal{T}$ is the dual 1-skeleton of some parabolic (hyperbolic resp.) THP in $\mathbb{H}^3$.
			\item [($U_4$)] $\mathcal{T}$ is recurrent (transient resp.).
		\end{enumerate}
		Then we have the following uniformization results for infinite RCPs and THPs:
		\begin{itemize}
			\item[(1)] If conditions ($Z_1$)-($Z_3$) hold and $\sup_{e\in E}\Theta(e)<\pi$, then ($U_1$) and ($U_2$) are equivalent. 
            If further assume that $\mT$ has bounded degrees, then ($U_1$), ($U_2$) and ($U_4$) are equivalent.
			\item[(2)] If conditions ($Z_1$)-($Z_3$) hold, $\Theta>0$ and $\sup_{e\in E}\Theta(e)<\pi$, then ($U_1$)-($U_3$) are all equivalent. If further assume that $\mT$ has bounded degrees, then ($U_1$)-($U_4$) are all equivalent. 
		\end{itemize}
	\end{thm}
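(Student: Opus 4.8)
The plan is to reduce everything to three core equivalences, namely $(U_1)\Leftrightarrow(U_2)$, then $(U_2)\Leftrightarrow(U_3)$ (under $\Theta>0$), and finally $(U_1)\Leftrightarrow(U_4)$ (under bounded degrees), and then assemble the two parts using the elementary fact that every graph is \emph{exactly one} of VEL-parabolic or VEL-hyperbolic. First I would observe that under $(Z_1)$--$(Z_3)$ Theorem \ref{infinite_existence} produces an RCP $\pac$ realizing $(G,\Theta)$, so that $(U_2)$ is not vacuous. Combining this with the rigidity statement of Theorem \ref{thm-intro-rigidity-cp}, any two RCPs realizing $(G,\Theta)$ differ by a M\"obius transformation or a euclidean similarity, so the type $\carrier(\pac)\in\{\mathbb{C},\mathbb{U}\}$ is an invariant of $(G,\Theta)$ and cannot be both. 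Hence ``RCP-parabolic'' and ``RCP-hyperbolic'' are well defined and mutually exclusive, and to prove the theorem it suffices to match each of these types with the corresponding combinatorial/geometric type in $(U_1)$, $(U_3)$, $(U_4)$.

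For $(U_1)\Leftrightarrow(U_2)$ the key is to read the VEL-type of $G$ off the geometry of $\pac$. I would use the radii of $\pac$ as a distinguished vertex metric $m(v)=r_v$ on $G$. The angle bound $\Theta\le \pi-\epsilon_{\mathcal{T}}$ is exactly what yields a generalized \emph{ring lemma}: the radii of neighboring disks are comparable with a constant depending only on $\epsilon_{\mathcal{T}}$ (and the degrees), and each interstitial triangle $\Delta v_iv_jv_k$ has euclidean area comparable to $r_{v_i}^2+r_{v_j}^2+r_{v_k}^2$. With this uniform geometric control, the combinatorial vertex extremal length $\vel(o\to\infty)$ computed in $G$ is comparable to the continuous extremal length of $\carrier(\pac)$ toward its ideal boundary. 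Since the continuous extremal length distinguishes $\mathbb{C}$ (infinite modulus to the puncture, parabolic) from $\mathbb{U}$ (finite modulus to $\partial\mathbb{U}$, hyperbolic), one concludes that $G$ is VEL-parabolic iff $\sum_v r_v^2=\infty$ iff $\carrier(\pac)=\mathbb{C}$, which is precisely $(U_1)\Leftrightarrow(U_2)$.

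For $(U_2)\Leftrightarrow(U_3)$ I would invoke the correspondence between RCPs and THPs established in Section \ref{section-correspond-cp-hcp}. When $0<\Theta\le\pi-\epsilon_{\mathcal{T}}$ this correspondence is a genuine bijection (the strict positivity $\Theta>0$ rules out tangencies and guarantees that each interstice produces a well-defined trivalent vertex), so a parabolic RCP yields a THP whose truncation $P^{\mathrm{trun}}\setminus K_n$ escapes to the single ideal point corresponding to the puncture of $\carrier(\pac)=\mathbb{C}$, while a hyperbolic RCP, with $\carrier(\pac)=\mathbb{U}$, produces a THP tending to the half-sphere over $\partial\mathbb{U}$; thus the types match by construction, giving $(U_2)\Leftrightarrow(U_3)$ together with Theorem \ref{thm-exist-IP}. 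For $(U_1)\Leftrightarrow(U_4)$ under bounded degrees I would appeal to the standard equivalence between vertex extremal length and effective resistance for bounded-degree graphs: when $\deg$ is bounded, vertex and edge extremal lengths are comparable, and edge extremal length to infinity is finite iff the simple random walk is transient. Hence VEL-parabolic $\Leftrightarrow$ recurrent and VEL-hyperbolic $\Leftrightarrow$ transient, which is $(U_1)\Leftrightarrow(U_4)$.

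The main obstacle is the comparison underlying $(U_1)\Leftrightarrow(U_2)$, and specifically the generalized ring lemma near $\Theta=\pi$. For tangent or nonobtuse patterns the classical ring lemma gives uniform radius comparability, but as some dihedral angles approach $\pi$ the disks overlap deeply and a naive estimate degenerates, so controlling the ratio $r_u/r_v$ and the interstitial areas uniformly is delicate; this is exactly why the hypothesis $\Theta\le\pi-\epsilon_{\mathcal{T}}$ (rather than merely $\Theta<\pi$) is imposed. I expect the bulk of the work to lie in proving this uniform geometric estimate and in showing that the resulting discrete-to-continuous extremal length comparison holds without bounded-degree assumptions, so that $(U_1)\Leftrightarrow(U_2)$ is valid in full generality while $(U_4)$ is only added back under bounded degrees.
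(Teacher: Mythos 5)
Your top-level architecture coincides with the paper's: $(U_1)\Leftrightarrow(U_2)$ as the analytic core (Theorem \ref{uniformization_RCP}), $(U_2)\Leftrightarrow(U_3)$ via the RCP--THP correspondence (Theorems \ref{RCPTHP} and \ref{THPRCP}), and $(U_1)\Leftrightarrow(U_4)$ via the comparison of vertex extremal length with resistance for bounded-degree graphs (the paper simply cites He--Schramm \cite{He2}). The genuine gap is in your argument for $(U_1)\Leftrightarrow(U_2)$. First, the estimate you lean on --- that each triangle $\Delta v_iv_jv_k$ has Euclidean area comparable to $r_{v_i}^2+r_{v_j}^2+r_{v_k}^2$ --- fails once angles are obtuse: with $\Theta_{ij}$ near $\pi$ the side length $l_{ij}=\sqrt{r_i^2+r_j^2+2r_ir_j\cos\Theta_{ij}}$ can be arbitrarily small compared with $r_i+r_j$ (take $r_i=r_j$ and $\Theta_{ij}=\pi-\epsilon$, so $l_{ij}\approx r_i\epsilon$), and the triangle can be tiny relative to the disks. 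Second, the Ring Lemma the paper actually proves (Lemma \ref{ringlemma}) is obtained by a compactness/contradiction argument and yields a constant depending on all of $(\mathcal{T},\Theta)$, not only on $\epsilon_{\mathcal{T}}$ and the degree; since part (1) of the theorem does not assume bounded degree, a degree-dependent ring constant would not give the uniform control your comparison requires. Third, ``$G$ is VEL-parabolic iff $\sum_v r_v^2=\infty$'' is not a valid characterization: divergence of the area of one candidate metric gives no upper bound on $\modc(\Gamma(V_1,\infty))$. The paper's mechanism avoids all three issues: the admissible vertex metric is $\nu_v=\mathrm{diam}(D_v\cap D(r_2))/(r_2-r_1)$ on an annulus, admissibility comes from connectivity of chains of disks, and the area bound comes not from a ring lemma but from Lemma \ref{mostthree} (no point lies in four disks, a consequence of $(Z_2)$), which caps $\sum_v\area(D_v\cap D(r_2))$ by $3\pi r_2^2$ and $\sum_v l(D_v\cap C(r_2))$ by $6\pi r_2$; summing the resulting moduli over nested annuli gives both directions (Lemmas \ref{es}, \ref{annuilemma}, \ref{parabolic_equivalent}, \ref{hyperbolic_equivalent}).

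A secondary gap: you make the dichotomy in $(U_2)$ well defined by invoking the rigidity Theorem \ref{thm-intro-rigidity-cp}, but that theorem assumes $(Z_4)$, which is not among the hypotheses of Theorem \ref{uniformization} (only $(Z_1)$--$(Z_3)$ are assumed), so it is not available here. The mutual exclusivity of ``RCP-parabolic'' and ``RCP-hyperbolic'' instead follows from the equivalence with $(U_1)$ itself: every graph is exactly one of VEL-parabolic or VEL-hyperbolic, and Lemma \ref{parabolic_equivalent} ties the carrier type of \emph{any} RCP realizing $(G,\Theta)$ to that dichotomy, so no appeal to rigidity is needed.
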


    To our knowledge, this is the first uniformization result concerning infinite hyperbolic polyhedra. The equivalence between ($U_1$) and ($U_2$) in Theorem \ref{uniformization} extends the results in \cite{BS90,BS96,He1,He} (which studied the cases of $\Theta=0$ and $0\leq\Theta \leq\pi/2$ respectively). It should be noted that the equivalence between ($U_1$) and ($U_4$) was previously obtained by He-Schramm in \cite{He2}. 
 Furthermore, it is worth emphasizing that the THP in ($U_3$) may have non-compact faces, and each face of the corresponding THP must be compact if ($Z_1$) is replaced by the following ($Z'_1$): if \( e_{1}, e_{2}, e_{3} \) forms the boundary of a triangle, then $\sum\limits_{i=1}^{3} \Theta(e_i) > \pi$ and $\Theta(e_{i}) + \Theta(e_{j}) < \pi + \Theta(e_{k})$ for all \( \{i,j,k\} = \{1,2,3\} \). Using Theorem \ref{thm-intro-rigidity-cp}, Theorem \ref{uniformization}, and the correspondence between RCPs and THP, we have the following rigidity theorem for THP.

	\begin{thm}[Rigidity of THP]\label{thm-rigidity-IP}
	Let $\mathcal{T}=(V,E,F)$ be a disk triangulation, and let $\Theta\in(0 ,\pi)^E$ be an angle function. Assume that conditions ($Z_3$) and ($Z_4$) hold. Let $P_1$, $P_2$ be two hyperbolic (or parabolic, with an additional assumption that $\sup\limits_{e\in E}\Theta(e)<\pi$) THP whose Poincar\'e dual is combinatorially isomorphic to $\mathcal{T}$ with $\Theta(e^*)=\Theta(e)$ at each $e\in E$, then $P_1$ is isometric to $P_2$. 
	\end{thm}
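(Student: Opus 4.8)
The plan is to transfer the rigidity statement from $\mathbb{H}^3$ to the ideal boundary $\hat{\mathbb{C}} = \partial\mathbb{H}^3$ and reduce it to the rigidity of circle patterns, Theorem \ref{thm-intro-rigidity-cp}. First, the hypotheses needed for that theorem are available essentially for free: since the THP $P_1, P_2$ are assumed to exist, Proposition \ref{prop-thp-z1+z2} forces $(Z_1)$ and $(Z_2)$ to hold, so together with the assumed $(Z_3)$ and $(Z_4)$ all four conditions are in force. By the correspondence of Section \ref{section-correspond-cp-hcp}, each $P_i$ determines a regular circle pattern $\pac_i$ on $\hat{\mathbb{C}}$ that realizes $(G, \Theta)$: the faces of $P_i$ lie on geodesic planes $\Pi$, each such plane meets $\partial\mathbb{H}^3$ in a boundary circle $\partial D$, and the dihedral angle of $P_i$ along $e^*$ equals the intersection angle $\Theta(e)$ of the corresponding disks.

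Next I would arrange that $\pac_1$ and $\pac_2$ are locally finite in the same domain. Because $P_1$ and $P_2$ are by hypothesis of the same type, it suffices to translate ``type of $P_i$'' into ``carrier of $\pac_i$''. In the hyperbolic case, the defining property that $P_i^{\text{trun}}\setminus K_n$ tends to a half-sphere means the accumulation set of $\pac_i$ is a circle, so after a normalizing M\"obius transformation $\carrier(\pac_i)=\mathbb{U}$, i.e. $\pac_i$ is l.f. in $\mathbb{U}$. In the parabolic case, the equivalence $(U_2)\Leftrightarrow(U_3)$ of Theorem \ref{uniformization}(2), applicable since $0<\Theta\le\pi-\epsilon_{\mathcal{T}}$ is assumed and $(Z_1)$-$(Z_3)$ hold, gives $\carrier(\pac_i)=\mathbb{C}$, i.e. $\pac_i$ is l.f. in $\mathbb{C}$. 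Either way, $\pac_1$ and $\pac_2$ are l.f. in one common domain.

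Now I would invoke Theorem \ref{thm-intro-rigidity-cp}, whose hypotheses $(Z_2)$ and $(Z_4)$ we have secured. In the hyperbolic case part (1) produces a M\"obius transformation $h$ with $\pac_2 = h(\pac_1)$; in the parabolic case part (2), using the bound $\Theta\le\pi-\epsilon_{\mathcal{T}}$, produces a euclidean similarity $h$, which is again a M\"obius map of $\hat{\mathbb{C}}$, with $\pac_2 = h(\pac_1)$. The Poincar\'e extension $\tilde h$ of $h$ is an isometry of $\mathbb{H}^3$ that sends geodesic planes to geodesic planes and preserves dihedral angles. Since each $P_i$ is recovered from $\pac_i$ as the intersection of the half-spaces bounded by the planes dual to its disks, $\tilde h$ carries the bounding planes of $P_1$ onto those of $P_2$, whence $\tilde h(P_1)=P_2$, which is the required isometry.

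The step I expect to be the main obstacle is making the correspondence of Section \ref{section-correspond-cp-hcp} and the type-identification precise for patterns that are not l.f. on all of $\hat{\mathbb{C}}$: one must verify that the reconstruction of $P_i$ from $\pac_i$ yields the correct combinatorial polyhedron, with the right choice of bounding half-spaces and the correct treatment of ideal and hyperideal vertices under truncation, and that a M\"obius matching of the two boundary patterns is compatible with this reconstruction, so that the extended map is a genuine isometry of polyhedra rather than a mere matching of their traces at infinity.
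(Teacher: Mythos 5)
Your proposal is correct and follows essentially the same route as the paper, which simply combines Theorem \ref{THPRCP} (THP $\Rightarrow$ RCP, using ($Z_3$)) with Theorem \ref{thm-intro-rigidity-cp}. You actually supply more of the necessary detail than the paper's two-sentence proof does — in particular, noting that ($Z_1$) and ($Z_2$) come for free from Proposition \ref{prop-thp-z1+z2}, and that the M\"obius map between the boundary patterns must be promoted via Poincar\'e extension to an isometry carrying the bounding half-spaces of $P_1$ to those of $P_2$.
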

    \begin{rem}
    For infinite non-parabolic THP, if we do not require the accumulation points to form a circle, then \emph{rigidity phenomena do not exist, which is completely different from the case of finite polyhedra} (compare the rigidity results for finite polyhedra summarized in Section \ref{section-character-finite-hp}). Actually, using Theorem \ref{uniformization}, it is easy to construct two infinite THP $P_1$ and $P_2$ that have the same combinatorial structure and corresponding dihedral angles. Moreover, $P_1$ is a hyperbolic THP, but $P_2$ is not (the accumulation points of $P_2$ form an ellipse). Therefore, $P_1$ and $P_2$ are not isometric in $\HH^3$. This shows the non-rigidity of infinite THP. For more details, see Section \ref{section-correspond-cp-hcp}. 
    \end{rem}

	
	Theorems \ref{uniformization} and \ref{thm-rigidity-IP} provide detailed and satisfactory characterizations of infinite THP, and extend the series of pioneering works by Andreev \cite{An70A}, Rivin-Hodgson \cite{RH93}, Bao-Banahon \cite{Bao}, Huang-Liu \cite{HL17}, Zhou \cite{Zhou23} and others to the infinite case. Moreover, the infinite THP considered in this article allows for both compact and noncompact faces (see Section \ref{polyhedra_section}), which characterizes infinite hyperbolic polyhedra in a very general context. The characterization of convex polyhedra is very ancient, in fact, the Platonic solids (also called regular polyhedra) were known to the ancient Greeks. In modern times, there have been even more profound descriptions and understandings. Readers may refer to the following references for some relevant research progress: Alexandrov \cite{Alex05,Alex42}, Andreev \cite{An70A,An70B}, Bobenko–Izmestiev \cite{BI06}, Bobenko–Pinkall–Springborn \cite{BPS2015}, Bobenko–Springborn \cite{BS04}, Bowers–Bowers–Pratt \cite{BBP}, Chen-Schlenker \cite{Chen-Schlenker}, Gu\'eritaud \cite{Gueritaud}, Hodgson \cite{Hodgson}, Liu–Zhou \cite{Liu-Zhou}, Marden–Rodin \cite{MR90}, Rivin \cite{Ri03,Ri04}, Roeder–Hubbard–Dunbar \cite{RHD07}, Rousset \cite{Rou04}, Schlenker \cite{Sch00,Sch05}, Schramm \cite{schramm2}, Springborn \cite{Springborn1,Springborn2} (we apologize that some important relevant literature may have been overlooked, and we sincerely look forward to feedback from peer experts so that we can make changes and improvements in subsequent versions). Previous works involve most polyhedra with finite faces. Very few results of existence and rigidity for infinite polyhedra are known, and Rivin has done some pioneering work \cite{Ri03}. In this sense, it can be said that our work systematically characterizes a class of infinite hyperbolic polyhedra for the first time. Our work is inspired by He \cite{He}, Huang-Liu \cite{HL17}, Rivin \cite{Ri94,Ri96}, Rivin-Hodgson \cite{RH93}, and Zhou \cite{Zhou23}. We express our gratitude for these groundbreaking works.
	
	In addition to its early applications in fields such as 3-dimensional geometric topology, circle packing has in recent years developed an increasing number of deep connections with discrete geometry and combinatorics \cite{BHS06, Lam20}, complex analysis \cite{He-Liu-2013, He1, Lam21, Rod87,Rodin-Sullivan, st}, and other fields \cite{Aga03, BH03, BW25, KMT03}. It is particularly worth mentioning that since Schramm pioneered the SLE theory (see \cite{Roh11} for example), people have found increasingly profound connections between circle packing and fields such as probability theory and mathematical physics. Discoveries by several great mathematicians have revealed more and more profound phenomena and their growing importance. In fact, it is believed that under various notions of discrete conformal embeddings, uniform random triangulation converges to $\sqrt{8/3}$-LQG, where the field is given by Liouville CFT. The only proved case is Cardy-Smirnov's embedding, see \cite{HS2023}. By contrast, circle packing is also a classical model of discrete conformal embedding. Whether its scaling limit converges to some $\gamma$-LQG for a certain parameter $\gamma$ is still unknown. We refer to references such as \cite{AddAlb2017, ABGN16, AHNG16}, \cite{BLPS2001}-\cite{BHS2023}, \cite{CLR2023}, \cite{Curien2023}-\cite{DS2011}, \cite{Geo16,GN13,Gwy2020}, \cite{IM23,KLRR22}, \cite{LSW2001}-\cite{Le2014}, \cite{Nachmias2020}, \cite{Smirnov20}.

	
	The organization of the paper is as follows. In Section \ref{Preliminaries}, we will give some notations and conventions about circle patterns. After that, we will give a comprehensive discussion of their geometric properties, especially the three-circle configurations, the intersection properties under various conditions, and the relations between conditions ($Z_1$)-($Z_4$). In Section \ref{section:existence}, we will establish Theorem \ref{infinite_existence}, i.e. the existence of infinite circle patterns with the help of a ``Ring Lemma" for circle patterns with (possibly) obtuse intersection angles. In Section \ref{sec:uniform}, we will go through the concept of vertex extremal length and prove the uniformization theorems for RCP, which ensure the equivalence of ($U_1$) and ($U_2$) in Theorem \ref{uniformization}. In Section \ref{sec:rigid}, we will prove Theorem \ref{thm-intro-rigidity-cp}, i.e. the rigidity of RCPs. Finally, we will discuss the relationship between infinite RCPs and infinite THP in Section \ref{polyhedra_section}, derive Theorem \ref{thm-exist-IP} and finish the proof of Theorem \ref{uniformization} and Theorem \ref{thm-rigidity-IP}.\\
	
	\textbf{Acknowledgments.} 
	The first author expresses his sincere thanks to Professor Jinsong Liu and Wenfeng Jiang for useful discussions in the early stages of this work. The second author thanks Chuwen Wang for his figure support. The first and third authors thank Professor Bobo Hua and Ze Zhou for their interest in this work and some inspiring discussions. The first author is supported by NSFC, no.12341102, no.12122119, and no.12525103. The third author was supported by NSFC, no.12531001. The fourth author was supported in part by the European Union (ERC, GeoFEM, 101164551). Views and opinions expressed are however those of the authors only and do not necessarily reflect those of the European Union or the European Research Council. Neither the European Union nor the granting authority can be held responsible for them.

	\section{Preliminaries}\label{Preliminaries}
	\subsection{Circle patterns and regular circle patterns}   
	Let $V$ be a discrete subset of $\mathbb{C}$, where each element $v\in V$ is the center of some closed disk $D_v$ in $\mathbb{C}$. A disk pattern embedded in $\mathbb{C}$ (with the standard Euclidean metric) is a collection of disks \(\{D_v\}_{v \in V}\) in $\mathbb{C}$ where each disk intersects only a finite number of disks. Let $C_v=\partial D_v$ be the boundary circle of $D_v$, then \(\{C_v\}_{v \in V}\) is called a circle pattern. For convenience, both $\{D_v\}$ and $\{C_v\}$ are referred to as a \textbf{circle pattern} \(\mathcal{P}\). 
	The \textbf{contact graph} $G=G(\mathcal{P})$ 
	of \(\mathcal{P}\) is a graph whose vertices correspond to the centers of the circles, and an edge appears in $G$ exactly when the corresponding circles intersect each other. The \textbf{intersection angle} (or \textbf{dihedral angle}) between two intersecting circles
    $C_i$ and $C_j$ is defined as the angle $\Theta_{ij}\in[0,\pi)$ formed by their oriented
    tangent lines at a point of $C_i\cap C_j$. Let $E$ be the set of edges in the graph $G$. For any $[v_i, v_j]\in E$, the dihedral angle of the circles $C_i$ and $C_j$ is often denoted by \(\Theta([v_i, v_j])\) or \(\Theta_{ij}\), see Figure \ref{two-disk-reduce-config}. A point $z\in\hat{\mathbb{C}}$ is called an accumulation point of $\pac$ if every neighborhood of $z$ intersects infinitely many disks of $\pac$. Let $\Omega \subset \mathbb{C}$ be an open domain; then $\pac$ is called \textbf{locally finite} (abbr. \textbf{l.f.}) \textbf{in $\Omega$} if each disk in $\pac$ is contained in $\Omega$ and $\pac$ has no accumulation point in $\Omega$, or equivalently, every compact subset of $\Omega$ intersects only finitely many disks of $\pac$. 
    	\label{section:cp-notation}
        \begin{figure}[h]
		\centering
		\includegraphics[width=0.42\textwidth]{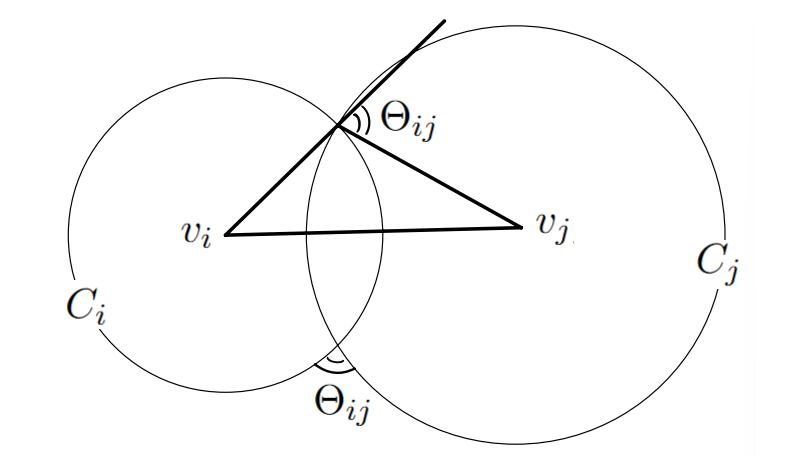}
		\caption{two circle configuration}
		\label{two-disk-reduce-config}
	      \end{figure}
    
    Consider four disks $D_1$, $D_2$, $D_3$, and $D_4$ in $\pac$, assuming that each pair of them intersects and that their centers $v_1$, $v_2$, $v_3$, and $v_4$ form a convex planar quadrilateral. In this case, the two diagonals $v_1v_3$ and $v_2v_4$ must intersect, and each diagonal is called \textbf{reducible}, see Figure \ref{reduce-edge}. The presence of reducible edges will result in the graph $G$ being immersed in $\mathbb{C}$ but not embedded. However, the reduced graph of $\pac$, consisting of the same vertex set as $G$ and the irreducible edges, is embedded in $\mathbb{C}$. Any circle pattern $\mathcal{P}$ produces an angled graph $(G,\Theta)$, which is a graph $G$ with an intersection angle function $\Theta:E\rightarrow[0,\pi)^E$. Conversely, given an abstract angled graph $(G,\Theta)$, it is called \textbf{realized} by a circle pattern $\pac$ if $G$ is the contact graph of $\pac$ and $\Theta$ is the intersection angle of $\pac$, and it is called \textbf{weakly realized} by $\pac$ if it can be extended to an angled graph $(\tilde{G},\tilde{\Theta})$ that is realized by $\pac$ and has the same vertex set as $G$. In order to characterize the geometry and combinatorics of CPs, a natural question arises:
            \begin{figure}[h]
		\centering
		\includegraphics[width=0.45\textwidth]{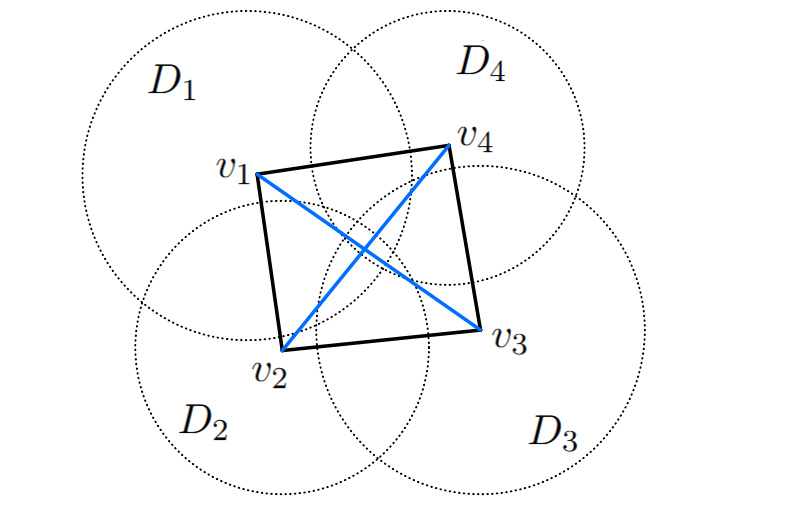}
		\caption{two reducible edges}
		\label{reduce-edge}
	      \end{figure}
	
	\begin{question}
		\label{problem-realize}
		Given an abstract angled graph $(G,\Theta)$ with angle $0\leq\Theta<\pi$, can it be realized (or weakly realized) by some circle pattern $\pac$? And if it does, is the circle pattern unique? 
	\end{question}
    
	Before addressing Problem \ref{problem-realize}, we first clarify the difference between the definition of ``$\pac$ realizes $(G,\Theta)$" here and the original definition of He in \cite{He}, where the angled graph $(G,\Theta)$ being considered satisfies $0\leq \Theta\leq \pi/2$. 
	In \cite{He}, whenever a simple loop $v_1$, $v_2$, $v_3$, $v_4$ in $G$ has the property that $\Theta_{12}=\Theta_{23}=\Theta_{34}=\Theta_{41}=\pi/2$ and $\Theta_{24}=0$, as shown in Figure \ref{subgraphgivingextra}, then add the other reducible edge $v_1v_3$ (if it is not in $G$) to the graph $G$. Denote $\tilde{G}$ by the graph thus obtained, and define $\tilde{\Theta}$ by letting $\tilde{\Theta}(e)=\Theta(e)$ if $e$ is an edge in $G$, and $\tilde{\Theta}(e)=0$ if $e$ is in $\tilde{G}-G$. A disk pattern $\pac$ is said to realize the data $(G,\Theta)$ if its contact graph is combinatorial isomorphic to $\tilde{G}$ and the corresponding dihedral angle function is equal to $\tilde{\Theta}$. Readers who are not familiar with the circle pattern can refer to Section \ref{section-thurston-cp} before reading the following Lemma \ref{lem-explain-he} and Example \ref{ex-four-circle}.
    
    	\begin{figure}[h]
		\centering
		\includegraphics[width=0.9\textwidth]{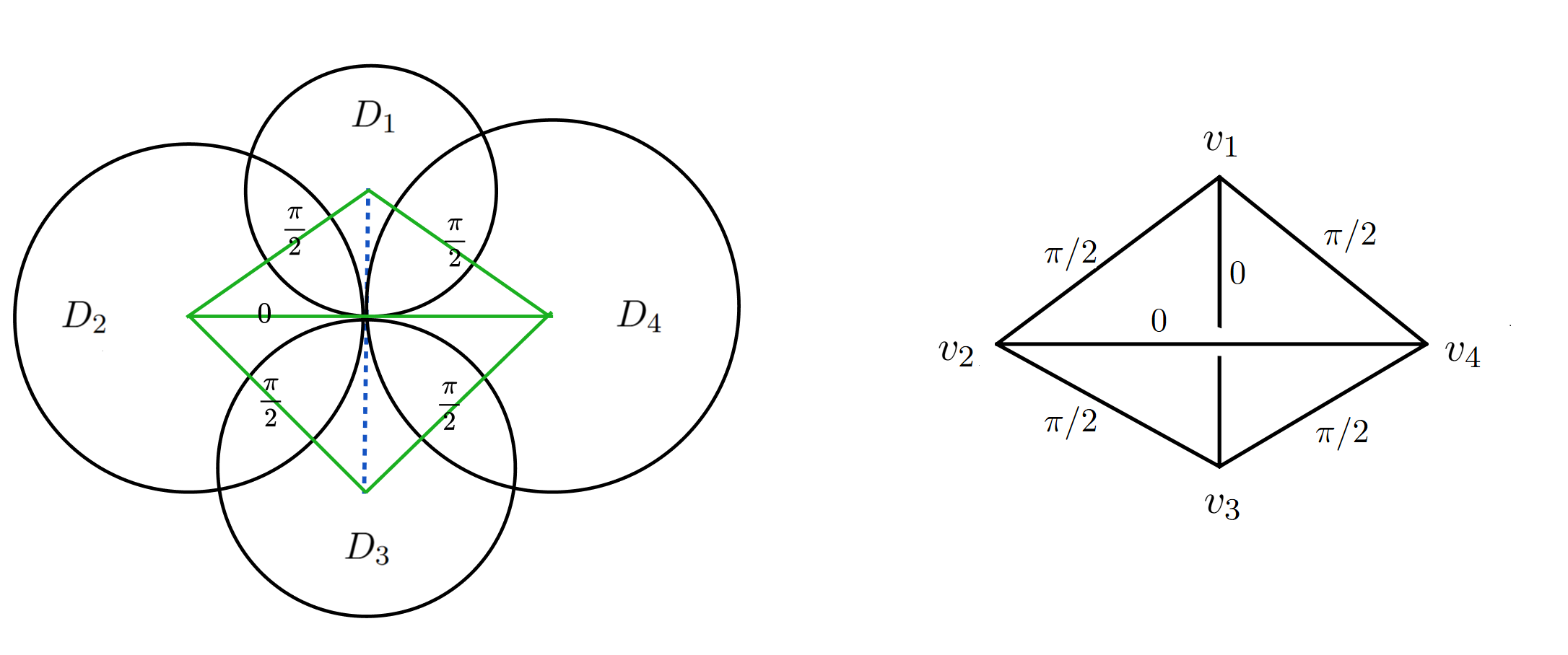}
		\caption{He's reducible configuration (Bowers-Stephenson's extraneous tangencies)}
		\label{subgraphgivingextra}
	\end{figure}

    \begin{lem}\label{lem-explain-he}
    Given a convex quadrilateral $v_1v_2v_3v_4$ in the Euclidean plane, which is composed of two triangles $\Delta v_1v_2v_4$ and $\Delta v_3v_2v_4$ sharing the common edge $v_2v_4$, as shown in Figure \ref{He-reduce-config}. The side lengths of each edge $ij\in\{12, 23, 34, 41, 24\}$ are known to be $$l_{ij}=\sqrt{r_i^2+r_j^2+2r_ir_j\cos\Theta_{ij}},$$ 
    where $\Theta_{ij}\in[0,\pi/2]$ are given. Then, for all $r_1$, $r_2$, $r_3$, $r_4>0$, we have $l_{13}\geq r_1+r_3$. Moreover, the equality holds if and only if $\Theta_{12}=\Theta_{23}=\Theta_{34}=\Theta_{41}=\pi/2$ and $\Theta_{24}=0$.
    \end{lem}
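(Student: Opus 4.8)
The plan is to recast the conclusion geometrically: since $l_{13}^2=r_1^2+r_3^2+2r_1r_3\cos\Theta_{13}$, the bound $l_{13}\ge r_1+r_3$ says exactly that the two ``diagonal'' disks $D_1$ and $D_3$ are disjoint or externally tangent. First I would normalize the picture by placing the centers $v_2,v_4$ on the real axis and choosing the origin so that the radical axis of $C_2$ and $C_4$ is the imaginary axis. Then $v_2=(-a,0)$, $v_4=(b,0)$ with $a+b=l_{24}$ and $a^2-b^2=r_2^2-r_4^2$, which gives the closed form $a=r_2(r_2+r_4\cos\Theta_{24})/l_{24}$; by convexity $v_1$ and $v_3$ lie on opposite sides of the real axis, say $y_1>0>y_3$.

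Next I would solve $|v_1-v_2|^2=l_{12}^2$ and $|v_1-v_4|^2=l_{14}^2$ for $v_1=(x_1,y_1)$. Subtracting the two equations and using the radical-axis relation $a^2-b^2=r_2^2-r_4^2$ collapses all cross terms and yields the clean formula $x_1=r_1(r_2\cos\Theta_{12}-r_4\cos\Theta_{14})/l_{24}$ together with $y_1=\sqrt{l_{12}^2-(x_1+a)^2}$, and symmetrically $x_3=r_3(r_2\cos\Theta_{23}-r_4\cos\Theta_{34})/l_{24}$, $|y_3|=\sqrt{l_{23}^2-(x_3+a)^2}$. Writing $p=x_1+a$, $q=x_3+a$ and using $p^2+y_1^2=l_{12}^2$, $q^2+|y_3|^2=l_{23}^2$, the target $l_{13}^2=(p-q)^2+(y_1+|y_3|)^2\ge (r_1+r_3)^2$ reduces to the single inequality $y_1|y_3|\ge pq-K$, where $K:=r_2^2+r_1r_2\cos\Theta_{12}+r_2r_3\cos\Theta_{23}-r_1r_3$. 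If $pq\le K$ this is immediate because the left side is nonnegative; otherwise I would square. Setting $N_1=p\,l_{24}$ and $N_3=q\,l_{24}$ (explicit quadratics in the radii) and clearing denominators, the claim becomes the polynomial inequality
\[
(\star)\colon\quad l_{24}^2\bigl(l_{12}^2l_{23}^2-K^2\bigr)-l_{12}^2N_3^2-l_{23}^2N_1^2+2KN_1N_3\ \ge\ 0 .
\]

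The whole problem now rests on proving $(\star)$ for all $r_i>0$ and all $\cos\Theta_{ij}\in[0,1]$, and this is the step I expect to be the main obstacle. Note first that in the regime where squaring is needed ($pq>K$) one has $K<pq=l_{12}l_{23}\cos\alpha\cos\gamma\le l_{12}l_{23}$, so $l_{12}l_{23}-K>0$; this suggests starting from the exact regrouping $(\star)=(l_{12}l_{23}-K)\bigl(l_{24}^2(l_{12}l_{23}+K)-2N_1N_3\bigr)-(l_{12}N_3-l_{23}N_1)^2$ and showing that the (now positive) first product dominates the square term. The constraints $0\le\cos\Theta_{ij}\le1$, i.e.\ $\Theta_{ij}\in[0,\pi/2]$, must be used at each step — dropping $\cos\Theta\ge0$ makes the statement false (the analogous quantity $y_1\ge r_1$ already fails for large $r_1$) — so the nonnegativity must be organized as a sum of terms each manifestly controlled by the sign conditions; producing such a decomposition is the essential work.

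Finally, for the equality case I would track when every inequality is tight. In the regime $pq>K$ equality in $l_{13}=r_1+r_3$ is equivalent to $(\star)=0$, so the task is to solve $(\star)=0$ on the admissible domain $\cos\Theta_{ij}\in[0,1]$ and show it forces $\cos\Theta_{12}=\cos\Theta_{14}=\cos\Theta_{23}=\cos\Theta_{34}=0$ and $\cos\Theta_{24}=1$, i.e.\ He's configuration. As a consistency check in the other direction, substituting these values gives $l_{12}^2l_{23}^2-K^2=r_2^2(r_1+r_3)^2$ and $N_1=N_3=r_2(r_2+r_4)$, whence $(\star)$ collapses to $0$ identically in the radii; geometrically $v_1=(0,r_1)$ and $v_3=(0,-r_3)$, so $C_1$ and $C_3$ are tangent at the point where $C_2$ and $C_4$ touch, matching the asserted equality picture.
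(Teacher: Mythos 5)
Your coordinate setup and the reduction are correct — I checked that $a=r_2(r_2+r_4\cos\Theta_{24})/l_{24}$, the formula $x_1=r_1(r_2\cos\Theta_{12}-r_4\cos\Theta_{14})/l_{24}$, the equivalence of $l_{13}\ge r_1+r_3$ with $y_1|y_3|\ge pq-K$, the algebraic identity behind your regrouping of $(\star)$, and the computation in the equality configuration (where indeed $v_1=(0,r_1)$, $v_3=(0,-r_3)$ and $(\star)$ vanishes identically) all hold. The paper gives no proof to compare against: it dismisses the lemma as ``quite elementary'' and omits the argument entirely.

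However, as written your proposal is not a proof but a reduction with an acknowledged hole exactly where the content of the lemma lives. The inequality $(\star)\ge 0$ in the regime $pq>K$ is the entire statement in disguise — it is a polynomial inequality in five free parameters (the $r_i$ and the $\cos\Theta_{ij}$, after scaling) — and you explicitly defer it (``this is the step I expect to be the main obstacle,'' ``producing such a decomposition is the essential work''). The regrouping $(\star)=(l_{12}l_{23}-K)\bigl(l_{24}^2(l_{12}l_{23}+K)-2N_1N_3\bigr)-(l_{12}N_3-l_{23}N_1)^2$ is a true identity, and one can verify that $l_{12}l_{23}-K\ge r_1r_3>0$ via Cauchy--Schwarz applied to $l_{12}^2=(r_2+r_1\cos\Theta_{12})^2+r_1^2\sin^2\Theta_{12}$ and the analogous splitting of $l_{23}^2$, but you give no argument that the resulting positive product dominates $(l_{12}N_3-l_{23}N_1)^2$, and it is not obvious that it does; without that, nothing is proven. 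The same issue affects the equality case: you verify that He's configuration gives $l_{13}=r_1+r_3$ (the easy direction), but the converse — that $(\star)=0$ on the admissible domain forces $\cos\Theta_{12}=\cos\Theta_{14}=\cos\Theta_{23}=\cos\Theta_{34}=0$ and $\cos\Theta_{24}=1$ — is again only posed as a task. To close the gap you must either carry out the sum-of-squares/term-by-term decomposition of $(\star)$ using $\cos\Theta_{ij}\in[0,1]$, or replace the brute-force route by a geometric one (e.g.\ a monotonicity argument in each $\cos\Theta_{ij}$ separately, reducing to the extremal corner of the cube, which is closer in spirit to how such ``extraneous tangency'' statements are usually handled).
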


    The proof of Lemma \ref{lem-explain-he} is quite elementary, and is therefore omitted. Lemma \ref{lem-explain-he} explains the unique configuration of the reducible edges of He. To be precise, for the operation of adding reducible edges to $G$ to obtain $\tilde{G}$, the added reducible edges (from a local perspective) can only appear in the configuration where $\Theta_{12}=\Theta_{23}=\Theta_{34}=\Theta_{41}=\pi/2$ and $\Theta_{24}=0$, as shown in Figure \ref{subgraphgivingextra}. 
    However, in case $\Theta\in[0,\pi)^E$, the following example shows that there is no such canonical construction to add reducible edges that is independent of the disk radii.
	
	\begin{ex}\label{ex1234}
	\label{ex-four-circle}
    We still consider the quadrilateral contact graph $G$ configured by four disks shown in Figure \ref{He-reduce-config}. Assume that the four disks have radii $r_1=r_3=x$, $r_2=y$, $r_4=z$, and have intersection angles 
    $\Theta_{12} = \Theta_{14} = \Theta_{24}=\Theta_{23}=\Theta_{34}=2\pi/3$. Obviously, the five intersection angles satisfy the inequality (\ref{angle-condition}). By Lemma \ref{sanyuangouxing_yinli} in the next section, the two Euclidean triangles $\Delta v_1v_2v_4$ and $\Delta v_3v_2v_4$ are uniquely determined for any $x, y, z >0$. We may calculate the side lengths $l_{ij}=\sqrt{r_i^2+r_j^2+2r_ir_j\cos\Theta_{ij}}$ and compare the relationship between $l_{13}$ and $r_1+r_3$ in the following three cases respectively:

   \medskip
   \emph{Case 1}. $x=y=z=1$, direct calculation shows that $l_{13}=\sqrt{3}<2=r_1+r_3$, which implies $D_1 \cap D_3\neq \emptyset$ and $\Theta_{13}=\pi/3$. This indicates that the edge $v_1v_3$ indeed exists, and therefore should be added to the graph $G$ with an assigned intersecting angle of $\pi/3$.
   
    \medskip
    \emph{Case 2}. $x=y = 1$, $z= 2$, direct calculation shows that $l_{13} = \sqrt{11/3} < r_1 + r_3$, which implies $D_1 \cap D_3 \neq \emptyset$ and $\Theta_{13} = \cos^{-1}(5/6)$. This indicates that the edge $v_1v_3$ also exists, and therefore should be added to the graph $G$ with an assigned intersecting angle of $\cos^{-1}(5/6)$.
    
    \medskip
    \emph{Case 3}. $x= 1$, $y=z=4$, direct calculation shows that $l_{13} = 6 > 2 = r_1 + r_3$, which implies $D_1 \cap D_3 = \emptyset$. This indicates that the edge $v_1v_3$ does not exist in this case, so there is no need to add it to graph $G$.
    \end{ex}
    
    \begin{figure}[h]
		\centering
		\includegraphics[width=0.32\textwidth]{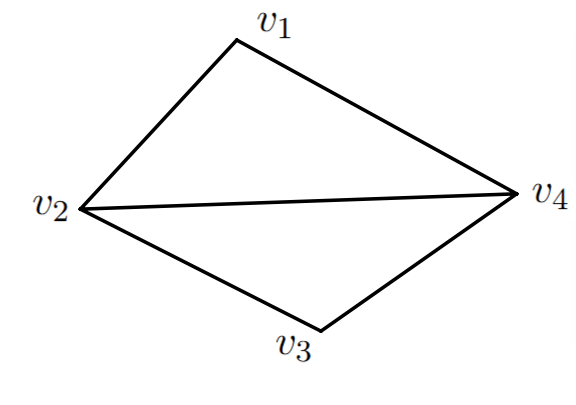}
		\caption{two adjacent triangle configuration}
		\label{He-reduce-config}
	\end{figure}
    
    Example \ref{ex-four-circle} shows that in the case of $\Theta\in[0,\pi)^E$, the intersection and inclusion relationships between disks depend not only on $(G, \Theta)$, but also on the specific radii. Due to Lemma \ref{lem-explain-he}, this phenomenon does not occur in the case $\Theta\in[0,\pi/2]^E$, where such relationships are determined solely by $(G, \Theta)$. This phenomenon makes the obtuse angle case significantly more difficult to handle, rendering it impossible to add reducible edges without considering the disk radii.
	
	Now, come back to Problem \ref{problem-realize}. It is well posed when further assuming $\Theta\leq\pi/2$. In fact, by Koebe-Andreev-Thurston's circle packing theorem, He-Schramm \cite{He1,He2}, and He's work (see \cite[Uniformization Theorem 1.3]{He}  and the explanation in the preceding paragraph), the existence problem has a complete answer, that is, there is a disk pattern that realizes the data $(G,\Theta)$ if and only if the following conditions hold:
	\begin{itemize}
		\item[($C_1$)] If a simple loop in $G$ formed by three edges $e_1$, $e_2$, $e_3$ separates the vertices of $G$ (i.e. there is a pair of vertices in the complement of the subset so that any path joining the vertices passes through the subset), then $\Theta(e_1)+\Theta(e_2)+\Theta(e_3)<\pi$;  
		\item[($C_2$)] If $v_1$, $v_2$, $v_3$, $v_4=v_0$ are distinct vertices in $G$ and if $[v_{i-1},v_i]\in E$ and $\Theta([v_{i-1},v_i])=\pi/2$, $i=1,\cdots,4$, then either $[v_1,v_3]$ or $[v_2,v_4]$ is an edge in $G$.
	\end{itemize}
	Moreover, the issue of uniqueness has also been understood quite clearly by Thurston \cite{Th76} and He-Liu \cite[Theorem 1.3]{He-Liu-2013} when $G$ is finite, and by He \cite[Rigidity Theorem 1.1, 1.2]{He} when $G$ is an infinite disk triangulation graph. It is worth highlighting that according to He-Liu's work \cite[Theorem 1.3]{He-Liu-2013}, the uniqueness is not valid whenever there is an interstice in $\pac$ with more than three sides. Compared to condition $(C_2)$, Example \ref{ex-four-circle} suggests that Problem \ref{problem-realize} is not well-posed in the general case $0\leq\Theta<\pi$. Moreover, there are no sufficient and necessary conditions as $(C_1)$ and $(C_2)$ to resolve the existence issue. Therefore, it is quite natural to assume that $G$ is a disk triangulation graph and to rule out the occurrence of such singular configurations as ``reducible edges'', to reach more accurate conclusions about Problem \ref{problem-realize}. This naturally gives rise to the following concept.

	
	\begin{defn}[RCP]
		\label{RCP_def}
		A circle pattern $\pac=\{C_v\}_{v\in V}$ is called a \textbf{regular circle pattern} (abbr. \textbf{RCP}) if its contact graph $G(\pac)=(V,E)$ is a \textbf{disk triangulation graph}, i.e. $G(\pac)$ is induced by some \textbf{disk triangulation} $\mathcal{T}$, where $\mathcal{T}= (V, E, F)$ is an infinite and locally finite triangulation of the open unit disk $\mathbb U$. Moreover, the geometric realization obtained by joining the centers of adjacent disks gives a planar embedding compatible with the triangulation.
Here $V$, $E$ and $F$ denote the set of vertices, edges, and faces in $\mathcal T$ respectively, and $\mathcal T$ is called locally finite if each vertex has a finite degree. 
	\end{defn}
	
	
	The definition of RCP excludes the kind of singular configuration shown in Figure \ref{reduce-edge}, in which four disks intersect each other in pairs, and the edges connecting their centers $v_1$, $v_2$, $v_3$, $v_4$, $v_1$ sequentially form a convex quadrilateral. Suppose $(G,\Theta)$ is weakly realized by a circle pattern $\pac$. Example \ref{ex-four-circle} shows that if $\Theta<\pi/2$, $\pac$ is always regular; whereas if $\Theta\leq\pi/2$, the only local configuration that makes $\pac$ irregular is precisely He’s reducible example, i.e., the local configuration shown in Figure \ref{subgraphgivingextra} (with $\Theta_{12}=\Theta_{23}=\Theta_{34}=\Theta_{41}=\pi/2$ and $\Theta_{24}=\Theta_{13}=0$, and this local configuration is also called extraneous tangencies by Bowers-Stephenson \cite{BS96}). 
	Furthermore, we have another motivation, which is to characterize the rigidity and existence of infinite hyperbolic polyhedra. According to the correspondence between hyperbolic polyhedra and circle patterns discussed in Section \ref{polyhedra_section}, in a hyperbolic polyhedron, it is impossible for a side to correspond to a ``reducible edge" in the circle pattern. From this perspective, the above definition is also natural, reasonable, and just right. The carrier of a circle pattern $\mathcal P$, denoted by $\operatorname{Carr}(\mathcal P)$,
    is defined as the union of all closed disks in $\mathcal P$ together with all
    closed geometric triangles $\Delta v_i v_j v_k$ corresponding to triangular
    faces of the underlying triangulation.
	If $\pac$ is regular, one can directly verify that there is a unique domain $\Omega \subset \hat{\mathbb{C}}$ such that $\pac$ is locally finite in $\Omega$. Moreover, $\partial \Omega$ coincides with the set of accumulation points of $\pac$ and $\Omega = \carrier(\pac)$.


	\emph{Notational conventions}. We provide some notations and conventions here, although some of them have appeared before. We use $v_i$, $e_{ij}$, and $\Delta v_iv_jv_k$, where $v_i$, $v_j$, $v_k \in V$, to present a special vertex, an edge, and a face, respectively. Most of the time, for convenience, we abbreviate  $v_i$, $e_{ij}$, and $\Delta v_iv_jv_k$ as $i$, $[i,j]$, and $[i,j,k]$ respectively. Sometimes we write $[i,j]\in E$ (or $i\sim j$) if $i$ and $j$ are connected by an edge in $E$, and write $[i,j]\notin E$ if there is no edge in $E$ connecting $i$ and $j$. Note that the notation $[i,j]\notin E$ only appears as a whole, indicating that there is no edge between $i$ and $j$ when written this way. If only $[i,j]$ (or $[i,j]\in E$, $i\sim j$) appears, it indicates that $i$ and $j$ are connected by an edge. In addition, regarding the notations for vertices, edges, and faces (or triangles) mentioned above, we do not distinguish whether they appear in a circle pattern $\pac$, in the contact graph $G(\pac)$ of a circle pattern $\pac$, or in an abstract triangulation graph $G$. One can always discern the specific meaning from the context. 
    A loop in a connected graph $G=(V, E)$ is a closed path in $G$, and we denote by $d\left(v_1, v_2\right)$ the combinatorial distance between $v_1$ and $v_2$, i.e., the length of the shortest path connecting $v_1$ and $v_2$ in $G$. We also denote by $d_{\mathbb{R}^2}(x, y)$ the Euclidean distance between $x$ and $y$ in $\mathbb{R}^2$.

	\subsection{Thurston's construction for CPs}
    \label{section-thurston-cp}
	In this subsection, we recall Thurston's geometric construction for circle patterns \cite[Chap. 13]{Th76}. Let $\mathcal T=(V,E,F)$ be an infinite and locally finite triangulation of the open unit disk $\mathbb{U}$ ($\mathcal{T}$ is also a triangulation of $\mathbb{C}$ since $\mathbb{U}$ and $\mathbb{C}$ are homeomorphic), where the set of vertices, edges, and faces is denoted by $V$, $E$ and $F$ respectively. For the convenience of expression, we write $V$ in the form of $V=\big\{(\cdots,v_i\,,\cdots)\big\}$. By definition, a \textbf{circle pattern metric} $r=(\cdots,r_i\,,\cdots)\in \mathbb R_{+}^{V}$ assigns each vertex $i$ a positive number $r_i$. Given an intersection angle function $\Theta\in[0,\pi)^E$ satisfying suitable conditions (such as ($Z_1$) or ($Z_4$)), each CP-metric $r$ produces a Euclidean (or hyperbolic resp.) cone metric on $\mathbb{C}$ (or $\mathbb{U}$ resp.) as follows.
	
	For each combinatorial triangle $\Delta v_iv_jv_k$ of $\mathcal T$, one associates it with a Euclidean (or hyperbolic resp.) triangle formed by linking the centers of three Euclidean (or hyperbolic resp.) disks of radii $r_{i}, r_{j}, r_{k}$ with intersection angles $\Theta([v_i,v_j]), \Theta([v_j,v_k]),\Theta([v_k,v_i])$ respectively. More precisely, let $l_{ij}$ be the length of the edge $[i,j]$. Then
	\begin{equation}
		l_{ij}\,=\,\sqrt{r_i^2+r_j^2+2r_ir_j\cos\Theta([v_i,v_j])}
	\end{equation}
	in Euclidean background geometry, or
	\begin{equation}
		l_{ij}\ =\ \cosh^{-1}\big(\cosh r_{i}\cosh r_{j}+\sinh r_{i}\sinh r_{j}\cos\Theta([v_i,v_j])\big)
	\end{equation}
	in the hyperbolic background geometry, see Figure \ref{two-disk-reduce-config}. Under the conditions ($Z_1$) or ($Z_4$), for any three positive numbers $r_{i}, r_{j}, r_{k}$, the corresponding $l_{ij}, l_{jk}, l_{ki}$
	satisfy the triangle inequalities, see Lemma \ref{sanyuangouxing_yinli} and Corollary \ref{cor-cos-three-angle-config} below. Thus up to isometry, there exists a unique Euclidean (or hyperbolic resp.) triangle of edge lengths $l_{ij}, l_{jk}, l_{ki}.$
	
	Gluing all these Euclidean (or hyperbolic resp.) triangles along the common edges produces a Euclidean (or hyperbolic resp.) cone metric on $\mathbb{C}$ (or $\mathbb{U}$ resp.) with possible cone singularities at the vertices of $\mathcal T$. For each $i\in V$, the discrete Gauss curvature (also called combinatorial Gauss curvature), is defined as follows:
	\begin{equation}
		K_i = 2\pi - \sum_{[i, j, k] \in F} \vartheta_i^{jk},
	\end{equation}      
	where \( \vartheta_i^{jk} \) is the angle at \( v_i \) between edges \([v_i, v_j]\) and \([v_i, v_k]\) in the triangle \(\Delta v_iv_jv_k\), and the sum is taken over all triangles incident to the vertex $v_i$.
	
	Clearly, the discrete Gauss curvature $K_i$ at the vertex $v_i$ is a smooth function of $r$ (in fact, it is a function of $r_i$ and those $r_j$, where $v_j$ are the vertices connected to $v_i$). This gives rise to a curvature map
	$Th(\cdot):\mathbb{R}_{+}^{V}\to\mathbb{R}^{V}$, $(\cdots,r_i\,,\cdots)\mapsto (\cdots, K_i\,,\cdots)$. 
	
	The major purpose is to show that the origin $(0,0,\cdots)$ belongs to the image of the curvature map $Th$. If there exists a CP-metric $r^\ast$ such that  $K_i(r^\ast)=0$ for $v_i\in V$, then it produces a smooth Euclidean (or hyperbolic resp.) metric on $\mathbb{C}$ (or $\mathbb{U}$ resp.). Drawing the circle centered at $v_i$ of radius $r^{\ast}_i$, one will obtain the desired circle pattern realizing $(\mathcal T,\Theta)$.

	\subsection{Some properties of three-circle configurations}
	\label{section-three-circle-config}
	We first provide some useful lemmas on three-circle configurations (see Figure \ref{tu_sanyuangouxing} for example). We remark that some relevant results have already appeared in the works of Thurston \cite{Th76}, Marden-Rodin \cite{MR90}, Bowers-Stephenson \cite{BS96}, He \cite{He}, Chow-Luo \cite{CL03}, Ge-Hua-Zhou \cite{GHZ21}, Xu \cite{Xu18}, Zhou \cite{Zhou19}-\cite{Zhou23} and others. Suppose that $\Theta_{i}, \Theta_{j}, \Theta_{k} \in[0, \pi )$ are three real numbers. Consider the following four properties:
	\begin{equation}
		\label{sum<pi}
		\Theta_{i}+\Theta_{j}+\Theta_{k} \leq \pi,
	\end{equation}
	\begin{equation}
		\label{angle-condition}
		\left\{
		\begin{aligned}
			\Theta_{i}+\Theta_{j} & < \pi+\Theta_{k}\\
			\Theta_{j}+\Theta_{k} & < \pi+\Theta_{i}\\
			\Theta_{k}+\Theta_{i} & < \pi+\Theta_{j},
		\end{aligned}
		\right.
	\end{equation}
	
	\begin{equation}
		\label{cos-condition}
		\left\{
		\begin{aligned}
			\cos \Theta_i+\cos \Theta_j \cos \Theta_k & \geq 0\\
			\cos \Theta_j+\cos \Theta_k \cos \Theta_i & \geq 0\\
			\cos \Theta_k+\cos \Theta_i \cos \Theta_j & \geq 0,
		\end{aligned}
		\right.
	\end{equation}
	and
	\begin{equation}
		\label{equ-sphere-triangle}
		\Theta_{i}, \Theta_{j}, \Theta_{k}~\text{are the angles of a spherical triangle with each side length} \leq\frac{\pi}{2}.
	\end{equation}	
	We have:
	\begin{lem}(\cite[Proposition 2.7]{Zhou19})\label{C1_C1}
		Given $\Theta_i, \Theta_j, \Theta_k \in[0, \pi)$. Then 
		(\ref{cos-condition}) $\iff$ (\ref{sum<pi}) or (\ref{equ-sphere-triangle}). To be precise, (\ref{cos-condition}) holds if and only if at least one of (\ref{sum<pi}) or (\ref{equ-sphere-triangle}) holds.
	\end{lem}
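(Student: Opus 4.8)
The plan is to prove the two implications separately, using the dual (polar) spherical law of cosines as the bridge between the algebraic inequalities in (\ref{cos-condition}) and the geometric statement (\ref{equ-sphere-triangle}). The key identity is that, whenever $\Theta_i,\Theta_j,\Theta_k\in(0,\pi)$ are the angles of a spherical triangle, the side $a$ opposite $\Theta_i$ satisfies $\cos a=(\cos\Theta_i+\cos\Theta_j\cos\Theta_k)/(\sin\Theta_j\sin\Theta_k)$, so the sign of each numerator appearing in (\ref{cos-condition}) records exactly whether the corresponding side is at most $\pi/2$.

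For the direction ``(\ref{sum<pi}) or (\ref{equ-sphere-triangle}) $\Rightarrow$ (\ref{cos-condition})'' I would split into the two cases. If (\ref{sum<pi}) holds, then $\Theta_i\le\pi-\Theta_j-\Theta_k$, and since $\cos$ is decreasing on $[0,\pi]$ we get $\cos\Theta_i\ge-\cos(\Theta_j+\Theta_k)$; expanding the right side yields $\cos\Theta_i+\cos\Theta_j\cos\Theta_k\ge\sin\Theta_j\sin\Theta_k\ge 0$, and the other two inequalities follow by symmetry. If instead (\ref{equ-sphere-triangle}) holds, the angles are those of a genuine spherical triangle (so all lie in $(0,\pi)$ and the sines are positive), and the bound $a\le\pi/2$ together with the displayed law of cosines forces $\cos\Theta_i+\cos\Theta_j\cos\Theta_k\ge 0$, the remaining inequalities again being symmetric.

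The substantive direction is ``(\ref{cos-condition}) $\Rightarrow$ (\ref{sum<pi}) or (\ref{equ-sphere-triangle})''. The crucial algebraic step is to add the inequalities of (\ref{cos-condition}) in pairs: the first two sum to the factored form $(\cos\Theta_i+\cos\Theta_j)(1+\cos\Theta_k)\ge 0$, and since $1+\cos\Theta_k>0$ this gives $\cos\Theta_i+\cos\Theta_j\ge 0$, i.e. $\Theta_i+\Theta_j\le\pi$; cyclically one obtains all three pairwise bounds $\Theta_i+\Theta_j\le\pi$, $\Theta_j+\Theta_k\le\pi$, $\Theta_k+\Theta_i\le\pi$. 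Now I dichotomize on the total angle. If $\Theta_i+\Theta_j+\Theta_k\le\pi$ we are already in case (\ref{sum<pi}). Otherwise the total exceeds $\pi$; a pairwise bound then forces every angle to be strictly positive (if, say, $\Theta_k=0$, then $\Theta_i+\Theta_j\le\pi$ would bound the total by $\pi$), so all angles lie in $(0,\pi)$, and combining the pairwise bounds with positivity yields the strict inequalities of (\ref{angle-condition}). Together with the excess condition $\Theta_i+\Theta_j+\Theta_k>\pi$, this is precisely the existence criterion for a non-degenerate spherical triangle with these angles, seen most cleanly through the polar dual triangle, whose sides $\pi-\Theta_i,\pi-\Theta_j,\pi-\Theta_k$ satisfy the ordinary triangle inequalities and sum to less than $2\pi$. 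Finally the law of cosines rewrites each inequality of (\ref{cos-condition}) as $\cos(\text{side})\ge 0$, so every side is at most $\pi/2$, establishing (\ref{equ-sphere-triangle}).

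I expect the main obstacle to be the careful treatment of the boundary and degenerate cases in the hard direction: verifying that the inequalities of (\ref{angle-condition}) hold strictly (so the spherical triangle is non-degenerate and its sides lie in $(0,\pi)$) and ruling out vanishing angles when the total angle exceeds $\pi$. The algebra itself—the factorization trick and the law-of-cosines translation—is routine once set up; the delicate point is the clean invocation of spherical-triangle existence, for which the polar dual formulation is the most transparent route.
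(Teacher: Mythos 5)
The paper does not prove this lemma at all: it is quoted verbatim from Zhou (cited as Proposition 2.7 of that reference), so there is no in-paper argument to compare against. Your proof is correct and self-contained. The easy direction is handled properly in both subcases (the expansion $\cos\Theta_i\ge -\cos(\Theta_j+\Theta_k)$ giving a slack of $\sin\Theta_j\sin\Theta_k\ge 0$, and the polar law of cosines $\cos a=(\cos\Theta_i+\cos\Theta_j\cos\Theta_k)/(\sin\Theta_j\sin\Theta_k)$ in the spherical case). In the hard direction, the factorization $(\cos\Theta_i+\cos\Theta_j)(1+\cos\Theta_k)\ge 0$ with $1+\cos\Theta_k>0$ correctly yields the pairwise bounds $\Theta_i+\Theta_j\le\pi$, the dichotomy on the total angle is clean, the exclusion of zero angles when the total exceeds $\pi$ is right, and the polar-triangle existence criterion (sides $\pi-\Theta_i$ in $(0,\pi)$, strict triangle inequalities equivalent to (\ref{angle-condition}), side sum below $2\pi$ equivalent to angle sum above $\pi$) together with the law of cosines does establish (\ref{equ-sphere-triangle}). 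This gives the paper a proof it currently outsources.
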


	From Lemma \ref{C1_C1}, we see that condition ($Z_4$) implies condition ($Z_1$). We note that (\ref{cos-condition}) was first introduced by Zhou in \cite{Zhou19}, which may seem a bit unintuitive. On the other hand, (\ref{sum<pi}) and (\ref{equ-sphere-triangle}) can be seen as a geometric interpretation of property (\ref{cos-condition}). Therefore, under the assumption of $\Theta_{i}+\Theta_{j}+\Theta_{k}>\pi$,  (\ref{cos-condition}) becomes quite intuitive. 
	
	\begin{lem}(Configuration of three-circles, \cite[Lemma 2.1]{JLZ20})\label{sanyuangouxing_yinli}
		Given $\Theta_{i}, \Theta_{j}, \Theta_{k} \in[0, \pi )$, if they satisfy either (\ref{sum<pi}) or (\ref{angle-condition}), then for any $r_{i}, r_{j}, r_{k} \in (0, \infty)$, there exists a configuration of three intersecting circles in both Euclidean and hyperbolic geometries (see Figure \ref{tu_sanyuangouxing}), unique up to isometry, having radii $r_{i}, r_{j}, r_{k}$ and meeting in exterior intersection angles $\Theta_{i}, \Theta_{j}, \Theta_{k}$. 
	\end{lem}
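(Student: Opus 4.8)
The plan is to collapse the whole statement to a single pair of facts: that the configuration is the same datum as a triangle on the three centres, and that the required side lengths always satisfy the strict triangle inequality. A configuration of three pairwise‑intersecting circles of radii $r_i,r_j,r_k$ with exterior intersection angles $\Theta_i,\Theta_j,\Theta_k$ is equivalent to a triangle on the centres $v_i,v_j,v_k$, because in the triangle formed by two centres and a common intersection point the two radii meet at apex angle $\pi-\Theta_{ab}$, and the stated formula is exactly the Euclidean (resp. hyperbolic) law of cosines for that apex. Conversely, given a triangle with these side lengths I draw the circles centred at the vertices with the prescribed radii; the elementary estimate $|r_a-r_b|\le l_{ab}\le r_a+r_b$ (equalities only at $\Theta_{ab}=0$) shows the circles actually meet, and at the prescribed angle. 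Since a triangle is determined up to isometry by its three sides (SSS in the Euclidean case and its hyperbolic counterpart), the configuration is unique up to isometry once it exists, and existence is equivalent to the strict triangle inequalities among $l_{ij},l_{jk},l_{ki}$. So everything reduces to proving those inequalities for all positive radii.

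For the Euclidean background I would prove all three inequalities at once by showing the squared area is positive. Writing $a=l_{jk},b=l_{ik},c=l_{ij}$ and letting $A$ be the Euclidean area, the three inequalities hold iff $16A^2=(a+b+c)(-a+b+c)(a-b+c)(a+b-c)>0$. Substituting $a^2=r_j^2+r_k^2+2r_jr_k\cos\Theta_i$ and its cyclic analogues and expanding, I expect the clean identity
\[
4A^2=\sum_{\mathrm{cyc}} r_j^2r_k^2\sin^2\Theta_i+2\sum_{\mathrm{cyc}} r_i^2 r_j r_k\big(\cos\Theta_i+\cos\Theta_j\cos\Theta_k\big),
\]
in which the three double‑product coefficients are exactly the left‑hand sides of (\ref{cos-condition}). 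Setting $\xi=r_jr_k\sin\Theta_i$, $\eta=r_ir_k\sin\Theta_j$, $\zeta=r_ir_j\sin\Theta_k$, this regroups as a quadratic form $4A^2=Q(\xi,\eta,\zeta)=\xi^2+\eta^2+\zeta^2+2\xi\eta\cos c^{\circ}+2\eta\zeta\cos a^{\circ}+2\zeta\xi\cos b^{\circ}$, whose matrix is the Gram matrix of three unit vectors with pairwise angles equal to the sides $a^{\circ},b^{\circ},c^{\circ}$ of the spherical triangle whose angles are $\Theta_i,\Theta_j,\Theta_k$ (the dual spherical law of cosines gives $\cos a^{\circ}=(\cos\Theta_i+\cos\Theta_j\cos\Theta_k)/(\sin\Theta_j\sin\Theta_k)$ and cyclically).

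Positivity then follows by splitting on the hypothesis. If $\Theta_i+\Theta_j+\Theta_k\le\pi$, then by Lemma \ref{C1_C1} property (\ref{cos-condition}) holds, so every term in the first form of $4A^2$ is nonnegative, and at least one is strictly positive for positive radii (when all angles vanish the cosine terms already give $4r_ir_jr_k(r_i+r_j+r_k)>0$, and otherwise some $\sin^2\Theta_m>0$); hence $4A^2>0$. If instead $\Theta_i+\Theta_j+\Theta_k>\pi$, the hypothesis forces (\ref{angle-condition}), and these two facts are exactly the classical conditions for $\Theta_i,\Theta_j,\Theta_k$ to be the angles of a genuine (positive‑area, hence non‑degenerate) spherical triangle, all of them necessarily positive. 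Its three defining unit vectors are then linearly independent, so $Q$ is positive definite and $4A^2=Q(\xi,\eta,\zeta)>0$ for $(\xi,\eta,\zeta)\neq 0$. Either way the strict triangle inequalities hold.

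For the hyperbolic background the argument is parallel: realizability of a hyperbolic triangle with the prescribed side lengths is governed, in the hyperboloid model, by the sign of the Gram determinant $D=\det\big(\cosh l_{ab}\big)_{a,b}$ of the three centres, where one needs $D>0$ (Lorentzian signature). Substituting $\cosh l_{ab}=\cosh r_a\cosh r_b+\sinh r_a\sinh r_b\cos\Theta_{ab}$ and expanding, I expect an entirely analogous reduction of $D>0$ to positive‑definiteness of the same spherical Gram matrix (only the weights $\xi,\eta,\zeta$ being replaced by their hyperbolic counterparts), so the identical two‑case analysis closes it. I expect the main obstacle to be precisely the regime $\Theta_i+\Theta_j+\Theta_k>\pi$ with (\ref{cos-condition}) failing — for instance the all‑$2\pi/3$ configuration of Example \ref{ex-four-circle} — where the cheap ``sum of nonnegative terms'' argument collapses and one must instead recognize $4A^2$ (resp. $D$) as the positive‑definite Gram form furnished by (\ref{angle-condition}); a secondary, routine nuisance is the uniform bookkeeping across the two geometries and the boundary cases where some $\Theta_m=0$.
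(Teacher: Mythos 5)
The paper itself does not prove this lemma; it is imported verbatim from \cite[Lemma 2.1]{JLZ20}, so there is no in-paper argument to compare you against, and I assess your proposal on its own terms.

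Your Euclidean half is correct and essentially complete. The reduction of existence-and-uniqueness to the strict triangle inequalities for $l_{ij},l_{jk},l_{ki}$ (via the law of cosines and SSS) is sound, the Heron-type identity
\[
4A^2=\sum_{\mathrm{cyc}} r_j^2r_k^2\sin^2\Theta_i+2\sum_{\mathrm{cyc}} r_i^2 r_j r_k\bigl(\cos\Theta_i+\cos\Theta_j\cos\Theta_k\bigr)
\]
is indeed an identity (I checked it on several configurations), and the two-case analysis works: when $\Theta_i+\Theta_j+\Theta_k\le\pi$, Lemma \ref{C1_C1} gives \eqref{cos-condition} and every summand is nonnegative with at least one strictly positive; when the sum exceeds $\pi$, the hypothesis forces \eqref{angle-condition}, all angles are then strictly positive, the spherical triangle with angles $\Theta_i,\Theta_j,\Theta_k$ exists, and the resulting Gram form is positive definite.

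The genuine gap is in the hyperbolic half, and it sits exactly in the regime you yourself flag as the main obstacle. Writing $h=\cosh r$, $s=\sinh r$ and expanding $D=1+2\prod_{\mathrm{cyc}}\cosh l_{jk}-\sum_{\mathrm{cyc}}\cosh^2 l_{jk}$ with $\cosh l_{jk}=h_jh_k+s_js_k\cos\Theta_i$, one gets
\[
D=\sum_{\mathrm{cyc}}s_j^2s_k^2\sin^2\Theta_i+2\sum_{\mathrm{cyc}}h_jh_k\,s_i^2s_js_k\bigl(\cos\Theta_i+\cos\Theta_j\cos\Theta_k\bigr)+2s_i^2s_j^2s_k^2\bigl(1+\cos\Theta_i\cos\Theta_j\cos\Theta_k\bigr).
\]
This is \emph{not} the same quadratic form with the weights $\xi,\eta,\zeta$ merely replaced by hyperbolic counterparts: the cross terms carry extra factors $h_jh_k>1$ that the diagonal terms do not, and there is an additional summand. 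When \eqref{cos-condition} holds this is harmless (every term is nonnegative and the last is strictly positive, since $1+\cos\Theta_i\cos\Theta_j\cos\Theta_k>0$ for $\Theta\in[0,\pi)$), but in the case $\Theta_i+\Theta_j+\Theta_k>\pi$ with \eqref{cos-condition} failing, the negative cross terms are \emph{amplified} relative to the diagonal by the factors $h_jh_k$, so positive definiteness of the unweighted spherical Gram matrix does not yield $D>0$; one must either exploit the extra positive term quantitatively or argue differently (e.g.\ a monotonicity or deformation argument in $r$). As written, "the identical two-case analysis closes it" is an assertion, not a proof, and the claimed mechanism fails in precisely the hard subcase. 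The overall strategy is completable, but the hyperbolic obtuse-angle case still needs a real argument.
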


	\begin{lem}
		Given $\Theta_{i}, \Theta_{j}, \Theta_{k} \in[0, \pi )$, then (\ref{equ-sphere-triangle}) $\Rightarrow$ (\ref{angle-condition}). To be precise, if (\ref{equ-sphere-triangle}) holds, then both $\Theta_i, \Theta_j, \Theta_k >0$, $\Theta_{i}+\Theta_{j}+\Theta_{k}>\pi$ and (\ref{angle-condition}) hold.
	\end{lem}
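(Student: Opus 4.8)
The plan is to pass to the polar (dual) spherical triangle, which turns each of the three desired conclusions into an elementary constraint on the sides of a spherical triangle. Recall that if $T$ is a nondegenerate spherical triangle with interior angles $\Theta_i,\Theta_j,\Theta_k$ and opposite side lengths $a,b,c$ (so $a$ is opposite $\Theta_i$, etc.), then its polar triangle $T^{\ast}$ is again a nondegenerate spherical triangle, with side lengths $\pi-\Theta_i,\pi-\Theta_j,\pi-\Theta_k$ and angles $\pi-a,\pi-b,\pi-c$. All three assertions of the lemma will be read off from the three standard facts that the side lengths of the spherical triangle $T^{\ast}$ must satisfy: each lies strictly in $(0,\pi)$, they obey the strict triangle inequalities, and their sum is strictly less than $2\pi$.

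First I would establish positivity. Since $T^{\ast}$ is a genuine (nondegenerate) spherical triangle, each of its sides lies strictly between $0$ and $\pi$; in particular $0<\pi-\Theta_i<\pi$, which forces $\Theta_i\in(0,\pi)$, and likewise for $\Theta_j,\Theta_k$. This yields $\Theta_i,\Theta_j,\Theta_k>0$. Next, the perimeter of $T^{\ast}$ satisfies $(\pi-\Theta_i)+(\pi-\Theta_j)+(\pi-\Theta_k)<2\pi$, which rearranges immediately to $\Theta_i+\Theta_j+\Theta_k>\pi$; this is, of course, nothing but the positivity of the spherical excess of $T$, so it could also be cited directly from Girard's theorem.

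Finally, for (\ref{angle-condition}) I would invoke the strict triangle inequalities among the sides of $T^{\ast}$: for instance $(\pi-\Theta_i)<(\pi-\Theta_j)+(\pi-\Theta_k)$ simplifies at once to $\Theta_j+\Theta_k<\pi+\Theta_i$, and the two cyclic variants give the remaining inequalities of (\ref{angle-condition}). The one point deserving care — and the place where a loose argument would break — is the \emph{strictness}: the triangle inequalities for a spherical triangle are strict exactly because $T$ (hence $T^{\ast}$) is nondegenerate, equivalently because the $\sin$-factors in the pertinent law of cosines do not vanish. The binding extreme case is instructive: a triangle with an apex angle $\phi$ at the north pole and two equatorial right angles has $\Theta_j+\Theta_k=\pi$, and $\Theta_j+\Theta_k<\pi+\Theta_i$ survives precisely because $\Theta_i=\phi>0$. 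I would also note that the side bound ``$\leq\pi/2$'' built into (\ref{equ-sphere-triangle}) is not needed for any of these three conclusions, which hold for every nondegenerate spherical triangle; that bound enters only in the separate equivalence with (\ref{cos-condition}) of Lemma \ref{C1_C1}, through $\cos a\geq 0$ in the angular law of cosines $\cos\Theta_i=-\cos\Theta_j\cos\Theta_k+\sin\Theta_j\sin\Theta_k\cos a$.
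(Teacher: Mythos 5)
The paper states this lemma without any proof (it is presented as elementary, with the three-circle existence statement then drawn from Lemma \ref{sanyuangouxing_yinli} in Corollary \ref{cor-cos-three-angle-config}), so there is no argument of the authors' to compare against. Your polar-triangle proof is correct and complete: reading off $\Theta_i,\Theta_j,\Theta_k>0$, the excess $\Theta_i+\Theta_j+\Theta_k>\pi$, and (\ref{angle-condition}) from, respectively, the side bounds $(0,\pi)$, the perimeter bound $<2\pi$, and the strict triangle inequalities for the polar triangle with sides $\pi-\Theta_i,\pi-\Theta_j,\pi-\Theta_k$ is the canonical route, and you correctly flag both where strictness comes from (nondegeneracy) and that the hypothesis ``each side $\leq\pi/2$'' in (\ref{equ-sphere-triangle}) is irrelevant here, entering only in Lemma \ref{C1_C1} via $\cos a\geq 0$ in the dual law of cosines. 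Nothing is missing.
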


	\begin{cor}
		\label{cor-cos-three-angle-config}
		Given $\Theta_{i}, \Theta_{j}, \Theta_{k} \in[0, \pi )$, then  (\ref{cos-condition}) $\Rightarrow$ (\ref{sum<pi}) or (\ref{angle-condition}). To be precise, if (\ref{cos-condition}) holds, then at least one of (\ref{sum<pi}) or (\ref{angle-condition}) holds. Therefore, by Lemma \ref{sanyuangouxing_yinli}, when assuming (\ref{cos-condition}), for any $r_{i}, r_{j}, r_{k} \in (0, \infty)$, there exists a configuration of three intersecting circles in both Euclidean and hyperbolic geometries, unique up to isometry, having radii $r_{i}, r_{j}, r_{k}$ and meeting in exterior intersection angles $\Theta_{i}, \Theta_{j}, \Theta_{k}$. 
	\end{cor}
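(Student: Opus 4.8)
The plan is to obtain this corollary purely as a logical consequence of the two lemmas that immediately precede it, since all of the analytic content has already been isolated there. First I would apply Lemma \ref{C1_C1}, which asserts that (\ref{cos-condition}) holds if and only if at least one of (\ref{sum<pi}) or (\ref{equ-sphere-triangle}) holds. Thus, assuming (\ref{cos-condition}), we are immediately placed into one of exactly two cases, and it suffices to handle each separately.

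In the first case (\ref{sum<pi}) holds, and then the first disjunct of the desired conclusion is already satisfied, so there is nothing further to verify. In the second case (\ref{equ-sphere-triangle}) holds; here I would invoke the Lemma immediately above the corollary, which states that (\ref{equ-sphere-triangle}) $\Rightarrow$ (\ref{angle-condition}) (and in fact forces $\Theta_i,\Theta_j,\Theta_k>0$ together with $\Theta_i+\Theta_j+\Theta_k>\pi$). Hence in this case the second disjunct holds. Combining the two cases shows that (\ref{cos-condition}) implies that at least one of (\ref{sum<pi}) or (\ref{angle-condition}) holds, which is the first assertion of the corollary.

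Finally, once this disjunction is established, the geometric conclusion follows by feeding it directly into Lemma \ref{sanyuangouxing_yinli}: that lemma guarantees, whenever (\ref{sum<pi}) or (\ref{angle-condition}) holds, a configuration of three mutually intersecting circles of any prescribed radii $r_i, r_j, r_k \in (0,\infty)$ with exterior intersection angles $\Theta_i,\Theta_j,\Theta_k$, unique up to isometry, in both Euclidean and hyperbolic geometry. Since the two cases produced above are exhaustive and each lands squarely inside the hypotheses of Lemma \ref{sanyuangouxing_yinli}, the existence-and-uniqueness statement transfers verbatim.

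I do not anticipate any genuine obstacle: the result is a formal chaining of previously established implications, namely (\ref{cos-condition}) $\Rightarrow$ [(\ref{sum<pi}) or (\ref{equ-sphere-triangle})] $\Rightarrow$ [(\ref{sum<pi}) or (\ref{angle-condition})]. The only point requiring even minimal care is the bookkeeping of the disjunction, i.e. confirming that the two cases supplied by Lemma \ref{C1_C1} exhaust all possibilities and that each is compatible with the hypotheses demanded by Lemma \ref{sanyuangouxing_yinli}; both checks are immediate.
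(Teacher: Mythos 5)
Your argument is correct and is precisely the intended derivation: the paper states this as a corollary without proof because it follows by chaining Lemma \ref{C1_C1} (which gives (\ref{cos-condition}) $\iff$ (\ref{sum<pi}) or (\ref{equ-sphere-triangle})) with the immediately preceding lemma (which gives (\ref{equ-sphere-triangle}) $\Rightarrow$ (\ref{angle-condition})), and then invoking Lemma \ref{sanyuangouxing_yinli}. Your case bookkeeping matches the paper's implicit reasoning exactly, so there is nothing to add.
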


	\begin{figure}[h]
		\centering
		\includegraphics[width=0.45\textwidth]{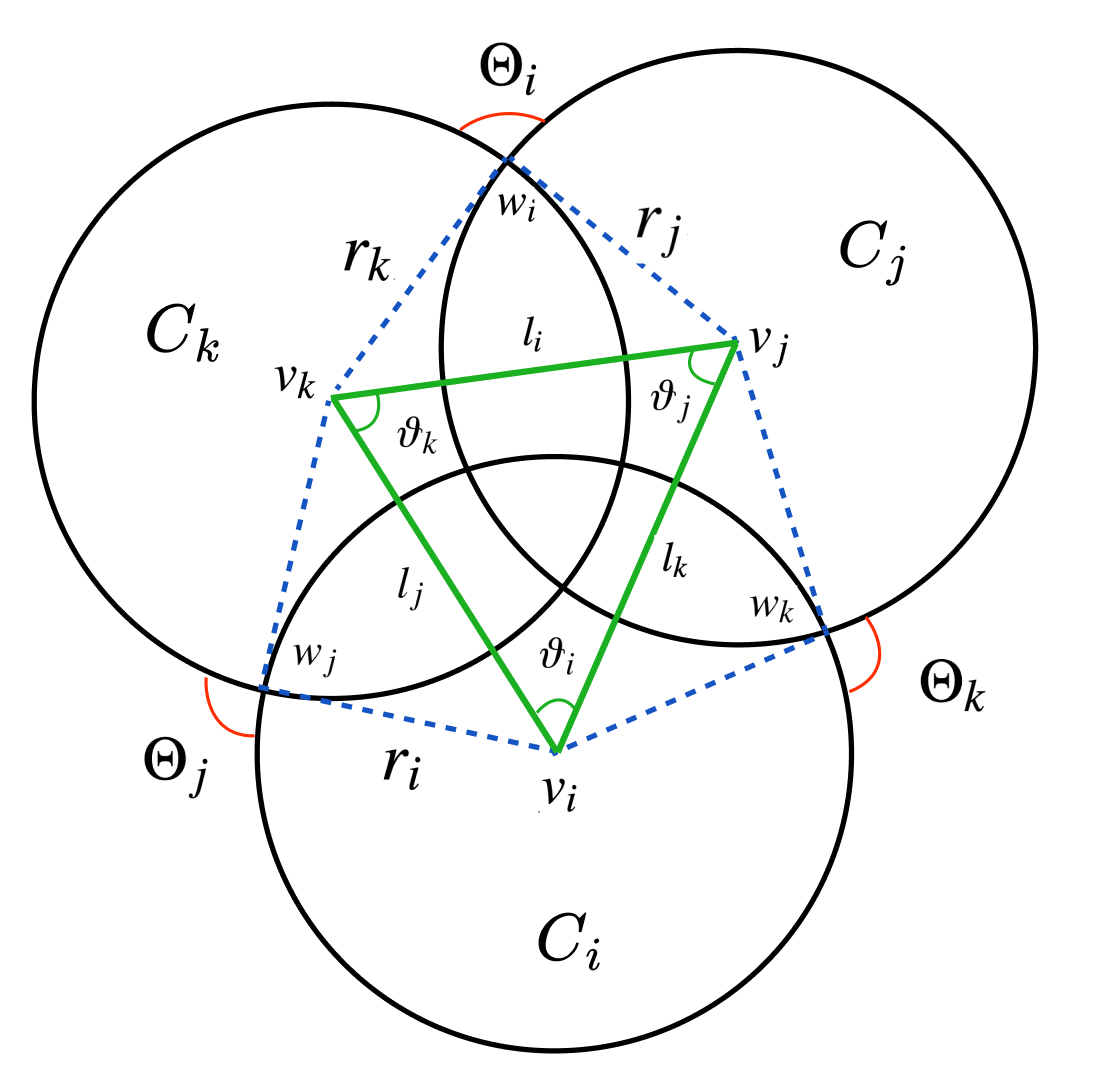}
		\caption{three circle configuration }
		\label{tu_sanyuangouxing}
	\end{figure}
	
	\begin{rem}
		Generally, one cannot derive (\ref{cos-condition}) from $\Theta_{i}+\Theta_{j}+\Theta_{k}>\pi$ and property (\ref{angle-condition}). For example, take one-eighth of $\mathbb{S}^2$ as the initial spherical triangle $\Delta_{ijk}$, and slightly move the third vertex $k$ outward on the sphere so that the three interior angles are $\Theta_i=\Theta_j=\frac{\pi}{2}+\epsilon$, $\Theta_k=\frac{\pi}{2}-\delta_{\epsilon}$ respectively. At this point, one of the three sides of the triangle is $\frac{\pi}{2}$, while the other two are slightly greater than $\frac{\pi}{2}$. It is easy to see $\Theta_{i}+\Theta_{j}+\Theta_{k}>\pi$ and that (\ref{angle-condition}) are true, but $\cos \Theta_i+\cos \Theta_j \cos \Theta_k<0$.
	\end{rem}

	Given three angles $\Theta_i, \Theta_j, \Theta_k \in[0, \pi)$ that satisfy (\ref{cos-condition}). For $r_i$, $r_j$, $r_k>0$, consider the three-circle configuration in Figure \ref{tu_sanyuangouxing}. By Corollary \ref{cor-cos-three-angle-config}, the three-circle configuration exists. Moreover, by \cite{Xu18,Zhou19}, the common intersection point of the radical axes of three mutually intersecting circles lies inside or on the boundary of the triangle $\Delta v_iv_jv_k $.

	\begin{lem}(\cite[Lemma 1.3]{GHZ21})\label{vari}
		Suppose that $\Theta_i, \Theta_j, \Theta_k \in[0, \pi)$ satisfies (\ref{cos-condition}). The three interior angles of the triangle \(\Delta v_iv_jv_k\) are denoted by \( \vartheta_i\), \( \vartheta_j\), and \( \vartheta_k\), and they are all elementary functions of $r_i$, $r_j$, and $r_k$. Then we have:
		\begin{itemize}
			\item[(1)]  in Euclidean geometry,
			$$
			\frac{\partial \vartheta_i}{\partial r_i}<0, \quad r_j \frac{\partial \vartheta_i}{\partial r_j}=r_i \frac{\partial \vartheta_j}{\partial r_i} \geq 0.
			$$
			\item[(2)]  in hyperbolic geometry,
			$$
			\frac{\partial \vartheta_i}{\partial r_i}<0, \quad \sinh r_j \frac{\partial \vartheta_i}{\partial r_j}=\sinh r_i \frac{\partial \vartheta_j}{\partial r_i} \geq 0, \quad \frac{\partial \operatorname{Area}\left(\Delta v_iv_jv_k\right)}{\partial r_i}>0,
			$$
			where Area $\left(\Delta v_iv_jv_k\right)$ denotes the area of $\Delta v_iv_jv_k$.
		\end{itemize}
	\end{lem}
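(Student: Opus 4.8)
The plan is to realize each interior angle as an explicit elementary function of the radii through the law of cosines, and then to extract all three assertions from the geometry of the radical center of the three circles. Under (\ref{cos-condition}) the configuration exists and is unique (Corollary \ref{cor-cos-three-angle-config}), and, as recalled just before the lemma, the common point $O$ of the three radical axes lies in the closed triangle $\Delta v_iv_jv_k$; this last fact is exactly what will force the off-diagonal derivatives to be nonnegative. Throughout I would pass to the \emph{conformal variables} $u_m=\log r_m$ in the Euclidean case and $u_m=\log\tanh(r_m/2)$ in the hyperbolic case, so that $\partial/\partial u_m=r_m\,\partial/\partial r_m$ (resp. $\sinh r_m\,\partial/\partial r_m$). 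In these variables the claimed symmetry $r_j\,\partial_{r_j}\vartheta_i=r_i\,\partial_{r_i}\vartheta_j$ (resp. with $\sinh$) becomes the symmetry $\partial_{u_j}\vartheta_i=\partial_{u_i}\vartheta_j$ of the angle Jacobian, which is the cleanest object to attack.

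For the Euclidean case I would first record two elementary ingredients. Writing $P_{ij}$ for the foot on the edge $[i,j]$ of the radical axis of $C_i,C_j$ and $d_{ij}=|v_iP_{ij}|$ (signed), a direct computation from $l_{ij}^2=r_i^2+r_j^2+2r_ir_j\cos\Theta_{ij}$ gives $\partial_{u_i}l_{ij}=r_i\,\partial_{r_i}l_{ij}=d_{ij}$, so the conformal variation of an edge length is the splitting of that edge by the radical foot. Combining this with the standard law-of-cosines derivatives $\partial\vartheta_i/\partial l_{jk}=l_{jk}/(2\operatorname{Area})$ and $\partial\vartheta_i/\partial l_{ij}=-\,l_{jk}\cos\vartheta_j/(2\operatorname{Area})$ (the second via the projection formula), together with the elementary right-triangle identity $d_{jk}-d_{ji}\cos\vartheta_j=h_{ij}\sin\vartheta_j$, where $h_{ij}=|OP_{ij}|$ is the distance from $O$ to $[i,j]$, the chain rule yields
\[
\partial_{u_j}\vartheta_i=\frac{l_{jk}\sin\vartheta_j}{2\operatorname{Area}}\,h_{ij},\qquad
\partial_{u_i}\vartheta_j=\frac{l_{ik}\sin\vartheta_i}{2\operatorname{Area}}\,h_{ij}.
\]
By the law of sines $l_{jk}\sin\vartheta_j=l_{ik}\sin\vartheta_i$, so the two coincide, giving the symmetry; since $O$ lies in the closed triangle we have $h_{ij}\geq 0$, giving $\partial_{u_j}\vartheta_i\geq 0$. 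Finally, Euclidean angles are scale invariant, so each row sum $\sum_m\partial_{u_m}\vartheta_i$ vanishes; as $O$ cannot lie on two edges simultaneously, at least one off-diagonal entry in the $i$-th row is strictly positive, forcing $\partial_{u_i}\vartheta_i<0$, i.e. $\partial_{r_i}\vartheta_i<0$.

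In the hyperbolic case the same scheme should apply verbatim once Euclidean trigonometry is replaced by its hyperbolic counterpart: the radical center and its feet on the geodesic edges behave identically (again $O$ lies in the closed triangle by (\ref{cos-condition})), and the hyperbolic law of sines $\sinh l_{jk}/\sin\vartheta_i=\sinh l_{ik}/\sin\vartheta_j$ delivers both the symmetry and the nonnegativity of $\partial_{u_j}\vartheta_i$ exactly as above. The diagonal inequality $\partial_{r_i}\vartheta_i<0$ no longer follows from a scaling symmetry—hyperbolic geometry has none—and I would read it off directly from the hyperbolic law-of-cosines derivative as a routine sign check. For the area I would use Gauss–Bonnet, $\operatorname{Area}(\Delta v_iv_jv_k)=\pi-\vartheta_i-\vartheta_j-\vartheta_k$, so that $\partial_{r_i}\operatorname{Area}=-\tfrac{1}{\sinh r_i}\bigl(\partial_{u_i}\vartheta_i+\partial_{u_j}\vartheta_i+\partial_{u_k}\vartheta_i\bigr)$, reducing the claim $\partial_{r_i}\operatorname{Area}>0$ to the strict negativity of the $i$-th row sum of the hyperbolic Jacobian.

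I expect this last point to be the only genuine obstacle. Euclidean scale invariance forced that row sum to be exactly zero, and this is precisely what is missing hyperbolically; one must show that enlarging any single radius strictly enlarges the hyperbolic triangle, and the edge lengths themselves need not even be monotone when the intersection angles are obtuse, so a naive ``the triangle grows'' argument fails. I would establish it by an explicit evaluation of $\partial_{u_i}\vartheta_i+\partial_{u_j}\vartheta_i+\partial_{u_k}\vartheta_i$ from the hyperbolic angle formulas—equivalently, by exhibiting $\partial_{r_i}\operatorname{Area}$ as a manifestly positive expression in the half-edge data—with the ambient curvature $-1$ supplying the strict sign. Everything else is the clean radical-center and law-of-sines bookkeeping above, so the hyperbolic area monotonicity is where the real work lies.
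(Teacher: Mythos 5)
The paper does not actually prove this lemma: it is imported verbatim from Ge--Hua--Zhou (\cite[Lemma 1.3]{GHZ21}) and used as a black box (note the bare \qed after the statement), so there is no internal proof to compare against. Judged on its own, your Euclidean argument is the standard radical-center computation --- indeed your formula $\partial_{u_j}\vartheta_i=\frac{l_{jk}\sin\vartheta_j}{2\operatorname{Area}}h_{ij}=h_{ij}/l_{ij}$ is exactly the identity the paper itself invokes in the proof of Lemma \ref{partialthetabound} --- and the symmetry and the nonnegativity of the off-diagonal entries are correctly derived from the fact that the radical center $O$ lies in the closed triangle. One small hole: to upgrade the vanishing row sum to the \emph{strict} inequality $\partial\vartheta_i/\partial r_i<0$ you must exclude $h_{ij}=h_{ik}=0$, i.e.\ that $O$ coincides with the vertex $v_i$; ``$O$ cannot lie on two edges simultaneously'' is not automatic, since two edges do meet at $v_i$. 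It does follow from (\ref{cos-condition}): equal powers of $v_i$ with respect to the three circles force $\cos\Theta_{ij}=-r_i/r_j$ and $\cos\Theta_{ik}=-r_i/r_k$, and then the remaining inequalities of (\ref{cos-condition}) give $\cos\Theta_{jk}\le-\max\{r_j/r_k,\,r_k/r_j\}\le-1$, which is impossible for $\Theta_{jk}\in[0,\pi)$. You should include this step.

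The genuine gap is the hyperbolic half of the statement. You correctly observe that scale invariance is unavailable, so neither $\partial\vartheta_i/\partial r_i<0$ nor $\partial\operatorname{Area}/\partial r_i>0$ follows from a row-sum identity, and you reduce the area claim (via Gauss--Bonnet and the symmetry of the Jacobian in the $u$-variables) to the strict negativity of $\sum_m\partial_{u_m}\vartheta_i$ --- but you then defer exactly this computation, which you yourself identify as ``where the real work lies.'' As written, the proposal proves part (1) but only outlines part (2): both hyperbolic strict inequalities remain unverified claims. In addition, the assertion that the radical-center bookkeeping transfers ``verbatim'' --- that $\partial_{u_i}\ell_{ij}$ is again the signed distance from $v_i$ to the foot of the hyperbolic radical axis, with $u_i=\log\tanh(r_i/2)$, and that the feet assemble into a point of the closed triangle --- is true but rests on the hyperbolic power-of-a-point identities and is not a literal repetition of the Euclidean computation; it needs at least a reference. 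To close the argument you would either carry out the explicit evaluation of the hyperbolic Jacobian (as is done in \cite{CL03} and \cite{GHZ21}) or simply cite those sources, which is what the paper does.
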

   
	\qed
 
 In particular, set $u_i=\ln r_i$ (or $\ln\tanh{(r_i/2)}$ resp.) in the Euclidean (or hyperbolic resp.) background geometry, then we have
    \begin{align}\label{conformalfactor}
    \pp{\vartheta_i}{u_j}=\pp{\vartheta_j}{u_i},
    \end{align}	
where $u_i$ is called the discrete conformal factor at the vertex $i$ in literature.

	For brevity, write $C(x, r)$ for the circle centered at $x\in\mathbb{C}$ with radius $r$, and denote $C(r)$ by the circle centered at the origin with radius $r$. In addition, denote $D(x, r)$ and $D(r)$ by the closed disk bounded by the circle $C(x, r)$ and $C(r)$ respectively. The next two lemmas reveal more relationships between the positions of the three circles in the three-circle configuration and the aforementioned properties (\ref{sum<pi}) and (\ref{angle-condition}). 
	
	\begin{figure}[h]
		\centering
		\includegraphics[width=0.8\textwidth]{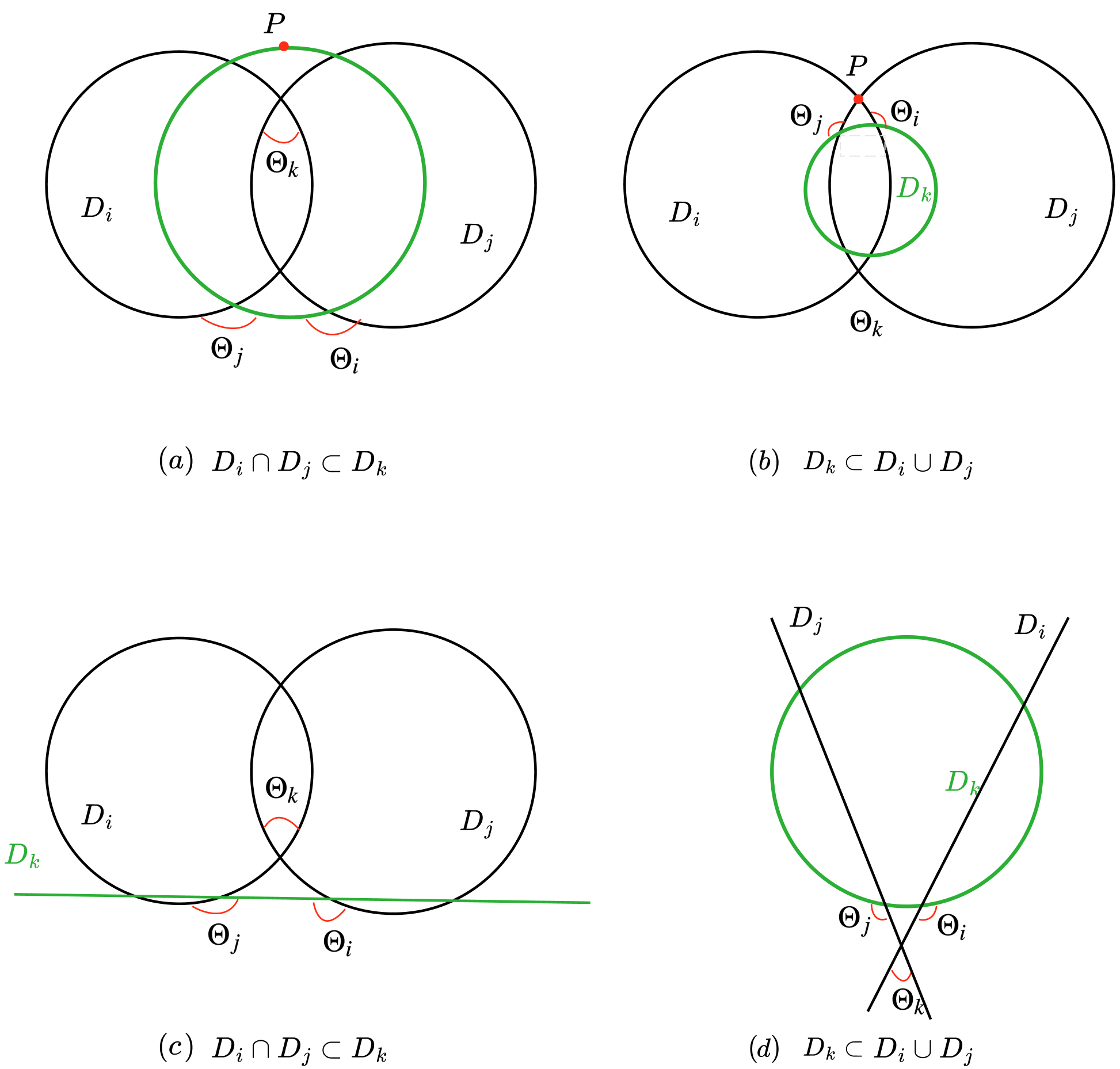}
		\caption{critical three-circle configurations.}
		\label{containlemmafig}
	\end{figure}
	
	\begin{lem}\label{containlemma}
		As shown in Figure \ref{tu_sanyuangouxing}, let $D_i, D_j, D_k$ be three intersecting disks with intersection angles $\Theta_i, \Theta_j, \Theta_k \in [0, \pi)$. Whether it is $D_i \cap D_j \subset D_k$ (see Figure \ref{containlemmafig}-(a)) or $D_k \subset D_i \cup D_j$ (see Figure \ref{containlemmafig}-(b)), the following conclusion holds:
		\[
		\Theta_i + \Theta_j \ge \Theta_k + \pi.
		\]
	\end{lem}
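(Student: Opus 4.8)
The plan is to analyze the two intersection points $P,Q$ of $C_i$ and $C_j$ and to reduce both containment hypotheses to a single cleaner condition about a circle crossing two lines. First I record the basic angle relation: since $|v_iv_j|^2=r_i^2+r_j^2+2r_ir_j\cos\Theta_k$, the law of cosines in the triangle $v_iPv_j$ gives $\angle v_iPv_j=\pi-\Theta_k$ (and likewise at $Q$), so $C_i$ and $C_j$ genuinely cross at exterior angle $\Theta_k$. I keep $C_i,C_j$ (hence $P,Q$ and $\Theta_k$) fixed and view the assertion as a constraint on the third circle $C_k$.

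Next I apply the inversion $\iota$ centered at $P$. It sends $C_i,C_j$ to two straight lines $\ell_i,\ell_j$ meeting at $Q'=\iota(Q)$ at angle $\Theta_k$ (inversions preserve angles), and it sends $C_k$ to a circle $C_k'$ (or, in the degenerate case $P\in C_k$, to a line). A short check shows that both hypotheses place $P$ and $Q$ outside the open disk $D_k$: in case (b) this is because every neighborhood of an intersection point meets the exterior wedge $\hat{\mathbb{C}}\setminus(D_i\cup D_j)$, which cannot be contained in $D_k$; in case (a) the two points lie in the closed lens $D_i\cap D_j\subset D_k$. Translating through $\iota$, both cases reduce to the single statement: the disk bounded by $C_k'$ is disjoint from one of the two wedges of angle $\Theta_k$ at the apex $Q'$, and in particular $Q'$ does not lie in the open disk of $C_k'$. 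Thus it suffices to prove the linear model: if a circle crosses two lines meeting at a point $O$ with wedge angle $\Theta_k$, makes exterior angles $\Theta_j$ with one line and $\Theta_i$ with the other, and has $O$ outside (or on) it, then $\Theta_i+\Theta_j\ge\pi+\Theta_k$.

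For the linear model, let $m$ and $s$ be the center and radius of the circle, let $d_i,d_j$ be the distances from $m$ to the two lines, and set $t=|Om|$. With the correct orientation convention the exterior angles are obtuse, $\cos\Theta_j=-d_i/s$ and $\cos\Theta_i=-d_j/s$, so both lie in $(\pi/2,\pi)$ and their sum lies in $[\pi,2\pi)$, where cosine is increasing; hence $\Theta_i+\Theta_j\ge\pi+\Theta_k$ is equivalent to $\cos(\Theta_i+\Theta_j)\ge-\cos\Theta_k$. Writing $d_i=t\sin\phi_i$, $d_j=t\sin\phi_j$ with $\Theta_k=\phi_i+\phi_j$, the left side becomes $\big(d_id_j-\sqrt{(s^2-d_i^2)(s^2-d_j^2)}\big)/s^2$, and one checks that it equals exactly $-\cos\Theta_k$ when $t=s$ (the boundary case $O\in C_k'$, which corresponds to the three original circles being concurrent at $P,Q$ and, by direct angle-chasing at the common point, to equality $\Theta_i+\Theta_j=\pi+\Theta_k$). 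Viewing this expression as a function of $t\ge s$ with $\phi_i,\phi_j,s$ fixed, the first term increases and the subtracted square root decreases, so it is nondecreasing in $t$; since $t\ge s$ in our situation, the inequality follows.

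The main obstacle is bookkeeping the orientation conventions: I must verify that the exterior intersection angle is the obtuse determination $\arccos(-d/s)$ rather than the acute one, which is exactly what makes the reduction to an increasing branch of cosine valid, and I must confirm that the avoided wedge is the one of angle $\Theta_k$ (not $\pi-\Theta_k$), so that indeed $\Theta_k=\phi_i+\phi_j$. Secondary care is needed for the degenerate subcases where $C_k$ passes through $P$ or $Q$ (so $\iota(C_k)$ is a line) or where a tangency occurs; these are treated as limiting cases of the monotonicity argument. Once these conventions are pinned down, the uniform treatment of (a) and (b) through the inversion, together with the one-variable monotonicity in $t$, completes the proof.
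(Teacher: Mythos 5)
Your high-level plan coincides with the paper's: the published proof consists precisely of applying a M\"obius transformation that sends a point of $C_i\cap C_j$ to infinity, so that $D_i,D_j$ become half-planes, and then reading the inequality off Figure \ref{containlemmafig}-(c),(d). The value of your proposal would therefore lie entirely in making that final ``clear from the figure'' step rigorous, and this is where it has genuine gaps. The first is in the reduction itself, in case (a): if $D_i\cap D_j\subset D_k$ then $P,Q\in D_k$ (generically in its interior), so the inversion $\iota$ centered at $P$ sends $D_k$ to the \emph{complement} of a disk. Your claim that ``both hypotheses place $P$ and $Q$ outside the open disk $D_k$'' is justified in case (a) by noting that $P,Q$ lie in the lens $D_i\cap D_j\subset D_k$ --- which proves the opposite. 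The two cases can still be unified, but only after passing to complements, which replaces the exterior angles $\Theta_i,\Theta_j$ by $\pi-\Theta_i,\pi-\Theta_j$ while fixing $\Theta_k$; none of that bookkeeping appears.

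The second and more serious gap is that the ``linear model'' you prove is weaker than the one you need. The true consequence of either containment hypothesis is that the image disk is disjoint from a specific open wedge of angle $\Theta_k$ at the apex $Q'$. You replace this by ``$Q'$ lies outside the disk'' ($t\ge s$) together with the tacit assumption that the center $m$ lies in the opposite wedge $H_i'\cap H_j'$; that assumption is exactly what makes both exterior angles obtuse (so that $\Theta_i+\Theta_j\in[\pi,2\pi)$ and comparing cosines is legitimate) and what validates the parametrization $d_i=t\sin\phi_i$, $d_j=t\sin\phi_j$ with $\phi_i,\phi_j\ge 0$, $\phi_i+\phi_j=\Theta_k$. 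It can fail: for small $\Theta_k$, the disk of radius $1$ centered at $(-10,\tfrac{1}{2})$ is disjoint from the wedge $\{0<\arg z<\Theta_k\}$ and crosses both bounding lines, yet its center is not in the opposite wedge and its exterior angle with the lower half-plane is $\pi/3$, i.e.\ acute. The lemma's conclusion still holds in that configuration, but your argument does not reach it: one signed distance is negative, so $d_id_j$ is \emph{decreasing} in $t$, and $\cos(\Theta_i+\Theta_j)\ge-\cos\Theta_k$ no longer yields $\Theta_i+\Theta_j\ge\pi+\Theta_k$ without first knowing the sum exceeds $\pi$. A case-free repair is available: the arcs of $C_k'$ lying outside $H_i'$ and outside $H_j'$ have measures $2(\pi-\Theta_j)$ and $2(\pi-\Theta_i)$ and are disjoint because $C_k'$ misses the forbidden wedge; their complement is the part of $C_k'$ inside the wedge $H_i'\cap H_j'$ of angle $\Theta_k$, whose apex is not interior to the disk, and the two-secant (external angle) theorem shows this part has measure at least $2\Theta_k$, whence $2(\pi-\Theta_i)+2(\pi-\Theta_j)\le 2\pi-2\Theta_k$.
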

	
	\begin{proof}
		There exists a M\"obius transformation $M$ sending a point $p$ to infinity, such that the configuration in Figure \ref{containlemmafig}-(a) is transformed into that in Figure \ref{containlemmafig}-(c). It is clear that
		\[
		\Theta_i + \Theta_j \ge \Theta_k + \pi.
		\]
		Similarly, there exists a M\"obius transformation $M'$ sending a point $p$ to infinity, transforming the configuration in Figure \ref{containlemmafig}-(b) into that in Figure \ref{containlemmafig}-(d).  
		Then we obtain
		\[
		\Theta_i + \Theta_j \ge \Theta_k + \pi.
		\]
	\end{proof}
    By the same argument as in the proof of Lemma~\ref{containlemma}, we have
    \begin{cor}\label{containlemmacor}
        As shown in Figure \ref{tu_sanyuangouxing}, let $D_i, D_j, D_k$ be three intersecting disks with intersection angles $\Theta_i, \Theta_j, \Theta_k \in [0, \pi)$. If $C_i \cap C_j \subset D_k$, the following conclusion holds:
		\[
		\Theta_i + \Theta_j \ge \Theta_k + \pi.
		\]
    \end{cor}
	
	\begin{lem}\label{angleleqpilemma}
		As shown in Figure \ref{tu_sanyuangouxing}, let $D_i, D_j, D_k$ be three intersecting disks with intersection angles $\Theta_i, \Theta_j, \Theta_k \in [0, \pi)$. Then $\Theta_i+\Theta_j+\Theta_k<\pi$ if and only if
		$$
		D_i \cap D_j \cap D_k=\emptyset.
		$$
	\end{lem}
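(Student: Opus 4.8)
The plan is to reduce the global statement to the local angular data at the corners of $\Omega:=D_i\cap D_j\cap D_k$ and to treat the two implications separately: the forward one by a Gauss--Bonnet computation, and the reverse one by producing an explicit common point from the radical center, using a M\"obius normalization (as in Lemma \ref{containlemma}) to settle the delicate equality and empty cases.

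First I would record the local fact that drives everything: at each of its two corners the lens $D_i\cap D_j$ has interior angle exactly $\Theta_k$, the exterior intersection angle of $C_i$ and $C_j$. This is a short conformal computation (the angle of the wedge cut out by the two tangent lines equals the intersection angle). Consequently, whenever $\Omega$ has nonempty interior it is a curvilinear triangle: its boundary consists of one arc of each of $C_i,C_j,C_k$, the interior angle at the corner $C_i\cap C_j$ is $\Theta_k$ (and cyclically), and since $\Omega\subset D_m$ each bounding arc is convex as seen from the exterior of $\Omega$. Hence, traversing $\partial\Omega$ counterclockwise, the geodesic curvature of every arc is strictly positive.

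For the implication ``$\sum\Theta<\pi\Rightarrow\Omega=\emptyset$'' I would argue by contraposition. If $\Omega$ has nonempty interior, Gauss--Bonnet in the flat plane ($K\equiv0$) applied to the curvilinear triangle $\Omega$ gives
\[
\int_{\partial\Omega}k_g\,ds+\big[(\pi-\Theta_i)+(\pi-\Theta_j)+(\pi-\Theta_k)\big]=2\pi,
\]
and since the first term is strictly positive we obtain $\Theta_i+\Theta_j+\Theta_k=\pi+\int_{\partial\Omega}k_g\,ds>\pi$. Letting the triangle degenerate to a point ($\int_{\partial\Omega}k_g\,ds\to0$) shows that the borderline $\sum\Theta=\pi$ corresponds exactly to the three circles becoming concurrent, in which case $\Omega$ is the single triple point. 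Thus a nonempty $\Omega$ forces $\sum\Theta\ge\pi$, which is the contrapositive of the desired implication, and simultaneously identifies the equality case.

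For the converse ``$\sum\Theta\ge\pi\Rightarrow\Omega\neq\emptyset$'' (and to rule out lower-dimensional intersections when $\sum\Theta<\pi$) I would use the radical center $h$, which by the fact recalled above (just after Lemma \ref{vari}) lies in the closed triangle $\Delta v_iv_jv_k$ and has a common power $\rho=|h-v_m|^2-r_m^2$ with respect to all three circles. The key claim, which I would verify in the normal form below, is that $\mathrm{sign}(\rho)=\mathrm{sign}(\pi-\sum\Theta)$. Granting it: if $\sum\Theta\ge\pi$ then $\rho\le0$, so $h$ lies in every closed disk and $\Omega\neq\emptyset$; if $\sum\Theta<\pi$ then $\rho>0$, and I would take the common orthogonal circle $O=C(h,\sqrt\rho)$ as the absolute of a Poincar\'e disk, in which $C_i,C_j,C_k$ become geodesics and each $D_m$ becomes the hyperbolic half-plane lying on the far side of $h$; one then checks that the three ``far'' half-planes have empty common intersection, and invariance under inversion in $O$ forces $\Omega=\emptyset$. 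The step I expect to be the main obstacle is exactly this sign claim together with the empty-intersection step for $\sum\Theta<\pi$, since these are statements about the \emph{global} placement of the circles rather than the local corner angles. To prove them cleanly I would fall back on the M\"obius normalization used in Lemma \ref{containlemma}: send one point of $C_i\cap C_j$ to infinity, turning $C_i,C_j$ into two lines meeting at angle $\Theta_k$ and $C_k$ into a circle meeting them at angles $\Theta_i,\Theta_j$; then $\Omega$ becomes the intersection of an angular sector of opening $\Theta_k$ with a disk (or the complement of one), which is analyzable by elementary trigonometry, while the concurrent case degenerates to three lines through a point for which $\sum\Theta=\pi$ is immediate. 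Continuity in the radii $r_i,r_j,r_k$ then propagates the strict inequalities across the whole configuration space given by Lemma \ref{sanyuangouxing_yinli}.
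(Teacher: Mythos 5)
Your plan is a genuinely different route from the paper's: the paper proves ``$\sum\Theta<\pi\Rightarrow D_i\cap D_j\cap D_k=\emptyset$'' by deforming the angles along $\Theta_m(t)=t\Theta_m$ and using the uniqueness statement of Lemma \ref{sanyuangouxing_yinli} to pin down the first parameter at which a triple point appears, and it gets the converse by reading the angle sum off the interstice configuration of Figure \ref{localconfig}; you replace the first by Gauss--Bonnet on $\Omega=D_i\cap D_j\cap D_k$ and the second by a radical-center argument. Both replacements are viable in principle, but each, as written, has a genuine gap.

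First, the assertion that whenever $\Omega$ has nonempty interior it is a curvilinear triangle whose boundary carries exactly one arc of each of $C_i,C_j,C_k$, with corner angles $\Theta_i,\Theta_j,\Theta_k$, is false once obtuse angles are allowed. If $D_i\cap D_j\subset D_k$ --- precisely the configuration of Figure \ref{containlemmafig}-(a), which Lemma \ref{containlemma} shows occurs exactly when $\Theta_i+\Theta_j\ge\pi+\Theta_k$ --- then $\Omega$ is the whole lens $D_i\cap D_j$, its boundary contains no arc of $C_k$, it has two corners each of angle $\Theta_k$, and your Gauss--Bonnet identity degenerates to $\int_{\partial\Omega}k_g\,ds=2\Theta_k$, which says nothing about $\Theta_i+\Theta_j+\Theta_k$. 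The desired conclusion $\sum\Theta\ge\pi$ still holds there, but only by invoking Lemma \ref{containlemma} separately; your argument needs this case split (and a check of the other possible boundary structures, e.g.\ two disjoint arcs of the same circle, which can also occur when one disk covers more than half of another circle).

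Second, the converse direction rests entirely on the sign claim $\mathrm{sign}(\rho)=\mathrm{sign}(\pi-\sum\Theta)$ and, when $\rho>0$, on the three ``far'' half-planes having empty intersection. The latter is equivalent to the radical center lying in the triangle cut out by the three geodesics (i.e.\ in the interstice): three lines in general position realize only seven of the eight sign patterns, and the pattern $\bigcap_m(D_m\cap\mathrm{int}\,O)$ is the unrealized one exactly when $h$ sits in the triangular region. If $h$ lands in one of the other six regions, the three half-planes not containing $h$ do meet. The fact you cite from after Lemma \ref{vari} places $h$ in the triangle of \emph{centers} $\Delta v_iv_jv_k$, is stated in the paper only under hypothesis (\ref{cos-condition}) (which Lemma \ref{angleleqpilemma} does not assume), and is in any case not the statement you need. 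So the crux of your proof is deferred to an unexecuted normalization computation --- a wedge of opening $\Theta_k$ against a disk meeting its sides at angles $\Theta_i,\Theta_j$ --- and that computation, carried out honestly, is essentially the paper's Figure \ref{localconfig} argument and would prove the converse directly, making the radical center superfluous. Until those two points are filled in, the proof is not complete.
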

	\begin{proof}
		We divide the proof into two steps.
		
		\medskip
		\emph{Step 1. If $\Theta_i+\Theta_j+\Theta_k<\pi$, then $
			D_i \cap D_j \cap D_k=\emptyset$.}
		\medskip

		Fix $r_i, r_j, r_k>0$. Define
		\[
		\Theta_i(t)=t\Theta_i, \quad \Theta_j(t)=t\Theta_j, \quad \Theta_k(t)=t\Theta_k,
		\]
		where $t \in [0,t_0]$ and $t_0(\Theta_i+\Theta_j+\Theta_k)=\pi$. Clearly, $t_0>1$.
		
		By Lemma~\ref{sanyuangouxing_yinli}, for each $t \in [0,t_0]$ there exists a unique configuration of three circles with radii $r_i, r_j, r_k$ and intersection angles $\Theta_i(t), \Theta_j(t), \Theta_k(t)$. Since the configuration varies continuously with $t$, and
		\[
		D_i(0)\cap D_j(0)\cap D_k(0)=\emptyset,\]
		\[D_i(t_0)\cap D_j(t_0)\cap D_k(t_0)=\{p\},
		\]
		where $p$ is a single point in $\mathbb C$. For contradiction, suppose that the statement is false. Then there exists a smallest $t_1\le 1$ such that 
		\[
		D_i(t_1)\cap D_j(t_1)\cap D_k(t_1)
		\]
		consists of exactly one point. At this critical parameter, it follows that 
		$t_1(\Theta_i+\Theta_j+\Theta_k)=\pi$, implying $t_1=t_0$. However, this contradicts $t_1\le 1<t_0$. Hence, when $t=1$, we must have 
		$$D_i\cap D_j\cap D_k=\emptyset.$$
		
		\medskip
		\emph{Step 2. If $D_i \cap D_j \cap D_k=\emptyset$, then $\Theta_i+\Theta_j+\Theta_k<\pi$.}
		\medskip
		
		The conclusion follows directly from the geometric configuration illustrated in Figure \ref{localconfig}.
		\begin{figure}[h]
			\centering
			\includegraphics[width=0.3\textwidth]{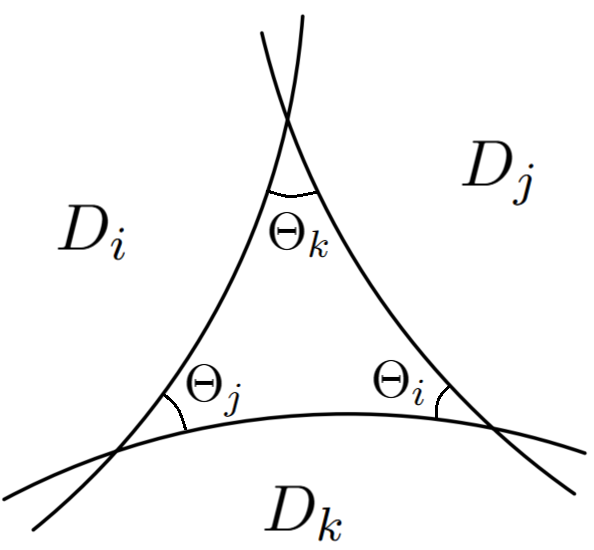}
			\caption{the local configuration of $D_i \cap D_j \cap D_k=\emptyset$.}
			\label{localconfig}
		\end{figure}	
	\end{proof}
	
	Consider the local configuration where three disks $D_i$, $D_j$, and $D_k$ intersect each other in pairs, the open set $\Delta v_iv_jv_k\setminus (D_i\cup D_j\cup D_k)$ (if it is non-empty) is called the \textbf{interstice} bounded by $D_i$, $D_j$, and $D_k$. By Lemmas~\ref{sanyuangouxing_yinli} and \ref{angleleqpilemma}, we have the following corollary.

    \begin{cor}
    \label{cor-three-one}
    Let $D_i, D_j, D_k$ be three disks with intersection angles $\Theta_i, \Theta_j, \Theta_k \in [0, \pi)$. Suppose that  $\Theta_i$, $\Theta_j$, and $\Theta_k$ satisfy at least one of (\ref{sum<pi}) or (\ref{angle-condition}). Then we have:
    		\begin{itemize}
			\item[(1)] if $\Theta_i+\Theta_j+\Theta_k < \pi$, then there is an interstice bounded by $D_i$, $D_j$, and $D_k$; 
			\item[(2)]  if $\Theta_i+\Theta_j+\Theta_k = \pi$, then $D_i \cap D_j \cap D_k = \{p\}$, where $p$ is a point in $\mathbb{C}$; 
			\item[(3)]  if $\Theta_i+\Theta_j+\Theta_k > \pi$, then there is no interstice bounded by $D_i$, $D_j$, and $D_k$. 
		\end{itemize}
    \end{cor}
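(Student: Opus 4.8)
The plan is to route all three statements through the triple intersection $D_i\cap D_j\cap D_k$, whose emptiness is already pinned down by Lemma \ref{angleleqpilemma}, and then to translate facts about that triple intersection into facts about the interstice by using the radical center. Under the standing hypothesis that $\Theta_i,\Theta_j,\Theta_k$ satisfy (\ref{sum<pi}) or (\ref{angle-condition}), Lemma \ref{sanyuangouxing_yinli} provides the unique three-circle configuration, so all objects below are well-defined. Write $\mathrm{pow}_m(z)=|z-v_m|^2-r_m^2$ for the power of $z$ with respect to $C_m$, $m\in\{i,j,k\}$. As recalled in the paragraph following Corollary \ref{cor-cos-three-angle-config} (after \cite{Xu18,Zhou19}), the radical center $h$ lies in the closed triangle $\Delta v_iv_jv_k$, and by definition it equalizes the three powers, so it has a common value $P:=\mathrm{pow}_i(h)=\mathrm{pow}_j(h)=\mathrm{pow}_k(h)$.

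The first step is to show that the sign of $P$ governs the triple intersection. The function $f=\max\{\mathrm{pow}_i,\mathrm{pow}_j,\mathrm{pow}_k\}$ is convex, being a maximum of convex quadratics, and since $h$ lies in the convex hull $\Delta v_iv_jv_k$ of the three centers and makes all three powers active and equal, a standard subdifferential computation ($0\in\mathrm{conv}\{2(h-v_m)\}$ exactly when $h\in\mathrm{conv}\{v_m\}$) shows that $h$ is a global minimizer of $f$ with $\min f=P$. Because $z\in D_i\cap D_j\cap D_k$ if and only if $f(z)\le 0$, I obtain the trichotomy: $P>0$ forces $D_i\cap D_j\cap D_k=\emptyset$, $P=0$ forces $D_i\cap D_j\cap D_k=\{h\}$, and $P<0$ forces nonempty interior. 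Combined with Lemma \ref{angleleqpilemma} this identifies $P>0$ with $\Theta_i+\Theta_j+\Theta_k<\pi$; since $P$ depends continuously on the angles and Lemma \ref{angleleqpilemma} forces $P\le 0$ once the angle sum is $\ge\pi$, continuity gives $P=0$ exactly at $\Theta_i+\Theta_j+\Theta_k=\pi$ and $P<0$ for $\Theta_i+\Theta_j+\Theta_k>\pi$. (Equivalently, the single-point configuration is the one produced at the critical parameter $t_0$ in the proof of Lemma \ref{angleleqpilemma}.) This already yields case (2), namely $D_i\cap D_j\cap D_k=\{h\}$.

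It then remains to pass from the triple intersection to the interstice for cases (1) and (3). For (1), $\Theta_i+\Theta_j+\Theta_k<\pi$ gives $P>0$, so $h$ lies strictly outside all three disks; as $h\in\Delta v_iv_jv_k$, the point $h$ belongs to $\Delta v_iv_jv_k\setminus(D_i\cup D_j\cup D_k)$ and the interstice is nonempty. For (3), $\Theta_i+\Theta_j+\Theta_k>\pi$ gives $P\le 0$, i.e. $h\in D_i\cap D_j\cap D_k$, and I would establish the covering $\Delta v_iv_jv_k\subset D_i\cup D_j\cup D_k$ by cutting the triangle along the three segments $hv_i$, $hv_j$, $hv_k$ into $\Delta hv_iv_j$, $\Delta hv_jv_k$, $\Delta hv_kv_i$ and proving, say, $\Delta hv_iv_j\subset D_i\cup D_j$. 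Here the radical axis of $C_i,C_j$ passes through $h$ and meets the segment $v_iv_j$ at an interior point $m$ with $\mathrm{pow}_i(m)=\mathrm{pow}_j(m)<0$ (this negativity is exactly the condition that $C_i,C_j$ meet in two real points); the cevian $hm$ splits $\Delta hv_iv_j$ into $\Delta hv_im$ and $\Delta hv_jm$, whose vertices all lie in $D_i$ (respectively $D_j$) because $v_i,h,m\in D_i$ and $v_j,h,m\in D_j$, so convexity of the disks gives $\Delta hv_im\subset D_i$ and $\Delta hv_jm\subset D_j$. Summing over the three sub-triangles yields the covering, hence no interstice.

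The main obstacle is this last covering step in case (3): the reductions in the first two paragraphs are essentially formal once the radical center and the convexity of $f$ are in hand, but showing that the three disks genuinely fill the triangle needs the explicit decomposition at $h$ together with the observation that each radical-axis foot $m$ is interior to the two circles it separates. Verifying $\mathrm{pow}_i(m)<0$ and that the cevian decomposition at $h$ is nondegenerate (including the boundary subcase where $h$ lies on an edge of $\Delta v_iv_jv_k$) is the only point requiring real geometric care; everything else follows from Lemmas \ref{sanyuangouxing_yinli} and \ref{angleleqpilemma}.
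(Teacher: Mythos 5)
The paper offers no argument for this corollary beyond the citation of Lemmas \ref{sanyuangouxing_yinli} and \ref{angleleqpilemma}, so your attempt to supply the details via the radical center is a genuine fleshing-out rather than a retracing. Parts (1) and (2) of your argument are sound: there the angle sum is $\leq\pi$, so (\ref{sum<pi}) holds, Lemma \ref{C1_C1} upgrades this to (\ref{cos-condition}), the quoted Xu--Zhou fact places $h$ in the closed triangle, and your convexity/subdifferential computation combined with Lemma \ref{angleleqpilemma} and continuity in the scaling parameter pins down the sign of $P$. (One harmless overclaim: continuity only yields $P\le 0$ for angle sum $>\pi$, not $P<0$; you never need the strict inequality.)

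Case (3) contains two genuine gaps. First, your proof of $P\le 0$ (equivalently $h\in D_i\cap D_j\cap D_k$) passes through ``$h$ is the global minimizer of $f$'', which by your own subdifferential argument requires $h\in\Delta v_iv_jv_k$; the only justification on offer for that is the statement quoted after Corollary \ref{cor-cos-three-angle-config}, which assumes (\ref{cos-condition}). When $\Theta_i+\Theta_j+\Theta_k>\pi$ the corollary's hypothesis only supplies (\ref{angle-condition}), and the paper's remark following Corollary \ref{cor-cos-three-angle-config} exhibits exactly this regime with (\ref{cos-condition}) failing, so $h$ may leave the triangle and $P$ need not equal $\min f$. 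Second, the assertion that the radical axis of $C_i,C_j$ meets the \emph{segment} $v_iv_j$ at an interior point is false in general: the signed distance from $v_i$ to the foot equals $r_i(r_i+r_j\cos\Theta)/l_{ij}$, where $\Theta$ is the intersection angle of $C_i$ and $C_j$, and this is negative once $\Theta$ is obtuse and $r_j$ is sufficiently large compared with $r_i$. Both gaps are repairable without new ideas. Replace $h$ by an arbitrary point $q$ of the triple intersection, which Lemma \ref{angleleqpilemma} guarantees is non-empty when the angle sum exceeds $\pi$; for \emph{any} point $q$ the three triangles $\Delta qv_av_b$ cover $\Delta v_iv_jv_k$ (follow the ray from $q$ through a given point of the triangle to the edge where it exits), so the radical center is not needed at all. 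On an edge where the foot $m$ falls outside $[v_i,v_j]$, the inequality $r_i+r_j\cos\Theta<0$ forces $l_{ij}^2=r_j^2+r_i\bigl((r_i+r_j\cos\Theta)+r_j\cos\Theta\bigr)<r_j^2$, hence $v_i\in D_j$ and the whole sub-triangle $\Delta qv_iv_j$ already lies in the single disk $D_j$. With these two substitutions your covering argument closes case (3).
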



	\subsection{Some lemmas for CPs and RCPs}\label{someusefullemmas}

	For any subset $A\subset V$, denote $S(A)$ by the union of all open simplexes connected to $A$ in $\mathcal{T}$. Specifically, denote $S\left(i\right)=S\left(\left\{i\right\}\right)$ and $N(i)= \{j\in V: j \sim i \}$ for any $i \in V$. 
	\begin{figure}[h]
		\centering
		\includegraphics[width=0.4\textwidth]{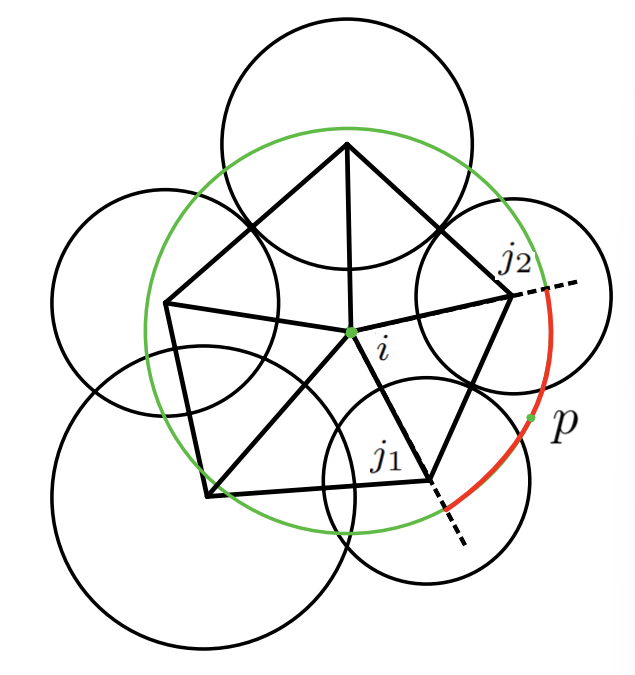}
		\caption{the configuration of $D_i$ and $\big(\bigcup_{j \in N(i)} D_j\big)\cup S(i).$}
		\label{figarea_of_a_point}
	\end{figure}
	\begin{lem}\label{containinFLlemma}
		Let $\mathcal{T}=(V,E,F)$ be a  disk triangulation and let $\Theta\in [0 ,\pi)^E$ be an intersection angle function. Suppose that $\pac = \{D_i\}_{i \in V}$ is a CP that weakly realizes $(\mT, \Theta)$. If further assume that condition ($Z_1$) holds, then
		$$
		D_i \subset \big(\bigcup_{j \in N(i)} D_j\big)\cup S(i).
		$$
	\end{lem}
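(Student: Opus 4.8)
The plan is to prove the equivalent inclusion $D_i\setminus S(i)\subseteq \bigcup_{j\in N(i)}D_j$, reasoning one face at a time. Since $\pac$ genuinely realizes $(\mathcal T,\Theta)$ in the plane and $\mathcal T$ triangulates $\mathbb U$, the Euclidean triangles $\Delta v_iv_jv_k$ attached to the faces $[i,j,k]\in F$ incident to $v_i$ fit around $v_i$ filling the full angle $2\pi$ without overlap (equivalently $K_i=0$). Hence the closed star $\overline{S(i)}=\bigcup_{[i,j,k]\in F}\overline{\Delta v_iv_jv_k}$ is an honest topological disk neighborhood of $v_i$ whose interior is exactly $S(i)$, and the closed angular sectors $\sigma_{jk}$ at $v_i$ cut out by consecutive edges $[i,j]$, $[i,k]$ cover every direction. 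So it suffices to fix a face $[i,j,k]$ and control $D_i\cap\sigma_{jk}$. Let $\ell_{jk}$ be the line through $v_j,v_k$ and $H$ the open half-plane bounded by $\ell_{jk}$ not containing $v_i$, so that $\sigma_{jk}\setminus H=\overline{\Delta v_iv_jv_k}$. The ``near'' part $D_i\cap\sigma_{jk}\cap(\mathbb C\setminus H)\subseteq\overline{\Delta v_iv_jv_k}\subseteq\overline{S(i)}$ causes no trouble: the only points of $\overline{\Delta v_iv_jv_k}$ outside the open star $S(i)$ lie on the link edge $v_jv_k$ (with its endpoints), and since $l_{jk}=\sqrt{r_j^2+r_k^2+2r_jr_k\cos\Theta_{jk}}\le r_j+r_k$, that segment is contained in $D_j\cup D_k$. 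Thus everything reduces to the \emph{beyond-the-edge} region $R:=D_i\cap\sigma_{jk}\cap H$, for which I must show $R\subseteq D_j\cup D_k$.

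The engine for this last step is the power of a point. For a disk $D_m$ write $P_m(x)=|x-v_m|^2-r_m^2$, so that $x\in D_m\iff P_m(x)\le 0$ and the radical axis of $C_m,C_n$ is $\{P_m=P_n\}$. The elementary but crucial observation is that $x\in D_i$ together with $P_j(x)\le P_i(x)$ forces $P_j(x)\le 0$, i.e. $x\in D_j$ (and similarly for $k$). Consequently any point of $R$ escaping both $D_j$ and $D_k$ must satisfy $P_i(x)<P_j(x)$ and $P_i(x)<P_k(x)$, i.e. it lies in the open wedge $W$ on which $C_i$ is \emph{power-dominant} — the wedge bounded by the two radical axes $\{P_i=P_j\}$ and $\{P_i=P_k\}$, with apex at the radical center $p^{\ast}$ and opening toward $v_i$. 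Hence the proof is finished once I show $R\cap W=\emptyset$; equivalently, the exposed arc $C_i\setminus(D_j\cup D_k)=C_i\cap W$, and the region of $D_i$ it bounds, stay on the $v_i$-side of $\ell_{jk}$, inside $\overline{\Delta v_iv_jv_k}$.

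This is precisely where condition $(Z_1)$ enters, through the three-circle analysis of Section~\ref{section-three-circle-config}. Because $\pac$ realizes the data, the three angles of the face satisfy (\ref{sum<pi}) or (\ref{angle-condition}); under $(Z_1)$ the remaining case $\sum\Theta>\pi$ again yields (\ref{angle-condition}), so by the contrapositive of Lemma~\ref{containlemma} no disk of the triple is swallowed by the union of the other two, and the two ``inner'' intersection points $C_i\cap C_j$ and $C_i\cap C_k$ (the vertices of the curvilinear interstice when $\sum\Theta<\pi$, or the common point when $\sum\Theta=\pi$, cf.\ Corollary~\ref{cor-three-one}) lie in the closed triangle. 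The exposed arc of $C_i$ is the arc joining these two inner points on the interstice side, so both it and the region $W\cap D_i$ it cuts off lie in $\overline{\Delta v_iv_jv_k}$, giving $R\cap W=\emptyset$ and hence $R\subseteq D_j\cup D_k$. Summing over the faces incident to $v_i$ completes the argument.

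The main obstacle is the obtuse regime $\Theta>\pi/2$. There the containment relations among the disks are genuinely radius-dependent — the four-disk Example~\ref{ex-four-circle} shows that two disks may or may not even meet depending on the radii — so one cannot locate the inner intersection points, or the radical center $p^{\ast}$, by a purely combinatorial argument; the inequality $\Theta_i+\Theta_j<\pi+\Theta_k$ furnished by $(Z_1)$ (via Lemma~\ref{containlemma}) is exactly what pins the exposed arc to the near side of $\ell_{jk}$ uniformly in the radii. A handful of degenerate subcases also need separate, easy checks: for very obtuse $\Theta_{ij}$ one may have $l_{ij}<r_i$, or even $v_i\in D_j$, so that a vertex or the center of $D_i$ already lies in a neighbor disk — but in each such situation the offending points are trivially covered and the conclusion is unaffected.
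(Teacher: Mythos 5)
Your reduction to a per-face statement is legitimate (the sectors of the faces incident to $v_i$ tile a neighborhood of $v_i$ because the cone angle is $2\pi$, and the link edges are covered since $l_{jk}\le r_j+r_k$), and the power-of-a-point observation is correct and genuinely useful: if $x\in D_i$ and $P_j(x)\le P_i(x)$ then $x\in D_j$, so $D_i\setminus(D_j\cup D_k)$ is contained in the wedge $W=\{P_i<P_j\}\cap\{P_i<P_k\}$. The gap is the final geometric step. Your claim that $W\cap D_i$ (the region bounded by the exposed arc $C_i\cap W$) lies in $\overline{\Delta v_iv_jv_k}$ is false in general: already for three mutually tangent circles ($\Theta\equiv 0$, which satisfies $(Z_1)$ vacuously) the radical axes are the common tangent lines, so $C_i\cap W$ is all of $C_i$ minus the two tangency points and $W\cap D_i$ is all of $D_i$ minus two points — nowhere near contained in the triangle. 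What your argument actually requires is only the weaker statement $W\cap D_i\cap\sigma_{jk}\cap H=\emptyset$, but the justification offered does not establish it. You deduce it from the assertion that the two ``inner'' points of $C_i\cap C_j$ and $C_i\cap C_k$ lie in the closed triangle; (i) that assertion is itself unproved under $(Z_1)$ alone — the radical-center-in-the-triangle fact the paper records (after Corollary~\ref{cor-cos-three-angle-config}) is stated only under condition (\ref{cos-condition}), i.e. essentially $(Z_4)$, and the paper's own remark shows (\ref{angle-condition}) does not imply (\ref{cos-condition}); moreover for $\Theta_{ij}$ close to $\pi$ the intersection points sit at distance $r_ir_j\sin\Theta_{ij}/l_{ij}$ from the line $v_iv_j$, which is radius-dependent and can be large — and (ii) even if granted, knowing that the two endpoints of an arc lie in a convex set controls neither the arc nor the two-dimensional region $W\cap D_i\cap\sigma_{jk}$ that has to be shown not to cross $\ell_{jk}$. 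So the crucial ``beyond-the-edge'' containment is asserted rather than proved, and the condition $(Z_1)$ never actually enters your argument in a load-bearing way.

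For contrast, the paper's proof sidesteps this local analysis. It first proves that the entire circle $\partial D_i$ is covered by $\bigcup_{j\in N(i)}D_j$: if the arc of $C_i$ between two consecutive neighbours $j_1,j_2$ contained an uncovered point, then $D_{j_1}\cap D_{j_2}\subset D_i$, and Lemma~\ref{containlemma} would force $\Theta([i,j_1])+\Theta([i,j_2])\ge\pi+\Theta([j_1,j_2])$, contradicting $(Z_1)$ — this is where $(Z_1)$ does its work. It then covers $\partial S(i)$ by the link-edge estimate and passes from the two boundary curves to the full region $D_i$ by interpolating a family of Jordan curves between $\partial S(i)$ and $\partial D_i$ and a connectedness argument. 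If you want to keep your sector-by-sector route, the honest way to close the gap is to apply the same contradiction (via Lemma~\ref{containlemma}) to a putative point of $D_i\cap\sigma_{jk}\cap H$ outside $D_j\cup D_k$, rather than trying to locate the exposed arc from the positions of the intersection points.
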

	\begin{proof}
		We divide the proof into three steps.
		
		\medskip
		\emph{Step 1. We show $\partial D_{i} \subset \bigcup_{j \in N(i)} D_j$}.
		\medskip
		
		We argue by contradiction. Without loss of generality, denote by \( \langle j_1, j_2 \rangle \) the red circular arc, as shown in Figure~\ref{figarea_of_a_point}. 
		Assume that there exists a point \( p \in \langle j_1, j_2 \rangle \) such that \( p \notin \bigcup_{j \in N(i)} D_j \).
		Clearly,
		\[
		D_{j_1} \cap D_{j_2} \subset D_i.
		\]
		By Lemma~\ref{containlemma}, we have
		\[
		\Theta([i,j_1]) + \Theta([i,j_2]) \geq \pi + \Theta([j_1,j_2]),
		\]
		which contradicts condition~($Z_1$). Hence, $\partial D_{i} \subset \bigcup_{j \in N(i)} D_j$.
		
		\medskip
		\emph{Step 2. We show $\partial S(i) \subset \bigcup_{j \in N(i)} D_j$}.
		\medskip
		
		Since $\Theta \in [0,\pi)^E$, the geodesic segment $[j_1,j_2]$ lies within \( D_{j_1} \cup D_{j_2} \). 
		Therefore, we obtain 
		$$\partial S(i) \subset \bigcup_{j \in N(i)} D_j.$$
		
		\medskip
		\emph{Step 3. We prove $D_i \subset \big(\bigcup_{j \in N(i)} D_j\big)\cup S(i)$}.
		\medskip
		
		To prove this, we construct a family of closed Jordan regions. Express the boundaries \(\partial D_i\) and \(\partial S(i)\) in polar coordinates as \(A(\theta)\) and \(B(\theta)\), respectively. Define a family of regions by
		\[
		J_t = \{(\rho, \theta) \mid \rho \leq tA(\theta) + (1-t)B(\theta)\}.
		\]
		This family satisfies
		\[
		J_0 = \overline{S(i)}, \quad J_1 = D_i, \quad \partial J_t \subset \big(\bigcup_{j \in N(i)} D_j\big)\cup S(i).
		\]
		Define 
		\[
		X = \{t \in [0,1] \mid J_t \subset \big(\bigcup_{j \in N(i)} D_j\big)\cup S(i)\}.
		\]
		Clearly, \(X\) is non-empty, open, and closed in \([0,1]\). Hence \(X = [0,1]\). 
		Taking \(t = 1\), we obtain 
		$$D_i \subset \big(\bigcup_{j \in N(i)} D_j\big)\cup S(i),$$
		which completes the proof.
	\end{proof}
	
	\begin{lem}\label{onepointlemma}
		Let $\mathcal{T}=(V,E,F)$ be a disk triangulation and let $\Theta\in [0 ,\pi)^E$ be an intersection angle which satisfies ($Z_1$). Assume $\pac = \{C_i\}_{i \in V}$ is a CP that weakly realizes $(\mT, \Theta)$. Let $i,j \in V$ be not adjacent in $\mathcal{T}$, then for any point $p \in D_i \cap D_j$, there is a vertex $k\in V \backslash\left\{i, j\right\}$ such that $p \in D_k$.
	\end{lem}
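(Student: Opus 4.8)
The plan is to apply the covering estimate of Lemma~\ref{containinFLlemma} \emph{separately} at the two vertices $i$ and $j$, and then to exploit the purely combinatorial fact that the open stars of two non-adjacent vertices of $\mathcal{T}$ are disjoint. All hypotheses of Lemma~\ref{containinFLlemma} (namely $\Theta\in[0,\pi)^E$, that $\pac$ realizes $(\mathcal{T},\Theta)$, and condition ($Z_1$)) are in force here, so for every vertex $v$ I may freely use the inclusion $D_v\subset\big(\bigcup_{w\in N(v)}D_w\big)\cup S(v)$. Note that, by symmetry in $i$ and $j$, no case analysis on which of $D_i,D_j$ contains the other is needed; I only use $p\in D_i$ and $p\in D_j$ individually.

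First I would apply the inclusion at $v=i$. Since $p\in D_i$, either $p\in D_k$ for some $k\in N(i)$, or $p\in S(i)$. In the first case the neighbor $k$ is already the desired vertex: $[k,i]\in E$ gives $k\neq i$, while $[i,j]\notin E$ gives $j\notin N(i)$ and hence $k\neq j$, so $k\in V\setminus\{i,j\}$ with $p\in D_k$. I would then run the identical argument at $v=j$: since $p\in D_j$, either some $k\in N(j)$ has $p\in D_k$ --- and again $k\neq j$ and, because $i\notin N(j)$, $k\neq i$, which finishes the proof --- or else $p\in S(j)$. Thus the only configuration not yet handled is $p\in S(i)\cap S(j)$.

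The crux is to rule this out. The open simplices of $\mathcal{T}$ partition its carrier (this embedded simplicial structure is exactly what is already used geometrically for $S(\cdot)$ in Lemma~\ref{containinFLlemma}), so $p$ lies in a unique open simplex $\sigma$. Membership $p\in S(i)$ means $\sigma$ is incident to $i$, and $p\in S(j)$ means $\sigma$ is incident to $j$; hence $\sigma$ has both $i$ and $j$ among its vertices. But a simplex of a $2$-dimensional triangulation containing the two distinct vertices $i$ and $j$ must contain the edge $[i,j]$ as a face, forcing $[i,j]\in E$ and contradicting non-adjacency. Therefore $p\in S(i)\cap S(j)$ cannot occur, so one of the two neighbor cases above must hold. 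I expect the only delicate point to be precisely this last step --- the justification that $p\in S(i)\cap S(j)$ implies $[i,j]\in E$ --- which relies on the disjointness of distinct open simplices of the embedded triangulation; the rest is a direct two-way dichotomy obtained by invoking Lemma~\ref{containinFLlemma} twice.
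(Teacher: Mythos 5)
Your proposal is correct and follows essentially the same route as the paper: the paper's proof also applies Lemma~\ref{containinFLlemma} at both $i$ and $j$ (phrased as a contradiction argument, assuming $p\notin D_k$ for all $k\in V\setminus\{i,j\}$ and concluding $p\in S(i)\cap S(j)$) and then invokes the disjointness $S(i)\cap S(j)=\emptyset$ for non-adjacent vertices. Your only addition is to spell out why that disjointness holds via the partition of the carrier into open simplices, a point the paper simply asserts.
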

	\begin{proof}
		If the lemma is not true, then there exists a point \( p \in D_i \cap D_j \), such that \( p \notin {D}_k \) for any \( k \in V \setminus \{i, j\} \). From Lemma \ref{containinFLlemma}, we know $p\in S(i)$ and $p\in S(j)$. Then 
		$$
		p\in S(i)\cap S(j).
		$$
		However, since \( i \) and \( j \) are not adjacent, we have \( S(i) \cap S(j) = \emptyset \), which is a contradiction.
	\end{proof}
	
	\begin{rem}
		It is remarkable that Lemma \ref{angleleqpilemma}, \ref{containinFLlemma}, and \ref{onepointlemma}, as well as part of Lemma \ref{containlemma}, were previously and essentially proved by Zhou \cite{Zhou21} and Ge-Yu \cite{GY24}.
		For the sake of logical fluency and convenience for readers, we include complete proofs here.
	\end{rem}

	\begin{lem}\label{mostthree}
		Let $\mathcal{T}=(V,E,F)$ be a disk triangulation, and let $\Theta\in [0 ,\pi)^E$ be an intersection angle function that satisfies the condition ($Z_2$). Suppose $\pac=\{C_i\}_{i \in V}$ is an RCP that realizes $(\mathcal{T}, \Theta)$. Then for any four vertices $u,v, w,x$, we have $D_u \cap D_v \cap D_w \cap D_x = \emptyset$.
	\end{lem}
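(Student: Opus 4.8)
The plan is to argue by contradiction and to manufacture, from a putative common point, a simple closed curve of contact edges that is \emph{not} the boundary of a face, so that condition ($Z_2$) and Lemma~\ref{angleleqpilemma} collide. Suppose some $p$ lies in four of the disks, say $p\in D_u\cap D_v\cap D_w\cap D_x$ with $u,v,w,x$ distinct. The engine of the argument is the following remark: if three vertices $a,b,c$ are pairwise joined by edges of $\mathcal T$, if the cycle $[a,b],[b,c],[c,a]$ is \emph{not} the boundary of a face, and if $p\in D_a\cap D_b\cap D_c$, then on one hand Lemma~\ref{angleleqpilemma} forces $\Theta([a,b])+\Theta([b,c])+\Theta([c,a])\ge\pi$ (the triple intersection is nonempty), while on the other hand ($Z_2$) applied to this non-facial $3$-cycle ($s=3$) forces the very same sum to be $<\pi$. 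Thus everything reduces to \emph{exhibiting a pairwise-adjacent, non-facial triangle} among the disks through $p$.

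First I would dispatch the generic situation in which all six pairs among $u,v,w,x$ are genuine contact edges, i.e.\ their boundary circles cross transversally. Then $\{u,v,w,x\}$ spans a $K_4$ inside $\mathcal T$. Because $\pac$ is regular, $\mathcal T$ is realized as an embedding of the plane (or of $\mathbb U$) whose edges are the straight segments between centers, with no crossings; but a straight-edge planar drawing of $K_4$ cannot place its four vertices in convex position, since then the two diagonals would cross. Hence one center, say $v_x$, lies inside the triangle spanned by $v_u,v_v,v_w$ and is joined to all three. The $3$-cycle $[u,v],[v,w],[w,u]$ therefore encloses $x$, so it is a simple closed curve that is not a face boundary, and the engine above applied to $a,b,c=u,v,w$ yields the contradiction.

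The delicate case, which I expect to be the main obstacle, is when some pair among the four, say $D_u,D_v$, shares $p$ yet is \emph{not} a contact edge ($[u,v]\notin E$). Since $\Theta<\pi$ excludes internal tangency, such a non-adjacent overlap would force nesting of $D_u$ and $D_v$, which I would rule out using Lemma~\ref{containinFLlemma} (a disk sits inside $\big(\bigcup_{j\in N(u)}D_j\big)\cup S(u)$) together with local finiteness and the embeddedness of $\mathcal T$. With nesting excluded I would feed the configuration into Lemma~\ref{onepointlemma}, which for the non-adjacent pair $u,v$ and the point $p$ supplies a further vertex $k\neq u,v$ with $p\in D_k$; iterating this shielding and returning to the planarity argument should always surface a pairwise-adjacent, non-facial triangle carrying $p$, after which Lemma~\ref{angleleqpilemma} versus ($Z_2$) finishes the proof. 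Here I would also note that ($Z_1$), which Lemmas~\ref{containinFLlemma} and~\ref{onepointlemma} require, holds automatically in our setting because $\pac$ realizes every face of $\mathcal T$ as a nondegenerate triangle, so the face inequalities of ($Z_1$) follow from Lemma~\ref{containlemma}.
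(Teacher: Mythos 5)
Your argument is correct and is essentially the paper's proof: a common point of four disks makes $u,v,w,x$ pairwise adjacent, hence a $K_4$ in the planar triangulation $\mathcal T$, one of whose $3$-cycles encloses the fourth vertex and so cannot bound a face, whereupon ($Z_2$) forces its angle sum below $\pi$ while Lemma~\ref{angleleqpilemma} forces it to be at least $\pi$. The ``delicate case'' you devote a paragraph to never arises and the paper dispenses with it in one line: since $\pac$ \emph{realizes} $(\mathcal T,\Theta)$, the contact graph of $\pac$ is exactly the $1$-skeleton of $\mathcal T$, so any two disks meeting at $p$ are automatically adjacent (cf.\ the check ``$D_u\cap D_v=\emptyset$ for $[u,v]\notin E$'' in the proof of Theorem~\ref{RCPe}); moreover your fallback for that case is the one shaky step, because deducing ($Z_1$) from Lemma~\ref{containlemma} would require the converse of that lemma, not the implication as stated.
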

	
	\begin{proof}
		We prove the lemma by contradiction. Suppose that
        \begin{equation}\label{DuDvDwDxnonemp}
            D_u\cap D_v\cap D_w\cap D_x\neq \emptyset .
        \end{equation}
        Since $\mathcal P$ is an RCP, we know $u,v, w,x$ are pairwise connected in $\mT$. Since  $\mT$ is a planar triangulation, then without loss of generality,  we can assume that the edges $[u,v]$, $[v,w]$ and $[u,w]$ constitute a simple closed curve that is not the boundary of a face. From ($Z_2$), we have $\Theta([u,v])+ \Theta([v,w])+ \Theta([u,w])<\pi$. Then, by Lemma \ref{angleleqpilemma}, we know $D_u \cap D_v \cap D_w = \emptyset$, which contradicts \eqref{DuDvDwDxnonemp}.
	\end{proof}
	The following lemma plays an extremely important role in this article. It shows that when conditions ($Z_1$) and ($Z_2$) are assumed, all disks in an RCP are dispensable.
	\begin{lem}\label{indispensable}
		Let $\mathcal{T}=(V,E,F)$ be a disk triangulation, and let $\Theta\in [0 ,\pi)^E$ be an intersection angle function that satisfies conditions ($Z_1$) and ($Z_2$). Suppose $\pac=\{C_i\}_{i \in V}$ is an RCP that realizes $(\mathcal{T}, \Theta)$, then for any $i \in V$, we have
		$$
		D_i \not \subset \bigcup_{j \neq i} D_j.
		$$
		
	\end{lem}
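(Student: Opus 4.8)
The plan is to argue by contradiction: assume $D_i\subseteq\bigcup_{j\neq i}D_j$ and exhibit a point of $D_i$ contained in no other disk. The first step is to localize. Since ($Z_1$) holds, Lemma~\ref{containinFLlemma} gives $D_i\subseteq\big(\bigcup_{j\in N(i)}D_j\big)\cup S(i)$, so the set
\[
\Omega_i:=\mathrm{int}(D_i)\setminus\bigcup_{j\in N(i)}D_j
\]
of interior points of $D_i$ missed by all neighbouring disks is contained in the open star $S(i)$. It therefore suffices to produce a point of $\Omega_i$ that is in addition missed by every non-neighbouring disk.

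Next I would show $\Omega_i\neq\emptyset$; this is exactly where ($Z_2$) is needed, since otherwise $D_i\subseteq\bigcup_{j\in N(i)}D_j$ and the conclusion already fails. Writing $j_1,\dots,j_n$ for the neighbours of $i$ in cyclic order, the innermost arcs of the circles $C_{j_l}$ inside $D_i$ bound a curvilinear region around $v_i$ that the neighbouring disks leave uncovered, with corners at the interior intersection points $C_{j_l}\cap C_{j_{l+1}}$. If this region were empty, the $n$ circles $C_{j_l}$ would share a common point; for $n\geq 4$ this already puts four disks through a single point, impossible by Lemma~\ref{mostthree}, while for $n=3$ the link $[j_1,j_2,j_3]$ is not a face, so ($Z_2$) forces $\sum_l\Theta([j_l,j_{l+1}])<\pi$ and Lemma~\ref{angleleqpilemma} gives $D_{j_1}\cap D_{j_2}\cap D_{j_3}=\emptyset$. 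Either way a neighbourhood of an uncovered point survives inside $\mathrm{int}(D_i)$, so $\Omega_i\neq\emptyset$.

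Now pick $p\in\Omega_i$. By hypothesis $p\in D_m$ for some $m\neq i$, and $p\notin\bigcup_{j\in N(i)}D_j$ forces $i\not\sim m$. Applying Lemma~\ref{onepointlemma} to the non-adjacent pair $i,m$ produces a third disk $D_k$ with $p\in D_k$ and $k\neq i,m$, and $k$ is again a non-neighbour of $i$; by Lemma~\ref{mostthree} no further disk contains $p$, so $p$ lies in $D_i\cap D_m\cap D_k$ and nothing else. Feeding $p\in D_m$ into Lemma~\ref{containinFLlemma} for the vertex $m$, together with $S(i)\cap S(m)=\emptyset$ (valid since $i\not\sim m$), forces $p\in D_l$ for some $l\in N(m)$; the only option compatible with the triple-point count is $l=k$, whence $m\sim k$. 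Thus every point of $\Omega_i$ lies in the lens of two mutually adjacent disks $D_m,D_k$ whose centres are both non-neighbours of $i$.

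It remains to rule this out, and this is the main obstacle. Passing to an open subregion $U\subseteq\Omega_i$ lying in a single open face $\Delta v_iv_av_b$ and contained in $D_m\cap D_k$ (using local finiteness and connectedness to make $m,k$ locally constant), one obtains a disk $D_m$ with $m\notin\{i,a,b\}$ covering an open piece of the interior of the non-incident face $\Delta v_iv_av_b$ while avoiding $D_a$ and $D_b$. Joining $v_m$ to a point of $U$ by a chord of the convex disk $D_m$, the chord must cross $\partial(\Delta v_iv_av_b)$ at a point $s\in D_m$; since each side of the face is covered by the disks of its endpoints, $s$ also lies in one of $D_i,D_a,D_b$, and chasing these incidences with Lemmas~\ref{onepointlemma} and~\ref{containlemma} yields a point belonging to four mutually intersecting disks, contradicting Lemma~\ref{mostthree}. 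The delicate point throughout is precisely this last step: quantifying, via the planarity of $\mathcal T$ and ($Z_2$), that a far-away disk cannot reach the part of $D_i$ left uncovered by its neighbours without forcing a forbidden quadruple intersection.
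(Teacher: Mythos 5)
Your proposal has a genuine gap, and also a structural confusion about the hypothesis. First the confusion: since $\pac$ is an RCP \emph{realizing} $(\mathcal T,\Theta)$, its contact graph is exactly $\mathcal T$, so $D_i\cap D_m=\emptyset$ whenever $i\not\sim m$. Hence the contradiction hypothesis $D_i\subset\bigcup_{j\neq i}D_j$ collapses immediately to $D_i\subset\bigcup_{j\in N(i)}D_j$, and a point $p\in\Omega_i$ would automatically lie in no disk at all — your entire second half (the pair $D_m, D_k$ of non-neighbours covering $\Omega_i$, the chord argument in the face $\Delta v_iv_av_b$, etc.) addresses a situation that cannot occur and is not needed. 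The whole lemma reduces to the single claim $D_i\not\subset\bigcup_{j\in N(i)}D_j$, which is where all the work lies.

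And that is exactly where your argument fails. You assert that if the neighbours covered $D_i$, then ``the $n$ circles $C_{j_l}$ would share a common point,'' which is false: two large adjacent neighbour disks can cover $D_i$ between them (so that $D_i\subset D_{j_1}\cup D_{j_2}$) without any common point of all $n$ circles, and your picture of lens-shaped bites leaving a curvilinear region around $v_i$ simply does not apply in that regime. Ruling out this configuration is the heart of the paper's proof: a topological analysis of the pieces $K_{l,m}=D_l\cap D_m\setminus\bigcup_{s\in N(i)\setminus\{l,m\}}D_s$ shows that a covering by neighbours forces $D_i\subset D_{j_1}\cup D_{j_2}$ for some adjacent pair $j_1\sim j_2$ in $N(i)$, and then Lemma~\ref{containlemma} yields $\Theta([i,j_1])+\Theta([i,j_2])\geq\pi+\Theta([j_1,j_2])$, contradicting ($Z_1$). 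Your proposal never invokes this mechanism (indeed it never uses ($Z_1$) beyond the preliminary localization via Lemma~\ref{containinFLlemma}), so the essential case is left open.
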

	\begin{proof}
		We prove this lemma by contradiction. Suppose that there is a vertex $i \in V$ such that 
		$$
		D_i\subset \bigcup_{j \neq i} D_j.
		$$
		Let $N(i)= \{j\in V: j \sim i \}$, since the circle pattern  $\pac$ is an RCP, by the properties of RCP, we know 
		\begin{equation}\label{containeq}
			D_i\subset \bigcup_{j \in N(i)} D_j.
		\end{equation}
		In the following, we will prove $D_i\subset D_{j_1}\cup D_{j_2}$ for some $j_1,j_2\in N(i)$ with $j_1\sim j_2$.

        \begin{figure}[h]
			\centering
			\includegraphics[width=0.3\textwidth]{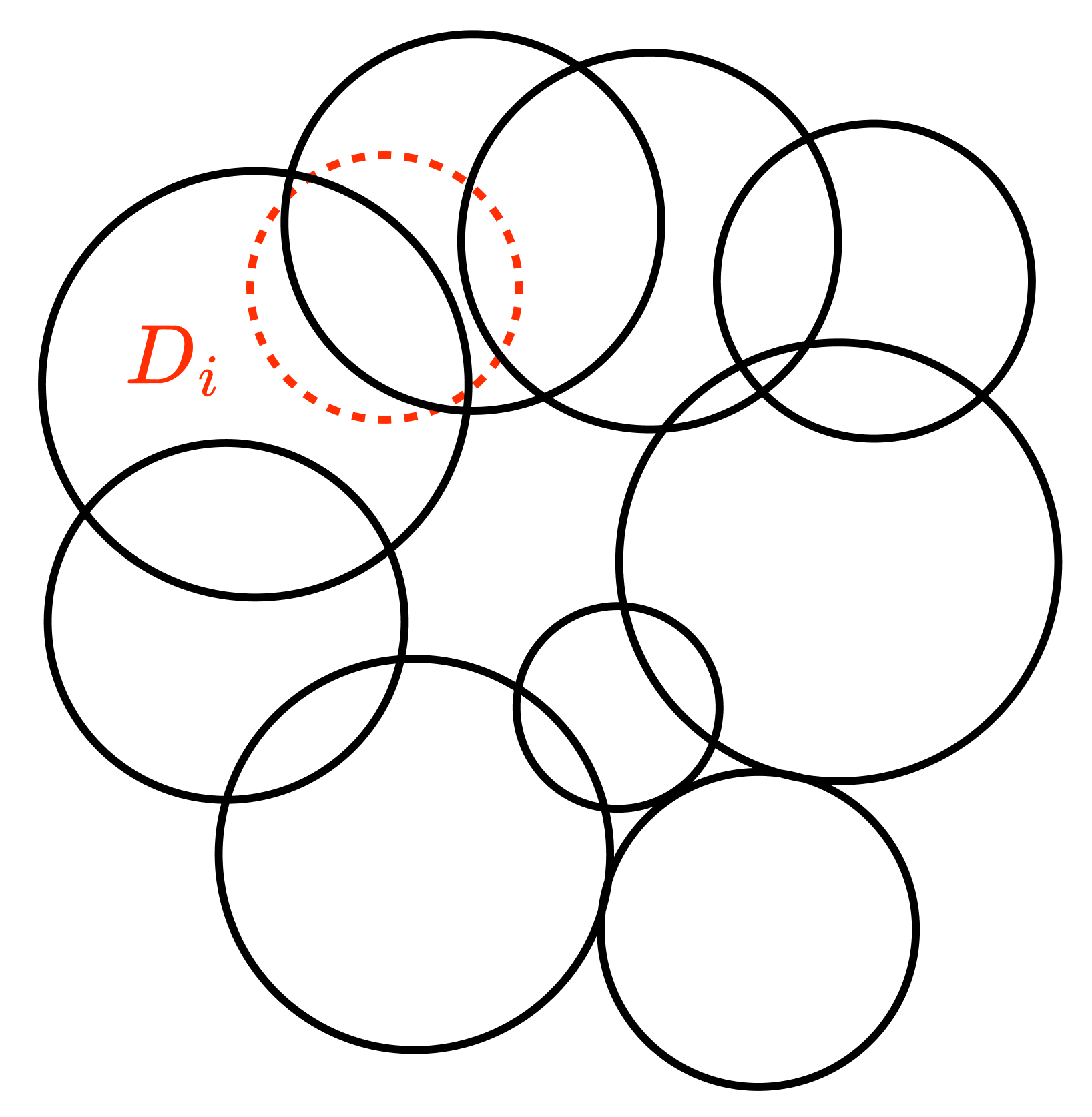}
			\caption{the configuration of $D_i\subset \bigcup_{j \in N(i)} D_j$}
			\label{n(i)}
		\end{figure}
        
		For each edge $[l,m]$ with ends $l,m\in N(i)$, let $K_{l,m}=D_l\cap D_m\backslash \cup_{s\in N(i)\backslash\{l,m\}}D_s$ and let $E_l^m=\partial K_{l,m}\cap D_l$. We claim that $D_i\cap \partial K_{l,m}\subset E_l^m\cup E_m^l$ for each edge $[l,m]$ with two ends $l,m\in N(i)$. Otherwise, we may assume that $p\in D_i\cap \partial K_{l,m}\backslash (E_l^m\cup E_m^l)$. If $p\notin D_l\cup D_m$, by \eqref{containeq} we see that $p\in D_s$ for some $s\in N(i)\backslash\{l,m\}$, which contradicts the definition of $K_{l,m}$. Therefore, we have $p\in D_l\cup D_m$. Since $p\notin E_l^m\cup E_m^l$, we have $p\in D_l\cap D_m$. Hence, there is a small neighborhood $N_p$ of $p$ such that $N_p\subset D_i\cap D_m\cap D_l$. By the definition of $K_{m,l}$ there exists some $s\in N(i)\backslash\{l,m\}$ such that $N_p\cap D_s\neq\emptyset$. Therefore, $D_i\cap D_l\cap D_m\cap D_s\neq\emptyset$, which contradicts Lemma \ref{mostthree}. It follows that the above claim is valid.
		
		Without loss of generality, we may well assume that $D_i\cap D_j\neq\emptyset$ for
        some vertex $j\in N(i)$. Since $D_i$ is not contained in $D_j$, and $D_i=\cup_{t\in N(i)}D_i\cap D_t$, by the connectivity of $D_j$, there exists a vertex $s\in N(i)\backslash\{j\}$ such that $D_i\cap D_j\cap D_s\neq \emptyset$. Otherwise, $D_i$ can be divided into two closed sets
        $D_i\cap D_j$ and $D_i\cap( \cup_{t\in N(i)\backslash\{j\}}D_t )$.
        Therefore, $(D_i\cap D_j\cap D_s)\backslash (\cup_{h\in N(i)\backslash\{j,s\}}D_h)\neq\emptyset$.
		Next, we consider the following two cases:

        \medskip

        \emph{Case 1:} Both $D_i\cap E_s^j\neq\emptyset$ and $D_i\cap E_j^s\neq\emptyset$, then $C_i\cap E_s^j\subset C_j\neq\emptyset$ and $C_i\cap E_j^s\subset C_s\neq\emptyset$. Since three non-collinear points determine a circle, it is obvious that $D_i\subset D_j\cup D_s$ in this case.

       \medskip
       
        \emph{Case 2:} $D_i\cap E_s^j=\emptyset$ or $D_i\cap E_j^s=\emptyset$. Without loss of generality, assume $D_i\cap E_s^j= \emptyset$. Therefore, $D_i\cap E_j^s\neq\emptyset$. Since $D_i$ is not contained in $D_j$, we have $D_i\cap (C_j\backslash D_s)\neq\emptyset$. By \eqref{containeq}, there exists a vertex $k\in N(i)$ and a point $p\in  C_j\backslash D_s$ such that $p\in E_k^j\subset D_k$. Since $K_{j,k}$ and $K_{j,s}$ are disjoint and $D_i$ is connected, we have $D_i\cap E_j^k\neq\emptyset$. Therefore, for reasons similar to Case 1, we see that $D_i\subset D_j\cup D_k$.
	 
        In conclusion, we have shown that $D_i\subset D_{j_1}\cup D_{j_2}$ for some $j_1,j_2\in N(i)$ with $j_1\sim j_2$. According to Lemma \ref{containlemma}, we know that this contradicts condition ($Z_1$). 
		\end{proof}

	The following lemma introduces and provides a description of $V_{C(r)}(\mathcal{P})$, which will be used in the following sections. It states that $V_{C(r)}(\mathcal{P})$ is a set of vertices in those triangles or edges that are adjacent to a simple closed finite path in $\mathcal{T}$.

    \begin{lem}\label{sectionlemma} Let $\mathcal{T}=(V,E,F)$ be a disk triangulation, and let $\Theta\in [0 ,\pi)^E$ be an intersection angle that satisfies ($Z_1$) and ($Z_2$). Suppose $\pac=\{C_i\}_{i \in V}$ is an RCP that realizes the data $(\mathcal{T}, \Theta)$ and $\carrier(\pac)= \Omega$. Fix a vertex $v \in V$ and assume that the center $o_v$ of $C_v$ is located at the origin in $\mathbb{C}$. Assume that $D(r)\subset\Omega$ and that $C(r)$ is not contained in any disk
    of $\mathcal P$. Define
    \[
    V_{C(r)}(\mathcal P)=\{i\in V: D_i\cap C(r)\neq\emptyset\}.
    \]
    Then there exists a simple loop $\gamma \subset V_{C(r)}(\mathcal{P})$ such that $V_{C(r)}(\mathcal{P})$ consists of three types of vertices: (1) the vertices in $\gamma$; (2) the vertices of those edges that intersect $\gamma$; (3) the vertices of some triangle that has a boundary edge in $\gamma$. \end{lem}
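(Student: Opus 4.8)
The plan is to read off both the loop $\gamma$ and the three vertex types from the way the round circle $C(r)$ sweeps through the carrier, using the local structural lemmas above to keep that sweep under control. First I would record the basic finiteness: since $\pac$ is l.f. in $\Omega$ and $\overline{D(r)}\subset\Omega$ is compact, only finitely many disks meet $D(r)$, so $V_{C(r)}(\pac)$ is finite and $C(r)$ crosses only finitely many boundary circles. Because $D(r)\subset\Omega=\carrier(\pac)$, every point of $C(r)$ lies in some closed disk $D_i$ or in some closed geometric triangle $\Delta v_iv_jv_k$. Parametrizing $C(r)$ and recording, in cyclic order, the disks whose interiors it enters yields a cyclic sequence of vertices $i_1,\dots,i_m$. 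The key local claim is that consecutively visited disks are adjacent in $\mathcal T$: passing from $D_{i_a}$ to $D_{i_{a+1}}$, the circle either crosses the overlap $D_{i_a}\cap D_{i_{a+1}}$ (so $[i_a,i_{a+1}]\in E$) or traverses an interstice, which by Lemma \ref{mostthree} is bounded by exactly the three disks of one face and is entered and exited across the arcs of two of them, again forcing $[i_a,i_{a+1}]\in E$; Lemma \ref{onepointlemma} excludes a jump between non-adjacent disks. This produces a closed edge-walk $W$ in $\mathcal T$ all of whose vertices lie in $V_{C(r)}(\pac)$.

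Next I would extract the simple loop $\gamma$. The point $o_v$ lies in the bounded component of $\mathbb{C}\setminus C(r)$, and $W$ ``shadows'' $C(r)$ in the sense that $W$ is contained in the closed set $A=\bigcup_{i\in V_{C(r)}}D_i\cup\bigcup\{\Delta_f: C(r)\text{ meets }\Delta_f\}$, which contains $C(r)$ and (for $r$ large enough that $D_v\subset\{|z|<r\}$) excludes $o_v$. Hence $W$, viewed as a loop in the embedded reduced graph of the RCP, separates $o_v$ from $\infty$, and therefore contains a simple sub-cycle $\gamma$ that still separates $o_v$ from $\infty$. By construction $\gamma\subset V_{C(r)}(\pac)$, which accounts for the type (1) vertices.

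Finally I would establish the trichotomy. Any $i\in V_{C(r)}(\pac)$ has $C(r)$ entering $D_i$, so $i$ is a vertex of the walk $W$; if it survives into $\gamma$ it is type (1). Otherwise $i$ lies on an excursion of $W$ that was discarded in passing to $\gamma$, and I would argue this excursion has one of two short shapes: either $C(r)$ leaves $\gamma$ across an edge of $\mathcal T$ incident to a $\gamma$-vertex and returns, so $i$ is an endpoint of an edge meeting $\gamma$, which is type (2); or $C(r)$ makes a single-disk detour $D_a\to D_i\to D_c$ with $a,c\in\gamma$ consecutive, forcing $[a,i,c]$ to be a face with boundary edge $[a,c]\subset\gamma$, which is type (3). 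Here Lemma \ref{containinFLlemma} (so $D_i$ is covered by its star $S(i)$ and its neighbors) and Lemma \ref{indispensable} (no disk is swallowed by the others) are exactly what confine each excursion to these two local forms.

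I expect the principal difficulty to be the rigorous extraction of $\gamma$ together with the proof that \emph{every} discarded excursion is of one of the two short types above, rather than some long detour. This is where the planarity and embeddedness of the reduced graph must be combined with the three structural lemmas, and where the degenerate configurations---where $C(r)$ merely touches some $\partial D_i$ tangentially, or passes through a point common to three disks---have to be disposed of by a separate, if routine, perturbation or limiting argument. Controlling the ``shadowing'' of $C(r)$ by $W$ when some disks meeting $C(r)$ have very large radius (so their centers sit far from $C(r)$) is the other delicate point, and I would address it by working with the covering set $A$ and its complementary components rather than with the polygon through the centers $o_{i_a}$.
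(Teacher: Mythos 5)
Your proposal follows essentially the same strategy as the paper: walk a point $p$ around $C(r)$, use Lemma \ref{mostthree} (no four disks meet), Lemma \ref{indispensable} (no disk is swallowed) and Lemma \ref{containinFLlemma} to control the local configurations encountered, read off a closed combinatorial itinerary, and then identify the simple loop $\gamma$ and the two classes of leftover vertices. The one substantive difference is in the bookkeeping device, and it is exactly the device that dispatches the two difficulties you flag at the end. The paper does not form a walk through disk centers and then surgically remove excursions; instead it first uses Lemmas \ref{mostthree} and \ref{indispensable} to partition $\carrier(\pac)$ into cells $A_i=D_i\setminus\bigcup_{j\neq i}D_j$, $B_{[i,j]}=(D_i\cap D_j)\setminus\bigcup_{k\neq i,j}D_k$ and $C_{[i,j,k]}$, so that each point of $C(r)$ is labelled by a vertex, an edge, a face, or nothing (interstice), and then invokes the elementary fact that $C(r)$ enters and exits each $C_i$ exactly once (two distinct circles meet in at most two points, so $C(r)\cap D_i$ is a single arc). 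This single-arc property is what prevents long or repeated detours and makes the cyclic cell-itinerary directly yield the simple loop and the trichotomy, entirely independently of where the centers $o_i$ of large disks sit — so the ``shadowing'' worry about the polygon through the centers never arises. Your excursion analysis would have to re-derive this control by hand; if you replace the center-polygon walk by the cyclic sequence of cell labels and record that each $D_i$ contributes one arc to $C(r)$, your argument collapses to the paper's. (To be fair, the paper's own write-up of the final step is also terse, asserting rather than proving that the resulting loop is simple.)
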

    
    \begin{figure}[h] \centering \includegraphics[width=0.4\textwidth]{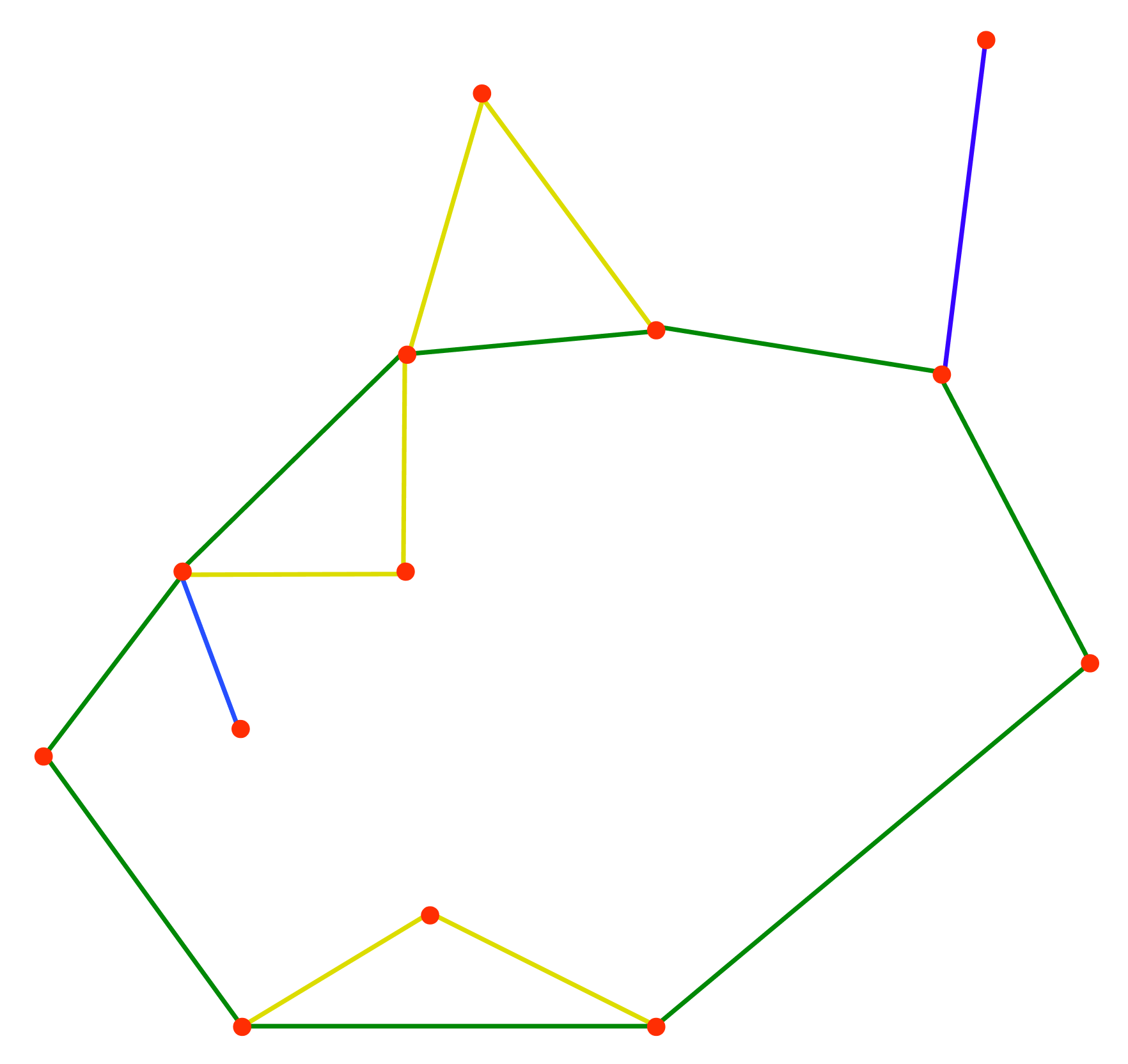} \caption{the structure of $\mathcal{T}_{C(r)}(\mathcal{P})$ associated with $V_{C(r)}(\mathcal{P})$} \label{Vcr_structure} 
    \end{figure}
    
    \begin{proof}
    By Lemma~\ref{mostthree} and Lemma~\ref{indispensable}, under the RCP assumption on 
    $\mathcal P$, the carrier $\carrier(\mathcal P)=\Omega$ admits a decomposition into the 
    following mutually disjoint types of regions:
    \begin{itemize}
    \item 
    \[
    A_i=D_i\setminus \bigcup_{j\neq i}D_j, \qquad i\in V;
    \]
    \item 
    \[
    B_{[i,j]}=(D_i\cap D_j)\setminus \bigcup_{k\neq i,j}D_k, 
    \qquad [i,j]\in E;
    \]
    \item 
    \[
    C_{[i,j,k]}
    =
    \bigl[\Delta_{ijk}\setminus (D_i\cup D_j\cup D_k)\bigr]
    \cup
    (D_i\cap D_j\cap D_k),
    \qquad [i,j,k]\in F.
    \]
\end{itemize}
It is clear that sets $\cup_{i\in V}A_i,\cup_{[i,j]\in E}B_{[i,j]},\cup  _{[i,j,k]\in F}C_{[i,j,k]}$ are pairwise disjoint, and together they decompose 
$\Omega$.



We now construct a closed edge path $\gamma$ in $\mathcal T$. 
Choose a point $p_0\in C(r)$, and let $p$ travel once around $C(r)$ 
in the clockwise direction. If $p_0$ lies in a disk $D_{v_0}$ such that
\[
C(r)\cap D_{v_0} \not\subset C(r)\cap D_v
\qquad 
\text{for every } v\in V \text{ with } v\neq v_0 \text{ and } p_0\in D_v,
\]
then we take $v_0$ to be the initial vertex of $\gamma$. Otherwise, we continue 
moving $p$ along $C(r)$ clockwise until it first enters a disk $D_{v_0}$ such that
\[
C(r)\cap D_{v_0} \not\subset C(r)\cap D_v
\qquad 
\text{for every } v\in V \text{ with } v\neq v_0 \text{ and } p\in D_v,
\]
and take this $v_0$ as the initial vertex. By Lemma~\ref{mostthree} every point is contained in at most three circles. Therefore such a choice 
of $v_0$ is always possible.

Suppose that the vertices $v_0,\ldots,v_n$ have already been chosen. Continue moving 
$p$ clockwise along $C(r)$ until it leaves $D_{v_n}$ and next enters a disk 
$D_{v_{n+1}}$. If the current $p \in D(v_{0})$, the construction stops. And we link the final vertex to $v_0$. Otherwise, 
we add $v_{n+1}$ to the sequence and continue the same procedure.

This produces a cyclic sequence of vertices
\[
v_0,v_1,\ldots,v_m,v_0.
\]
By the RCP condition, whenever the moving point passes from $D_{v_\ell}$ to 
$D_{v_{\ell+1}}$, the corresponding vertices are joined by an edge in $\mathcal T$. 
Thus
\[
[v_0,v_1],\ [v_1,v_2],\ \ldots,\ [v_m,v_0]
\]
are edges of $\mathcal{T}$. We define $\gamma$ to be the resulting 
closed edge path.

Moreover, for each disk $D_i$ intersecting $C(r)$, the set $C(r)\cap D_i$ is connected, 
since both $C(r)$ and $D_i$ are Euclidean circles/disks. Hence, as $p$ travels once 
around $C(r)$, it enters and exits $D_i$ at most once. Consequently, no vertex appears 
more than once in the above cyclic sequence, except for the initial vertex at the end. 
Therefore $\gamma$ is a simple closed edge path.

It remains to describe the vertices in $V_{C(r)}(\mathcal P)$. The construction of 
$\gamma$ divides $C(r)$ into subarcs, each of which is associated with one of the 
vertices appearing in $\gamma$. Since the regions
\[
A_i,\qquad B_{[i,j]},\qquad C_{[i,j,k]}
\]
form a decomposition of $\Omega$, every point of $C(r)\subset \Omega$ lies in exactly 
one of these regions. Therefore, if a disk $D_i$ meets $C(r)$, then the corresponding 
vertex $i$ must occur in one of the following ways:
\begin{enumerate}
    \item $i$ is a vertex of the simple loop $\gamma$;
    \item $i$ is an endpoint of an edge of $\mathcal T$ whose geometric realization 
    intersects $\gamma$;
    \item $i$ is a vertex of a triangle in $\mathcal T$ which has a boundary edge 
    contained in $\gamma$.
\end{enumerate}
Hence $V_{C(r)}(\mathcal P)$ consists precisely of vertices of the above three types. 
This proves the lemma.
\end{proof}

	\section{The existence of  CPs and RCPs}\label{section:existence}
	\subsection{Ring Lemma for embedded CPs}
    Given a circle pattern that weakly realizes the data$(\mathcal{T},\Theta)$, connecting the centers of circles in a face, we can obtain a triangle in Euclidean or hyperbolic background geometry. Gluing all those triangles together via the developing map (see \cite{Martelli} or \cite{Th76}), which is denoted by $\eta$, one can obtain a flat or hyperbolic surface. Even if all these triangles are embedded in $\mathbb{C}$ or $\mathbb{U}$, the surface obtained by the developing map $\eta$ may not be embedded in the plane. Therefore, we have the following natural definition.
        \begin{defn}
            Given a circle pattern $\pac$, it is called an \textbf{embedded circle pattern} in Euclidean background geometry (hyperbolic background geometry resp.) if and only if the corresponding developing map is an isometric embedding.
        \end{defn}
	\begin{rem}
    It is clear that \emph{all RCPs are embedded}.  
    Therefore, each theorem for embedded CPs also holds for RCPs. 
    \end{rem}
    
    To obtain existence results for CPs and RCPs, we need to establish a Ring Lemma for CPs that allows circles with obtuse intersection angles. 
    The Ring Lemma was first obtained by Rodin and Sullivan \cite{Rodin-Sullivan} for CPs with $\Theta = 0$. Subsequently, He \cite{He} extended the Ring lemma for CPs with $\Theta\in[0,\frac{\pi}{2}]^E$. However, the proof of the Ring Lemma for obtuse intersection angles is much more complicated, and the methods of He and Rodin-Sullivan will no longer be applicable in our setting. For any connected graph $G=(V, E)$, let $d\left(v_1, v_2\right)$ denote the combinatorial distance between $v_1$ and $v_2$, i.e., the length of the shortest path connecting $v_1$ and $v_2$ in $G$.
    For a vertex $i \in V$ and a nonnegative $\delta$, define
    $$
    B(i, \delta)=\{v \in V: d(v, i) \leq \delta\}, \quad S(i, \delta)=\{v \in V: d(v, i)=\delta\}.
    $$
    We have the following Ring Lemma:

	\begin{lem}\label{ringlemma}(Ring lemma)
		Let $\mathcal{T}= (V, E, F)$ be a \emph{finite} triangulation of the closed disk $\overline{\mathbb{U}}$, and let $\Theta\in[0 ,\pi)^E$ be an angle function. Assume conditions ($Z_1$) and ($Z_2$) hold, and there exists an embedded circle pattern $\pac = \{C_i\}_{i \in V}$ realizing $(\mT, \Theta)$ weakly. If there is a constant $\epsilon>0$, so that $\Theta([v_i,v_j])\leq\pi-\epsilon$ for all $[i,j]\in E$ (since $\mathcal{T}$ is finite, such $\epsilon$ can always be taken). Then, for each vertex $i\in V$ with $B(i, \frac{2\pi}{\epsilon})\cap \partial V= \emptyset$, there exists a constant \[
C=C\bigl(\epsilon,\Theta|_{B(i,\lceil 2\pi/\epsilon\rceil)}, 
\mathcal T|_{B(i,\lceil 2\pi/\epsilon\rceil)}\bigr)>0
\] such that
		\begin{equation}
			\frac{r_j}{r_i}\geq C, \quad \forall j \sim i.
		\end{equation}
	\end{lem}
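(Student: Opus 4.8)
The plan is to argue by contradiction, exploiting that $V$ is finite so that a failure of the bound produces a clean degeneration. Suppose no such constant exists. Then there is a sequence of embedded circle patterns $\mathcal{P}^{(n)}$ realizing $(\mathcal{T},\Theta)$ weakly, an interior vertex $i$ with $B(i,2\pi/\epsilon)\cap\partial V=\emptyset$, and a neighbor $j\sim i$, such that $r_j^{(n)}/r_i^{(n)}\to 0$; since $V$ is finite we may fix $i$ and $j$ along a subsequence. After rescaling each pattern so that $r_i^{(n)}=1$, we pass to a further subsequence so that every rescaled radius $\hat r_w^{(n)}$ converges in $[0,\infty]$, with $\hat r_i=1$ and $\hat r_j\to 0$. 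The goal is to show that such a degeneration is incompatible with the angle-sum constraint forced by embeddedness together with condition ($Z_2$).

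The heart of the argument is a local computation at a vertex that shrinks relative to \emph{all} of its neighbors. Fix an interior vertex $w$ and rescale so that $r_w=1$; if every neighbor radius tends to $\infty$, then in each face $[w,a,b]$ of the flower of $w$ the law of cosines shows that the interior angle $\vartheta_w^{ab}$ at $v_w$ converges to $\pi-\Theta([a,b])$, where $[a,b]$ is the link edge opposite $v_w$ (the dominant terms give $\cos\vartheta_w^{ab}\to-\cos\Theta([a,b])$). Because $\mathcal{P}^{(n)}$ is embedded and $w$ is interior, the flower closes up exactly, so $\sum_{[w,a,b]}\vartheta_w^{ab}=2\pi$. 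Passing to the limit yields $\sum_{e\in\operatorname{lk}(w)}\Theta(e)=(\deg w-2)\pi$. However, the link of an interior vertex is a simple closed curve that does not bound a single triangular face (the vertex $w$ itself subdivides the region it encloses), so condition ($Z_2$) forces $\sum_{e\in\operatorname{lk}(w)}\Theta(e)<(\deg w-2)\pi$ — a contradiction. Thus it suffices to locate, within the interior region guaranteed by the buffer hypothesis, a vertex all of whose neighbors degenerate relative to it.

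To produce such a vertex I would track the degeneration outward from the edge $[i,j]$. Let $\Sigma$ be the set of vertices whose rescaled radius tends to $0$; it contains $j$ but not $i$. If some vertex of $\Sigma$ has all of its neighbors degenerating relative to it, we are done by the previous paragraph. Otherwise one passes to a maximal cluster $\Sigma_0$ of vertices whose radii stay mutually comparable, rescales by the common scale of $\Sigma_0$, and runs the same angle analysis: every neighbor outside $\Sigma_0$ blows up, so summing the angle-sums $2\pi$ over the vertices of $\Sigma_0$ (a discrete Gauss--Bonnet computation in which faces interior to $\Sigma_0$ contribute $\pi$, faces with one outside vertex contribute a limit $\pi$, and faces with two outside vertices contribute $\pi-\Theta$) again produces an angle total along a simple closed curve that meets, and then violates, the bound in ($Z_2$). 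The quantitative input that makes this terminate inside the interior is the uniform bound $\Theta\le\pi-\epsilon$: each degenerate face contributes an angle of at least $\pi-\Theta\ge\epsilon$ at the shrinking vertex, so the $2\pi$ of total angle available at any vertex can accommodate at most $2\pi/\epsilon$ such faces. This is precisely why the combinatorial buffer $B(i,2\pi/\epsilon)$ of interior vertices is imposed: it guarantees enough room for the degenerate cluster and its one-neighborhood to lie in the interior, where each vertex carries the full angle $2\pi$. Lemma~\ref{partialthetabound}, which controls $r_j\,\partial\vartheta_i/\partial r_j$ by $C(\epsilon)\vartheta_i$, is the tool that converts the pointwise bound $\Theta\le\pi-\epsilon$ into this uniform angle budget, with the monotonicity of Lemma~\ref{vari} ensuring the angles move in the expected direction.

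The main obstacle I anticipate is exactly this propagation/localization step: in the obtuse regime a single small circle need \emph{not} subtend a small angle at its large neighbor (in contrast to the tangential Rodin--Sullivan setting), so one cannot simply follow a chain of shrinking circles as in \cite{Rodin-Sullivan} or \cite{He}. The degeneration must instead be organized by comparing radius \emph{ratios} and by isolating a vertex, or a comparable cluster, that shrinks relative to \emph{all} outside neighbors; carrying the boundary angle bookkeeping through the cluster case, and verifying that the relevant simple closed curve genuinely falls under the hypothesis of ($Z_2$) (i.e.\ does not bound a face), is the delicate part. Controlling the limits uniformly — ensuring the convergence $\vartheta_w^{ab}\to\pi-\Theta([a,b])$ is not spoiled by a neighbor that neither stays bounded nor blows up — is where the hypothesis $\Theta\le\pi-\epsilon$ and Lemma~\ref{partialthetabound} do the essential work.
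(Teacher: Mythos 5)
Your overall strategy coincides with the paper's: normalize $r_i=1$, extract a subsequential limit of the radii, isolate the connected cluster of vertices whose radii degenerate to $0$, and derive the angle-sum identity $\sum_e(\pi-\Theta(e))=2\pi$ along a simple closed curve surrounding that cluster, contradicting the strict inequality in ($Z_2$). Your law-of-cosines computation $\vartheta_w^{ab}\to\pi-\Theta([a,b])$ is correct and yields exactly the identity the paper obtains by observing that the limit disks of the vertices surrounding the shrunken cluster all pass through its single limit point, so their exterior angles there sum to $2\pi$.

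There is, however, a genuine gap in your boundary control, i.e. in why the degenerate cluster is trapped in the interior. You argue that each degenerate face contributes an angle $\ge\epsilon$ at the shrinking vertex, so a single vertex can carry at most $2\pi/\epsilon$ such faces, and you claim this ``guarantees enough room for the degenerate cluster and its one-neighborhood to lie in the interior.'' Bounding the number of degenerate faces in one flower does not bound the combinatorial diameter of the cluster: the connected set $A_0$ of vertices with $r^{[\infty]}=0$ containing $j$ could a priori be a long thin chain of low-degree vertices reaching $\partial V$, in which case the curve surrounding it is a path rather than a loop, boundary flowers do not close up to $2\pi$, and no contradiction with ($Z_2$) is available. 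The paper closes this by a different mechanism: if the surrounding set $\gamma$ were a path with both endpoints on $\partial V$, its circles all pass through the common limit point of $A_0$, so embeddedness forces $\sum_{k}(\pi-\Theta(e_k))\le 2\pi$ along the \emph{whole path}, hence the path has at most $2\pi/\epsilon$ edges; since it contains $i$ and reaches $\partial V$, this contradicts $B(i,2\pi/\epsilon)\cap\partial V=\emptyset$. You need this (or an equivalent) step. Two smaller points: your ``maximal cluster $\Sigma_0$ of mutually comparable radii'' does not automatically have all outside neighbors blowing up relative to it --- an outside neighbor could decay strictly faster --- so you must select the cluster at the minimal scale, which is possible since $V$ is finite; and Lemmas \ref{partialthetabound} and \ref{vari} play no role here, as the law of cosines already gives the limiting angles.
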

	\begin{proof}
        We prove it by contradiction. Choose a vertex $i \in V$ such that $B(i, \frac{2\pi}{\epsilon})\cap \partial V= \emptyset$ and a vertex $j \in V$ such that $j \sim i$. If this lemma is not true, then there is a sequence of finite circle patterns $\{\pac^{[n]}\}$, which weakly realize $(\mT, \Theta)$, such that $r_i^{[n]}=1$ and $r_j^{[n]}\rightarrow 0$ as $n \rightarrow + \infty$. By subtracting a subsequence, we may assume that, for any $v \in V$, the limit $r_v^{[\infty]}= \lim\limits_{n\rightarrow + \infty}r_v^{[n]}$ exists in $[0, + \infty]$. It is obvious that $r_i^{[\infty]}= 1$ and $r_j^{[\infty]}= 0$.
		
		Let $A = \{v\in V : r_v^{[\infty]}= 0\}$, and let $A_0$ be the connected component of $A$ containing $j$. It is obvious that $i \in \tilde \partial A_0$. Let $V_\gamma$ be the connected component of $\partial A_0$ that contains $i$. Denote 
		$$\gamma=V_\gamma \cup \{e \in E: \partial e \subset V_\gamma\}.$$ 
		
		The following proof is divided into two steps.
		
		\medskip
		\emph{Step 1. We claim that $\gamma$ contains a closed curve $\gamma_0$.}
		\medskip
		
		If the claim is not true, then $\gamma$ is a path. For convenience, if $\gamma$ contains only one vertex, we still call $\gamma$ a path. Let $\alpha, \beta \in V$ be the end vertices of $\gamma$ (if $\gamma=\{pt\}$, then $\alpha$ and $\beta$ are the same). We derive contradictions by considering whether $\alpha$ and $\beta$ are interior points of $V$ separately.

		\medskip
		\emph{Case 1: $\alpha$ or $\beta$ is an interior vertex of $V$}. 
		\medskip

        Assume that $\alpha$ is an interior point of $V$. The case where $\beta$ is an interior point is similar. In this case, there are at least two vertices $v^\prime, v^{\prime \prime} \sim \alpha$ such that $r_{v^\prime}^{[\infty]}>0$ and  $r_{v^{\prime \prime}}^{[\infty]}>0$. Consider the set neighboring vertices $S(\alpha, 1)$ of $\alpha$. Denote $\tilde N = \tilde \partial A_0 \cap S(\alpha, 1)$. Since the set $S(\alpha, 1)$ is circle-like, $\tilde N$ contains at least two vertices. For any $v \in \tilde N$, it follows that $r_v^{[\infty]}>0$, otherwise $v \in A_0$ by the definition of $A_0$, which is a contradiction. By the definition of $\gamma$, we know that $\tilde N \subset \gamma\cap S(\alpha, 1)$. This contradicts the fact that there is at most one vertex in $\gamma\cap S(\alpha, 1)$, since $\alpha$ is an end vertex of $\gamma$. 

		\medskip
		\emph{Case 2: Both $\alpha$ and $\beta$ are boundary points of $V$.} 
		\medskip
        
		  Write $\gamma= (e_1, e_2,\cdots, e_l)$. Let $(\alpha_0= \alpha, \alpha_1,, \alpha_2 \cdots, \alpha_n= \beta)$ be the corresponding vertices of $\gamma $. By the definition of $A_0$, the radius $r_v^{[\infty]}=0$ for every $v \in A_0$. It follows that $D^{[\infty]}_{\alpha_k}$ ($k = 0, 1, \cdots, l$), which are the disks in the limit circle pattern $\pac^{[\infty]} = \lim\limits_{n\rightarrow + \infty}\pac^{[n]}$, intersect at a common point. Since the nondegenerate triangles of $\pac^{[\infty]}$ is also embedded, we have 
		$$
		\sum_{k=1}^l (\pi- \Theta(e_k))\leq 2\pi.
		$$
		By $\Theta\in[0,\pi-\epsilon]^E$, it follows that  $l \leq \frac{2\pi}{\epsilon}$, which contradicts $B(i, \frac{2\pi}{\epsilon})\cap \partial V= \emptyset$.
		
		\medskip
		\emph{Step 2. We finish the proof.}
		\medskip
		
		From the discussion in Step 1, we actually gather that there are no endpoints in $\gamma$ and $\gamma \cap \partial V = \emptyset$.  In this assertion, a vertex in $\gamma$ is called an endpoint if there is at most one neighboring vertex in $\gamma$. 
		
		Since $\gamma$ contains a closed curve, there are two cases:
		
		\medskip
		\emph{Case 1: Every simple loop in $\gamma$ is the boundary of a face}. 
		\medskip
		
		Since there are no end vertices in $\gamma$, there must exist an end triangle $\Delta v_1v_2v_3$ in $\gamma$. In this assertion, a face $f$ is called an end triangle if all its vertices are in $\gamma$ and there is at most one vertex $v \in \gamma \cap \tilde \partial f$. Now, we just need to deal with the end triangle just like the end vertex. For convenience, we adopt a notation that slightly abuses the symbols but causes no ambiguity. Since $v_1,v_2,v_3$ are all interior vertices, it is easy to see that there are at least two vertices $v^\prime, v^{\prime \prime} \in \tilde \partial \Delta_{v_1v_2v_3} $ such that $r_{v^\prime}^{[\infty]}>0, r_{v^{\prime \prime}}^{[\infty]}>0$.  Otherwise, $\sum_{k=1}^3 \sigma(v_k) < 6\pi$.  Let $\tilde N = \tilde \partial A_0 \cap \tilde \partial \Delta v_1v_2v_3$. Since $\tilde \partial \Delta v_1v_2v_3$ is circle-like, $\tilde N$ contains at least two vertices. For any $v \in \tilde N$, it follows that $r_v^{[\infty]}>0$. Otherwise $v \in A_0$ by the definition of $A_0$, which is a contradiction. By the definition of $\gamma$, we know $\tilde N \subset \gamma\cap S(\alpha, 1)$. It contradicts  that $\Delta v_1v_2v_3$ is an end triangle in $\gamma$.
		Therefore, there is no possible that $\gamma$ only contain the simple closed curves that are the boundary of a face.
		
		\medskip
		\emph{Case 2: $\gamma$ contains a simple loop $\gamma_0$ that is not a boundary of a face}.
		\medskip
		
		Note the radius $r_v^{[\infty]}=0$ for any $v \in A_0$, it follows that $D^{[\infty]}_{\alpha_k}$ ($k = 0, 1, \cdots, l$), which are the disks in the limit circle pattern $\pac^{[\infty]} = \lim\limits_{n\rightarrow + \infty}\pac^{[n]}$, intersect at a common point. Hence we know
		$$
		\sum_{e \in \gamma_0}(\pi- \Theta(e))=2\pi,
		$$
		which contradicts  ($Z_2$).
		
	\end{proof}

	\subsection{The existence of CP  for infinite triangulation}
	We adopt the approach of He \cite{He} and Ge-Yu-Zhou \cite{GYZ} to prove the existence of circle patterns.
	To prove the existence of CPs realizing infinite triangulations, we first provide an existence result of CPs realizing finite triangulations, with the help of exhaustion and approximation methods. Then we obtain the existence of infinite CPs.
	\begin{thm}\label{exist_boundary_radius}
		Let $\mathcal{T}= (V, E, F)$ be a \emph{finite} triangulation of $\overline{\mathbb{U}}$, and let $\Theta\in[0 ,\pi)^E$ be an angle function. Assume conditions ($Z_2$) and ($Z_4$) hold. 
		Then there exists a unique embedded circle pattern $\mathcal{P}=\{C_i\}_{i\in V}$ weakly realizing $(\mathcal{T},\Theta)$, and moreover, every boundary circle $C_i$ (for $i\in\partial V$) is tangent to $\partial \mathbb{U}$.  
	\end{thm}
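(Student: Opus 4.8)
The plan is to run Thurston's construction in the hyperbolic background geometry and to read the required tangency to $\partial\mathbb{U}$ as the statement that each boundary circle is a horocycle, i.e.\ a hyperbolic circle of infinite radius based at an ideal point of $\partial\mathbb{U}$. Concretely, I would freeze $r_i=+\infty$ for every $i\in\partial V$ and treat the interior radii $r_i$ ($i\in V\setminus\partial V$) as the free unknowns. Since ($Z_4$) is precisely condition (\ref{cos-condition}), Lemma \ref{C1_C1} gives (\ref{angle-condition}) and Corollary \ref{cor-cos-three-angle-config} guarantees that for every face and every choice of positive radii the hyperbolic three-circle configuration exists and is unique; hence the gluing of Section \ref{section-thurston-cp} produces a hyperbolic cone metric on $\mathbb{U}$ and the interior curvature map $r\mapsto(\dots,K_i,\dots)$ is well defined and smooth. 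The goal then reduces to locating interior radii with $K_i=0$ at every interior vertex, which turns the cone metric into a genuine (cone-point-free) hyperbolic metric.

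To produce such radii I would use the variational method. Passing to the hyperbolic conformal factors $u_i=\ln\tanh(r_i/2)$, the symmetry relation (\ref{conformalfactor}) shows that the $1$-form $\sum_i K_i\,\mathrm{d}u_i$ is closed on the (convex, simply connected) space of interior factors, so it integrates to a function $\mathcal{E}(u)$ whose gradient is the interior curvature vector. The monotonicity of Lemma \ref{vari} (namely $\partial\vartheta_i/\partial r_i<0$ together with the off-diagonal symmetry and sign) makes the Hessian of $\mathcal{E}$ positive definite, so $\mathcal{E}$ is strictly convex and a critical point is exactly a minimizer. Thus the task becomes minimizing $\mathcal{E}$ over $u\in(-\infty,0)^{V\setminus\partial V}$, with the boundary factors held at $u=0$ (i.e.\ $r=\infty$).

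The heart of the argument, and the step I expect to be the main obstacle, is the a priori control keeping a minimizing sequence away from the two degenerate regimes $u_i\to 0$ ($r_i\to\infty$) and $u_i\to-\infty$ ($r_i\to 0$). Here I expect ($Z_2$) and ($Z_4$) to do the essential work: ($Z_4$) controls the angle sums inside each triangle and, via Corollary \ref{cor-three-one}, the way interstices open and close, while ($Z_2$) rules out exactly the loop configurations along which disks could collapse to a common point --- the configurations that forced the contradiction $\sum_{e}(\pi-\Theta(e))=2\pi$ in the Ring Lemma (Lemma \ref{ringlemma}). Translating these combinatorial--angle constraints into coercivity of $\mathcal{E}$ (equivalently, showing that on the boundary of the admissible domain the gradient $K$ points strictly inward at every face) is the delicate point, and it is precisely where the obtuse case is harder than He's nonobtuse setting, since the incidence pattern now genuinely depends on the radii (cf.\ Example \ref{ex-four-circle}). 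Granting coercivity, strict convexity yields a unique interior minimizer $u^{\ast}$ with $K_i(u^{\ast})=0$ at every interior vertex.

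Finally I would upgrade the resulting metric to an embedded circle pattern. With $K_i=0$ everywhere the glued hyperbolic cone metric is smooth, so the developing map $\eta$ of Section \ref{section:existence} is a local isometry of a simply connected surface into $\mathbb{H}^2$; completeness (forced by the boundary horocycles running out to $\partial\mathbb{U}$) together with the Cartan--Hadamard property of $\mathbb{H}^2$ makes $\eta$ a global isometric embedding, so $\pac=\{C_i\}$ is an embedded circle pattern in $\mathbb{U}$. The vanishing $K_i$ makes the circles around each interior vertex close up exactly, the frozen boundary circles are horocycles and hence tangent to $\partial\mathbb{U}$, and by Lemma \ref{sanyuangouxing_yinli} together with Corollary \ref{cor-three-one} the prescribed angle $\Theta$ is realized along every edge of $\mathcal{T}$ --- up to the extra incidences that obtuse triangles may create, which is exactly why the realization is only \emph{weak}. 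This produces the asserted $\pac$.
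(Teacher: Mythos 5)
Your first step — posing the problem as a hyperbolic Dirichlet boundary value problem with the boundary radii frozen at $+\infty$ so that the boundary circles become horocycles tangent to $\partial\mathbb{U}$, and solving it by the variational principle built on Lemma \ref{vari} and the symmetry (\ref{conformalfactor}) — is exactly the route the paper takes (it invokes the same Lemma \ref{vari} and defers the details to \cite{GYZ}). However, you explicitly leave the heart of that step open: the coercivity of the energy $\mathcal{E}$, i.e.\ the a priori estimates keeping a minimizing sequence away from $u_i\to 0$ and $u_i\to-\infty$, is where ($Z_2$) and ($Z_4$) must actually be used, and writing ``granting coercivity'' does not close it. Since everything else in the existence part is routine once this is in place, the proposal as written does not yet prove existence; it only reduces it to the statement that still needs proof.

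The more serious problem is the embeddedness argument. The glued object is a simply connected hyperbolic surface \emph{with boundary} (a finite union of triangles, the outermost ones having ideal vertices), so it is not a complete boundaryless manifold and the Cartan--Hadamard/covering-map argument for local isometries does not apply to it. A local isometric immersion of a simply connected surface with boundary into $\mathbb{H}^2$ can perfectly well fail to be injective (a chain of triangles can wrap around a point by more than $2\pi$ even though every interior cone angle equals $2\pi$); indeed the paper explicitly warns, just before the definition of an embedded circle pattern, that the developed surface ``may not be embedded in the plane'' even when each triangle is. This is why the paper's proof replaces your argument by a topological one: it modifies the developing map $\eta$ so that it sends $\partial\overline{\mathbb{U}}$ to $\partial\overline{\mathbb{U}}$ and then applies invariance of domain to conclude that the immersion is an embedding. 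If you want to stay closer to your geometric picture, the fact you should be exploiting is not completeness but local convexity of the boundary (the boundary edges are complete geodesics meeting at ideal vertices with angle $0$), from which injectivity of the developing map can be deduced; as stated, though, your step is a genuine gap.
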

	\begin{proof}
		The proof is almost identical to the argument of \cite[Lemma 6.2]{GYZ}, which primarily relies on Lemma \ref{vari}. For the completeness of the paper, we present a sketch of the proof.
        First, using the variational principle Lemma $\ref{vari}$, there exists a hyperbolic CP weakly realizing $(\mathcal{T},\Theta)$ whose boundary circles are horocycles. This is just a Dirichlet boundary problem when assuming the radii of circles on boundary vertices are all equal to $+\infty$ in hyperbolic background geometry. Therefore, we obtain a circle pattern with each boundary circle internally tangent to $\partial\mathbb{U}$. It is also possible to use Chow-Luo's combinatorial Ricci flow method to prove it; for specific ideas, we refer to \cite{CL03, GHZ-advance, GHZ21}. We omit the details here.

        Secondly, we show that this CP is actually an embedded CP in hyperbolic space. The method is similar to that of Stephenson \cite[Chapter 6]{st}. Notice that the developing map we obtained is an immersion from the closed topological disk to $\overline{\mathbb{U}}$.
        Furthermore, by pushing the boundary of $\eta(\mathbb{U})$ to $\partial\mathbb{U}$, the developing map can be modified to $\tilde\eta$, which is an immersion that maps from $\partial\overline{\mathbb{U}}$ to $\partial\overline{\mathbb{U}}$. Therefore, by the domain invariance theorem, it is an embedding. Since the modification only changes the image of the boundary of $\eta(\mathbb{U})$, we can prove that $\eta$ is also an embedding.
	\end{proof}
	
	Our proof of Theorem~\ref{exist_boundary_radius} requires the stronger hypothesis ($Z_4$) and cannot be 
    weakened to the assumption ($Z_1$). The topological degree method used in Zhou \cite{Zhou21} is also inapplicable here, since a key step in Zhou's approach is valid only for good circle patterns. Nevertheless, we conjecture that the above theorem remains true under the weaker condition ($Z_1$) (and ($Z_2$)).

	\begin{thm}[Existence of CPs]\label{exist_cp}
		Let $\mathcal{T}= (V, E, F)$  be a disk triangulation graph with an angle function $\Theta\in[0,\pi)$ and $\sup_{e\in E}\Theta(e)<\pi$. Suppose that conditions ($Z_2$) and ($Z_4$) hold, then there exists an embedded circle pattern $\mathcal{P}$ weakly realizing the data ($\mathcal{T}$, $\Theta$). 
	\end{thm}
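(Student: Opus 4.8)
The plan is to construct the infinite circle pattern as a limit of finite ones, via exhaustion together with the Ring Lemma. First I would exhaust the disk triangulation $\mathcal{T}$ by an increasing sequence of finite simplicial subcomplexes $\mathcal{T}_1 \subset \mathcal{T}_2 \subset \cdots$, each a triangulation of a closed topological disk $\overline{\mathbb{U}}$ with $\bigcup_n \mathcal{T}_n = \mathcal{T}$. Since ($Z_2$) and ($Z_4$) hold and descend to every $\mathcal{T}_n$, Theorem \ref{exist_boundary_radius} produces for each $n$ an embedded circle pattern $\pac^{[n]} = \{C_i^{[n]}\}_{i \in V(\mathcal{T}_n)}$ weakly realizing $(\mathcal{T}_n, \Theta)$ whose boundary circles are internally tangent to $\partial \mathbb{U}$. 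After fixing a base vertex $v_0$, I would normalize each $\pac^{[n]}$ by a Euclidean similarity so that $C_{v_0}^{[n]}$ is centered at the origin with radius $r_{v_0}^{[n]} = 1$, which places the normalized patterns in expanding regions of $\mathbb{C}$.

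The heart of the argument is a compactness estimate: for every fixed vertex $v \in V$, the radii $r_v^{[n]}$ stay bounded away from $0$ and $\infty$ as $n \to \infty$. Because $\Theta \le \pi - \epsilon_{\mathcal{T}}$ with $\epsilon_{\mathcal{T}}$ fixed, the combinatorial radius $2\pi/\epsilon_{\mathcal{T}}$ appearing in Lemma \ref{ringlemma} is uniform. For a fixed interior vertex $i$, set $\mathcal{T}' = B(i, \tfrac{2\pi}{\epsilon_{\mathcal{T}}} + 1) \subset \mathcal{T}$, a finite triangulation depending only on $i$; once $n$ is large enough that $\mathcal{T}'$ lies in the interior of $\mathcal{T}_n$, the restriction of $\pac^{[n]}$ to $\mathcal{T}'$ weakly realizes $(\mathcal{T}', \Theta)$, and Lemma \ref{ringlemma} furnishes a constant $C(i) > 0$, \emph{independent of $n$}, with $r_j^{[n]}/r_i^{[n]} \ge C(i)$ for all $j \sim i$. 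Applying this in both directions gives $C(i) \le r_j^{[n]}/r_i^{[n]} \le C(j)^{-1}$, so the ratio of radii across each edge is pinched uniformly in $n$. Chaining these estimates along a fixed combinatorial path from $v_0$ to $v$ then confines $r_v^{[n]}$ to a fixed compact subinterval of $(0, \infty)$, as desired.

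With the radii controlled, a diagonal argument extracts a subsequence along which $r_v^{[n]} \to r_v \in (0,\infty)$ for every $v \in V$. Since the intersection angles are prescribed, each triangle $\Delta v_i v_j v_k$ depends continuously on its three radii (Lemma \ref{sanyuangouxing_yinli}), so after fixing the position of $C_{v_0}$ and the direction of one incident edge, the developing maps $\eta_n$ converge uniformly on every finite subcomplex to a limit developing map $\eta$. The limit radii $\{r_v\}$ together with $\eta$ define a circle pattern $\pac$; because $\mathcal{T}$ is locally finite, each disk meets only its finitely many neighbors, so $\pac$ weakly realizes $(\mathcal{T}, \Theta)$ with the correct contact graph and intersection angles by continuity.

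The final point, and the step I expect to be the main obstacle, is that the limit $\pac$ is genuinely embedded, i.e. $\eta$ is an isometric embedding rather than merely an isometric immersion. Uniform convergence of the embeddings $\eta_n$ makes $\eta$ a local isometry, and the uniform radius bounds rule out local degeneration; the difficulty is promoting local injectivity to global injectivity on the simply connected carrier, where a naive limit of embeddings need not remain injective. I would resolve this as in \cite{He, GYZ}, using that the developing images exhaust a simply connected planar domain, combined with a monodromy/winding-number argument to exclude overlaps, so that $\eta$ is injective and $\pac$ is the sought embedded circle pattern weakly realizing $(\mathcal{T}, \Theta)$.
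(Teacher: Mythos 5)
Your proposal follows essentially the same route as the paper's proof: exhaust $\mathcal{T}$ by the combinatorial balls $V^{[n]}=\{v\in V: d(v,v_0)\le n\}$, apply Theorem \ref{exist_boundary_radius} to each finite piece, control the radii uniformly in $n$ via the Ring Lemma (Lemma \ref{ringlemma}) together with connectedness, and extract a convergent limit by a diagonal argument. The only divergence is that you are more explicit than the paper on two points it treats tersely --- the $n$-independence of the Ring Lemma constant (which you secure by restricting to the fixed finite ball $B(i,\tfrac{2\pi}{\epsilon_{\mathcal{T}}}+1)$ before invoking the lemma) and the embeddedness of the limit developing map --- both of which are refinements of, rather than departures from, the paper's argument.
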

	\begin{proof}
		Pick an arbitrary vertex $v_0\in V$, recall that $d(v, v_0)$ is the combinatorial distance in $\mT$. Since $\mathcal T=(V,E,F)$ is a locally finite triangulation of the open disk,
        we choose an increasing exhaustion by finite triangulated topological disks
        \[
        \left\{\mathcal T^{[n]}=\left(V^{[n]},E^{[n]},F^{[n]}\right)\right\}_{n=1}^{\infty}
        \]
        such that each $|\mathcal T^{[n]}|$ is homeomorphic to a closed disk,
        \[
        |\mathcal T^{[n]}|\subset \operatorname{int} |\mathcal T^{[n+1]}|,
        \qquad
        \bigcup_{n=1}^{\infty}|\mathcal T^{[n]}|=|\mathcal T|,
        \]
        and every compact subset of $|\mathcal T|$ is contained in $|\mathcal T^{[n]}|$
        for all sufficiently large $n$.
        Here $|\mathcal T^{[n]}|$ denotes the underlying polyhedron of the finite
        subcomplex $\mathcal T^{[n]}$.
        
		By Theorem \ref{exist_boundary_radius}, we know that there exist circle patterns $\pac^{[n]} = \{C^{[n]}_i\}_{i \in V^{[n]}}$, which weakly realize $(\mT^{[n]}, \Theta)$ with boundary circles tangent to the unit circle. By M\"obius transformation, we can obtain a circle pattern that weakly realizes $(\mT^{[n]}, \Theta)$ with $r_{v_0}^{[n]}= 1$.
		
		For any $i, j \in V$ such that $i \sim j$, by Lemma \ref{ringlemma}, we know there is a constant $C (i,j)>0$, independent of $n$,  such that 
		$$
		C(i,j)^{-1}\leq \frac{r_i^{[n]}}{r_j^{[n]}} \leq C(i,j)=C(\mathcal{T}^{[n]},\Theta),
		$$
		for some sufficiently large $n$. Since $r_{v_0}^{[n]}= 1$, and $\mathcal{T}$ is connected, then for any $i \in V$, there is a constant $\tilde C (i)>0$ such that 
		$$
		\tilde C(i)^{-1}\leq r_i^{[n]} \leq \tilde C(i),
		$$
		for any sufficiently large $n$. Hence, for any $i \in V$, the center of $C^{[n]}_i$ is also located in a compact set, for any sufficiently large $n$. 
		
		Thus, by a diagonal argument, there exists a subsequence such that the radii and centers of all vertices converge. The limit circle pattern  $\pac^{[\infty]} = \{C^{[\infty]}_i\}_{i \in V}$ of this subsequence is exactly the CP we need.
	\end{proof}

	\subsection{The existence of RCP for infinite triangulation}
	\begin{thm}(Zhou \cite[Theorem 1.4]{Zhou21})\label{RCP_finite}
		Let $\mathcal{T}$ be a \emph{finite} triangulation of the sphere with more than four vertices. Assume that $\Theta: E \rightarrow[0, \pi)$ is a function satisfying the following conditions:
		\begin{itemize}
			\item[(1)] If $e_1, e_2, e_3$ forms the boundary of a triangle of $\mathcal{T}$, then $\Theta\left(e_1\right)+\Theta\left(e_2\right)<\Theta\left(e_3\right)+\pi, \Theta\left(e_2\right)+$ $\Theta\left(e_3\right)<\Theta\left(e_1\right)+\pi, \Theta\left(e_3\right)+\Theta\left(e_1\right)<\Theta\left(e_2\right)+\pi$.
			\item[(2)] If $e_1, e_2$ forms a homologically non-adjacent arc, then $\Theta\left(e_1\right)+\Theta\left(e_2\right) \leq \pi$, and one of the inequalities is strict when $\mathcal{T}$ is the boundary of a triangular bipyramid.
			\item[(3)] If $e_1, e_2, e_3$ forms a simple loop that separates the vertices of $\mathcal{T}$, then $\sum_{i=1}^3 \Theta\left(e_i\right)<\pi$.
			\item[(4)] If $e_1, \cdots, e_4$ forms a simple loop that separates the vertices of $\mathcal{T}$, then $\sum_{i=1}^4 \Theta\left(e_i\right)<2 \pi$.
		\end{itemize}
		Then there exists a regular circle pattern $\mathcal{P}$ on the Riemann sphere $\hat{\mathbb{C}}$, whose exterior intersection angles are given by $\Theta$.
	\end{thm}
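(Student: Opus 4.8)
The plan is to realize the prescribed data as a zero of a discrete curvature map and to locate that zero by the method of continuity, with local rigidity supplying openness and a Ring-Lemma-type compactness supplying closedness. Condition~(1) is precisely the triangle inequality (\ref{angle-condition}) on each face, so by Corollary~\ref{cor-cos-three-angle-config} every triangle $[i,j,k]$ of $\mathcal{T}$ carries, for each assignment of radii, a unique three-circle configuration with the prescribed angles. Working in the \emph{spherical} background geometry and following Thurston's construction (Section~\ref{section-thurston-cp}), I would glue these configurations along shared edges to obtain from any $r=(r_i)\in\mathbb{R}_+^V$ a spherical cone metric on $S^2=\hat{\mathbb{C}}$ with discrete curvatures $K_i(r)=2\pi-\sum_{[i,j,k]}\vartheta_i^{jk}$. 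The metric is smooth -- hence the round sphere, on which the circles give the desired pattern realizing $(\mathcal{T},\Theta)$ -- exactly when $K_i(r)\equiv 0$. Since a pattern on $\hat{\mathbb{C}}$ is determined only up to the $6$-dimensional M\"obius group, I would first impose a normalization fixing three circles (equivalently, pass to the Poincar\'e-dual convex polyhedron in $\mathbb{H}^3$), so that the solution set is cut down to a point.

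The second ingredient is local rigidity: the curvature map $r\mapsto(K_i)$ is a local homeomorphism once the M\"obius freedom is removed. As in Lemma~\ref{vari} and its spherical analogue, writing $u_i=\ln r_i$ gives the symmetry (\ref{conformalfactor}), so the Jacobian $\left(\partial K_i/\partial u_j\right)$ is symmetric; its only degeneracy is tangency to the M\"obius orbit, and after the normalization above the differential is nondegenerate. This gives both the uniqueness assertion and the openness required below.

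The third and hardest ingredient is properness, and I expect it to be the main obstacle. Suppose a sequence of normalized patterns keeps $\Theta$ and the combinatorics fixed while $K_i\to 0$, yet some radius ratio $r_i/r_j\to 0$; I must derive a contradiction from (2)--(4). Mirroring the Ring Lemma (Lemma~\ref{ringlemma}), let $A$ collect the vertices whose normalized radii tend to $0$ and let $\gamma$ be a boundary loop of a connected component of $A$. In the limit the circles indexed by $\gamma$ pass through one common point, forcing $\sum_{e\in\gamma}\bigl(\pi-\Theta(e)\bigr)=2\pi$, that is $\sum_{e\in\gamma}\Theta(e)=(|\gamma|-2)\pi$, which contradicts the strict inequalities (3) and (4) when $|\gamma|=3,4$. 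Length-two degenerations -- two non-adjacent circles forced to meet at two points, or one disk swallowed as in Lemma~\ref{containlemma} -- are ruled out precisely by the non-adjacent-arc condition~(2); its strict form on the triangular bipyramid is the single low-complexity case that must be checked by hand. Carrying out this compactness uniformly over all degeneration types is the technical heart of the argument.

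Finally I would run the method of continuity along the segment $\Theta_t=t\,\Theta$, $t\in[0,1]$. The set of angle functions satisfying (1)--(4) is convex, hence connected, and it contains $\Theta_0\equiv 0$, which is realized by the tangent packing of $\hat{\mathbb{C}}$ provided by Koebe's theorem. The set of $t$ for which $\Theta_t$ is realizable is then nonempty (it contains $0$), open (by local rigidity), and closed (by properness) in $[0,1]$, so it equals $[0,1]$; taking $t=1$ yields the desired regular circle pattern on $\hat{\mathbb{C}}$ with exterior intersection angles $\Theta$.
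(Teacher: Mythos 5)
First, note that the paper does not prove this statement at all: it is imported verbatim as Zhou's Theorem~1.4 from \cite{Zhou21}, where it is established by a topological degree argument on configuration spaces. So there is no in-paper proof to compare against; what can be assessed is whether your continuity-method sketch would actually work, and there are two concrete gaps that I do not think can be repaired as written.

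The first gap is the reliance on the variational machinery in \emph{spherical} background geometry and under hypothesis~(1) alone. Lemma~\ref{sanyuangouxing_yinli}, Corollary~\ref{cor-cos-three-angle-config} and Lemma~\ref{vari} are stated (and true) only for Euclidean and hyperbolic backgrounds; in the spherical background the three-circle configuration need not exist for arbitrary radii, and the monotonicity $\partial\vartheta_i/\partial r_i<0$, $\partial\vartheta_i/\partial r_j\geq 0$ is false in general, which is exactly why sphere versions of these theorems are not proved by gluing spherical triangles. Worse, even if you transplant the problem to the plane or to $\mathbb{H}^3$, the sign structure of the Jacobian $\bigl(\partial K_i/\partial u_j\bigr)$ that you invoke for local rigidity/openness requires condition \eqref{cos-condition} (i.e.\ ($Z_4$)), which is strictly stronger than condition~(1) of the theorem (the remark after Corollary~\ref{cor-cos-three-angle-config} gives explicit angles satisfying \eqref{angle-condition} but not \eqref{cos-condition}). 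The paper is explicit that this is the obstruction: its own Theorem~\ref{exist_boundary_radius} has to assume ($Z_4$), and it states that Zhou's degree method is needed precisely because the variational step fails for patterns that are not ``good''. So the openness/nondegeneracy step of your continuity scheme is unavailable under the stated hypotheses, and replacing it is the actual content of Zhou's proof.

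The second gap is in properness. Your degeneration analysis derives $\sum_{e\in\gamma}\Theta(e)=(|\gamma|-2)\pi$ for a loop $\gamma$ of circles collapsing to a common point and contradicts conditions~(3) and~(4), but these only cover $|\gamma|=3,4$; for $|\gamma|\geq 5$ the identity $\sum\Theta=(|\gamma|-2)\pi$ is compatible with $\Theta<\pi$ on every edge, so you need a separate argument (e.g.\ that five disks through a common point force a $K_5$ in the contact graph of the limit, which is not obviously regular, or an appeal to condition~(2) via Lemma~\ref{containlemma}). You also do not address the spherical degenerations in which radii grow rather than shrink (a disk exhausting the sphere), nor the normalization needed to make the limit pattern nontrivial. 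These are exactly the ``technical heart'' you flag, but as it stands the closedness step is not reduced to the stated hypotheses (1)--(4).
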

	
	\begin{thm}[Existence of RCPs]\label{RCPe}
		Let $\mathcal{T}= (V, E, F)$  be a disk triangulation with $\Theta\in[0,\pi)^E$ and $\sup_{e\in E}\Theta(e)<\pi$. Suppose that conditions ($Z_1$), ($Z_2$) and ($Z_3$) hold, then there exists a regular circle pattern $\mathcal{P}$ realizing the data ($\mathcal{T}$, $\Theta$). 
	\end{thm}
	\begin{proof}
		Take an arbitrary vertex $v_0\in V$, recall that $d(v, v_0)$ is the combinatorial distance in $\mT$. Since $\mathcal T=(V,E,F)$ is a locally finite triangulation of the open disk,
we choose an increasing exhaustion by finite triangulated topological disks
\[
\left\{\mathcal T^{[n]}=\left(V^{[n]},E^{[n]},F^{[n]}\right)\right\}_{n=1}^{\infty}
\]
such that each $|\mathcal T^{[n]}|$ is homeomorphic to a closed disk,
\[
|\mathcal T^{[n]}|\subset \operatorname{int} |\mathcal T^{[n+1]}|,
\qquad
\bigcup_{n=1}^{\infty}|\mathcal T^{[n]}|=|\mathcal T|,
\]
and every compact subset of $|\mathcal T|$ is contained in $|\mathcal T^{[n]}|$
for all sufficiently large $n$.
Here $|\mathcal T^{[n]}|$ denotes the underlying polyhedron of the finite
subcomplex $\mathcal T^{[n]}$.
		
		For each $\mathcal{T}^{[n]}$, we may add a point $v_\infty$ outside its carrier. For any boundary vertex $v\in \partial V^{[n]}$, connect it with $v_\infty$  by an edge $e=[v, v_\infty]$. Such construction gives a triangulation  $ \mathcal{T}_s^{[n]}$, which is a finite triangulation of the sphere. Set $\Theta_s^{[n]}([v, v_\infty])=0$ for any boundary vertex $v\in \partial V^{[n]}$, and set $\Theta_s^{[n]}(e)=\Theta(e)$ for any $e\in E^{[n]}$. 
		
		  By the definition of $V^{[n]}$ and using the conditions ($Z_1$), ($Z_2$), 
        and ($Z_3$), it is easy to check that the intersection angle $\Theta_s(e)$ in $ \mathcal{T}_s^{[n]}$ satisfies all conditions (1)-(4) in Theorem \ref{RCP_finite}. Therefore, by Theorem \ref{RCP_finite}, there exists an RCP $\mathcal{P}_s^{[n]}$ on the Riemann sphere $\hat{\mathbb{C}}$, whose exterior intersection angles are given by $\Theta_s^{[n]}$. By stereographic projection, if we fix $r_{v_0}=1$, we know that there exist RCPs $\pac^{[n]} = \{C^{[n]}_i\}_{i \in V}$ that realize $(\mT^{[n]}, \Theta)$ and $r_{v_0}^{[n]}= 1$.
		
		By a similar argument as in the proof of Theorem \ref{exist_cp}, we know that there exists a subsequence such that the radii and centers of all vertices converge. Without loss of generality, we may assume $\{\pac^{[n]} \}_{n=1}^{\infty}$ is exactly the convergent subsequence. Denote $\pac^{[\infty]} = \{D^{[\infty]}_i\}_{i \in V}$ by the limit circle pattern of $\{\pac^{[n]} \}_{n=1}^{\infty}$. To prove $\pac^{[\infty]}$ is an RCP, we only need to check that $D^{[\infty]}_u \cap D^{[\infty]}_v = \emptyset$ for any $[u,v] \notin E$. We prove this fact by contradiction. Assume that there are two vertices $u, v \in V$ such that $[u,v] \notin E$ and $D^{[\infty]}_u \cap D^{[\infty]}_v \neq \emptyset$. There are two cases to consider:

        \medskip
        \emph{Case 1: $D^{[\infty]}_u \cap D^{[\infty]}_v$  contains an interior point $p$.}
        \medskip
        
        In this case, we have $d_{\mathbb R^2}(p, c^{[\infty]}_u)< r^{[\infty]}_u$ and $d_{\mathbb R^2}(p, c^{[\infty]}_v)< r^{[\infty]}_v$, where $d_{\mathbb R^2}(x,y)$ is the Euclidean distance between $x$ and $y$,  and $c^{[\infty]}_u, c^{[\infty]}_v$, $r^{[\infty]}_u, r^{[\infty]}_v$ are the centers and radii of $C^{[\infty]}_u , C^{[\infty]}_v$ respectively. Thus, from the properties of $d_{\mathbb R^2}(x,y)$, we know $d_{\mathbb R^2}(p, c^{[n]}_u)< r^{[n]}_u$ and $d_{\mathbb R^2}(p, c^{[n]}_v)< r^{[n]}_v$, for any sufficiently large $n$. Hence, $p\in D^{[n]}_u \cap D^{[n]}_v$ holds for any sufficiently large $n$, which leads to a contradiction.
		
        \medskip
        \emph{Case 2: $D^{[\infty]}_u \cap D^{[\infty]}_v$  contains a single point $p$.} 
        \medskip
        
		Due to Lemma \ref{onepointlemma}, we know that there is a vertex $w \in V$ so that $p \in D^{[\infty]}_w$. Then it follows that both $D^{[\infty]}_u \cap D^{[\infty]}_w$ and $D^{[\infty]}_v \cap D^{[\infty]}_w$ contain interior points. For the same reasons as previously argued, $D^{[n]}_u \cap D^{[n]}_w$ and $D^{[n]}_v \cap D^{[n]}_w$ contain interior points for any sufficiently large $n$. Since $\pac^{[n]}$ are RCPs, $u$ and $v$ are connected to $w$. 
        Since $[u,v]\notin E$, $[u,w]$ and $[v,w]$ form a homologically non-adjacent arc.
        According to Lemma \ref{containlemma}, we see
		$$\Theta (u,w)+\Theta (v,w)\ge\pi.$$ 
		It contradicts the condition ($Z_3$).
		
	\end{proof}

	\section{Uniformization of RCPs}\label{sec:uniform}
	\subsection{Vertex extremal length and recurrence of networks}
	
	  To deal with infinite CPs, we will make extensive use of \textbf{vertex extremal length} theory on graphs. Specifically, to obtain the rigidity of CPs, we require some theory that comes from \textbf{infinite networks}. In this section, we will provide a brief review of the relevant background \cite{ahlfors, Ca94, He}.

	Given an undirected infinite graph $G=(V, E)$, where $V$ and $E$ are the vertex and edge sets of $G$ respectively, we write $v_i \sim v_j$ if $v_i$ and $v_j$ are connected by an edge in $E$. For a function $f: V \rightarrow \mathbb{R}$, the discrete Laplacian operator $\Delta_G$ is given by
	$$
	\Delta_G f_i=\sum_{j \sim i} \omega_{i j}\left(f_j-f_i\right), ~i\in V,
	$$
	where $\omega_{ij}$ is the weight on the edge $E$. For an edge $e=[i,j]\in E$, we also write $\omega_{ij}$ as $\omega_{e}$ or $\omega_{[i,j]}$ without ambiguity.
	
	\begin{defn}
		Let $i\in V$ be a vertex. A function $f: V\rightarrow \mathbb{R}$ is called \textbf{harmonic} (\textbf{subharmonic} resp.) at $i$ if $\Delta_G f_i=0$ ($\ge0$ resp.). Given a subset $W\subset V$, $f$ is called harmonic (subharmonic resp.) on $W$ if $\Delta_G f_i=0$ ($\geq 0$ resp.) for any $i\in W$.
	\end{defn}
	For subharmonic functions on finite graphs, the following maximum principle holds.
	\begin{lem}(\cite[Proposition 1.4.1]{anandam2011harmonic})\label{maxharmonic}
		Let $G=(V,E)$ be a finite connected graph. Suppose that $f$ is a subharmonic function on $\mathrm{int}(V)$ and attains its maximum at a vertex in $\mathrm{int}(V)$, then $f$ is constant.
	\end{lem}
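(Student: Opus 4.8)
The plan is to argue via the superlevel set at the maximum value, exploiting the positivity of the edge weights $\omega_{ij}>0$ together with subharmonicity to force a local spreading of the maximum, and then to close the argument by connectedness of $G$. First I would set $M=\max_{v\in V}f(v)$; since $f$ attains its maximum at an interior vertex $i_0\in\mathrm{int}(V)$, this global maximum is realized inside $\mathrm{int}(V)$. Define the nonempty maximum set $S=\{v\in V:f(v)=M\}$, so that $i_0\in S\cap\mathrm{int}(V)$. The problem is thereby reduced to proving $S=V$, for then $f\equiv M$ is constant.

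The key local step is a propagation claim: if $v\in S\cap\mathrm{int}(V)$, then every neighbour of $v$ already lies in $S$. Indeed, subharmonicity at the interior vertex $v$ gives $\Delta_G f_v=\sum_{j\sim v}\omega_{vj}(f_j-f_v)\ge 0$, while $f_v=M$ being the global maximum forces each difference $f_j-M\le 0$. Since every conductance $\omega_{vj}$ is strictly positive, each summand $\omega_{vj}(f_j-M)$ is nonpositive, and a sum of nonpositive terms can be $\ge 0$ only if every term vanishes. Hence $f_j=M$, i.e.\ $j\in S$, for all $j\sim v$. Thus $S$ is closed under taking neighbours of its interior points.

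Finally I would spread the maximum from $i_0$ to all of $V$. Propagating through interior vertices, the connectedness of the interior part of $G$ yields $\mathrm{int}(V)\subseteq S$; then, since each boundary vertex of the triangulation is adjacent to some interior vertex, the propagation claim applied at that interior neighbour places the boundary vertex in $S$ as well, giving $S=V$. The delicate point, and the main obstacle, is exactly this last spreading: the propagation claim only continues through \emph{interior} vertices, so a priori the maximum set could leak onto $\partial V$ and stall there. This is why the connectedness of $G$ (more precisely, the interior/boundary incidence structure of a triangulation of $\overline{\mathbb{U}}$, which guarantees that the forcing can be carried out step by step from $i_0$ to every vertex) is essential. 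Once this spreading is justified, $S=V$ and $f$ is constant, completing the proof.
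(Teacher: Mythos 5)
The paper gives no proof of this lemma; it is quoted from Anandam's book, so there is nothing to compare your argument against except the statement itself and the way it is used later. Your local propagation step is correct and is the standard argument: at an interior vertex $v$ where the global maximum $M$ is attained, $0\le \Delta_G f_v=\sum_{j\sim v}\omega_{vj}(f_j-M)$ is a sum of nonpositive terms (using $\omega_{vj}>0$), so every neighbour of $v$ also attains $M$.

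The gap is exactly at the point you flag as delicate, and it is not repairable as stated: the propagation only fires at \emph{interior} vertices, and neither the connectedness of $G$ nor the structure of a disk triangulation guarantees that the maximum set can reach every vertex through interior vertices. Your appeal to ``connectedness of the interior part of $G$'' assumes that $\mathrm{int}(V)$ induces a connected subgraph, which is not a hypothesis, and your final step assumes every boundary vertex has an interior neighbour, which is also not a hypothesis (the lemma is about an arbitrary splitting $V=\partial V\cup\mathrm{int}(V)$ of a finite connected graph). In fact the strong conclusion ``$f$ is constant'' is false in this generality, even for a triangulated disk: take a hexagon $v_1,\dots,v_6$ with the diagonal $v_1v_4$, an interior vertex $a$ adjacent to $v_1,v_2,v_3,v_4$ and an interior vertex $b$ adjacent to $v_4,v_5,v_6,v_1$; set $f=1$ on $\{a,v_1,v_2,v_3,v_4\}$ and $f=0$ on $\{b,v_5,v_6\}$. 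Then $f$ is subharmonic at both interior vertices and attains its maximum at the interior vertex $a$, yet is not constant. What your argument \emph{does} prove correctly is the weak maximum principle: the maximum set $S$ is either all of $V$ or must meet $\partial V$ (otherwise $S$ would be a nonempty set closed under taking neighbours in the connected graph $G$, forcing $S=V$), i.e.\ $\max_V f=\max_{\partial V}f$. That weaker form is all the paper actually uses (e.g.\ in Theorem \ref{MaximumPrincipleHyper}, where $w\le 0$ on $\partial V$ is upgraded to $w\le 0$ on $V$), so the right fix is to prove and invoke that version rather than the constancy statement.
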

	We call a function $\nu: V\rightarrow [0,+\infty)$ a vertex metric. For a vertex metric $\nu$, its area is defined by
	$$
	\area (\nu) = \sum_{v\in V}  \nu_v^2.
	$$
	For a subset $\alpha \subset V$, its length with respect to the vertex metric $\nu$ is defined by 
	$$
	l(\alpha) = \int_\alpha d\nu=  \sum_{v\in \alpha} \nu_v.
	$$
	Let $A$ be a non-empty set of subsets of $V$. A vertex metric $\nu$ is called $A$-admissible, if $\int_\alpha d\nu \geq 1$ for any $\alpha \in A$. 
	\begin{defn}
		The \textbf{vertex modulus} of $A$ is defined by 
		$$
		\modc (A) = \inf \{\area (\nu)| \nu \text{ is }  A\text{-admissible}  \}.
		$$
	\end{defn}
	
	\begin{defn}
		The \textbf{vertex extremal length} of $A$ is defined by 
		$$
		\vel (A)= \frac{1}{ \modc (A)}.
		$$
	\end{defn}

	We call a function $\eta: E\rightarrow [0,+\infty)$ an edge metric. For an edge metric $\eta$, its area is defined as the following:
	$$
	\area (\eta) = \sum_{e\in E} \omega_e \eta_e^2.
	$$
	For a path $\gamma=(v_0,v_1,\cdots,v_n,\cdots)$, the length of $\gamma$ in the edge metric $\mu$ is given by 
	$$
	\int_\gamma d\eta= \sum_{i \geq 0}\eta_{[v_i, v_{i+1}]}.
	$$
	Let $\Gamma$ be a set of paths. An edge metric $\eta$ is called \textbf{$\Gamma$-admissible}, if $\int_\gamma d\eta \geq 1$ for any $\gamma \in \Gamma$. 
	\begin{defn}
		The \textbf{conductance} of $\Gamma$ is defined by
		$$
		\cond (\Gamma) = \inf \{\area_\omega (\eta)| \eta \text{ is }  \Gamma\text{-admissible}  \}.
		$$
	\end{defn}
	\begin{defn}
		The  \textbf{resistance} of $\Gamma$ is defined by
		$$
		\res (\Gamma)= \frac{1}{ \cond (\Gamma)}.
		$$
	\end{defn}

	We denote $\Gamma\left(V_1, V_2\right)$ be the set of paths joining $V_1$ and $V_2$ in $G$. Since we can see the paths as a subset of $V$, for any two nonempty sets $V_1, V_2 \subset V$, the vertex extremal length between $V_1$ and $V_2$ is defined by
	$$
	\vel (V_1, V_2) = \vel (\Gamma(V_1, V_2)) .
	$$
	For a pair of mutually disjoint, nonempty sets $V_1, V_2 \subset V$, the resistance between $V_1$ and $V_2$ is defined by
	$$
	\res (V_1, V_2) = \res (\Gamma(V_1, V_2)) .
	$$
For vertex extremal length and resistance, we have the following properties.
	\begin{prop}\cite[Lemma 5.1]{He}\label{addlemma}
		Let $V_1, V_2, \ldots, V_{2 m}$ be mutually disjoint, nonvoid subsets of vertices such that for $i_1<i_2<i_3, V_{i_2}$ separates $V_{i_1}$ from $V_{i_3}$. We allow $V_{2 m}=\infty$. Then 
		$$
		\operatorname{VEL}\left(V_1, V_{2 m}\right) \geq \sum_{i=1}^m \operatorname{VEL}\left(V_{2 k-1}, V_{2 k}\right).
		$$
	\end{prop}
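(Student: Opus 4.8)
The inequality is the discrete, vertex-extremal-length incarnation of the classical serial rule for extremal length (Fuglede), so the plan is to adapt the metric-combination argument to the VEL setting. Writing $\Gamma := \Gamma(V_1, V_{2m})$ and $\Gamma_k := \Gamma(V_{2k-1}, V_{2k})$, I will use the scaling reformulation $\vel(A) = \sup_{\nu}\, L_A(\nu)^2/\area(\nu)$, where $L_A(\nu) = \inf_{\alpha\in A}\sum_{v\in\alpha}\nu_v$ and the supremum runs over vertex metrics with $L_A(\nu)>0$; this is immediate from the definitions upon replacing $\nu$ by $\nu/L_A(\nu)$. Since a nonsimple path contains a simple subpath of no larger $\nu$-length, admissibility and all the infima above are governed by simple paths, so throughout I may restrict to simple paths. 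The heart of the matter is to produce, for each $k$, a near-extremal metric $\nu_k$ for $\Gamma_k$ whose support $S_k$ is confined to the \emph{slab between $V_{2k-1}$ and $V_{2k}$}, in such a way that the slabs $S_1,\dots,S_m$ are pairwise disjoint.

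Concretely, I will define $S_k$ to be $V_{2k-1}\cup V_{2k}$ together with every vertex $v$ that can be joined to $V_{2k-1}$ by a path avoiding $V_{2k}$ and to $V_{2k}$ by a path avoiding $V_{2k-1}$. Two facts must then be checked. First, the slabs are disjoint: if $v\in S_k\cap S_l$ with $k<l$, concatenating a path from $v$ to $V_{2k-1}$ avoiding $V_{2k}$ with a path from $v$ to $V_{2l}$ avoiding $V_{2l-1}$ yields a walk from $V_{2k-1}$ to $V_{2l}$; by the nested separation hypothesis it must meet $V_{2k}$, forcing the second arc to reach $V_{2k}$, and then that arc, being a path from $V_{2k}$ to $V_{2l}$, must meet $V_{2l-1}$, contradicting that it avoids $V_{2l-1}$. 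Second, every simple path $\gamma\in\Gamma$ contains, for each $k$, a subpath lying in $S_k$ and belonging to $\Gamma_k$: ordering the first-passage times $\tau_j$ of $\gamma$ through $V_j$, the separation hypothesis gives $\tau_1<\tau_2<\cdots<\tau_{2m}$ (to reach $V_{j+1}$ one must already have crossed $V_j$), so the arc of $\gamma$ running from its last visit to $V_{2k-1}$ before time $\tau_{2k}$ up to $v_{\tau_{2k}}$ is a ``clean'' path meeting $V_{2k-1}$ only at its start and $V_{2k}$ only at its end, and every vertex of such a clean path lies in $S_k$ (the same modification applies verbatim if $V_{2m}=\infty$, where $\gamma$ is a ray still meeting each $V_j$ in order). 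This very cleanness also shows that masking any metric to $S_k$ leaves $L_{\Gamma_k}$ unchanged while not increasing its area, so an extremal $\nu_k$ for $\Gamma_k$ may indeed be taken supported on $S_k$, normalized so that $L_{\Gamma_k}(\nu_k)=1$ and $\area(\nu_k)=\modc(\Gamma_k)=:m_k$.

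With these pieces in hand I will set $\nu = \sum_{k=1}^m t_k\,\nu_k$ for nonnegative constants $t_k$. Disjointness of the supports gives $\area(\nu)=\sum_k t_k^2 m_k$, while for any simple $\gamma\in\Gamma$ the containment of a $\Gamma_k$-subpath in $S_k$ yields $\sum_{v\in\gamma}\nu_v \ge \sum_k t_k\,L_{\Gamma_k}(\nu_k)=\sum_k t_k$, hence $L_\Gamma(\nu)\ge\sum_k t_k$. Therefore $\vel(\Gamma)\ge \big(\sum_k t_k\big)^2\big/\sum_k t_k^2 m_k$, and choosing $t_k = 1/m_k$ (equivalently, optimizing by Cauchy--Schwarz) collapses the right-hand side to $\sum_k 1/m_k = \sum_k \vel(\Gamma_k)$, which is the assertion. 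Degenerate terms are harmless: a factor with $m_k=\infty$ contributes $0$ and may be dropped, and the case $m_k=0$ forces $\vel(\Gamma)=\infty$ by the same metric comparison. I expect the genuine difficulty to be entirely combinatorial rather than analytic, namely the bookkeeping in the second paragraph that converts the nested separation hypothesis into (a) pairwise disjointness of the slabs $S_k$ and (b) the existence, inside each simple path of $\Gamma$, of a clean $\Gamma_k$-subpath confined to $S_k$; once these are secured, the metric combination and the Cauchy--Schwarz optimization are routine.
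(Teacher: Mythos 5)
The paper offers no proof of this proposition: it is quoted directly from He \cite[Lemma 5.1]{He}, and the argument there is exactly the serial rule you describe — restrict (near-)extremal metrics for each pair $(V_{2k-1},V_{2k})$ to pairwise disjoint slabs, superpose them with weights $t_k=1/\modc\big(\Gamma(V_{2k-1},V_{2k})\big)$, and optimize via Cauchy--Schwarz. Your write-up is correct, and it supplies the two combinatorial points that carry all the weight (disjointness of the slabs $S_k$, and the extraction from every $\gamma\in\Gamma(V_1,V_{2m})$ of a clean $\Gamma_k$-subpath contained in $S_k$, which also justifies the masking step); the only cosmetic suggestion is to stick with near-extremal metrics throughout, as in your opening paragraph, so that no existence claim for extremal metrics is needed.
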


    \begin{prop}\label{addlemma2}
        	Let $V_1, V_2, \ldots, V_{2 m}$ be mutually disjoint, non-empty subsets of vertices such that for any $i_1<i_2<i_3, V_{i_2}$ separates $V_{i_1}$ from $V_{i_3}$. We allow $V_{2 m}=\infty$. Then 
		$$
		\operatorname{RES}\left(V_1, V_{2 m}\right) \geq \sum_{i=1}^m \operatorname{RES}\left(V_{2 k-1}, V_{2 k}\right).
		$$
    \end{prop}
	By way of contrast, the continuous counterparts of the above propositions are also valid and can be found in Ahlfors's well-known book \cite{ahlfors}.
    
	\begin{defn}
		A connected graph $G$ is called \textbf{VEL-parabolic (or VEL-hyperbolic, resp.)} if $\vel (V_1, \infty) =+ \infty$ (or $\vel (V_1, \infty) <+ \infty$, resp.) for any  finite nonempty set $V_1$.
	\end{defn}
	
	\begin{defn}
		A connected weighted graph $(G, \omega)$ is called \textbf{recurrent (or transient, resp.)} if $\res (V_1, \infty) =+ \infty$ (or $\res (V_1, \infty) <+ \infty$, resp.) for any  finite nonempty set $V_1$.
	\end{defn}
	
	It is worth noting that the ``any" in the above two definitions can be replaced with ``some". Moreover, there is a relationship between $\vel (V_1, V_2) $ and $\res (V_1, V_2)$.

	In order to prove the rigidity of RCPs, we need the following Lemma.
	\begin{lem}\cite[Lemma 5.5]{He}\label{harmonic_current}
		For a recurrent connected weighted graph $(G, \omega)$, if $f: V \rightarrow \mathbb{R}$ is a bounded harmonic function, then $f$ is a constant function.
	\end{lem}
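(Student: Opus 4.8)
The statement is the classical fact that a recurrent network carries no non-constant bounded harmonic functions, and the plan is to prove it by a discrete Caccioppoli-type energy estimate driven by the vanishing of the capacity to infinity. First I would normalize: since $-f$ is harmonic whenever $f$ is, and adding constants preserves harmonicity, it suffices to treat a harmonic $f$ with $|f|\le M$ and to show that $f_a=f_b$ for every edge $[a,b]\in E$; connectedness of $G$ then forces $f$ to be constant. The engine of the argument is the dual, node-based reading of recurrence: since $(G,\omega)$ is recurrent, $\res(A,\infty)=+\infty$ (equivalently $\cond(A,\infty)=0$) for every finite $A\subset V$, and by the standard equivalence between the edge-metric conductance defined in this section and the effective conductance $\inf\{\mathcal D(\varphi):\varphi|_A=1,\ \varphi\text{ finitely supported}\}$, where $\mathcal D(\varphi)=\tfrac12\sum_{i\sim j}\omega_{ij}(\varphi_i-\varphi_j)^2$, one obtains cutoff functions $\psi_n$ with finite support, $0\le\psi_n\le1$, $\psi_n\equiv1$ on $A$, and $\mathcal D(\psi_n)\to0$ (truncation to $[0,1]$ never raises the energy).

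Next I would exploit harmonicity through discrete integration by parts. For any finitely supported $\varphi$ one has $\tfrac12\sum_{i\sim j}\omega_{ij}(\varphi_i-\varphi_j)(f_i-f_j)=-\sum_i\varphi_i\,\Delta_G f_i$, which vanishes because $f$ is harmonic. Applying this with $\varphi=\psi_n^2 f$ (finitely supported since $\psi_n$ is) and inserting the algebraic identity $\psi_i^2 f_i-\psi_j^2 f_j=\tfrac12(\psi_i^2+\psi_j^2)(f_i-f_j)+\tfrac12(\psi_i^2-\psi_j^2)(f_i+f_j)$ splits the vanishing sum into a nonnegative ``good'' term $T_1=\tfrac14\sum_{i\sim j}\omega_{ij}(\psi_i^2+\psi_j^2)(f_i-f_j)^2$ and a cross term $T_2$. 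Writing $\psi_i^2-\psi_j^2=(\psi_i-\psi_j)(\psi_i+\psi_j)$ and applying Cauchy--Schwarz, then using $|f_i+f_j|\le 2M$ together with $(\psi_i+\psi_j)^2\le 2(\psi_i^2+\psi_j^2)$, I would obtain the key bound $|T_2|\le 2M\,\mathcal D(\psi_n)^{1/2}\,T_1^{1/2}$. Combined with $T_1+T_2=0$ this gives $T_1^{1/2}\le 2M\,\mathcal D(\psi_n)^{1/2}$, hence $T_1\le 4M^2\,\mathcal D(\psi_n)$.

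Finally, because $\psi_n\equiv1$ on the fixed finite set $A$, the good term dominates the energy carried by the edges inside $A$: one has $\sum_{[a,b]\subset A}\omega_{ab}(f_a-f_b)^2\le T_1\le 4M^2\,\mathcal D(\psi_n)$. Letting $n\to\infty$ and using $\mathcal D(\psi_n)\to0$ forces $f_a=f_b$ for every edge contained in $A$; exhausting $V$ by finite sets $A$ yields $f_a=f_b$ for all $[a,b]\in E$, so $f$ is constant. The step I expect to be the main obstacle is the clean reconciliation of the paper's extremal-length/edge-metric definitions of $\res$ and $\cond$ with the effective-conductance (Dirichlet capacity) formulation that produces the cutoffs $\psi_n$, i.e.\ verifying that recurrence in the sense $\res(A,\infty)=+\infty$ really supplies finitely supported functions equal to $1$ on $A$ with arbitrarily small Dirichlet energy; once this duality is in hand, the Caccioppoli computation and the Cauchy--Schwarz bound are routine. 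Alternatively one could argue probabilistically: recurrence makes the associated random walk visit every vertex infinitely often, while $f(X_n)$ is a bounded martingale and hence almost surely convergent, forcing all values $f(v)$ to coincide; but this requires setting up the walk, whereas the energy argument stays entirely within the network formalism already developed above (and in particular is compatible with the maximum principle in Lemma \ref{maxharmonic} and the comparison in Lemma \ref{VEL_current}).
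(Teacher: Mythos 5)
Your proposal is correct, but note that the paper does not prove Lemma \ref{harmonic_current} at all: it is imported verbatim as \cite[Lemma 5.5]{He}, so there is no in-paper argument to compare against. What you supply is a complete, self-contained proof by the standard ``null capacity implies Liouville property'' route: cutoffs $\psi_n$ of vanishing Dirichlet energy, the test function $\psi_n^2 f$, the splitting $\psi_i^2f_i-\psi_j^2f_j=\tfrac12(\psi_i^2+\psi_j^2)(f_i-f_j)+\tfrac12(\psi_i^2-\psi_j^2)(f_i+f_j)$, and Cauchy--Schwarz to absorb the cross term; the algebra checks out and yields $T_1\le 4M^2\mathcal D(\psi_n)\to 0$, which kills the gradient of $f$ on every edge of a fixed finite set. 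The one step you rightly single out is the only one that is not purely mechanical: the paper defines $\cond(\Gamma(A,\infty))$ via admissible \emph{edge metrics} for path families, whereas your construction needs \emph{finitely supported} potentials equal to $1$ on $A$ with small Dirichlet energy. This is the discrete Dirichlet principle (an admissible $\eta$ of small area yields the $1$-Lipschitz potential $\rho_\eta(v)=\min\{1,\inf_\gamma\int_\gamma d\eta\}$ with $\mathcal D(\rho_\eta)\le\area_\omega(\eta)$, and one then truncates to the exhaustion $B_n$ to get finite support, using the monotone limit $\cond(\Gamma(A,\partial B_n))\downarrow\cond(\Gamma(A,\infty))$); it is classical and is exactly the machinery He develops in the section from which Lemmas \ref{VEL_current} and \ref{harmonic_current} are quoted, so no genuine gap remains. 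The probabilistic alternative you sketch (bounded martingale $f(X_n)$ plus recurrence visiting every vertex infinitely often) is also valid and is the more common textbook proof, but your energy argument has the advantage of staying entirely within the extremal-length formalism the paper actually sets up.
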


		\begin{figure}[h]
			\centering
			\includegraphics[width=0.5\textwidth]{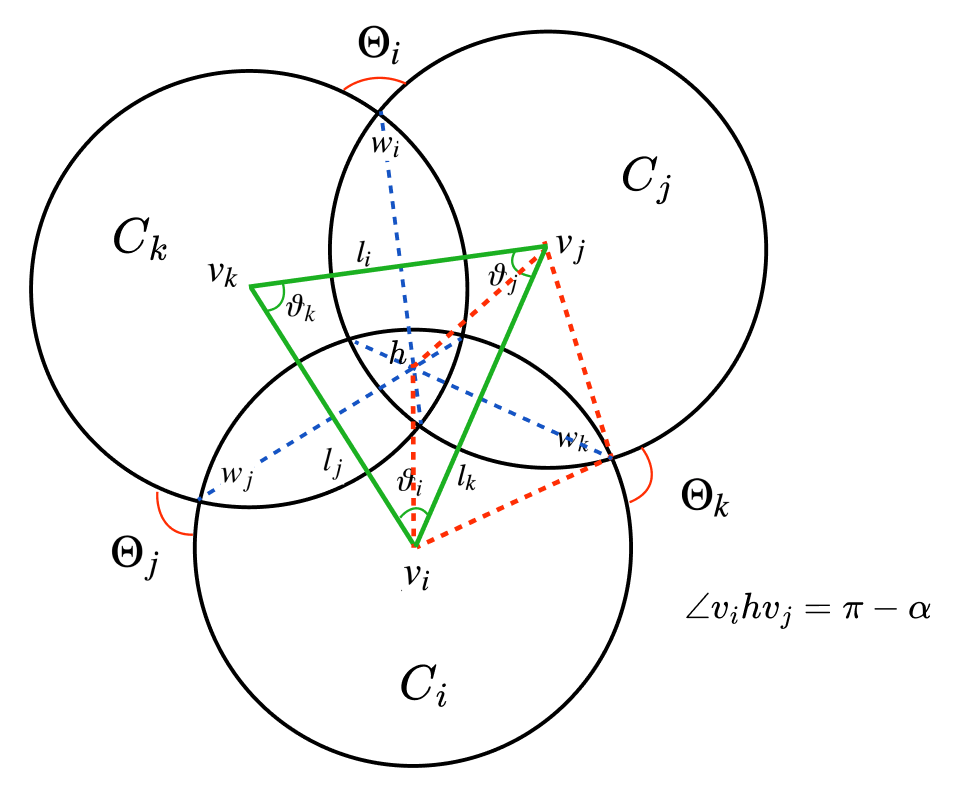}
			\caption{the geometric meaning of $r_j\frac{\partial \vartheta_i}{\partial r_j}$.}
			\label{partialconstrlemma1}
		\end{figure}	
		Now we introduce the weight associated to a circle pattern.
		Given a circle pattern of a triangulation on the plane, let $\{\Delta_{v_iv_jv_k}\}$ be a triangular face.
        Let $L_i,L_j$ and $L_k$ be straight lines passing through the intersection points of the pairs $\{C_j,C_k\},\{C_i,C_k\}$ and $\{C_i,C_j\}$ respectively. By \cite[Lemma A-1]{CL03}, as shown in Figure \ref{partialconstrlemma1}, we know $w_i,w_j$ and $w_k$ intersect at a common point $h$. By $h_i,h_j$ and $h_k$ we denote the distance from $h$ to the straight lines $v_jv_k$, $v_iv_k$ and $v_iv_j$. Let $H_k$ be $d_k$ if $h$ is inside the triangle and $-d_k$ otherwise. Then we have  $$r_j\frac{\partial \vartheta_i}{\partial r_j}=\frac{H_k}{l_k}.$$  Therefore, by Lemma \ref{vari},  if the angles $\Theta_i,\Theta_j$ and $\Theta_k$ satisfy $\ref{cos-condition}$, $h$ is contained in the triangle $\Delta_{ijk}$. Now for an edge $[i,j]$ of the face, we denote by $h_{ij}^k$ the quantity $\frac{H_k}{l_k}$. Then one can define weight on edges
        \[
        \omega_{ij}=\sum_{k:[i,j,k]~\text{is a triangle}}h_{ij}^k.
        \]
There is a geometric interpretation of the weight $\omega$, see \cite{Glickenstein15}. For any three circles in general position, there exists a circle that is perpendicular to each of them. For a given circle pattern of a triangulation $\mathcal{T}$ and each face $f=[i,j,k]$ of $\mathcal{T}$, we denote by $h_{ijk}$ the center of the circle that is perpendicular to $C_i, C_j$ and $C_k$. We call $h_{ijk}$ the center of the triangle $\Delta_{ijk}$. Connecting the centers of the triangles, we obtain a dual graph of the triangulation. The dual edge $e^*=[i,j]^*$ is obtained by connecting two centers of faces that are adjacent to $e=[i,j]$. 

\textbf{For an edge $e$ of the triangulation, we denote by $Q_e$ the quadrilateral whose diagonals are $e$ and $e^*$, as shown in Figure \ref{partialconstrlemma1}.} This quadrilateral is obtained by connecting the endpoints of edge $e$ and its dual, which is not necessarily convex.
Moreover, the edge and its dual are perpendicular. The weight is the ratio of the dual edge length to the edge length. Now we have the following lemma. Noticing that a similar lemma appeared in \cite[Lemma 4.4]{Ulrike08}.
\begin{lem}\label{recurrent_lemma}
    Let $\mathcal{T}=(V,E,F)$ be a disk triangulation, $\Theta\in [0,\pi)^E$ be an angle function. Assume that $(Z_3)$ and $(Z_4)$ hold. Let $\mathcal{P}$ be a parabolic RCP, and $\omega$ be the weight associated with $\mathcal{P}$ defined above, then $(\mathcal{T},\omega)$ is recurrence.
\end{lem}
\begin{proof}
    Since $\mathcal{P}$ is RCP-parabolic, it is locally finite. Let $v_0\in V$, be a fixed vertex. Without loss of generality, we assume that $C_{v_0}$ is the unit circle.  We denote by $V_0$ the set of vertices whose circles intersect with $C_{v_0}$. Let $R$ be a number large enough such that for all $v\in V_0$, $C_{v_0}\subset D(0,R)$. Now let $\eta$ be a function on edges defined as
    \[
    \eta(e)=\begin{cases}
    |e\cap D(0,2R)|& e~~\text{has a vertex inside }D(0,2R),
    \\
    0& \text{otherwise.}
    \end{cases}
    \]
    Here $e\cap D(0,2R)$ is the length of the segment $|e\cap D(0,2R)|$.
    Since $\pac$ is locally finite, we can find a finite vertex set $V_1$ that separates $V_0$ and $\infty$. Then clearly, $\frac{\eta}{R}$ is $\Gamma(V_0,V_1)$ admissible. Now assume that an edge $e$ has a vertex $v$ inside $D(0,2R)$. If that vertex belongs to $V_0$, by the definition of $D(0,R)$, the circle is contained in $D(0,R)$, and its radius is smaller than $R$. If $v$ does not belong to $D(0,R)$, by the definition of $V_0$,  $C_v\cap C_{v_0}=\emptyset$. Clearly, we have $r_{v}\le R$. In both two cases, we see that $C_{v}\subset D(0,3R)$. Therefore, the quadrilateral $\frac{|e\cap D(0,2R)|}{|e|}Q_e$ is contained in $B(0,3R)$. Here, $\frac{|e\cap D(0,2R)|}{|e|}Q_e$ means a quadrilateral inside $Q_e$ that shares the vertex $v$ with $Q_e$ and is scaled by a factor of $\frac{|e\cap D(0,2R)|}{|e|}$. Now, computing the conductance, we have
    \[
    \mathrm{area(\eta)}\le\sum_{e\in E}\omega(e)\eta^2(e)=\frac{1}{R^2}\sum_{e:e~\text{has a vertex in}~ D(0,2R)}\omega(e)|e\cap D(0,2R)|^2.
    \]
    It is easy to see that the quantity $\omega(e)|e\cap D(0,2R)|^2$ is the area of $\frac{|e\cap D(0,2R)|}{|e|}Q_e$. Therefore, we have 
    $$\mathrm{COND}(V_0,V_1)\le 9\pi.$$ This means 
    $\mathrm{RES(V_0,V_1)}\ge \frac{1}{9\pi}$.
    By induction, we can construct infinitely many vertex sets $V_0,V_1,V_2,\cdots$ such that 
    $V_i$ separates $V_{i-1}$ and $V_{i+1}$ with 
    $\mathrm{RES}(V_{2i},V_{2i+1})\ge \frac{1}{9\pi}$. By Proposition \ref{addlemma2}, $\mathcal{T}$ is recurrent.
\end{proof}

Also, we have the following estimation of the weight $\omega$.
\begin{lem}\label{partialthetabound}
		Assume $\Theta_i, \Theta_j, \Theta_k \in[0, \pi-\epsilon]$, where $0<\epsilon<\pi$. Suppose that they satisfy (\ref{cos-condition}). Then in the Euclidean background geometry, there is a constant $C=C(\epsilon)>0$ so that
		$$
		0\le \omega_{ij}=r_j\frac{\partial \vartheta_i}{\partial r_j}<C\vartheta_i.
		$$
	\end{lem}
	\begin{proof}
 Just as the proof of \cite[Lemma~3.3]{He}, the center of the face $h$ is contained in the convex hull of circles $C_i$ and $C_j$. Therefore, $0\le H_k\le r_i+r_j$. Hence, 
		\[
		r_j\frac{\partial \vartheta_i}{\partial r_j}\le \frac{r_i+r_j}{\sqrt{r_i^2+r_j^2+2r_ir_j\cos\Theta_k }}\le \frac{2}{\sin{\epsilon}}.
		\]
		Therefore, when $\vartheta_i\le 1$, it is easy to see that 
		\[
		\frac{H_k}{l_k}\le \tan\vartheta_i\le C\vartheta_i,
		\]
		with some universal constant $C$. When $\vartheta_i>1$, we have
		\[\frac{H_k}{l_k}\le C(\epsilon)\vartheta_i,\]
		where $C(\epsilon)=\frac{2}{\sin\epsilon}.$
	\end{proof}
    The following Lemma links the resistance to the vertex extremal length. 
    \begin{lem}\cite[Lemma 5.4]{He}\label{VEL_current}
		If there is a uniform constant $C>0$, such that 
		$$
		\sum_{j \sim i}\omega_{ij} \leq C
		$$
		for any $i\in V$, then for any pair of mutually disjoint, nonempty sets $V_1, V_2$, we have
		$$
		\vel (V_1, V_2)  \leq 2C \cdot \res (V_1, V_2).
		$$
		Moreover, if $G$ is connected and VEL-parabolic, then  $(G, \omega)$ is recurrent.
	\end{lem}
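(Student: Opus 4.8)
The plan is to prove the equivalent reciprocal inequality $\cond(\Gamma) \le 2C\,\modc(A)$, where $\Gamma = \Gamma(V_1, V_2)$ and $A$ is the same family of connecting paths viewed as vertex subsets; since $\vel = 1/\modc$ and $\res = 1/\cond$, this is exactly the asserted bound $\vel(V_1, V_2) \le 2C\,\res(V_1, V_2)$. The crux is a transfer principle that converts any admissible \emph{vertex} metric into a comparably sized admissible \emph{edge} metric. Given an $A$-admissible vertex metric $\nu$, I would define an edge metric by $\eta_{[i,j]} := \nu_i + \nu_j$ for each $[i,j] \in E$.

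First I would check that $\eta$ is $\Gamma$-admissible. For a path $\gamma = (v_0, v_1, \dots, v_n)$ joining $V_1$ and $V_2$, each interior vertex is counted twice in the edge sum, so
$$
\int_\gamma d\eta = \sum_{t=0}^{n-1}\bigl(\nu_{v_t} + \nu_{v_{t+1}}\bigr) \ge \sum_{t=0}^{n} \nu_{v_t} = l(\gamma) \ge 1,
$$
the final inequality holding because $\nu$ is $A$-admissible. Hence $\eta$ is $\Gamma$-admissible and $\cond(\Gamma) \le \area_\omega(\eta)$. Next I would estimate this area. Using the elementary bound $(\nu_i + \nu_j)^2 \le 2(\nu_i^2 + \nu_j^2)$ and regrouping the sum over edges as a sum over vertices,
$$
\area_\omega(\eta) = \sum_{[i,j]\in E}\omega_{ij}(\nu_i+\nu_j)^2 \le 2\sum_{[i,j]\in E}\omega_{ij}(\nu_i^2+\nu_j^2) = 2\sum_{i\in V}\nu_i^2\sum_{j\sim i}\omega_{ij} \le 2C\sum_{i\in V}\nu_i^2 = 2C\,\area(\nu),
$$
where the last inequality invokes the hypothesis $\sum_{j\sim i}\omega_{ij}\le C$. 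Combining the two displays gives $\cond(\Gamma) \le 2C\,\area(\nu)$ for every $A$-admissible $\nu$; taking the infimum over such $\nu$ yields $\cond(\Gamma) \le 2C\,\modc(A)$, which is the desired inequality.

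Finally, for the recurrence assertion I would run the identical construction with $V_2$ replaced by infinity, now letting $\gamma$ range over the infinite paths issuing from $V_1$; the admissibility transfer $\eta_{[i,j]} = \nu_i + \nu_j$ and the area estimate are unchanged, so $\vel(V_1,\infty) \le 2C\,\res(V_1,\infty)$ for every finite nonempty $V_1$. If $(G,\omega)$ is connected and VEL-parabolic, the left-hand side is $+\infty$, which forces $\res(V_1,\infty) = +\infty$ and hence recurrence. I expect no substantial obstacle: the whole argument is driven by the single choice $\eta_{[i,j]} = \nu_i + \nu_j$, and the only point requiring care is the bookkeeping between the vertex-path family $A$ and the edge-path family $\Gamma$ in the limiting case $V_2 = \infty$, namely verifying that the admissibility comparison holds verbatim for infinite paths. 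This is routine once the finite case is established.
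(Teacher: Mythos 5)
Your proof is correct and is essentially the standard argument for this lemma (the paper itself only cites \cite[Lemma 5.4]{He}, and your construction $\eta_{[i,j]}=\nu_i+\nu_j$, the admissibility transfer, and the Cauchy--Schwarz-type area estimate are exactly the argument in He's proof). The extension to $V_2=\infty$ and the recurrence conclusion are handled correctly as well.
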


In the end, we introduce the vertex extremal length of the cuts on graphs, which is the dual of that of the path set.	
	\begin{defn}
		We say the set $V_3$ \textbf{separates} $V_1$ from $V_2$, if $V_3\cap \gamma \neq \emptyset$, for any $\gamma \in \Gamma(V_1, V_2)$.
	\end{defn}
	
	Let $\Gamma^*(V_1, V_2)= \{\alpha \subset V: \alpha \cap \gamma \neq \emptyset, \forall \gamma \in \Gamma(V_1, V_2)\}$, there is an important argument concerning the duality of vertex extremal length:
	\begin{thm}(\cite{He2})\label{veldual}
		For any two nonempty sets $V_1, V_2 \subset V$ (we allow $V_2= \infty$), we have
		$$
		\vel (\Gamma^*(V_1, V_2))= 	\vel (\Gamma(V_1, V_2))^{-1}.
		$$ 
	\end{thm}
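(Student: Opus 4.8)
Since $\vel(\cdot)=\modc(\cdot)^{-1}$, the assertion is equivalent to the single product identity $\modc\big(\Gamma(V_1,V_2)\big)\cdot\modc\big(\Gamma^*(V_1,V_2)\big)=1$, and the plan is to prove the two inequalities separately, viewing a vertex metric as an element of $\ell^2(V)$ and writing $\langle\nu,\mu\rangle=\sum_{v\in V}\nu_v\mu_v$. Throughout, abbreviate $\Gamma=\Gamma(V_1,V_2)$ and $\Gamma^*=\Gamma^*(V_1,V_2)$, and recall that by the very definition of $\Gamma^*$ every path $\gamma\in\Gamma$ meets every separating set $\alpha\in\Gamma^*$; this transversality is the only combinatorial input needed.

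For the inequality $\modc(\Gamma)\,\modc(\Gamma^*)\ge1$ I would first establish a length--area lemma valid for arbitrary admissible metrics: if $\nu$ is $\Gamma$-admissible and $\mu$ is $\Gamma^*$-admissible, then $\langle\nu,\mu\rangle\ge1$. To see this, form the potential $u(v)=\min\{1,\,d_\nu(V_1,v)\}$, where $d_\nu$ is the $\nu$-distance from $V_1$, and for $t\in(0,1)$ set $\alpha_t=\{v:\,u(v)-\nu_v<t\le u(v)\}$. Since $\nu$ is $\Gamma$-admissible, $u$ rises from $0$ on $V_1$ to $1$ on $V_2$, so each $\alpha_t$ meets every $\gamma\in\Gamma$; hence $\alpha_t\in\Gamma^*$ and $\mu$-admissibility gives $\sum_{v\in\alpha_t}\mu_v\ge1$. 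Integrating in $t$ and using the discrete co-area estimate $|\{t\in(0,1):v\in\alpha_t\}|\le\nu_v$ yields
\[
1\le\int_0^1\Big(\sum_{v\in\alpha_t}\mu_v\Big)\,dt\le\sum_{v\in V}\mu_v\,\nu_v=\langle\nu,\mu\rangle.
\]
Feeding this into Cauchy--Schwarz, $1\le\langle\nu,\mu\rangle\le\area(\nu)^{1/2}\area(\mu)^{1/2}$, and taking infima over admissible $\nu,\mu$ gives $\modc(\Gamma)\,\modc(\Gamma^*)\ge1$.

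For the reverse inequality I would exploit the area minimizer. The set $C$ of $\Gamma$-admissible metrics is a closed convex subset of the positive cone of $\ell^2(V)$ (each constraint $\langle\nu,\mathbf 1_\gamma\rangle\ge1$ is a closed half-space, as every path $\gamma$ is finite); when $\modc(\Gamma)<\infty$ it is nonempty, so the area-minimizing metric $\nu$ exists as the Hilbert-space projection of the origin onto $C$. The first-order optimality condition for this convex minimization reads $\langle\nu,\nu'\rangle\ge\area(\nu)=\modc(\Gamma)$ for every $\nu'\in C$. The decisive observation is that for each separating set $\alpha\in\Gamma^*$ the indicator $\mathbf 1_\alpha$ is itself $\Gamma$-admissible, because any $\gamma\in\Gamma$ crosses $\alpha$ at least once; applying the optimality inequality with $\nu'=\mathbf 1_\alpha$ gives $\sum_{v\in\alpha}\nu_v=\langle\nu,\mathbf 1_\alpha\rangle\ge\modc(\Gamma)$ for \emph{every} $\alpha\in\Gamma^*$. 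Consequently $\nu/\modc(\Gamma)$ is $\Gamma^*$-admissible with area $\modc(\Gamma)^{-1}$, whence $\modc(\Gamma^*)\le\modc(\Gamma)^{-1}$. Combining the two inequalities proves the identity.

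Finally I would dispatch the technical points. The degenerate cases $\modc(\Gamma)\in\{0,\infty\}$ (which include the parabolic situation $V_2=\infty$) follow directly from the length--area lemma of the second paragraph without invoking a minimizer: if $\area(\nu)\to0$ along admissible $\nu$, then $\langle\nu,\mu\rangle\ge1$ forces $\area(\mu)\to\infty$, so $\modc(\Gamma^*)=\infty$, and symmetrically. For $V_2=\infty$ one also checks that the relevant separating sets may be taken finite, so that $\mathbf 1_\alpha\in\ell^2(V)$ and the projection argument applies verbatim. I expect the main obstacle to be making the discrete co-area step fully rigorous — verifying that each $\alpha_t$ genuinely separates $V_1$ from $V_2$ and that the endpoint bookkeeping in the potential $u$ leaves the bound $|\{t:v\in\alpha_t\}|\le\nu_v$ intact — together with the limiting argument that transports these facts to the infinite graph and to the case $V_2=\infty$; the variational and Cauchy--Schwarz steps are otherwise elementary.
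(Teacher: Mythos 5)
The paper does not prove this statement at all: it is quoted verbatim from He--Schramm \cite{He2}, so there is no internal proof to compare against. What you have written is, in essence, a reconstruction of the standard proof of vertex-extremal-length duality (it is the same strategy as in \cite{He2}): the inequality $\modc(\Gamma)\,\modc(\Gamma^*)\ge 1$ via the level sets $\alpha_t$ of the truncated $\nu$-distance from $V_1$ together with the discrete co-area bound and Cauchy--Schwarz, and the reverse inequality by testing the variational inequality for the $\ell^2$-projection minimizer $\nu$ against the indicators $\mathbf 1_\alpha$, $\alpha\in\Gamma^*$, which are themselves $\Gamma$-admissible. Both halves are sound: I checked that each $\alpha_t$ does lie in $\Gamma^*$ (take the first vertex $v_i$ along a path with $u(v_i)\ge t$ and use $u(v_i)-\nu_{v_i}\le u(v_{i-1})<t$, with the case $i=0$ handled by $u(v_0)\le\nu_{v_0}$), and the first-order condition $\langle\nu,\nu'\rangle\ge\area(\nu)$ is exactly the projection inequality. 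The reduction of infinite separating sets to finite ones, which you mention only in passing, does need a sentence: for locally finite $G$ and $V_1$ finite, the component of $V_1$ in $G\setminus\alpha$ is finite (K\H{o}nig's lemma), and its $\alpha$-neighbourhood is a finite separating subset of $\alpha$; since admissibility of $\nu/\modc(\Gamma)$ against $\alpha$ follows from admissibility against any subset, this closes the gap.

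The one genuine slip is in your treatment of the degenerate cases. The length--area lemma gives $\area(\nu)\area(\mu)\ge1$ for every admissible pair, which indeed shows $\modc(\Gamma)=0\Rightarrow\modc(\Gamma^*)=\infty$; but it is an inequality in the wrong direction for the case $\modc(\Gamma)=\infty$, where one must \emph{exhibit} $\Gamma^*$-admissible metrics of arbitrarily small area to conclude $\modc(\Gamma^*)=0$, and your minimizer argument is unavailable because $C\cap\ell^2(V)=\emptyset$. "And symmetrically" does not apply, since $\Gamma$ and $\Gamma^*$ do not play symmetric roles in the lemma. This case is vacuous in every application the paper makes of Theorem \ref{veldual} (there $V_1$ is finite, so $\mathbf 1_{V_1}$ is $\Gamma$-admissible and $\modc(\Gamma)\le|V_1|<\infty$), but as the statement is phrased for arbitrary nonempty $V_1,V_2$ you should either restrict the hypotheses or supply a separate construction for $\modc(\Gamma)=\infty$.
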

	By the definition of $\Gamma$ and $\Gamma^*$, we have the following  properties.
	\begin{prop}\label{nestlemma}
		Let $V_0, V_1, V_2, \cdots, V_n$ be mutually disjoint, finite, connected subsets of vertices. We allow $V_n = \infty$. Assume that for any $0 \leq i_1 < i_2 < i_3 \leq n$, the set $V_{i_2}$ separates $V_{i_1}$ from $V_{i_3}$. Then $\Gamma^*(V_a, V_b) \subset \Gamma^*(V_a^*, V_b^*)$, if $a^*<a<b<b^*$.
	\end{prop}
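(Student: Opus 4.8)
The plan is to strip away the extremal-length packaging and reduce the inclusion to a one-line combinatorial fact about individual paths. By definition, $\alpha\in\Gamma^*(V_a,V_b)$ precisely when $\alpha$ meets every path joining $V_a$ and $V_b$. So to establish $\Gamma^*(V_a,V_b)\subset\Gamma^*(V_{a^*},V_{b^*})$ I would fix an arbitrary $\alpha\in\Gamma^*(V_a,V_b)$ together with an arbitrary $\gamma\in\Gamma(V_{a^*},V_{b^*})$, and aim to prove $\alpha\cap\gamma\neq\emptyset$. Since $\gamma$ is arbitrary, this is exactly the assertion $\alpha\in\Gamma^*(V_{a^*},V_{b^*})$.

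The engine of the argument is to invoke the separation hypothesis twice on the single path $\gamma$. The chain $a^*<a<b<b^*$ contains the increasing triple $a^*<a<b^*$, so by hypothesis $V_a$ separates $V_{a^*}$ from $V_{b^*}$; as $\gamma\in\Gamma(V_{a^*},V_{b^*})$, this forces $\gamma$ to contain a vertex $p\in V_a$. It likewise contains the increasing triple $a^*<b<b^*$, so $V_b$ separates $V_{a^*}$ from $V_{b^*}$ and $\gamma$ contains a vertex $q\in V_b$. Mutual disjointness of the $V_i$ gives $p\neq q$.

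The conclusion is then immediate: the portion $\gamma'$ of $\gamma$ lying between $p$ and $q$ is a path with one endpoint in $V_a$ and the other in $V_b$, hence $\gamma'\in\Gamma(V_a,V_b)$. Since $\alpha\in\Gamma^*(V_a,V_b)$, we get $\alpha\cap\gamma'\neq\emptyset$, and because $\gamma'$ is a subpath of $\gamma$ this yields $\alpha\cap\gamma\neq\emptyset$, as desired.

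There is no serious obstacle here; the proof is genuinely short once the definitions are unfolded, and the only points I would double-check are bookkeeping ones. First, one must verify that the two index triples fed into the separation hypothesis are honestly increasing, which is guaranteed by $a^*<a<b<b^*$. Second, the degenerate case $V_n=\infty$ with $b^*=n$ deserves a remark: there $\gamma$ is an infinite path escaping to infinity, and ``$V_{i_2}$ separates $V_{i_1}$ from $\infty$'' is read in the usual sense that every path from $V_{i_1}$ to infinity meets $V_{i_2}$; the two applications still trap $\gamma$ through $V_a$ and $V_b$, and the extracted subpath $\gamma'$ remains finite and a legitimate member of $\Gamma(V_a,V_b)$. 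Note in particular that neither the finiteness nor the connectedness of the $V_i$ is used beyond ensuring $V_a\cap V_b=\emptyset$.
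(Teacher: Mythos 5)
Your proof is correct and follows essentially the same route as the paper's: the paper likewise observes that any $\gamma\in\Gamma(V_{a^*},V_{b^*})$ must contain a subpath $\gamma_0\in\Gamma(V_a,V_b)$ and then concludes $\alpha\cap\gamma\supset\alpha\cap\gamma_0\neq\emptyset$. You merely make explicit (via the two applications of the separation hypothesis) why that subpath exists, which the paper leaves implicit.
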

	\begin{proof}
		Since the sets $V_a, V_b, V_a^*, V_b^*$ satisfy the nested separation condition, any path $\gamma$ joining $V_a^*$ and $V_b^*$ must contain a sub-path $\gamma_0$ joining $V_a$ and $V_b$. For any $\alpha \in \Gamma^*(V_a, V_b) $, we have $\emptyset \neq \alpha \cap \gamma_0 \subset \alpha \cap \gamma$, which means $\alpha \in \Gamma^*(V_a^*, V_b^*)$.
	\end{proof}

	\subsection{The uniformization theorem for RCPs}
	
	In this section, we establish the uniformization theorem for RCPs, which is stated as follows.
	
	\begin{thm}\label{uniformization_RCP}
		Let $\mathcal{T}= (V, E, F)$ be a disk triangulation with an angle function $\Theta\in[0,\pi)^E$ and $\sup_{e\in E}\Theta(e)<\pi$. The graph $G=(V,E)$ is induced from $\mathcal{T}$. Then the following two statements are equivalent:
		\begin{enumerate}
			\item [(\textbf{U1})] $G$ is VEL-parabolic (VEL-hyperbolic resp.).
			\item [(\textbf{U2})] $(G,\Theta)$ is RCP parabolic (RCP hyperbolic resp.).
		\end{enumerate}
	\end{thm}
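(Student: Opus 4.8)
The plan is to reduce the combinatorial dichotomy (U1) to the geometric type of the carrier of a realizing RCP, and then to identify both with the conformal type of that carrier through a comparison between the vertex extremal length of $G$ and the extremal length of $\carrier(\pac)$. First I would fix a realizing RCP: under the standing hypotheses one exists by Theorem \ref{RCPe}, so normalize it so that a base disk $D_{v_0}$ is centered at the origin with radius $1$, and write $\Omega=\carrier(\pac)$. The theorem then follows from two implications, (I) $\Omega=\mathbb{C}\Rightarrow G$ is VEL-parabolic, and (II) $\Omega=\mathbb{U}\Rightarrow G$ is VEL-hyperbolic, together with the carrier dichotomy $\Omega\in\{\mathbb{C},\mathbb{U}\}$ (which I would extract from the boundary-tangency normalization of the finite approximants in Theorem \ref{exist_boundary_radius}, giving that $\Omega$ is simply connected, and from the comparison below pinning down its conformal type) and the standard fact that $G$ is exactly one of VEL-parabolic or VEL-hyperbolic.

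The analytic engine is the radii metric $\nu_i=r_i$. The key preliminary estimate, the one place where $\Theta\le\pi-\epsilon$ is indispensable, is a uniform bounded-overlap inequality: by Lemma \ref{indispensable} the set $A_i=D_i\setminus\bigcup_{j\ne i}D_j$ is nonempty, and the three-circle geometry of Lemma \ref{sanyuangouxing_yinli} together with $\Theta\le\pi-\epsilon$ upgrades this to a quantitative bound $\area(A_i)\ge c(\epsilon)\,r_i^2$. Since the $A_i$ are disjoint and the disks tile $\Omega$ up to interstices, this yields $\sum_i r_i^2\asymp_\epsilon\area(\Omega)$, and the same overlap control gives a length comparison: a combinatorial path has $\nu$-length comparable to the Euclidean length of the polygonal curve through the corresponding centers, since consecutive centers are within $r_u+r_v$ of one another. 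Finally, Lemma \ref{sectionlemma} supplies the dual fact that a vertex set separating $v_0$ from $\infty$ carries disks whose union covers a closed curve enclosing $D_{v_0}$.

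For (II), suppose $\Omega=\mathbb{U}$. Then $\area(\nu)=\sum_i r_i^2\asymp\area(\mathbb{U})<\infty$. Any separating set $\alpha\in\Gamma^{*}(v_0,\infty)$ covers, by Lemma \ref{sectionlemma}, a curve enclosing $D_{v_0}$; such a curve has length $\ge c_0>0$ and is covered by the disks $D_v,\ v\in\alpha$, whence $\sum_{v\in\alpha}2r_v\ge c_0$. Thus $(2/c_0)\nu$ is admissible for the separating family $\Gamma^{*}(v_0,\infty)$ and has finite area, so $\modc(\Gamma^{*}(v_0,\infty))<\infty$, and the duality of Theorem \ref{veldual} gives
$$\vel\big(\Gamma(v_0,\infty)\big)=\vel\big(\Gamma^{*}(v_0,\infty)\big)^{-1}=\modc\big(\Gamma^{*}(v_0,\infty)\big)<\infty,$$
i.e. $G$ is VEL-hyperbolic. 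For (I), suppose $\Omega=\mathbb{C}$. Then the circles $C(2^k)$ lie in $\Omega$ for all large $k$, and by Lemma \ref{sectionlemma} the sets $\gamma_k=V_{C(2^k)}(\pac)$ are nested loops separating $v_0$ from $\infty$. The comparison estimates show that each $\vel(\gamma_{2k-1},\gamma_{2k})$ is bounded below by a uniform constant $c>0$ (it is comparable to the conformal modulus $\log 2$ of the round annulus $\{2^{2k-1}\le|z|\le 2^{2k}\}$). Since $\Omega=\mathbb{C}$ provides infinitely many such disjoint annuli, Proposition \ref{addlemma} yields $\vel(\Gamma(v_0,\infty))\ge\sum_k\vel(\gamma_{2k-1},\gamma_{2k})=\infty$, so $G$ is VEL-parabolic. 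Assembling (I), (II), the carrier dichotomy and the VEL dichotomy gives the equivalence in both the parabolic and hyperbolic forms.

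I expect the main obstacle to be the uniform bounded-overlap estimate $\area(A_i)\ge c(\epsilon)r_i^2$ and the attendant area and length comparisons: this is exactly where $\Theta\le\pi-\epsilon$ cannot be dropped, since as $\Theta\to\pi$ neighboring disks nearly coincide, the radii metric degenerates, and the comparison with continuous extremal length breaks down. A secondary technical point is making the carrier dichotomy $\Omega\in\{\mathbb{C},\mathbb{U}\}$ precise as an \emph{equality} of domains rather than merely an identification of conformal type; I would handle this through the boundary-tangency normalization of the approximating patterns in Theorem \ref{exist_boundary_radius} together with the simple connectivity of $\Omega$.
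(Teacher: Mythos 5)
Your implications (I) and (II) --- deducing the VEL type of $G$ from the carrier of a fixed realizing RCP --- follow essentially the paper's route: separating vertex sets as in Lemma \ref{sectionlemma}, a vertex metric built from disk diameters whose area is controlled by bounded overlap, the duality Theorem \ref{veldual}, and the series bound of Proposition \ref{addlemma}; these correspond to Lemma \ref{es}, Lemma \ref{parabolic_equivalent} and Part 1 of Lemma \ref{hyperbolic_equivalent}. Two corrections on the details. First, your ``key preliminary estimate'' $\area(A_i)\geq c(\epsilon)\,r_i^2$ is neither proved nor needed: Lemma \ref{indispensable} only gives $A_i\neq\emptyset$ with no quantitative content, and the paper instead uses the \emph{upper} bound that no point is covered by more than three disks (Lemma \ref{mostthree}, a consequence of ($Z_2$)), which already yields $\sum_i r_i^2\leq 3\,\area(\mathbb{U})/\pi$ when all disks lie in $\mathbb{U}$, as well as all the length and area comparisons in Lemma \ref{es}. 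Consequently the place where $\Theta\leq\pi-\epsilon$ is genuinely indispensable is not here but in the existence theory (the Ring Lemma \ref{ringlemma} behind Theorem \ref{RCPe}) and in the construction described below.

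The genuine gap is the ``carrier dichotomy'' $\Omega\in\{\mathbb{C},\mathbb{U}\}$, which you relegate to a secondary technical point but which is in fact the core of the hyperbolic half of the theorem. An RCP realizing $(G,\Theta)$ can be locally finite in an arbitrary Jordan domain --- the paper exploits exactly this in Section \ref{section-correspond-cp-hcp} to exhibit non-rigidity of THPs --- so the carrier of \emph{a} realizing pattern, even one obtained as a limit of the finite patterns of Theorem \ref{exist_boundary_radius}, is not automatically $\mathbb{C}$ or $\mathbb{U}$. Simple connectivity only pins down the conformal type, whereas (U2) demands that the carrier be the specific domain $\mathbb{U}$ (resp.\ $\mathbb{C}$) as a set. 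The paper's Lemma \ref{hyperbolic_equivalent}, Part 2, is devoted precisely to this: one normalizes each finite approximant so that its boundary circles are tangent to $C(R^{[n]})$, shows via the hyperbolic maximum principle (Theorem \ref{MaximumPrincipleHyper}) that $R^{[n]}$ increases to a finite limit $R^{[\infty]}$ (finiteness coming from Lemma \ref{parabolic_equivalent}), and then rules out accumulation points inside $D(R^{[\infty]})$ by playing the extremal-length lower bound of Lemma \ref{es} against Lemma \ref{sequenceaaq}. Without an argument of this kind your proof establishes only that the carrier type of one particular pattern determines the VEL type of $G$; it does not produce the RCP with carrier exactly $\mathbb{U}$ that the VEL-hyperbolic case of the equivalence requires.
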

	
	To prove the uniformization theorem for RCPs, we need to establish the estimate of vertex extremal length with the help of the geometry of an RCP.

	Let $\mathcal{P} = \{C_i\}_{i \in V}$ be a circle pattern realizing $(\mathcal{T}, \Theta)$. Let $D_i$ denote the closed disk bounded by the circle $C_i$, and let $o_i$ be the center of $C_i$.
	For a circle $C$, we denote by $V_C(\mathcal{P})$ the set of vertices $i \in V$ such that $D_i \cap C \neq \emptyset$.
	For any subset $A \subset V$, we define $\mathcal{P}(A) := \bigcup_{v \in A} D_v$.
	We write $C(o, r)$ for the circle centered at $o$ with radius $r$, and denote by $C(r)$ the circle centered at the origin with radius $r$, for brevity.
	\begin{lem}\label{es}
		Let $\mathcal{T}=(V,E,F)$ be a disk triangulation, and $\Theta\in [0 ,\pi)^E$ be the prescribed intersection angle that satisfies ($Z_2$). Suppose $\pac=\{C_i\}_{i \in V}$ is an RCP that realizes $(\mathcal{T}, \Theta)$. Then for any $0<r_1<r_2$, we have
		$$
		\vel (V_{C(r_1)}(\pac), V_{C(r_2)}(\pac)) \geq \frac{(r_2-r_1)^2}{(24+36\pi^2)r_2^2}.
		$$
	\end{lem}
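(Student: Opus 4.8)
The plan is to use the identity $\vel=1/\modc$ from the definitions and bound the vertex modulus of $\Gamma\big(V_{C(r_1)}(\pac),V_{C(r_2)}(\pac)\big)$ from above by exhibiting one economical admissible vertex metric. Write $A=\{z:r_1\le |z|\le r_2\}$ and $W=\{v\in V:D_v\cap A\neq\emptyset\}$, and for each vertex assign the \emph{radial shadow} of its disk inside the annulus,
\[
\ell_v=\mathrm{length}\big\{t\in[r_1,r_2]:D_v\cap C(t)\neq\emptyset\big\},\qquad \nu_v=\frac{\ell_v}{r_2-r_1}.
\]
Then $\nu$ is supported on $W$, and $\modc\le\area(\nu)=(r_2-r_1)^{-2}\sum_{v\in W}\ell_v^2$, so it suffices to prove that (i) $\nu$ is admissible and (ii) $\sum_{v\in W}\ell_v^2\le(24+36\pi^2)\,r_2^2$. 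I deliberately use the \emph{truncated} quantity $\ell_v$ rather than $r_v$ itself, for the reason explained at the end.

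For admissibility, take any path $\alpha=(v_0,\dots,v_n)\in\Gamma\big(V_{C(r_1)}(\pac),V_{C(r_2)}(\pac)\big)$. Since consecutive vertices are adjacent and $\Theta<\pi$, consecutive disks genuinely overlap, so $K=\bigcup_i D_{v_i}$ is connected; the function $z\mapsto|z|$ is continuous on $K$ and attains the values $r_1$ (on $D_{v_0}$) and $r_2$ (on $D_{v_n}$), hence its image contains $[r_1,r_2]$ by the intermediate value theorem. Therefore the radial shadows of the disks along $\alpha$ cover $[r_1,r_2]$, and subadditivity of length gives $\sum_i\ell_{v_i}\ge r_2-r_1$, i.e. $\int_\alpha d\nu\ge 1$. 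This step is purely topological and needs no curvature hypothesis.

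For the area bound, the decisive input is that an RCP satisfying $(Z_2)$ has covering multiplicity at most three: by Lemma \ref{mostthree}, $D_u\cap D_v\cap D_w\cap D_x=\emptyset$ for any four vertices. I would estimate $\ell_v^2$ by the area each disk sweeps out inside a fixed bounded region and then sum against this multiplicity. Split $W$ into \emph{small} disks ($2r_v\le r_2-r_1$) and \emph{large} ones. For small disks $\ell_v\le 2r_v$ and $\ell_v^2\le \tfrac{4}{\pi}\,\area\!\big(D_v\cap\overline{D(2r_2)}\big)$; summing and invoking multiplicity $\le 3$ bounds their total contribution by a constant multiple of $\area(\overline{D(2r_2)})=4\pi r_2^2$, which is arranged to give the $24\,r_2^2$ term. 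For large disks one argues on circles: a disk with $r_v$ of size comparable to or larger than $r_2$ that meets some $C(t)$ with $t\le r_2$ must occupy a definite fraction of that circle, so the three-fold multiplicity of the arcs $D_v\cap C(t)$ forces only boundedly many large disks to meet each $C(t)$; integrating in $t$ controls $\sum_{\mathrm{large}}\ell_v$ and hence, via $\ell_v\le r_2-r_1$, the quantity $\sum_{\mathrm{large}}\ell_v^2$, producing the $36\pi^2 r_2^2$ term. Combining, $\modc\le (24+36\pi^2)r_2^2/(r_2-r_1)^2$, and inverting yields the claim.

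The main obstacle is precisely this area estimate for large disks. The naive metric $\nu_v\propto r_v$ fails outright: in an RCP whose carrier is $\mathbb U$ with $r_2$ small, a single disk can have $r_v\gg r_2$ while only grazing the annulus, so $\sum_W r_v^2$ need not be $O(r_2^2)$. Replacing $r_v$ by the truncated shadow $\ell_v$ is what saves the argument, but it then forces one to genuinely use bounded multiplicity (Lemma \ref{mostthree}, hence the RCP and $(Z_2)$ hypotheses) and to carry out the delicate angular/arc-length counting that shows "few large disks cross each circle'' — this counting is the geometric heart of the estimate and is the natural source of the $\pi^2$ in the constant.
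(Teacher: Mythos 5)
Your overall strategy is the same as the paper's: build one admissible vertex metric from a truncated measure of each disk's size, invoke the multiplicity-three bound of Lemma \ref{mostthree} (this is where the RCP and $(Z_2)$ hypotheses enter, exactly as you say), and split the vertices into small and large disks to get the two terms of the constant. Your admissibility argument via the intermediate value theorem is correct and is essentially what the paper asserts, and your small-disk estimate is sound in principle (though with $\overline{D(2r_2)}$ and multiplicity $3$ it yields $48r_2^2$ rather than $24r_2^2$; the paper instead classifies a disk as ``small'' when at least half its area lies in $D(r_2)$ and works inside $D(r_2)$, which gives $\tfrac{8}{\pi}\cdot 3\pi r_2^2=24r_2^2$).

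The genuine gap is in the large-disk step. The claim that a large disk meeting some $C(t)$ ``must occupy a definite fraction of that circle,'' so that ``only boundedly many large disks meet each $C(t)$,'' is false: a disk of arbitrarily large radius can graze $C(t)$ in an arbitrarily short arc, so bounded multiplicity of the arcs does not bound the \emph{number} of large disks crossing a given circle. (A counting statement of this flavor is true --- it is the paper's Lemma \ref{circle_counting} --- but it requires a separate compactness argument, produces a non-explicit constant, and applies to disks large relative to $r_2$, whereas your threshold $2r_v>r_2-r_1$ can be far smaller than $r_2$ when $r_1$ is close to $r_2$.) The paper avoids counting disks altogether: for a disk with less than half its area in $D(r_2)$, the diameter $d_v=\mathrm{diam}(D_v\cap D(r_2))$ is bounded by the arc length $l(D_v\cap C(r_2))$; summing these arc lengths over the single circle $C(r_2)$ with multiplicity $3$ gives $\sum_B d_v\le 6\pi r_2$, and then the elementary inequality $\sum_B d_v^2\le\bigl(\sum_B d_v\bigr)^2$ yields the $36\pi^2 r_2^2$ term. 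You would need to replace your circle-crossing count by an argument of this type (an $\ell^1$ bound on the truncated sizes followed by $\sum \ell_v^2\le(\sum\ell_v)^2$) for the estimate to close.
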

	\begin{proof}
		Let $d_v = \mathrm{diam}(D_v \cap D(r_2))$.  We construct a vertex metric $\nu$, which is defined by
		$$
		\nu_v = \frac{d_v }{r_2-r_1}, \quad \forall v \in V.
		$$
		It is obvious that $\int_\gamma \nu_v\geq 1$, for any $\gamma \in \Gamma\left(V_{C(r_1)}(\pac), V_{C(r_2)}(\pac)\right)$. Thus, by definition, 
		\begin{equation}\label{es0}
			\begin{aligned}
				\modc( \Gamma\left(V_{C(r_1)}(\pac), V_{C(r_2)}(\pac)\right)) &\leq \area (\nu)\\
                &= \sum_{v\in V}  \nu_v^2= \frac{1}{(r_2-r_1)^2} \sum_{v\in V}  d_v^2.
			\end{aligned}
		\end{equation}
		
		Let $A= \{v\in V:  \area (D_v \cap D(r_2)) \geq \frac{1}{2}\area ( D(r_v))\}$, then
		\begin{equation}\label{es1}
			\sum_{v\in A}  d_v^2 \leq  \sum_{v\in A}  4 r_v^2 \leq \sum_{v\in A} \frac{8}{\pi}\area (D_v \cap D(r_2)) .
		\end{equation}
		By Lemma \ref{mostthree}, we know that for any point $z \in D(r_2)$, there are at most three disks covering it. Thus,
		\begin{equation}\label{es2}
			\sum_{v\in A} \area (D_v \cap D(r_2)) \leq 3 \area (D(r_2)) = 3\pi r_2^2.
		\end{equation}

		Let $B = \{v\in V: 0< \area (D_v \cap D(r_2)) \leq \frac{1}{2}\area ( D(r_v)) \}$. Then, for any vertex $v \in B$, it follows that $d_v \leq l(D_v \cap C(r_2))$. Thus, by Lemma \ref{mostthree}, we have 
		$$
		\sum_{v\in B} d_v \leq \sum_{v\in B} l(D_v \cap C(r_2)\leq 3  l(C(r_2))\leq 6\pi r_2.
		$$
		From $(x+y)^2\geq x^2+y^2, \forall x, y \geq 0$, we know 
		\begin{equation}\label{es3}
			\sum_{v\in B} d_v^2 \leq (6\pi r_2)^2
		\end{equation}
		
		Combining \eqref{es0}, \eqref{es1}, \eqref{es2}, \eqref{es3}, we have 
		$$
		\vel (V_{C(r_1)}(\pac), V_{C(r_2)}(\pac)) = \frac{1}{\modc( \Gamma\left(V_{C(r_1)}(\pac), V_{C(r_2)}(\pac)\right)) }\geq \frac{(r_2-r_1)^2}{(24+36\pi^2)r_2^2}.
		$$
	\end{proof}

	\begin{lem}\label{annuilemma}
		Let $\mathcal{T}=(V,E,F)$ be a disk triangulation, and let $\Theta\in [0 ,\pi)^E$ be an intersection angle function satisfying condition ($Z_2$). Suppose $\pac=\{C_i\}_{i \in V}$ is an RCP that realizes $(\mathcal{T}, \Theta)$. Let $V_0=\{v_0\}$, and let $V_1$, $V_2$ be two mutually disjoint, finite, connected subsets of vertices. Let $V_3 = \infty$. Assume that for any $0 \leq i_1 < i_2 < i_3 \leq 3$, the set $V_{i_2}$ separates $V_{i_1}$ from $V_{i_3}$. 
		If
		\begin{equation}\label{annuli_P}
			\operatorname{VEL}(V_1, V_2) > 9(24 + 36\pi^2),
		\end{equation}
		then there exists $\hat r > 0$ such that for any circle $C(o_{v_0}, r)$ with $r \in [\hat r, 2\hat r]$, where $o_{v_0}$ is the center of $C_{v_0}$, the vertex set $V_{C(o_{v_0}, r)}(\pac) := \{ i \in V : D_i \cap C(o_{v_0}, r) \neq \emptyset \}
		$ separates $V_1$ from $V_2$ in $\mT$
	\end{lem}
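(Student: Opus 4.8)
The plan is to convert the combinatorial separation statement into a purely metric one about a round annulus centered at $o_{v_0}$, and then to extract such an annulus from the largeness of $\vel(V_1,V_2)$ through the path/separating-set duality.

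\medskip
First I would record a sufficient condition for separation. Placing the origin at $o_{v_0}$ and writing $\pac(A)=\bigcup_{v\in A}D_v$, I claim that if $\pac(V_1)\subset D(r)$ and $\pac(V_2)\cap\overline{D(r)}=\emptyset$, then $V_{C(r)}(\pac)$ separates $V_1$ from $V_2$. Indeed, for any path $(u_0,\dots,u_n)$ in $\mathcal T$ with $u_0\in V_1$, $u_n\in V_2$, the connected set $\bigcup_i D_{u_i}$ contains a point of $D_{u_0}\subset D(r)$ and a point of $D_{u_n}\subset\{|z|>r\}$, so it meets $C(r)$; the disk realizing this meeting is a vertex of the path lying in $V_{C(r)}(\pac)$. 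Hence it suffices to find $\hat r>0$ with $\pac(V_1)\subset D(\hat r)$ and $\pac(V_2)\subset\{|z|>2\hat r\}$, since then every $r\in[\hat r,2\hat r]$ satisfies the hypothesis above. Setting $R_1=\sup_{\pac(V_1)}|z|$ and $R_2=\inf_{\pac(V_2)}|z|$ (both finite and positive: $V_1,V_2$ are finite, and no disk of $V_2$ can contain $o_{v_0}$, for such a disk would force an edge $[v_0,w]$ with $w\in V_2$, contradicting that $V_1$ separates $v_0$ from $V_2$), this reduces the whole lemma to the single inequality $R_2>2R_1$. A preliminary routine step, using Lemma \ref{sectionlemma}, is to check that the good radii are exactly the interval $(R_1,R_2)$, so that the metric gap and the combinatorial separation genuinely coincide.

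\medskip
Second, I would prove $R_2>2R_1$ by contradiction: assume $R_2\le 2R_1$ and derive $\vel(V_1,V_2)\le 9(24+36\pi^2)$, contradicting the hypothesis. The key device is Theorem \ref{veldual}, which gives $\vel(V_1,V_2)=\modc(\Gamma^*(V_1,V_2))$, so it is enough to exhibit a single $\Gamma^*(V_1,V_2)$-admissible vertex metric $\mu$ with $\area(\mu)\le 9(24+36\pi^2)$. Here the geometry enters: because $V_1$ separates $v_0$ from $\infty$ and $V_2$ separates $V_1$ from $\infty$, the connected unions $\pac(V_1)$, $\pac(V_2)$ each enclose $o_{v_0}$, and any $\alpha$ separating $V_1$ from $V_2$ has $\pac(\alpha)$ blocking every ray from $o_{v_0}$ to $\infty$ inside the gap. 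I would take $\mu_v=\theta_v/(2\pi)$, with $\theta_v$ the angular width of $D_v$ seen from $o_{v_0}$, restricted to disks meeting $\{R_1\le|z|\le R_2\}$: the angular-covering property gives $\sum_{v\in\alpha}\theta_v\ge 2\pi$, i.e.\ admissibility, while the area is controlled by the bounded-overlap estimate (Lemma \ref{mostthree}) together with $R_2\le 2R_1$, which confines the charged disks to a region of area $\lesssim R_1^2$ and forces $\theta_v\lesssim r_v/R_1$ for the small disks, so that $\sum_v\theta_v^2\lesssim R_1^{-2}\sum_v r_v^2\lesssim 1$. A parallel route, matching the constant $24+36\pi^2$ of Lemma \ref{es}, is to cover $\Gamma^*(V_1,V_2)$ by finitely many ring-separating families $\Gamma^*\!\big(V_{C(\rho_i)},V_{C(\rho_{i+1})}\big)$ via Proposition \ref{nestlemma}, bound each $\modc\big(\Gamma^*(V_{C(\rho_i)},V_{C(\rho_{i+1})})\big)=\vel\big(V_{C(\rho_i)},V_{C(\rho_{i+1})}\big)^{-1}$ from above by Lemma \ref{es}, and sum by subadditivity of the modulus, the factor $9$ being absorbed by $R_2\le 2R_1$.

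\medskip
The hard part will be the geometric estimate underlying the admissible metric. A separating set's enclosing loop lies between $\pac(V_1)$ and $\pac(V_2)$ but need not stay inside the round annulus $\{R_1\le|z|\le R_2\}$: it may dip inward where $\pac(V_1)$ fails to reach radius $R_1$, or bulge past $R_2$, and the RCP may contain disks whose radii are comparable to $R_1$, for which $\theta_v$ is not small. Guaranteeing that the \emph{restricted} metric remains admissible (that the disks actually charged by $\mu$ still angularly surround $o_{v_0}$) and that the few large disks do not spoil the area bound is the crux; both points should be dispatched using the partition and indispensability structure from Lemmas \ref{containinFLlemma}, \ref{indispensable}, \ref{mostthree} and the loop description of Lemma \ref{sectionlemma}. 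Everything else — the connectivity argument, finiteness of $R_1,R_2$, and the final arithmetic — is routine.
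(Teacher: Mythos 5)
Your overall strategy (reduce to a round-annulus statement, then refute the negation by exhibiting a cheap $\Gamma^*(V_1,V_2)$-admissible metric via Theorem \ref{veldual}) is in the right spirit, but there is a genuine gap at the point you yourself flag as ``the crux'', and it is not a technicality that Lemmas \ref{containinFLlemma}--\ref{sectionlemma} will dispatch. First, your reduction to $R_2>2R_1$ with $R_1=\sup_{\pac(V_1)}|z|$ is strictly stronger than the lemma and is not obviously available: $R_1$ records the full outward reach of $\pac(V_1)$, so a single large disk in $V_1$ (say one that comes within distance $1$ of the origin but extends to $|z|=2000$) makes $R_1$ huge while $\pac(V_2)$ may a priori dip to small $|z|$ on the opposite side; then $R_2<R_1$, your ``good interval'' $(R_1,R_2)$ and your annulus $\{R_1\le|z|\le R_2\}$ are empty, and your whole scheme is vacuous before the VEL hypothesis is ever used. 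The paper instead takes $\hat r=\max_{i\in V_1}\operatorname{dist}(0,D_i)$ (the smallest $r$ with $D(r)$ meeting every disk of $V_1$), which is insensitive to how far $\pac(V_1)$ reaches outward, and proves separation only for $r\in[\hat r,2\hat r]$ --- a genuinely weaker and attainable target.

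Second, and more fundamentally, the metric $\mu_v=\theta_v/(2\pi)$ restricted to disks meeting $\{R_1\le|z|\le R_2\}$ is simply not $\Gamma^*(V_1,V_2)$-admissible: $\alpha=V_1$ and $\alpha=V_2$ both lie in $\Gamma^*(V_1,V_2)$, yet $\pac(V_1)\subset D(R_1)$ and $\pac(V_2)\subset\{|z|\ge R_2\}$, so essentially none of their disks are charged by the restricted metric and $\sum_{v\in\alpha}\mu_v$ is near $0$, not $\ge 1$. Dropping the restriction destroys the area bound (every disk near the origin has angular width comparable to $2\pi$, and there can be arbitrarily many of them inside $D(R_1)$). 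The missing idea is the one the paper uses to pin \emph{every} cut to a fixed bounded region: assuming some $C(r)$ with $r\in[\hat r,2\hat r]$ fails to separate, there is a path $\gamma\in\Gamma(V_1,V_2)$ with $\pac(\gamma)\cap C(r)=\emptyset$; since $\pac(\gamma)$ is connected and meets $D(\hat r)$, one gets $\pac(\gamma)\subset D(0,r)$, and then every $\alpha\in\Gamma^*(V_1,V_2)$ satisfies $\pac(\alpha)\cap D(0,r)\neq\emptyset$ because $\alpha\cap\gamma\neq\emptyset$. Combined with $\operatorname{diam}(\pac(\alpha))\ge\hat r$ (from $\alpha\in\Gamma^*(V_0,\infty)$ via Proposition \ref{nestlemma}), this makes the clipped diameter metric $\nu_v=\operatorname{diam}(D_v\cap D(0,3\hat r))/\hat r$ admissible, and Lemma \ref{mostthree} gives the area bound $9(24+36\pi^2)$ as in Lemma \ref{es}. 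Without an analogue of this ``every cut meets the bad path'' step, no geometrically localized metric can be shown admissible, so your argument does not close. (Your fallback of covering $\Gamma^*(V_1,V_2)$ by ring families and summing moduli also fails: Proposition \ref{nestlemma} gives inclusions of $\Gamma^*$ families in the direction opposite to what a subadditivity bound would require.)
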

	
	\begin{proof}
		Without loss of generality, assume that the center $o_{v_0}$ of $C_{v_0}$ is located at the origin. Hence, we denote  $C(o_{v_0}, r)$ by $C(r)$ for convenience.  Let
		\begin{equation}\label{hatr}
			\hat r := \min \left\{ r > 0 : D(r) \cap D_i \neq \emptyset, \forall i \in V_1 \right\}.
		\end{equation}
		We claim that $\hat r$ is what we need.
		
		Consider an $\alpha \in \Gamma^*(V_1, V_2)$. From Proposition \ref{nestlemma}, we have $\alpha\in \Gamma^*(V_0, \infty)$. According to \eqref{hatr}, it follows that
		\begin{equation}\label{annuli_P0}
			\operatorname{diam}(\mathcal{P}(\alpha)) \geq \hat r.
		\end{equation}
		
		For the subsequent proof of this lemma, we argue by contradiction. If there is a $r \in [\hat r, 2\hat r]$ such that $V_{C(r)}(\pac) \notin \Gamma^*(V_1, V_2)$. Then there exists a $\gamma \in \Gamma(V_1, V_2)$ such that
		\[
		\gamma \cap V_{C(r)}(\pac) = \emptyset.
		\]
		This is equivalent to
		\[
		D_i \cap C(r) = \emptyset \quad \text{for all } i \in \gamma,
		\]
		which implies
		\begin{equation}\label{annuli_P1}
			\mathcal{P}(\gamma) \cap C(r)  = \emptyset.
		\end{equation}
		By \eqref{hatr} and $\gamma \in \Gamma(V_1, V_2)$, we know 
		\begin{equation}\label{annuli_P2}
			\mathcal{P}(\gamma) \cap D(0, \hat r) \neq \emptyset.
		\end{equation}
		Combining \eqref{annuli_P1} and \eqref{annuli_P2}, and noting that $r \in [\hat r, 2\hat r]$, it follows that
		\[
		\mathcal{P}(\gamma) \subset D(0, r).
		\]
		Since $\alpha \in \Gamma^*(V_1, V_2)$ and $\gamma \in \Gamma(V_1, V_2)$, we have
		\begin{equation}\label{annuli_P3}
			\mathcal{P}(\alpha) \cap D(0, r) \supset	\mathcal{P}(\alpha) \cap \mathcal{P}(\gamma) \supset \mathcal{P}(\alpha \cap \gamma) \neq \emptyset.
		\end{equation}
		Let $d_v= \operatorname{diam}(D_v \cap D(0, 3\hat r))$.
		Define a vertex metric $\nu$ on $\mathcal{T}$ by
		\[
		\nu_v:= \frac{d_v}{\hat r}.
		\]
		We claim $\nu$ is $\Gamma^*(V_1, V_2)$-admissible. We need to verify the following two cases:
        
		\medskip 
        \emph{Case 1}: if $\mathcal{P}(\alpha) \subset D(0, 3\hat r)$, then from \eqref{annuli_P0},
			\[
			\sum_{v\in \alpha} \nu_v= \sum_{v\in \alpha} \ \frac{d_v}{\hat r}= \sum_{v \in \alpha} \frac{r_v}{\hat r} \geq \frac{	\operatorname{diam}(\mathcal{P}(\alpha)) }{\hat r} \geq 1.
			\]
            
		\medskip
        \emph{Case 2}: if $\mathcal{P}(\alpha) \setminus D(0, 3\hat r) \neq \emptyset$, then $\mathcal{P}(\alpha) $ intersects both $D(0, r)$ and the complement of the disk $D(0, 3\hat r)$, which implies
			\[
			\sum_{v\in \alpha} \nu_v= \sum_{v\in \alpha} \ \frac{d_v}{\hat r}\geq \frac{3\hat r - r}{\hat r} \geq 1.
			\]
        Combining the two cases above, we know that $\nu$ is $\Gamma^*(V_1, V_2)$-admissible. Then it follows 
		\[
		\operatorname{VEL}(V_1, V_2) = \operatorname{MOD}(\Gamma^*(V_1, V_2)) \leq \operatorname{area}(\nu).
		\]
		For the same reason as in Lemma~\ref{es}, we have the following estimate
		\[
		\operatorname{VEL}(V_1, V_2)  \leq  \operatorname{area}(\nu) \leq 9(24 + 36\pi^2),
		\]
		which contradicts the assumption \eqref{annuli_P}. This completes the proof.
	\end{proof}

	\begin{lem}\label{parabolic_equivalent}
		Let $\mathcal{T}=(V,E,F)$ be a disk triangulation, and let $\Theta\in[0,\pi)^E$ be an intersection angle function. Suppose $\pac=\{C_i\}_{i \in V}$ is an RCP that realizes $(\mathcal{T}, \Theta)$. Then $\pac$ is locally finite in the plane if and only if $\mathcal{T}$ is VEL-parabolic.
	\end{lem}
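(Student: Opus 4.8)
The plan is to prove the two implications separately, fixing a base vertex $v_0$ and normalizing so that the center $o_{v_0}$ of $C_{v_0}$ sits at the origin. Throughout I would use the \emph{radius metric} $\nu_v=r_v$ together with the fact, guaranteed by Lemma~\ref{mostthree}, that every point of $\mathbb{C}$ lies in at most three disks of $\pac$; this bounded overlap is exactly what converts Euclidean areas and lengths into admissible metrics for the combinatorial extremal length.

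\emph{Direction one: if $\pac$ is l.f. in $\mathbb{C}$ then $\mathcal{T}$ is VEL-parabolic.} Since $\carrier(\pac)=\mathbb{C}$, each circle $C(r)$ meets only finitely many disks, so $V_{C(r)}(\pac)$ is finite for every $r>0$. I would choose radii $1=r_0<r_1<r_2<\cdots\to\infty$ recursively so that $r_{k+1}\ge 2r_k$ and $r_{k+1}$ exceeds the largest distance from the origin reached by any disk meeting $C(r_k)$; the latter is possible precisely because $V_{C(r_k)}(\pac)$ is finite. This makes the sets $V_{C(r_k)}(\pac)$ pairwise disjoint, and since consecutive disks along a path in $\mathcal{T}$ are adjacent hence overlap, any path joining $V_{C(r_j)}(\pac)$ to $V_{C(r_\ell)}(\pac)$ with $j<k<\ell$ sweeps out a connected set meeting $C(r_j)$ and $C(r_\ell)$, which by the intermediate value property also meets $C(r_k)$; thus $V_{C(r_k)}(\pac)$ separates $V_{C(r_j)}(\pac)$ from $V_{C(r_\ell)}(\pac)$. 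By Lemma~\ref{es} and $r_{k+1}\ge 2r_k$, each annular pair satisfies $\vel(V_{C(r_k)}(\pac),V_{C(r_{k+1})}(\pac))\ge \tfrac{1}{4(24+36\pi^2)}=:c>0$. Feeding this nested, disjoint, separating family into the superadditivity Proposition~\ref{addlemma} gives $\vel(V_{C(r_1)}(\pac),V_{C(r_{2m})}(\pac))\ge mc$, and letting $m\to\infty$ forces $\vel(\{v_0\},\infty)=+\infty$, i.e. VEL-parabolicity (which suffices, the property being independent of the chosen finite set).

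\emph{Direction two, by contraposition: if $\pac$ is not l.f. in $\mathbb{C}$ then $\mathcal{T}$ is VEL-hyperbolic.} Now $\Omega=\carrier(\pac)\ne\mathbb{C}$, and after a M\"obius normalization I may assume $\Omega$ is bounded, so $\area(\Omega)<\infty$. By Lemma~\ref{mostthree} the radius metric then has finite area, $\area(\nu)=\sum_v r_v^2=\tfrac1\pi\sum_v\area(D_v)\le\tfrac3\pi\area(\Omega)<\infty$. I claim $\nu$ is, up to the scale $1/r_{v_0}$, admissible for the dual family $\Gamma^*(\{v_0\},\infty)$ of separating sets: any $\alpha\in\Gamma^*(\{v_0\},\infty)$ forces $\bigcup_{v\in\alpha}D_v$ to contain a closed curve encircling $D_{v_0}$ and separating it from $\partial\Omega$, so the connected subchain of $\{D_v\}_{v\in\alpha}$ covering this curve has diameter at least $2r_{v_0}$, whence $2\sum_{v\in\alpha}r_v\ge 2r_{v_0}$ and $\int_\alpha d\nu\ge r_{v_0}$. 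Therefore $\modc(\Gamma^*(\{v_0\},\infty))<\infty$, and the duality Theorem~\ref{veldual} (via $\vel(\Gamma^*)=\vel(\Gamma)^{-1}$) converts this into $\vel(\{v_0\},\infty)<\infty$, which is VEL-hyperbolicity.

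The main obstacle I anticipate is the geometric bookkeeping hidden in the two separation claims, both of which rest essentially on $\pac$ being an RCP. In direction one it is the assertion that the finite rings $V_{C(r_k)}(\pac)$ genuinely separate in $\mathcal{T}$ (needing the overlap/intermediate-value argument plus disjointness from the recursive choice of radii). In direction two it is the claim that an \emph{arbitrary} combinatorial cutset $\alpha$ produces a Jordan curve in $\bigcup_{v\in\alpha}D_v$ encircling $D_{v_0}$, with a total-radius lower bound uniform in $\alpha$; this should follow from the partition of $\carrier(\pac)$ into the cells $A_i$, $B_{[i,j]}$, $C_{[i,j,k]}$ isolated in Lemma~\ref{sectionlemma} together with the bounded overlap of Lemma~\ref{mostthree}. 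Making the reduction of $\Omega$ to finite area fully rigorous, rather than quietly appealing to the carrier dichotomy, is the remaining point that will require care.
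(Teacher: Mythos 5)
Your first direction is essentially the paper's Part 1: the same annulus construction with $r_{k+1}\ge 2r_k$, the same use of Lemma~\ref{es} and Proposition~\ref{addlemma}, with only a cosmetic difference in how the separation of the rings $V_{C(r_k)}(\pac)$ is justified (your connectedness/intermediate-value argument versus the paper's appeal to Lemma~\ref{sectionlemma}); that half is fine.

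Your second direction, however, departs from the paper and contains a genuine gap exactly where you suspect one. The paper does not argue by contraposition: it proves directly that VEL-parabolicity forces local finiteness, by extracting nested combinatorial annuli of large extremal length and converting them, via Lemma~\ref{annuilemma}, into round circles $C(\hat r_k)$ with $\hat r_k\to\infty$ whose vertex sets separate, so that $V_{C(r)}(\pac)$ is finite for every $r$. Your route instead needs $\operatorname{area}(\nu)=\sum_v r_v^2<\infty$, which you obtain from ``after a M\"obius normalization I may assume $\Omega$ is bounded.'' This step fails in general: a M\"obius image of $\Omega=\carrier(\pac)$ is bounded only when $\hat{\mathbb{C}}\setminus\Omega$ has nonempty interior, and nothing available at this point in the paper rules out a carrier whose complement is a continuum with empty interior (a slit-type domain), for which no M\"obius normalization yields $\sum_v r_v^2<\infty$. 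Since the whole admissibility estimate and the duality step rest on this finite area, the argument does not close as written; one would have to replace the global radius metric by a metric localized near a finite accumulation point $z_0\in\partial\Omega$ (essentially re-proving a version of Lemma~\ref{es}/Lemma~\ref{annuilemma} centered at $z_0$), at which point you are back to the paper's machinery. A secondary caveat: your lower bound $\sum_{v\in\alpha}r_v\ge r_{v_0}$ for every cutset $\alpha$ needs $D_{v_0}$ to stay inside its star so that the encircling chain of disks really surrounds the whole disk $D_{v_0}$ and not merely its center; this is Lemma~\ref{containinFLlemma} and requires ($Z_1$), a hypothesis absent from the statement (the paper's own proof silently uses ($Z_1$) and ($Z_2$) as well, so this is a shared defect rather than one specific to your write-up).
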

	\begin{proof}
		
		The proof is divided in two parts.
		
		\medskip
		\emph{Part 1. We claim that if $\pac$ is locally finite in the plane, then $\mathcal{T}$ is VEL-parabolic.}
		\medskip
		
		Let $v_0$ be a fixed vertex of $V$. For a sufficiently large $r_1>0$, from Lemma \ref{sectionlemma}, we know that $V_{C(r_1)}(\pac)$ is a set of vertices that contains a loop as a  finite vertex cut separating $\pac(v_0)$ from infinity.
		
		Since $V_{C(r_1)}(\pac)$ is finite, there exists a radius $r_2> 2r_1$ such that $D_v\cap C(r_2)= \emptyset$, which means
		$V_{C(r_1)}(\pac) \cap V_{C(r_2)}(\pac)= \emptyset$. Repeating the same procedure, we can get a sequence $\{r_n\}_{n=0}^\infty$ such that $r_{n+1}> 2r_n$ and $V_{C(r_n)}(\pac) \cap V_{C(r_n+1)}(\pac)= \emptyset$. 
		
		According to Lemma \ref{sectionlemma}, we know $V_{C(r_n)}(\pac)$ is a set of vertices lying around a simple closed finite path in $\mathcal{T}$. Thus, for any $1\leq n_i < n_2<n_3$, the set $V_{C(r_{n_2})}(\pac)$ separates $V_{C(r_{n_1})}(\pac)$ and $V_{C(r_{n_3})}(\pac)$. From Proposition \ref{addlemma} and Lemma \ref{es}, we have
		$$
		\begin{aligned}
			\vel(V_{C(r_1)}(\pac), \infty) &\geq \sum_{n=1}^{\infty} \vel(V_{C(r_n)}(\pac), V_{C(r_{n+1})}(\pac))\\
			&\geq  \sum_{n=1}^{\infty}  \frac{(r_{n+1}-r_n)^2}{(24+36\pi^2)r_{n+1}^2}\geq  \sum_{n=1}^{\infty}  \frac{1}{4(24+36\pi^2)}\\
			&= + \infty.
		\end{aligned}
		$$
		
		\medskip
		\emph{Part 2. We claim that if $\mathcal{T}$ is VEL-parabolic, then $\pac$ is locally finite in the plane.}
        \medskip
		
		Choose an increasing exhaustion by finite triangulated closed disks
        $\mathcal T^{[n]}$ with simple closed boundary such that each $|\mathcal T^{[n]}|$ is homeomorphic to a closed disk, $|\mathcal T^{[n]}|\subset \operatorname{int} |\mathcal T^{[n+1]}|$ and $\bigcup_{n=1}^{\infty}|\mathcal T^{[n]}|=|\mathcal T|$. Let $\partial V^{[n]}$ be the vertices on $\partial|\mathcal T^{[n]}|$. It is easy to see that $\{\partial V^{[n]}\}_{n=0}^\infty$ is a set of mutually disjoint, finite, and connected subsets of vertices such that $\partial V^{[n]} \in \Gamma^*(\partial V^{[n_1]}, \partial V^{[n_2]})$ for any $n_1\leq n\leq n_2$. Since $\mathcal{T}$ is VEL-parabolic, there is a subsequence $\{V_i=\partial  V^{[n_i]}\}_{i=0}^\infty$ such that 
        		$$\vel (V_i, V_{i+1}) \geq 9(24 + 36\pi^2).$$
		
		By Lemma \ref{annuilemma}, there are two numbers $\hat r_k$ and $\hat r_{k+1}$ such that $V_{C(2\hat r_k)}(\pac) \in \Gamma^*(V_{2k-1}, V_{2k})$ and $V_{C(\hat r_{k+1})}(\pac) \in \Gamma^*(V_{2k+1}, V_{2k+2})$. From Lemma \ref{sectionlemma}, we know $V_{C(2\hat r_k)}(\pac) \cap V_{C(\hat r_{k+1})}(\pac) \subset V_{2k}\cap V_{2k+1} = \emptyset$. Hence, $\hat r_{k+1}> 2 \hat{r_k}$ and $\lim_{k\rightarrow \infty} \hat r_k = + \infty.$
		
		Using the method of contradiction, it is easy to see that $V_{C(\hat r_k)}(\pac) \subset \mathrm{Interior}(V_{2k})$. Hence $V_{C(r)}(\pac) $ is a finite set for any $r\leq \hat r_k$. Since $\lim_{k\rightarrow \infty} r_k = + \infty$, it follows that $V_{C(r)}(\pac)$ is a finite set for any $r < + \infty$. 
	\end{proof}
	
	\begin{lem}\label{sequenceaaq}
		Let $\mathcal{T}=(V,E,F)$ be a disk triangulation, and $\Theta\in [0 ,\pi)^E$ be the prescribed intersection angle which satisfies ($Z_2$). Suppose $\pac=\{C_i\}_{i \in V}$ is an RCP that realizes $(\mathcal{T}, \Theta)$. Then $\mathcal{T}$ is VEL-hyperbolic, if and only if $\lim _{k \rightarrow \infty} \mathrm{VEL}\left(V_k, \infty\right)=0$, for any increasing sequence of finite, connected subsets of vertices $V_k$ in $\mT$ such that $V_{\infty}=\bigcup_{k=1}^{\infty} V_k$ is infinite.
	\end{lem}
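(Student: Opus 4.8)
The plan is to prove the two implications separately, the reverse one being essentially formal and the forward one containing all the geometry.

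\emph{Reverse implication.} Suppose $\lim_k\vel(V_k,\infty)=0$ for every admissible exhaustion, and apply this to the combinatorial ball exhaustion $V_k=\{v\in V:d(v,v_0)\le k\}$, which is increasing, finite, connected, with $V_\infty=V$ infinite. Then $\vel(V_k,\infty)<+\infty$ for all large $k$, so $\mathcal{T}$ possesses a finite vertex set with finite vertex extremal length to infinity; by the remark that ``any'' may be replaced by ``some'' in the definition, $\mathcal{T}$ is VEL-hyperbolic. (Contrapositively, if $\mathcal{T}$ were VEL-parabolic then $\vel(V_k,\infty)=+\infty$ for every finite $V_k$, and the limit could not vanish.)

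\emph{Forward implication, setup.} Assume $\mathcal{T}$ is VEL-hyperbolic and fix an exhaustion $V_k\uparrow V_\infty$ with $V_\infty$ infinite. Since $V_k\subset V_{k+1}$ gives $\Gamma(V_k,\infty)\subset\Gamma(V_{k+1},\infty)$, hence $\modc(\Gamma(V_k,\infty))\le\modc(\Gamma(V_{k+1},\infty))$, the sequence $\vel(V_k,\infty)$ is non-increasing and converges to some $L\ge 0$; it suffices to prove $L=0$. Normalize $o_{v_0}$ to the origin. By Lemma~\ref{parabolic_equivalent}, VEL-hyperbolicity forces $\pac$ to fail to be locally finite in the plane, so $\Omega:=\carrier(\pac)$ is a proper subdomain of $\mathbb{C}$ with $\partial\Omega\ne\emptyset$ (this avoids any circular appeal to the uniformization theorem, which only needs $\Omega\subsetneq\mathbb{C}$, not that $\Omega$ is a round disk). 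Because $V_\infty$ is infinite while $\pac$ is locally finite in $\Omega$, the disks $\{D_i\}_{i\in V_\infty}$ leave every compact subset of $\Omega$ and therefore accumulate on $\partial\Omega$; fix an accumulation point $\zeta\in\partial\Omega$ and, after passing to a subsequence, vertices $i_n\in V_\infty$ with $\operatorname{diam}D_{i_n}\to0$ and $D_{i_n}\to\zeta$.

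\emph{Forward implication, the estimate.} The geometric content is that, for large $k$, the set $V_k$ already reaches within distance $\epsilon_k\to0$ of $\partial\Omega$ (it contains some $i_{n(k)}$), so it can escape to the unique end of $\mathcal{T}$ through arbitrarily thin collars at $\zeta$. I would make this quantitative through the duality $\vel(V_k,\infty)=\modc(\Gamma^*(V_k,\infty))$ coming from Theorem~\ref{veldual}: by the circular structure established in Lemma~\ref{sectionlemma}, every minimal separator of $V_k$ from the end is a loop of disks enclosing $\mathcal{D}_k:=\bigcup_{i\in V_k}D_i$ inside $\Omega$, and since $\mathcal{T}$ triangulates the disk (hence has a single end) every such loop must cross the collar between $D_{i_{n(k)}}$ and $\partial\Omega$. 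Running the area estimate of Lemma~\ref{es} on the dyadic annuli $\{\rho<|z-\zeta|<2\rho\}$ centered at $\zeta$, through which the near-$\zeta$ disks of $V_k$ already pass, would produce a $\Gamma^*(V_k,\infty)$-admissible vertex metric supported in a shrinking neighborhood of $\zeta$ whose area tends to $0$; this yields $\vel(V_k,\infty)\to0$, i.e. $L=0$.

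\emph{Main obstacle.} The delicate point is the localization in the last paragraph when $V_\infty$ is thin — say a single ray whose disks accumulate only at $\zeta$ — for then a separator of $V_k$ still sweeps through the bulk of $\Omega$ near the origin and cannot simply be discarded away from $\zeta$. What should save the argument is that the ray's own disks converge to $\zeta\in\partial\Omega$, so $V_k$ meets the end in the limit and the \emph{collective} escape modulus of its near-boundary disks diverges with $k$ (equivalently $\vel(V_k,\infty)\to0$), exactly as a long sub-ray of a transient graph escapes through an ever-widening channel. Turning this picture into the clean bound $\vel(V_k,\infty)\le C\,\omega(\epsilon_k)\to0$, where $\omega(\epsilon)$ controls the modulus of the collar of width $\epsilon$ at $\zeta$, is the only genuine computation; it is where Lemma~\ref{es}, Lemma~\ref{sectionlemma}, and the one-endedness of $\mathcal{T}$ (with Lemma~\ref{annuilemma} used to preclude a persistent VEL gap that would otherwise push a separating circle out to a definite scale) have to be combined.
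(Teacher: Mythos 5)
Your reverse implication and the overall skeleton of the forward implication (an accumulation point $\zeta\in\partial\Omega$ obtained from Lemma \ref{parabolic_equivalent}, the duality of Theorem \ref{veldual}, and the area estimate of Lemma \ref{es} applied to disjoint annuli around $\zeta$) coincide with the paper's proof. But the forward implication has a genuine gap exactly where you flag the ``main obstacle'': you never prove that every cut $\alpha\in\Gamma^*(V_k,\infty)$ contains disjoint connected crossings of $n(k)\to\infty$ pairwise disjoint annuli, and without that claim the estimate does not close. Moreover, your proposed substitute --- an admissible vertex metric ``supported in a shrinking neighborhood of $\zeta$ whose area tends to $0$'' --- points at the wrong mechanism: a cut can traverse a single thin collar at $\zeta$ using very few disks, so a metric concentrated in one shrinking collar cannot be $\Gamma^*(V_k,\infty)$-admissible with small area. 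What drives the decay of $\modc(\Gamma^*(V_k,\infty))$ is not the thinness of one collar but the growing \emph{number} of disjoint annuli that every cut is forced to traverse, combined with the serial superadditivity of extremal length.

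The paper closes precisely this step. It inverts so that the accumulation point $z_\infty$ of $\pac(V_\infty)$ goes to $\infty$, picks $r_0$ with $\pac(V_1)\subset D(r_0)$ and $D(r_0)\cap\partial\Omega\neq\emptyset$, and nested radii $r_{k+1}>2r_k$ with $V_{C(r_k)}(\pac)\cap V_{C(r_{k+1})}(\pac)=\emptyset$. Since both $\pac(V_{2n})$ and $\partial\Omega$ then stretch from the fixed disk $D(r_0)$ out to $\infty$, every $\alpha\in\Gamma^*(V_{2n},\infty)$ contains disjoint paths $\gamma_i\in\Gamma\bigl(V_{C(r_{2i-2})}(\pac),V_{C(r_{2i})}(\pac)\bigr)$ for $i=1,\dots,n$; Lemma \ref{es} gives $\vel(\Gamma^*(V_{2n},\infty))\geq n/(96+144\pi^2)$, and Theorem \ref{veldual} converts this into $\vel(V_{2n},\infty)\leq (96+144\pi^2)/n\to 0$, with monotonicity handling the intermediate indices. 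If you prefer to keep $\zeta$ finite and work with dyadic annuli $\{2^{-j-1}\rho_0<|z-\zeta|<2^{-j}\rho_0\}$, the same argument goes through, but the crossing claim must be proved from the facts that $\pac(V_k)$ penetrates to distance $\epsilon_k\to 0$ of $\zeta$ while $\partial\Omega$ also meets every such annulus; one-endedness of $\mathcal{T}$ and the monotonicity of $\vel(V_k,\infty)$ alone do not supply it.
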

	\begin{proof}
		
		The proof is divided into two parts.
		
		\medskip
		\emph{Part 1. Suppose that $\lim _{k \rightarrow \infty} \mathrm{VEL}\left(V_k, \infty\right)=0$, then $\mathcal{T}$ is VEL-hyperbolic.}
        \medskip
		
		Since $\lim _{k \rightarrow \infty} \mathrm{VEL}\left(V_k, \infty\right)=0$, for sufficiently large $k$, we have $\vel (V_k, \infty)<+ \infty$, which implies that $\mathcal{T}$ is VEL-hyperbolic.
		
		\medskip
		\emph{Part 2. Suppose that $\mathcal{T}$ is VEL-hyperbolic, then $\lim _{k \rightarrow \infty} \mathrm{VEL}\left(V_k, \infty\right)=0$.}
        \medskip
		
		Let $\Omega = \carrier(\pac)$. According to Lemma \ref{parabolic_equivalent}, we know $\Omega \neq \mathbb C$. Let $z_\infty \in \partial \Omega$ be a limit point of $\pac(V_\infty)$. Without loss of generality, we can assume $z_\infty= \infty$. Otherwise we can transform $\pac$ by an inversion with respect to a circle centered at $z_\infty$. Let $r_0>0$ be a real number large enough such that $\pac(V_1)\subset D(r_0)$ and $ D(r_0)\cap \partial \Omega \neq \emptyset$. Let $r_k$ ($k\geq1$) be an increasing sequence such that $r_{k+1}>2r_k$ and $V_{C( r_k)}(\pac) \cap V_{C( r_{k+1})}(\pac) = \emptyset$.
		
		Denote $\tilde r_k = \inf \{r \geq 0:  D(r)\cap \pac(V_k)\neq \emptyset\}$. Since $\infty$ is the limit point of $\pac(V_\infty)$, we know $\lim_{k\rightarrow \infty} \tilde r_k = + \infty.$ Without loss of generality, we can assume $\tilde r_k >r_k$, otherwise, we can subtract a subsequence from sequence $\{\tilde r_k\}$. As $ D(r_0)\cap \partial \Omega \neq \emptyset$, any path $\alpha \in \Gamma^*(V_{2n},\infty)$ contains a disjoint union of paths $\gamma_i \in \Gamma(V_{C(r_{2i-2})}(\pac), V_{C(r_{2i})}(\pac) )$, $i=1,2,\cdots,n$. Thus, by Lemma \ref{es}, we have
		$$
		\vel (\Gamma^*(V_{2n},\infty))\geq \sum_{i=1}^n \vel(V_{C(r_{2i-2})}(\pac), V_{C(r_{2i})}(\pac) )\geq \frac{n}{96+144\pi^2}.
		$$
		It follows that 
		$$
		\vel  (\Gamma(V_{2n},\infty))= \frac{1}{\vel (\Gamma^*(V_{2n},\infty))} \leq\frac{96+144\pi^2}{n}\rightarrow 0
		$$
		as $n \rightarrow + \infty$.
	\end{proof}

	\begin{lem}\label{hyperbolic_equivalent}
		Let $\mathcal{T}=(V,E,F)$ be a disk triangulation with an angle function $\Theta\in[0,\pi)$ and $\sup_{e\in E}\Theta(e)<\pi$. If ($Z_1$), ($Z_2$) and ($Z_3$) hold, then the following two properties are equivalent:
		\begin{itemize}
			\item[(1)] There exists an RCP $\pac=\{C_i\}_{i \in V}$ and $\carrier(\pac)= \mathbb{U}$.
			\item[(2)] $\mathcal{T}$ is VEL-hyperbolic.
		\end{itemize}
	\end{lem}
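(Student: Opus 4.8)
The two notions form a dichotomy: since $\vel(\{v_0\},\infty)\in[0,\infty]$ and the value does not depend on the finite base set, $\mathcal{T}$ is either VEL-parabolic or VEL-hyperbolic, never both. The implication (2)$\Leftarrow$(1) is then immediate. If $\pac$ is an RCP realizing $(\mathcal{T},\Theta)$ with $\carrier(\pac)=\mathbb{U}$, then its accumulation set is $\partial\mathbb{U}\subset\mathbb{C}$, which is nonempty; hence $\pac$ is not locally finite in the plane, so by Lemma~\ref{parabolic_equivalent} the triangulation $\mathcal{T}$ is not VEL-parabolic, and the dichotomy forces it to be VEL-hyperbolic.

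For (1)$\Leftarrow$(2) the plan is to manufacture an RCP whose carrier is exactly the round disk by a boundary-tangent exhaustion. Fix $v_0$ and take $\mathcal{T}^{[n]}=(V^{[n]},E^{[n]},F^{[n]})$ with $V^{[n]}=B(v_0,n)$. As in the proof of Theorem~\ref{RCPe}, I would cap off $\mathcal{T}^{[n]}$ with an extra vertex $v_\infty$, assign the new edges angle $0$, and apply Zhou's Theorem~\ref{RCP_finite} to get a spherical RCP $\pac_s^{[n]}$; a M\"obius transformation sending $C_{v_\infty}$ to $\partial\mathbb{U}$ then produces an RCP $\pac^{[n]}$ of $\mathcal{T}^{[n]}$ inside $\overline{\mathbb{U}}$ whose boundary circles are internally tangent to $\partial\mathbb{U}$ (because $\Theta([v,v_\infty])=0$). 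Normalizing each $\pac^{[n]}$ by a hyperbolic isometry of $\mathbb{U}$ so that $C_{v_0}$ is centered at the origin, all circles lie in the compact set $\overline{\mathbb{U}}$, so after passing to a subsequence (a diagonal argument, using the Ring Lemma~\ref{ringlemma} to control radius ratios) the centers and radii converge to a limit pattern $\pac^{[\infty]}=\{C_i^{[\infty]}\}_{i\in V}$ realizing $(\mathcal{T},\Theta)$ with $\carrier(\pac^{[\infty]})\subset\mathbb{U}$.

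The first thing to verify is that the limit is non-degenerate, i.e. the Euclidean radius $\rho_n$ of $C_{v_0}^{[n]}$ does not collapse, and this is where VEL-hyperbolicity enters. If $\rho_n\to0$, then using the concentric circles $C(2^k\rho_n)$ and the fact (Lemma~\ref{sectionlemma}) that the vertex sets $V_{C(t)}(\pac^{[n]})$ form separating loops, the area estimate of Lemma~\ref{es} and the super-additivity of Proposition~\ref{addlemma} give
\[
\vel_{\mathcal{T}^{[n]}}\big(\{v_0\},\partial V^{[n]}\big)\ \geq\ \sum_{k}\vel\big(V_{C(2^{k+1}\rho_n)}(\pac^{[n]}),\,V_{C(2^{k}\rho_n)}(\pac^{[n]})\big)\ \geq\ \frac{c\,\log_2(1/\rho_n)}{24+36\pi^2}.
\]
Since any $\mathcal{T}$-path from $v_0$ to $\partial V^{[n]}=S(v_0,n)$ contains an initial subpath lying in $B(v_0,n)$, extending admissible metrics by zero yields $\vel_{\mathcal{T}}(\{v_0\},S(v_0,n))\geq\vel_{\mathcal{T}^{[n]}}(\{v_0\},S(v_0,n))$, while VEL-hyperbolicity bounds $\vel_{\mathcal{T}}(\{v_0\},S(v_0,n))\leq\vel_{\mathcal{T}}(\{v_0\},\infty)<\infty$ uniformly in $n$. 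Hence $\log_2(1/\rho_n)$ stays bounded and $\rho_n\geq c_0>0$ along the subsequence, after which the Ring Lemma propagates positivity to $r_i^{[\infty]}>0$ for every $i$, so $\pac^{[\infty]}$ is a genuine RCP.

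It remains to show $\carrier(\pac^{[\infty]})=\mathbb{U}$, and this is the main obstacle. Non-degeneracy alone does not suffice: an interior accumulation point $z_\infty\in\mathbb{U}$ is in fact compatible with VEL-hyperbolicity, since the annulus estimate around $z_\infty$ (the mechanism of Lemma~\ref{sequenceaaq}) only bounds $\vel(\{v_0\},\infty)$ from above, never below, and so yields no contradiction. The filling must instead be extracted from the maximality built into the boundary-tangent construction. The plan is to use monotonicity of the hyperbolic radii under refinement — a discrete Schwarz--Pick phenomenon, itself a consequence of the variational inequalities in Lemma~\ref{vari} — to identify $\{r_i^{[\infty]}\}$ as the maximal packing metric of $\mathcal{T}$ in $\mathbb{U}$, and then, via the discrete maximum principle (Lemma~\ref{maxharmonic}) applied to the logarithmic ratio of radii of two competing packings, to conclude that this maximal packing carries a \emph{complete} hyperbolic cone metric with all cone angles $2\pi$; a complete simply connected hyperbolic surface develops isometrically onto $\mathbb{H}^2=\mathbb{U}$, forcing $\carrier(\pac^{[\infty]})=\mathbb{U}$. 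Establishing this completeness rigorously for obtuse intersection angles, where the three-circle geometry behaves less rigidly than in the tangency case, is the crux of the argument.
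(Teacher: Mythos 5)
Your first direction and the non-degeneracy of the limit pattern are fine and essentially match the paper (the paper normalizes by fixing $r_{v_0}=1$ and using the hyperbolic maximum principle, Theorem~\ref{MaximumPrincipleHyper}, to show the outer tangency radius $R^{[n]}$ increases to a finite limit via Lemma~\ref{parabolic_equivalent}; your normalization inside $\overline{\mathbb{U}}$ with a lower bound on $\rho_n$ is an equivalent bookkeeping). The problem is the last step, which you yourself flag as ``the crux'' and then do not prove. Declaring the key difficulty open is already a genuine gap, but the more serious issue is that you dismiss, for the wrong reason, exactly the argument that closes it. You assert that an interior accumulation point $z_\infty\in\mathbb{U}$ ``is in fact compatible with VEL-hyperbolicity'' because the annulus mechanism ``only bounds $\vel(\{v_0\},\infty)$ from above, never below.'' But the relevant quantity is not $\vel(\{v_0\},\infty)$: Lemma~\ref{sequenceaaq} says that under VEL-hyperbolicity one has $\vel(W_i,\infty)\to 0$ for \emph{every} increasing sequence of finite connected vertex sets $W_i$ with infinite union. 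If the circles of an infinite connected set $W=\bigcup W_i$ accumulate at an interior point $z$ with $R_0=d_{\mathbb{R}^2}(o_{v_0},z)<R_1<R_2<R^{[\infty]}$, then in the finite approximants $\pac^{[n_k]}$ every path from $W_i$ to $\partial V^{[n_k]}$ must cross the Euclidean annulus between $C(R_1)$ and $C(R_2)$ (here the boundary tangency to $C(R^{[n_k]})$ with $R^{[n_k]}>R_2$ is what pushes $\partial V^{[n_k]}$ outside $C(R_2)$), so Lemma~\ref{es} gives the uniform \emph{lower} bound $\vel(W_i,\infty)\ge\vel(W_i,\partial V^{[n_k]})\ge (R_2-R_1)^2/\bigl((24+36\pi^2)R_2^2\bigr)>0$, contradicting Lemma~\ref{sequenceaaq}. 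So the VEL machinery does yield the contradiction, and this is precisely how the paper finishes.

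Your substitute route — identify the limit as a maximal packing, prove the induced hyperbolic cone metric is complete, and invoke the developing map onto $\mathbb{H}^2$ — is not carried out: completeness of the carrier metric is logically equivalent to local finiteness in $\mathbb{U}$, which is the statement to be proved, so as written the plan is close to circular, and you concede that establishing it for obtuse angles is unresolved. As it stands the proposal does not prove the implication (2)$\Rightarrow$(1).
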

	\begin{proof}
		
		The proof is divided into two parts.
		
		\medskip
		\emph{Part 1. If $\pac$ is an RCP and $\carrier(\pac)= \mathbb{U}$, then $\mathcal{T}$ is VEL-hyperbolic.}
        \medskip

		If not, assume that $\mathcal{T}$ is VEL-parabolic. Then Lemma~\ref{parabolic_equivalent} implies that $\pac$ is locally finite in the plane, contradicting the fact that $\carrier(\pac)= \mathbb{U}$.

		\medskip
		\emph{Part 2. If $\mathcal{T}$ is VEL-hyperbolic, then there exist an RCP $\pac=\{C_i\}_{i \in V}$ and $\carrier(\pac)= \mathbb{U}$.}
        \medskip
		
		Choose an arbitrary vertex $v_0\in V$. Since $\mathcal T=(V,E,F)$ is a locally finite triangulation of the open disk,
        we choose an increasing exhaustion by finite triangulated topological disks
        \[
        \left\{\mathcal T^{[n]}=\left(V^{[n]},E^{[n]},F^{[n]}\right)\right\}_{n=1}^{\infty}
        \]
        such that each $|\mathcal T^{[n]}|$ is homeomorphic to a closed disk,
        \[
        |\mathcal T^{[n]}|\subset \operatorname{int} |\mathcal T^{[n+1]}|,
        \qquad
        \bigcup_{n=1}^{\infty}|\mathcal T^{[n]}|=|\mathcal T|,
        \]
        and every compact subset of $|\mathcal T|$ is contained in $|\mathcal T^{[n]}|$
        for all sufficiently large $n$.
        Here $|\mathcal T^{[n]}|$ denotes the underlying polyhedron of the finite
        subcomplex $\mathcal T^{[n]}$.
		Following the proof in Theorem \ref{RCPe}, notice that we have assumed $\Theta_s^{[n]}([v, v_\infty])=0$, which implies that for any $v \in \partial V^{[n]}$, the $C_v^{[n]}\in \pac^{[n]}$ is tangent to a unit circle if we give a suitable scale for $\pac^{[n]}$. Without loss of generality, we assume $\pac^{[n]}$ is just with such suitable scale.
		
		By Maximum Principle Theorem \ref{MaximumPrincipleHyper} in Hyperbolic plane, we have $r_{v_0}^{[i]}> r_{v_0}^{[j]}$, if $i<j$. 
		
		Let $$R^{[n]}= \frac{1}{r_{v_0}^{[n]}}$$ and $$\pac^{\prime[n]}= R^{[n]}\pac^{[n]}.$$
		It is obvious that $r_{v_0}^{\prime[n]}=1$, and for any $v \in \partial V^{[n]}$, the $C_v^{\prime[n]}\in \pac^{\prime[n]}$ is tangent to circle $C(R^{[n]})$. Since $R^{[n]}$ is increasing , it has a limit  $R^{[\infty]}$. By Lemma \ref{parabolic_equivalent}, we know $R^{[\infty]}< +\infty$. Otherwise, $\pac_\infty$ is a locally finite RCP, which means $\mathcal{T}$ is VEL-parabolic.
        Thus, for the same reason as in the proof in Theorem \ref{RCPe}, we know there exists an RCP $\pac=\{C_i\}_{i \in V}$ that is contained in $D(R^{[\infty]})$.

		Assume that $\pac$ is not locally finite in $U$. Then there exists a vertex $x\in \mathrm{Int}(D(R^{[\infty]})) \cap \partial\carrier(\pac)$. Since $\partial\carrier(\mathcal{P}_\infty)$ is a closed set, without loss of generality, we can assume that $x$ is a point in $\partial\carrier(\mathcal{P}_\infty)$ closest to the origin $o_{v_0}$. It is obvious that 
		\begin{align*}
			\# \{v\in V:r_v\ge \delta\}<+\infty,~\forall \delta>0.
		\end{align*}
		Then there exists a connected infinite vertex set $W=\{v_i\}_{i\in\mathbb{N}_+}$ such that $\lim_{i\rightarrow\infty}c(v_i)=x$, and $W_i=\{v_1,\cdots,v_i\}$ is connected for each $i$. Then $W=\cup_{i=1}^\infty W_i$. 
		
		Let $R_0= d_{\mathbb{R}^2}(o_{v_0}, x)$, and $R_1, R_2$ be two constants such that $R_0< R_1<R_2<R^{[\infty]}$. Then, there is a $n_k$ such that  $R^{[n_k]}>R_2$ and $C_{v_i}^{[n_k]} \cap D(R_1) \neq \emptyset$, $\forall i\leq k$. From Lemma \ref{es}, we have 
		$$
		\vel (W_i, \partial V^{[n_k]}) \geq  \vel(V_{C(R_1)}(\pac^{[n_k]}), V_{C(R_2)}(\pac^{[n_k]}) )\geq \frac{(R_2-R_1)^2}{(24+36\pi^2)R_2^2},
		$$
		which contradicts Lemma \ref{sequenceaaq}.
	\end{proof}
	\begin{proof}[Proof of Theorem \ref{uniformization_RCP}]
		This is directly deduced from Lemma \ref{parabolic_equivalent} and \ref{hyperbolic_equivalent}.
	\end{proof}

	\section{The rigidity of RCPs}\label{sec:rigid}

	\subsection{The rigidity of hyperbolic RCPs}
    The proof of the rigidity of infinite RCPs is along the same lines as He's work \cite{He}. 
	In order to prove the rigidity of RCPs in the hyperbolic case, we need to develop a maximum principle for $\Theta \in [0,\pi)^E$ in the hyperbolic background geometry. Let $\mathcal{T}=(V,E,F)$  be a finite cellular decomposition of a disk, and   $\mathcal{P}= \{C_i\}_{i\in V}$ be a finite ideal disk pattern realizing $(\mathcal{T}, \Theta)$ on the complex plane $\mathbb{C}$. Recall that $D_i$ is the closed disk bounded by the circle  $C_i$. 
	
	Let $\mathbb{U}= \{z\in \mathbb C: |z|<1\}$ be the unit disk with hyperbolic metric $ds= 2|dz|/(1-|z|^2)$. We consider a special RCP  $\mathcal{P}= \{C_i\}_{i\in V}$ such that  $D_i\cap \mathbb{U} \neq \emptyset$, for any $i \in V$.  If $C_i$ is entirely lying in $\mathbb{U}$, it is called a \textbf{hyperbolic circle}; if $C_i$ is internally tangent to $\partial \mathbb{U}$, it is called a \textbf{horocycle}; if $C_i$ intersects $\partial \mathbb{U}$ in two points, it is called a \textbf{hypercycle} (see Figure \ref{generalized cycles}). We refer to the above three types of cycles as \textbf{generalized cycles}. We call the tangent point of a horocycle and the unit circle $\partial \mathbb{U}$ the \textbf{center} of the horocycle. For each hypercycle $C$, there exists a geodesic line that is equidistant to $C$, which is also called the ``center'' of $C$. 
	\begin{figure}[h]
		\centering
		\includegraphics[width=0.42\textwidth]{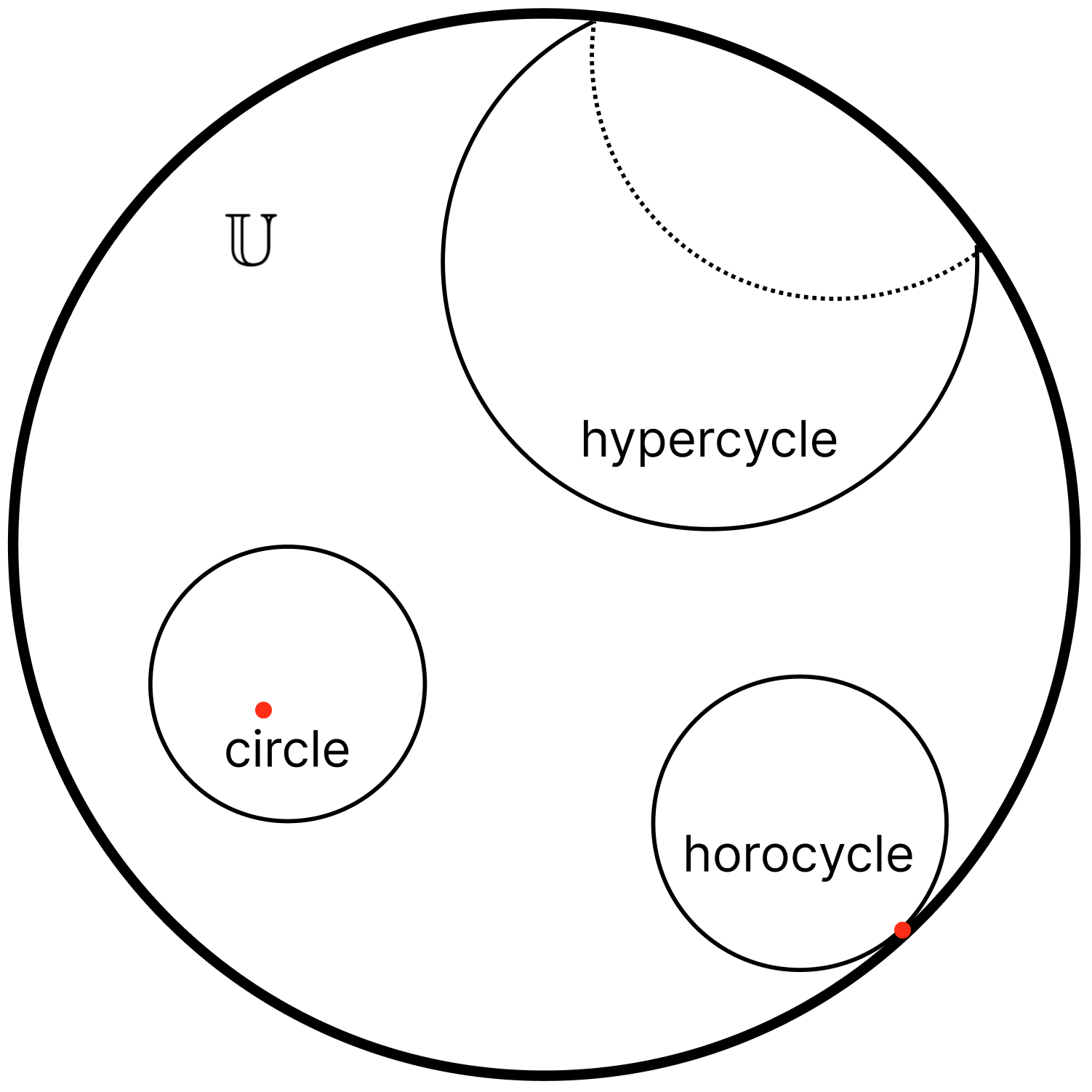}
		\caption{three types of generalized cycles}
		\label{generalized cycles}
	\end{figure}
	
	The following \textbf{geodesic curvature} provides a convenient parameter for generalized cycles. 
    \begin{defn}[Geodesic curvature]
    Let $r_i$ be the Euclidean radius of $C_i$, and $o_i$ be the center of $C_i$ in $\mathbb C$. If $C_i$ is a hyperbolic circle, we denote by $\rho_i$ the hyperbolic radius of $C_i$ with respect to $\mathbb{U}$. If $C_i$ is a hypercycle, we denote by $\alpha_i$ the intersection angle of $C_i$ and $\partial \mathbb{U}$. The geodesic curvature $g_i$ of $C_i$ is given by
	\begin{equation}\label{g_if}
		g_i= \begin{cases}\coth \rho_i & \text {for hyperbolic circle;}  \\ 1 & \text {for horocycle;} \\ -\cos \alpha_i & \text {for hypercycle} .\end{cases}
	\end{equation}
    \end{defn}
	It is obvious that $g_i(r_i, o_i)$ is a function on $o_i$ and $r_i$ such that $g_i$ is strictly decreasing with respect to $d_{\mathbb{R}^2}(0, o_i)$ or $r_i$. In fact, we can see the horocycle as a hypercycle with $\alpha_i =\pi$ or a hyperbolic circle with $\rho_i = + \infty$.
	
	\begin{lem}\label{dgdr}
		Let $\Theta \in [0,\pi)$ be fixed. Given a hyperbolic circle $C_0$ with Euclidean center $o_0$ at the origin of $\mathbb{C}$ and Euclidean radius $r_0<1$, consider any generalized cycle $C=C(o,r)$ that intersects $C_0$ at an angle $\Theta$. Denote by $g$ the geodesic curvature of $C$. Then
		\[
		\frac{dg}{dr}<0.
		\]
	\end{lem}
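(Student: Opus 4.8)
The plan is to collapse the whole statement onto a single explicit formula for the geodesic curvature followed by one elementary differentiation. First I would establish a \emph{unified} expression for $g$ valid simultaneously for all three types of generalized cycles in \eqref{g_if}. Writing $o$ for the Euclidean center of $C=C(o,r)$ and $d=d_{\mathbb{R}^2}(0,o)$ for its Euclidean distance to the origin, the claim is that
\[
g=\frac{1-d^2+r^2}{2r}
\]
for every generalized cycle in $\mathbb{U}$. This follows either from the standard conformal-metric computation of geodesic curvature in the Poincar\'e disk $ds=2|dz|/(1-|z|^2)$, or simply by checking the three regimes: at $d=0$ it gives $(1+r^2)/(2r)=\coth\rho$, where the Euclidean radius $r$ and hyperbolic radius $\rho$ satisfy $\tanh(\rho/2)=r$; at $d+r=1$ it gives $1$, the horocycle value; and when $C$ crosses $\partial\mathbb{U}$, the Euclidean circle-intersection formula gives $(1-d^2+r^2)/(2r)=\cos(\pi-\alpha)=-\cos\alpha$, matching \eqref{g_if}. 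The purpose of this step is twofold: $g$ depends on $o$ only through $d$, and the formula is smooth in $(r,d)$ across the transitions between the three types, so that no case distinction will be needed afterward.

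Next I would impose the constraint that $C$ meets the fixed circle $C_0$ at angle $\Theta$. Since $C_0$ is centered at the origin with Euclidean radius $r_0$, the Euclidean distance between the two centers is exactly $d$, and the Euclidean law-of-cosines relation for two circles of radii $r_0,r$ meeting at angle $\Theta$ (the edge-length formula of Section~\ref{section-thurston-cp}) gives
\[
d^2=r^2+r_0^2+2rr_0\cos\Theta .
\]
Because $g$ sees $o$ only through $d$, the configuration is determined up to rotation about the origin by $r$ alone, so $g$ is a genuine function of the single variable $r$ along the family of cycles meeting $C_0$ at angle $\Theta$.

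Substituting the constraint into the unified formula cancels the $r^2$ terms and yields
\[
g(r)=\frac{1-r_0^2}{2r}-r_0\cos\Theta ,
\]
whence
\[
\frac{dg}{dr}=-\frac{1-r_0^2}{2r^2}<0,
\]
using $r_0<1$, so that $1-r_0^2>0$, and $r>0$. This establishes the lemma. I expect the only genuine work to lie in the first step: pinning down the unified geodesic-curvature formula and matching the sign and angle conventions of \eqref{g_if} across the hyperbolic-circle, horocycle, and hypercycle regimes. Once that formula is in hand the conclusion is a one-line computation; note in particular that the $\Theta$-dependence drops out of $dg/dr$ entirely, so the monotonicity holds uniformly in $\Theta$.
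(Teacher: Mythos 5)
Your proposal is correct, and it streamlines the paper's argument. The paper proves the lemma by splitting into cases according to the type of the cycle $C$: for a hyperbolic circle ($d<1-r$) it differentiates $g=\coth\bigl(\operatorname{arctanh}(d+r)-\operatorname{arctanh}(d-r)\bigr)$ directly, using $d^2=r_0^2+r^2+2r_0r\cos\Theta$ to simplify the numerator to $-(2-2r_0^2)$, while for a hypercycle it uses exactly your formula $g=\frac{1-r_0^2}{2r}-r_0\cos\Theta$. Your observation that all three branches of \eqref{g_if} are given by the single rational expression $g=\frac{1-d^2+r^2}{2r}$ collapses the case analysis into one differentiation; the hypercycle computation in the paper's Case 2 is then literally your final display, and the paper's Case 1 is replaced by the identity
\[
\coth\bigl(\operatorname{arctanh}(d+r)-\operatorname{arctanh}(d-r)\bigr)=\frac{1-d^2+r^2}{2r},
\]
which follows in one line from $\tanh(a-b)=\frac{\tanh a-\tanh b}{1-\tanh a\tanh b}$ with $\tanh a=d+r$, $\tanh b=d-r$. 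The only point you should make explicit is precisely this identity: checking the hyperbolic-circle branch only at $d=0$ does not by itself cover off-center hyperbolic circles, since the Euclidean center is not the hyperbolic center; either invoke the $\tanh$ subtraction formula as above or the isometry-invariance of $\coth\rho$ together with the formula $\rho=\operatorname{arctanh}(d+r)-\operatorname{arctanh}(d-r)$ for the hyperbolic radius. With that one line supplied, your unified-formula route is complete and arguably cleaner than the paper's, and it makes transparent why $\Theta$ drops out of $dg/dr$.
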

	
	\begin{proof}
		Let $d=d_{\mathbb{R}^2}(0,o)$. From elementary geometry,
		\[
		d^2=r_0^2+r^2+2r_0r\cos\Theta .
		\]
		
		It is clear that, $C$ is a hyperbolic circle if $d<1-r$, a horocycle if $d=1-r$, and a hypercycle if $1-r<d<1+r$.
		
		From \eqref{g_if}, we obtain
		\[
		g=\begin{cases}
			\coth\!\left(\operatorname{arctanh}(d+r)-\operatorname{arctanh}(d-r)\right), & d<1-r, \\[4pt]
			1, & d=1-r, \\[4pt]
			\dfrac{1+r^2-d^2}{2r}, & 1-r<d<1+r.
		\end{cases}
		\]
		
		We now check $\dfrac{dg}{dr}$ in each case.

        \medskip

        \emph{Case 1:}  $d<1-r$. 
		Let $\dot{d}$ and $\rho$ denote the derivative of $d_{\mathbb{R}^2}(r)$ with respect to $r$ and the hyperbolic radius of $C$. Then
		\[
		\begin{aligned}
			\frac{dg}{dr}
			&=- \frac{1}{\sinh^2 \rho}\left[\frac{\dot{d}+1}{1-(d+r)^2}-\frac{\dot{d}-1}{1-(d-r)^2}\right] \\[4pt]
			&=- \frac{4\dot{d}dr+2(1-d^2-r^2)}{\sinh^2\rho(1-(d+r)^2)(1-(d-r)^2)} \\[4pt]
			&=- \frac{2-2r_0^2}{\sinh^2\rho(1-(d+r)^2)(1-(d-r)^2)} <0 .
		\end{aligned}
		\]

        \medskip

        \emph{Case 2:}  $1-r<d<1+r$. 
		Here
		\[
		g=\frac{1+r^2-d^2}{2r}
		=\frac{1-r_0^2}{2r}-r_0\cos\Theta ,
		\]
		which immediately implies $\dfrac{dg}{dr}<0$.
		
		\medskip
		Thus, in all cases, $\dfrac{dg}{dr}<0$ as claimed.
	\end{proof}

	\begin{lem}\label{gthreeconfig}
		Let $\Theta_i, \Theta_j, \Theta_k \in [0,\pi)$ be three angles satisfying \eqref{sum<pi}
		or \eqref{angle-condition}.
		For any $g_i > 1$ and $g_j, g_k > -1$, there exists a configuration of three intersecting generalized cycles 
		$C_i, C_j, C_k$, unique up to conformal transformation, having geodesic curvatures 
		$g_i, g_j, g_k$ and meeting in intersection angles $\Theta_i, \Theta_j, \Theta_k$.
	\end{lem}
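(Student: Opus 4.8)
The plan is to fix the hyperbolic circle $C_i$ by a conformal normalization and then reconstruct $C_j,C_k$ one at a time, using the strict monotonicity of Lemma~\ref{dgdr} to control geodesic curvatures and the existence of three-circle configurations (Lemma~\ref{sanyuangouxing_yinli}) to seed the construction. Since $g_i>1$, the cycle $C_i$ is a genuine hyperbolic circle; applying an automorphism of $\mathbb{U}$ I would place its Euclidean center at the origin, so that its Euclidean radius $r_0\in(0,1)$ is pinned down by $g_i=\tfrac{1+r_0^2}{2r_0}$ (equivalently $g_i=\coth\rho_i$, $\rho_i=\operatorname{arccoth}g_i$). This exhausts the translational part of the conformal group and leaves only the rotations about the origin, which I will spend to fix the direction of the center of $C_j$. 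After this normalization the whole configuration is rigid, so it suffices to produce $C_j$ and $C_k$ with the prescribed incidence data and to verify that the resulting triple is unique.

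For existence I would argue by continuous deformation out of Lemma~\ref{sanyuangouxing_yinli}. When $g_i,g_j,g_k>1$ all three cycles are hyperbolic circles, and Lemma~\ref{sanyuangouxing_yinli} (in the hyperbolic background) furnishes a unique three-circle configuration with hyperbolic radii $\rho_\bullet=\operatorname{arccoth}g_\bullet$ and the prescribed angles, the hypotheses \eqref{sum<pi} or \eqref{angle-condition} guaranteeing that the three edge lengths obey the triangle inequality. Holding $g_i$ and the three angles fixed, I then let $g_j$ (and afterwards $g_k$) decrease: by Lemma~\ref{dgdr}, lowering $g_j$ amounts to enlarging the Euclidean radius $r_j$ of $C_j$ along the one-parameter family of circles meeting $C_i$ at angle $\Theta_k$, and $g_j$ is a strictly decreasing, continuous function of $r_j$. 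Thus as $g_j$ crosses $1$ the cycle $C_j$ turns from a hyperbolic circle into a horocycle and then, for $g_j<1$, into a hypercycle, the configuration deforming continuously throughout; the target values $g_j,g_k$ are hit by the intermediate value theorem. Equivalently one may run a limiting argument: approximate by all-circle configurations with $g_j^{(n)},g_k^{(n)}\downarrow g_j,g_k$ coming from Lemma~\ref{sanyuangouxing_yinli}, normalize each with $C_i$ centered at the origin, and extract a convergent subsequence whose limit realizes $(\Theta_i,\Theta_j,\Theta_k)$ and $(g_i,g_j,g_k)$ by continuity of the intersection angle and of the geodesic curvature \eqref{g_if}.

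Uniqueness I would read off the same monotonicity. Given two configurations with identical data, normalize each as above so that the circles $C_i$ coincide. Placing the center of $C_j$ on the positive real axis in both, the radius $r_j$ is forced, since it is the unique root of $g(r_j)=g_j$ provided by $dg/dr<0$ in Lemma~\ref{dgdr}; hence the two copies of $C_j$ coincide. Finally $C_k$ must meet the now-fixed pair $C_i,C_j$ in the prescribed angles $\Theta_j,\Theta_i$ and carry geodesic curvature $g_k$; along the one-parameter family of circles making these two angles with $C_i$ and $C_j$ the geodesic curvature is again strictly monotone by Lemma~\ref{dgdr}, which determines $C_k$ uniquely. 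The two configurations therefore agree after a common conformal normalization, i.e. they differ by a conformal transformation.

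The step I expect to be the crux is the placement of the third cycle $C_k$, together with the control of degenerations during the deformation. Unlike the pair $(C_i,C_j)$, the cycle $C_k$ is subject to three conditions simultaneously---two intersection angles and one geodesic curvature---so I must check that the circles meeting $C_i,C_j$ at $\Theta_j,\Theta_i$ form a genuine one-parameter family whose geodesic curvature sweeps an interval containing $g_k$, and that no member of this family escapes $\mathbb{U}$ or collapses to a point before $g_k$ is reached; this is exactly where \eqref{sum<pi}/\eqref{angle-condition} and the monotonicity of Lemma~\ref{dgdr} have to be combined, and where one must be careful about the precise range of geodesic curvatures that is actually attainable once $g_i$ and the angles are fixed. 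A conceptually cleaner alternative that I would keep in reserve, especially for making the type transitions and the uniqueness transparent, is the Minkowski reformulation: each generalized cycle corresponds to a vector in $\mathbb{R}^{2,1}$ whose Minkowski norm encodes its geodesic curvature (timelike for $g>1$, null for $g=1$, spacelike for $g\in(-1,1)$) and whose pairwise inner products encode the intersection angles, so the problem reduces to the existence and uniqueness, up to $O(2,1)\cong\operatorname{Isom}(\mathbb{H}^2)$, of three vectors with a prescribed Gram matrix, which in turn is a signature computation for that matrix.
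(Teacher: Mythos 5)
Your proposal is correct and follows essentially the same route as the paper: normalize so that $C_i$ is a Euclidean circle centered at the origin, pin down $C_j$ via the strict monotonicity $dg/dr<0$ of Lemma~\ref{dgdr}, and then locate $C_k$ by the intermediate value theorem along the one-parameter family of circles furnished by Lemma~\ref{sanyuangouxing_yinli}, with uniqueness read off from the same monotonicity. The one issue you flag as the crux --- verifying that the geodesic curvature actually sweeps an interval containing the target $g_k$ as $r_k$ ranges over $(0,\infty)$ --- is precisely the point the paper's own proof also leaves implicit, so your caution there is warranted rather than a defect of your approach.
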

	
	\begin{proof}
		Without loss of generality, assume that $o_i$ lies at the origin in $\mathbb{C}$, 
		$o_j$ lies on the positive real axis, and $o_k$ lies in the upper half-plane, 
		as shown in Figure~\ref{hyperbolic_three_config}.
		
		It is clear that $C_i$ and $C_j$ are completely determined by $g_i$, $g_j$ and $\Theta_k$. 
		Thus, the radii $r_i$, $r_j$ and the location of $o_j$ are also determined. 
		We now let $r_k$ vary.
		
		From Lemmas~\ref{sanyuangouxing_yinli}, we know that for each $r_k > 0$ there exists 
		a unique circle $C_k$ such that $C_i, C_j, C_k$ meet with intersection angles 
		$\Theta_i, \Theta_j, \Theta_k$. 
		By Lemma \ref{dgdr}, when $r_k$ increases, $g_k$ strictly decreases. 
		Hence, by the intermediate value theorem, there exists a unique $r_k$ such that 
		$C_k$ has geodesic curvature $g_k$. 
		This establishes the existence of the configuration.
		
		The uniqueness follows directly from Lemma~\ref{sanyuangouxing_yinli}.
	\end{proof}
	
	\begin{figure}[h]
		\centering
		\includegraphics[width=0.5\textwidth]{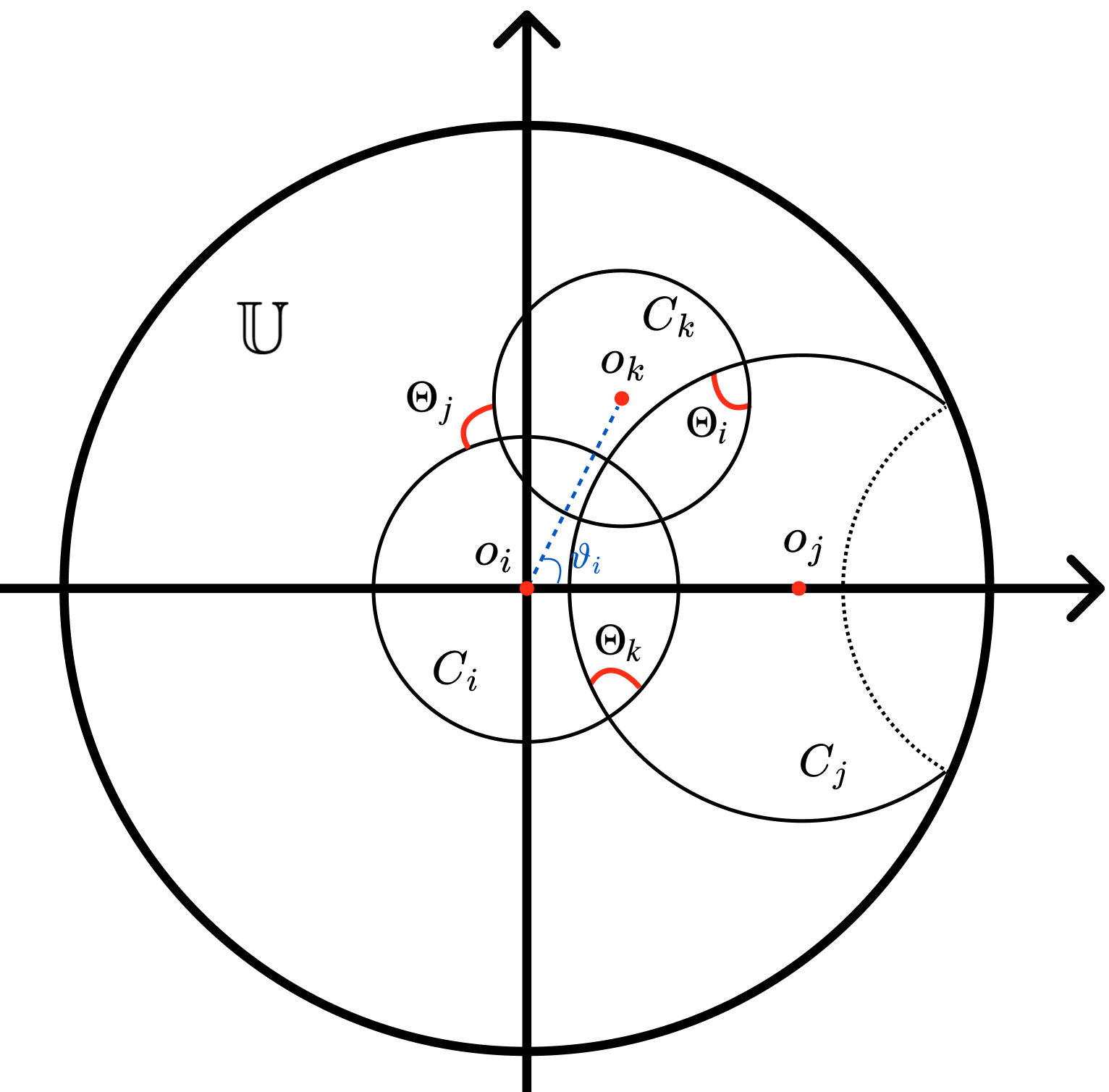}
		\caption{the configuration of three generalized cycles}
		\label{hyperbolic_three_config}
	\end{figure}
	
	\begin{lem}\label{gvari}
		Let $\Theta_i, \Theta_j, \Theta_k \in [0,\pi)$ be three angles satisfying \eqref{cos-condition}.
		For any $g_i > 1$ and $g_j, g_k > -1$, in hyperbolic geometry, by the previous lemma, there exists a three circle configuration formed by circles $C_i,C_j$ and $C_k$ with geodesic curvature equal to $g_i,g_j$ and $g_k$. Let $c_i,c_j$ and $c_k$ be the centers of $C_i, C_j$ and $C_k$. We denote by $\nu_i$ the inner angle between two geodesic segments connecting $\{c_i,c_j\}$ and $\{c_i,c_k\}$ (when $c_j$ or $c_k$ is a geodesic line that is the center of a hypercycle, then the geodesic segment connecting it with $c_i$ refers to the geodesic segment starting from $c_i$ that is perpendicular to $c_j$ or $c_k$, respectively).  Then we have
		\[
		\frac{\partial \vartheta_i}{\partial g_i} > 0, 
		\qquad 
		\frac{\partial \vartheta_i}{\partial g_j} \leq 0.
		\]
	\end{lem}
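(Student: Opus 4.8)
The plan is to reduce the two inequalities to the variational identity for hyperbolic triangles recorded in Lemma~\ref{vari}(2), by viewing the geodesic curvature as a reparametrisation of the hyperbolic radius. First I would treat the generic case where $C_i,C_j,C_k$ are all honest hyperbolic circles, so that each $g_m=\coth\rho_m$ with $\rho_m\in(0,\infty)$ the hyperbolic radius; this $\rho_m$ is precisely the radius variable $r_m$ appearing in Lemma~\ref{vari}, and $d\rho_m/dg_m=-\sinh^2\rho_m<0$. Holding $g_j,g_k$ (equivalently $\rho_j,\rho_k$) and the angles $\Theta_i,\Theta_j,\Theta_k$ fixed, the chain rule gives
\[
\frac{\partial\vartheta_i}{\partial g_i}=-\sinh^2\rho_i\,\frac{\partial\vartheta_i}{\partial\rho_i},\qquad \frac{\partial\vartheta_i}{\partial g_j}=-\sinh^2\rho_j\,\frac{\partial\vartheta_i}{\partial\rho_j}.
\]
Since Lemma~\ref{vari}(2) yields $\partial\vartheta_i/\partial\rho_i<0$ and $\sinh\rho_j\,\partial\vartheta_i/\partial\rho_j\ge0$, the desired signs $\partial\vartheta_i/\partial g_i>0$ and $\partial\vartheta_i/\partial g_j\le0$ follow on the open region $\{g_i,g_j,g_k>1\}$.

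Next I would extend these inequalities to the full range $g_i>1$, $g_j,g_k>-1$. By Lemmas~\ref{dgdr} and~\ref{gthreeconfig} the configuration is unique and depends smoothly on $(g_i,g_j,g_k)$; in particular the point $c_i$ (a genuine point of $\mathbb{H}^2$, since $g_i>1$) and the two geodesics through $c_i$ perpendicular to $C_j$ and to $C_k$ — whose angle is $\vartheta_i$ — vary smoothly, so $\vartheta_i$ is a $C^1$ function of $(g_i,g_j,g_k)$. The horocyclic boundary $g=1$ is a limit of the circle region, so there the two inequalities persist in the weak form by continuity of the partial derivatives. The difficulty is the hypercyclic range $g<1$: it is \emph{not} a limit of configurations of circles — one cannot let $g_j\to g_0<1$ through values $>1$ without crossing the horocyclic wall $g=1$ — so the chain-rule reduction, which relies on a finite hyperbolic radius $\rho_j$, genuinely fails there.

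To cover the hypercyclic range uniformly I would compute $\vartheta_i$ and its derivatives directly in the disk model, using that the Euclidean centres and radii of $C_i,C_j,C_k$ are smooth functions of $(g_i,g_j,g_k)$ (Lemma~\ref{dgdr} makes $g$ a smooth strictly monotone function of the Euclidean radius, and Lemma~\ref{gthreeconfig} then fixes the whole configuration). Equivalently one may pass to the Minkowski model $\mathbb{R}^{2,1}$ and represent each generalised cycle by a vector carrying $g_m$ as a smooth parameter over $(-1,\infty)$, so that the intersection-angle relations and the angle $\vartheta_i$ become analytic in $(g_i,g_j,g_k)$ and in $\cos\Theta_i,\cos\Theta_j,\cos\Theta_k$ across all three cycle types. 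The main obstacle is then to pin down the \emph{sign} of $\partial\vartheta_i/\partial g_j$ on the hypercyclic range: the geometric mechanism is that the foot of the common perpendicular from $c_i$ to $C_j$ moves monotonically as $g_j$ varies, and I would make this precise through the signed distance (or Busemann) function to $C_j$, checking that the resulting derivative agrees with the chain-rule value on the circle region — which selects the correct branch via the uniqueness in Lemma~\ref{gthreeconfig}. Strictness of $\partial\vartheta_i/\partial g_i$ is never lost, because $g_i>1$ keeps $C_i$ a genuine hyperbolic circle and the strict inequality $\partial\vartheta_i/\partial\rho_i<0$ continues to drive the sign; I expect establishing the monotone motion of the perpendicular foot uniformly in the cycle type to be the crux of the argument.
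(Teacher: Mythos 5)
Your first step (all three cycles genuine hyperbolic circles, chain rule through $g_m=\coth\rho_m$ and Lemma~\ref{vari}(2)) is correct, and you have correctly located the difficulty: that argument only covers $g_j,g_k>1$, while the lemma is asserted for $g_j,g_k>-1$, and the hypercyclic range cannot be reached by a limit of hyperbolic-circle configurations. The problem is that you do not actually close that gap. Your proposed treatment of the hypercyclic/horocyclic range is a plan, not a proof: you say the ``main obstacle is to pin down the sign of $\partial\vartheta_i/\partial g_j$'' there and that you ``expect establishing the monotone motion of the perpendicular foot uniformly in the cycle type to be the crux,'' but the crux is exactly what is left undone. Smoothness of the configuration in $(g_i,g_j,g_k)$ and analyticity in the Minkowski model give you differentiability, not the sign. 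The continuity argument across $g=1$ also only yields the weak inequalities at the horocyclic wall itself and says nothing on the other side of it.

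The paper closes this in one stroke, and the mechanism is worth internalizing because it makes the case distinction evaporate. Normalize so that $c_i$ (which is a genuine point of $\mathbb{H}^2$ since $g_i>1$) sits at the origin of the disk model. Then the geodesics from $c_i$ toward $c_j$ and $c_k$ are Euclidean diameters passing through the Euclidean centers $o_j$, $o_k$ of the cycles, so the hyperbolic angle $\vartheta_i$ \emph{equals} the Euclidean angle $\angle o_jo_io_k$ of the underlying Euclidean three-circle configuration, regardless of whether $C_j$, $C_k$ are hyperbolic circles, horocycles, or hypercycles. The Euclidean part of Lemma~\ref{vari} then gives $\partial\vartheta_i/\partial r_i<0$ and $\partial\vartheta_i/\partial r_j\ge 0$ in the \emph{Euclidean} radii, and Lemma~\ref{dgdr} --- which is proved uniformly for all three cycle types --- gives $dg_j/dr_j<0$ (with $g_i$ fixed) and $dg_i/dr_i<0$ (with $g_j,g_k$ fixed). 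The chain rule through the Euclidean radius, not the hyperbolic one, then yields both inequalities on the whole range $g_i>1$, $g_j,g_k>-1$ at once. In short: your route works on a proper subregion and stalls precisely where the new content of the lemma lies; replacing the hyperbolic radius by the Euclidean radius as the intermediate variable is the missing idea.
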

	
	\begin{proof}
		Without loss of generality, assume that $c_i$ lies at the origin in $\mathbb{C}$. Let $o_i,o_j$ and $o_k$ be the Euclidean centers of $C_i,C_j$ and $C_k$. 
		Then the angle $\vartheta_i$ in hyperbolic geometry coincides with the Euclidean angle 
		$\angle o_j o_i o_k$, as shown in Figure~\ref{hyperbolic_three_config}.
		From Lemma \ref{dgdr}, we know if $g_i$ is fixed, then
		\[
		\frac{d g_j}{d r_j} < 0.
		\]
		Since for $C_i$, we have $\rho_i = 2\operatorname{arctanh}(r_i)$ by elementary hyperbolic geometry. From~\eqref{g_if}, we also have $d g_i / d r_i < 0$ when $g_j, g_k$ are fixed. 
		Thus, the desired inequalities follow directly from Lemma~\ref{vari}.
	\end{proof}

	Also, we have the following lemma for three hyperbolic circles.

	Then we  establish the following maximum principle for hyperbolic background geometry:

	\begin{thm}\label{MaximumPrincipleHyper}(Maximum principle in hyperbolic plane)
		Let $G=(V,E)$ be a connected triangulation, and $\Theta\in [0 ,\pi)^E$ be the prescribed intersection angle such that (Z4) holds. Let $V=\partial V\cup \mathrm{int}(V)$, where $\partial V, \mathrm{int}(V)$ are two vertex sets.
		Assume $\mathcal{P}= \{C_i\}_{i\in V}$ and $\mathcal{P^*}= \{C^*_i\}_{i\in V}$ are two RCPs with circles intersecting the unit disk $U$ realizing $(\mathcal{T}, \Theta)$. Suppose $\mathcal{P} \subset \mathbb{U}$, $\{C_v^*\}_{v\in \mathrm{int(V)}} \subset \mathbb{U}$, and $C_v^*$ is not contained in $\mathbb{U}$ for all $v\in\partial V$. 
		Viewing all circles as generalized cycles in hyperbolic space.
		Let $g_v$ and $g^*_v$ respectively denote the geodesic curvatures of $C_v$ and $C_v^*$ respectively. Then $$g_v\ge g^*_v,~\forall v\in V $$
	\end{thm}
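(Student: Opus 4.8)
The plan is to run a discrete maximum principle for the pointwise difference of geodesic curvatures, driven by the cone-angle condition (both patterns close up to total angle $2\pi$ at every interior vertex) together with the angle monotonicities of Lemma~\ref{gvari} and its three-hyperbolic-circle companion. First I would set $\psi_v := g_v^* - g_v$ and read off the boundary data. For $v\in\partial V$ the circle $C_v\subset\mathbb{U}$ is a genuine hyperbolic circle, so $g_v=\coth\rho_v>1$, while $C_v^*$ is not contained in $\mathbb{U}$ and is therefore a horocycle ($g_v^*=1$) or a hypercycle ($g_v^*=-\cos\alpha_v\in(-1,1)$); in either case $g_v^*\le 1<g_v$, hence $\psi_v<0$ on $\partial V$. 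Since the claim is exactly $\max_V\psi\le 0$, I would argue by contradiction: if $M:=\max_V\psi>0$ then, because $\psi<0$ on $\partial V$, the maximum is attained at some interior vertex $v_0\in\mathrm{int}(V)$, where moreover both $g_{v_0}>1$ and $g_{v_0}^*>1$.

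Next I would linearize the interior equations. For an interior vertex $v$ put $\Phi_v(g)=\sum_{[v,j,k]\in F}\vartheta_v^{jk}(g_v,g_j,g_k)$; as $\mathcal P$ and $\mathcal P^*$ are honest circle patterns, the face angles around each interior vertex sum to $2\pi$, so $\Phi_v(g)=\Phi_v(g^*)=2\pi$. Interpolating along $g^t:=g+t\psi$, $t\in[0,1]$, keeps every face admissible (at an interior apex $v$ the curvature $g_v^t>1$, and the remaining curvatures stay $>-1$ as convex combinations of values $>-1$), so by Lemma~\ref{gthreeconfig} each face configuration exists and $\vartheta_v^{jk}$ is differentiable along the path. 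The fundamental theorem of calculus then yields
\[
0=\Phi_v(g^*)-\Phi_v(g)=\sum_{w} a_{vw}\,\psi_w,\qquad a_{vw}:=\int_0^1\frac{\partial\Phi_v}{\partial g_w}(g^t)\,dt,
\]
where $w$ ranges over $v$ and its neighbors. By Lemma~\ref{gvari}, $\partial\vartheta_v^{jk}/\partial g_v>0$ and $\partial\vartheta_v^{jk}/\partial g_j\le 0$, hence $a_{vv}>0$ and $a_{vw}\le 0$ for every $w\sim v$: the operator $(a_{vw})$ has the sign pattern of a discrete elliptic operator.

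The decisive input is diagonal dominance of this operator. The row sum is $\sum_{w}a_{vw}=\int_0^1\sum_{[v,j,k]\in F}\big(\partial_{g_v}+\partial_{g_j}+\partial_{g_k}\big)\vartheta_v^{jk}\,dt$, so I would invoke the lemma for three hyperbolic circles preceding this theorem, which says that the sum of the partials of a single face angle with respect to all three geodesic curvatures is positive; this gives $\sum_w a_{vw}>0$. Evaluating the relation at the maximizing vertex $v_0$, using $\psi_w\le M=\psi_{v_0}$ and $a_{v_0w}\le 0$ (so $a_{v_0w}\psi_w\ge a_{v_0w}M$),
\[
0=a_{v_0v_0}M+\sum_{w\sim v_0}a_{v_0w}\psi_w\ge a_{v_0v_0}M+M\sum_{w\sim v_0}a_{v_0w}=M\sum_{w}a_{v_0w}>0,
\]
a contradiction. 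Hence $\max_V\psi\le 0$, i.e. $g_v\ge g_v^*$ for all $v\in V$.

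The hard part is precisely the companion three-circle inequality $(\partial_{g_i}+\partial_{g_j}+\partial_{g_k})\vartheta_i>0$. Unlike the Euclidean setting, where scaling invariance forces the weighted row-sum identity $\sum_w r_w\,\partial\vartheta_i/\partial r_w=0$, hyperbolic geometry has no such symmetry, so this strict nonnegativity must be extracted from the explicit hyperbolic three-cycle geometry: Gauss–Bonnet controls the \emph{total} angle $\vartheta_i+\vartheta_j+\vartheta_k=\pi-\mathrm{Area}$ under a uniform shift of all curvatures, but isolating the single apex angle $\vartheta_i$ is the delicate point. If only the weak inequality $\ge 0$ turns out to be available, I would instead run the strong maximum principle: the equality case then forces $\psi_w=M$ along every edge with $a_{v_0w}<0$, and propagating this equality through the connected graph reaches $\partial V$, where $\psi<0$, again a contradiction; the extra care there is to confirm $a_{v_0w}<0$ (strictness of $\partial\vartheta/\partial g_w$ on the interpolation path) so that the propagation does not stall at a degenerate edge.
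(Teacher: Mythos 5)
Your overall strategy is the same as the paper's: linearize the cone-angle identity $\sum_{[v,j,k]\in F}\vartheta_v^{jk}=2\pi$ along an interpolation between the two patterns, read off the sign pattern of the resulting coefficients from Lemma~\ref{gvari} (diagonal positive, off-diagonal nonpositive), and close with a discrete maximum principle against the strict boundary inequality. Your boundary discussion, the admissibility of the interpolation path via Lemma~\ref{gthreeconfig}, and the sign pattern $a_{vv}>0$, $a_{vw}\le 0$ are all fine.

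The gap is exactly where you suspect it, and it is genuine: the row-sum inequality $(\partial_{g_i}+\partial_{g_j}+\partial_{g_k})\vartheta_i>0$ is not established, and the ``lemma for three hyperbolic circles'' you propose to invoke is announced in the paper but never actually stated. What the paper does instead is change variables to the discrete conformal factors $u_w$ (with $du_w/dg_w<0$) \emph{before} linearizing. In the $u$-variables two things happen simultaneously: the off-diagonal coefficients become symmetric by \eqref{conformalfactor}, and the row sum acquires a determinate sign, since by symmetry of the mixed partials, Gauss--Bonnet, and Lemma~\ref{vari}(2),
\[
\sum_{w\in\{i,j,k\}}\frac{\partial\vartheta_i}{\partial u_w}
=\frac{\partial}{\partial u_i}\bigl(\vartheta_i+\vartheta_j+\vartheta_k\bigr)
=-\frac{\partial\,\mathrm{Area}(\Delta v_iv_jv_k)}{\partial u_i}<0 ;
\]
this is the quantity the paper calls $B_i$. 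Crucially, this inequality does \emph{not} transfer to your $g$-coordinate formulation by the chain rule: $\sum_w\partial_{g_w}\vartheta_i=\sum_w(du_w/dg_w)\,\partial_{u_w}\vartheta_i$ is a sum of terms of both signs weighted by negative factors of generally different magnitudes, so neither the strict nor the weak version of your row-sum inequality follows --- which also disables your strong-maximum-principle fallback, since that still needs the weak inequality. The repair is simply to run your entire argument in the $u$-variables: the boundary comparison and the sign pattern of Lemma~\ref{gvari} carry over with all signs reversed consistently, the interpolated data stay admissible, and the maximum principle (Lemma~\ref{maxharmonic}) applied to $\max\{u_i-\tilde u_i,0\}$ finishes the proof exactly as in the paper.
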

	\begin{proof}
		The statement is true for all vertices in $\partial V$, so we only need to prove it for the vertices in $\mathrm{int}(V)$.
		Assume, for contradiction, that the maximum of $g_v<g^*_v$ is attained at an interior vertex $v$. For each vertex $w\in\mathrm{int}(V)$ and edge $[s,l]$ such that $[w,s,l]$ is a face. We can define the inner angle $$\nu_w^{s,l}=\nu_w^{s,l}(g_w,g_s,g_l)$$ as in Lemma \ref{gvari}. Denote 
        $$\tilde E(i): = \{[j,k]\in E: [i,j,k]\in F \}, \quad \forall i\in V$$
        and
        $$\tilde V([j,k]): = \{i \in V: [i,j,k]\in F \}, \quad \forall [j,k]\in E.$$
        Since $\pac$ and $\pac^*$ are two circle patterns immersed in the Euclidean plane, we have 
		\begin{align}\label{equation_sum}
			\sum_{[j,k]\in \tilde E(i)}\theta_i^{jk}(g_i,g_j,g_k)=\sum_{[j,k]\in \tilde E(i)}\theta_i^{jk}(g_i^*,g_j^*,g_k^*)=2\pi,~\forall i\in\mathrm{int}(V).
		\end{align}
		Let $$\tilde{g}_i=\begin{cases}
			g^*_i, & i\in \mathrm{int}(V), \\[4pt]
			g_i, & i\in\partial V.
		\end{cases}$$
		It is easy to see that $\tilde{g}_i>1,~\forall i\in V.$
		By Lemma \ref{gvari}, we have 
		\begin{align}\label{neq_sum}
			\sum_{[j,k]\in \tilde E(i)}\theta_i^{jk}(\tilde{g}_i,\tilde g_j,\tilde g_k)<2\pi,~\forall i\in \mathrm{int}(V).
		\end{align}
		Let $u_i,u_i^*,\tilde{u}_i$ be the discrete conformal factor related to $g_i,g^*_i,\tilde{g}_i$ respectively, see Lemma \ref{conformalfactor}. It is easy to see that $\frac{\mathrm{d}u}{\mathrm{d}g}<0$. By \eqref{equation_sum} and \eqref{neq_sum}, we have 
		\begin{align*}
			\sum_{[j,k]\in \tilde E(i)}\theta_i^{jk}(u_i,u_j,u_k)-\theta_i^{jk}(\tilde{u}_i,\tilde u_j,\tilde u_k)\ge0,~\forall i\in\mathrm{int}(V).
		\end{align*}
		Therefore, we have
		\begin{align}\label{harmonic1}
			\sum_{j:j\sim i}A_{ij}[(u_j-\tilde{u}_j)-(u_i-\tilde{u}_i)]+B_{i}(u_i-\tilde{u}_i)\ge0,
		\end{align}
		where
		\[
		A_{ij}=\sum_{k \in \tilde V([i,j])}\int_0^1 \pp{\theta_i^{jk}}{u_j}((1-t)\tilde u_i+tu_i,(1-t)\tilde u_j+tu_j,(1-t)\tilde u_k+tu_k)\mathrm{d}t>0,
		\]
		and 
		\begin{align*}
			& B_{i}=\sum_{[j,k]\in \tilde E(i)}\pp{\theta_i^{jk}}{u_i}((1-t)\tilde u_i+tu_i,(1-t)\tilde u_j+tu_j,(1-t)\tilde u_k+tu_k)\mathrm{d}t
			\\&-\sum_{[j,k]\in \tilde E(i)}\pp{\theta_i^{jk}}{u_j}((1-t)\tilde u_i+tu_i,(1-t)\tilde u_j+tu_j,(1-t)\tilde u_k+tu_k)\mathrm{d}t\\&-\sum_{[j,k]\in \tilde E(i)}\pp{\theta_i^{jk}}{u_j}((1-t)\tilde u_i+tu_i,(1-t)\tilde u_j+tu_j,(1-t)\tilde u_k+tu_k)\mathrm{d}t<0.
		\end{align*}
		Let $w_i=\max\{u_i-\tilde u_i,0\}$. By \ref{conformalfactor}, we have $A_{ij}=A_{ji}$, and $w$ is a subharmonic function with respect to the weight $A$. Since $w_i=0,~\forall i\in \partial V$, by Lemma \ref{maxharmonic} we see that $$w_i\le 0,~\forall i\in V.$$ Therefore, $u_i\le \tilde u_i,~\forall i\in V.$ Hence, $g_i^*\le g_i,~\forall i\in V$.
	\end{proof}
	With the above maximum principle, we have the following rigidity theorem. 
	\begin{thm}\label{rigidity_hyperbolic}
		Let $\mathcal{T} = (V, E, F)$ be a disk triangulation of a disk $U$, and let $\Theta: E \rightarrow [0, \pi)$ be a function defined on $E$ that satisfies (Z4). Let $P$ and $P^*$ be disk patterns in $\mathbb{U}$ realizing $(G, \Theta)$. Assume that both $P$ and $P^*$ are locally finite in $\mathbb{U}$. Then there exists a hyperbolic isometry $f: \mathbb{U} \rightarrow \mathbb{U}$ such that $P^* = f(P)$.
	\end{thm}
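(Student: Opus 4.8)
The plan is to reduce rigidity to the vertexwise equality of hyperbolic radii and to exploit that this quantity is intrinsic. Since $\mathcal{T}$ triangulates the \emph{open} disk $\mathbb{U}$, every vertex is interior, so both $P$ and $P^*$ satisfy the zero-curvature condition $\sum_{[j,k]}\vartheta_i^{jk}=2\pi$ at each $v_i$. The hyperbolic radius $\rho_v$, and hence the geodesic curvature $g_v=\coth\rho_v$ from \eqref{g_if} and the conformal factor $u_v=\ln\tanh(\rho_v/2)$, are \emph{invariant} under hyperbolic isometries of $\mathbb{U}$; thus they are well-defined functions of the abstract vertex, independent of any normalization. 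If I can show $\rho_v=\rho_v^*$ for all $v$, then the hyperbolic edge lengths $l_{ij}$ agree, the two packings develop the \emph{same} smooth hyperbolic metric (smooth because every $K_i=0$) on $\mathbb{U}$, and, since every RCP is embedded, the two developing maps are isometric embeddings of the same simply connected surface and differ by a global isometry $f$ with $P^*=f(P)$. So it suffices to prove $g_v\equiv g_v^*$.

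The engine is the maximum principle, Theorem \ref{MaximumPrincipleHyper}. Setting $h_v=u_v-u_v^*$, subtracting the two angle-sum identities and applying the mean value theorem along $t\mapsto(1-t)u^*+tu$ yields, exactly as in the proof of that theorem, the linear relation
\[
\sum_{j\sim i}A_{ij}(h_j-h_i)+B_ih_i=0,\qquad \forall i\in V,
\]
with $A_{ij}=A_{ji}>0$ by the symmetry \eqref{conformalfactor} and $B_i<0$ by the hyperbolic area monotonicity of Lemma \ref{vari}. The negative zeroth-order term makes this a discrete $\Delta-q$ with $q>0$, so $h$ obeys a \emph{strict} maximum principle: it can attain neither a positive maximum nor a negative minimum at any vertex.

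To globalize this local statement I would run an exhaustion $\{\mathcal{T}^{[n]}\}$ together with the finite Dirichlet packings $Q_n$ of Theorem \ref{exist_boundary_radius}, whose boundary circles are horocycles tangent to $\partial\mathbb{U}$ (so $g\equiv1$ on $\partial V^{[n]}$). The boundary circles of $P$ (resp.\ $P^*$) restricted to $\mathcal{T}^{[n]}$ are genuine hyperbolic circles with $g>1$, so Theorem \ref{MaximumPrincipleHyper} applies and gives $g_v(P)\ge g_v(Q_n)$ throughout $V^{[n]}$; the same comparison among the $Q_n$ themselves shows $g(Q_n)$ increases with $n$, so $Q_n$ converges to the filling packing $P_\infty$ as in Lemma \ref{hyperbolic_equivalent}. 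Running this for both patterns yields $g_v(P)\ge g_v(P_\infty)$ and $g_v(P^*)\ge g_v(P_\infty)$; the heart of the matter is to produce the matching \emph{reverse} inequalities so that $g_v(P)=g_v(P_\infty)=g_v(P^*)$ for every $v$.

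The main obstacle is precisely this reverse control, i.e.\ promoting the interior maximum principle to the ideal boundary $\partial\mathbb{U}$. The equation alone does not force $h\equiv0$: as a discrete $\Delta-q$ with $q>0$ it admits nonconstant positive solutions on transient graphs, and transience is exactly the VEL-hyperbolic regime we are in. The resolution must use the filling hypothesis $\carrier(P)=\carrier(P^*)=\mathbb{U}$ to show $\limsup_{v\to\partial\mathbb{U}}h_v\le0$ (and, by the symmetric argument, $\liminf_{v\to\partial\mathbb{U}}h_v\ge0$), after which the strict maximum principle gives $h\equiv0$. Concretely this means constructing, alongside the horocyclic barriers $Q_n$, upper barriers whose boundary cycles protrude through $\partial\mathbb{U}$ — the asymmetric configuration for which Theorem \ref{MaximumPrincipleHyper} was tailored — and checking that the two families of barriers squeeze down to the same boundary behavior. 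This squeezing, where the filling property and the existence machinery of Theorem \ref{exist_boundary_radius} are indispensable, is the step I expect to require the most care.
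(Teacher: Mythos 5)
Your reduction to the vertexwise identity $g_v\equiv g_v^*$ and your reliance on Theorem \ref{MaximumPrincipleHyper} are the right ingredients, but the proposal stops exactly where the proof has to be done. You correctly diagnose that the linearized equation $\sum_{j\sim i}A_{ij}(h_j-h_i)+B_ih_i=0$ cannot by itself force $h\equiv 0$ in the VEL-hyperbolic regime, and that one must control $h$ near $\partial\mathbb{U}$ by upper barriers whose boundary cycles protrude through $\partial\mathbb{U}$ --- and then you defer precisely that construction (``the step I expect to require the most care''). That step is the entire content of the theorem; without it your argument only yields the one-sided bounds $g(P)\ge g(P_\infty)$ and $g(P^*)\ge g(P_\infty)$, which do not imply $g(P)=g(P^*)$.

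The paper closes this gap with a short scaling argument in which the two given patterns serve as each other's barriers, with no exhaustion by Dirichlet packings and no harmonic-function analysis on the infinite graph. Assume for contradiction that $\rho_{\mathrm{hyp}}(D(v_0))<\rho_{\mathrm{hyp}}(D^*(v_0))$ and pick $\delta>1$ with $\rho_{\mathrm{hyp}}(\delta D(v_0))<\rho_{\mathrm{hyp}}(D^*(v_0))$. Because $P$ is locally finite in $\mathbb{U}$, only finitely many disks of the scaled pattern $\delta P$ still meet $\overline{\mathbb{U}}$; on that finite subcomplex the combinatorial boundary disks of $\delta P$ protrude through $\partial\mathbb{U}$ while every disk of $P^*$ lies inside $\mathbb{U}$, so Theorem \ref{MaximumPrincipleHyper} applies directly --- this is exactly the asymmetric configuration you were looking for, realized by $\delta P$ itself rather than by auxiliary packings --- and yields $\rho_{\mathrm{hyp}}(\delta D(v_0))\ge\rho_{\mathrm{hyp}}(D^*(v_0))$, a contradiction. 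Your detour through the horocyclic Dirichlet packings $Q_n$ of Theorem \ref{exist_boundary_radius} has the further defect that that existence theorem requires $(Z_2)$ and $(Z_4)$, whereas the rigidity statement assumes only $(Z_1)$; the paper's argument never invokes existence, only the two patterns at hand.
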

	
	\begin{proof}
		Let $P$ and $P^*$ be as given. Choose an arbitrary vertex $v_0$ of $G$. We will show that 
		\[
		\rho_{\text{hyp}}\left(D(v_0)\right) = \rho_{\text{hyp}}\left(D^*(v_0)\right),
		\]
		which implies the theorem.
		
		Assume, for contradiction, that $\rho_{\text{hyp}}\left(D(v_0)\right) < \rho_{\text{hyp}}\left(D^*(v_0)\right)$. Then there exists $\delta = 1 + \varepsilon > 1$ such that 
		\[
		\rho_{\text{hyp}}\left(\delta D(v_0)\right) < \rho_{\text{hyp}}\left(D^*(v_0)\right).
		\]
		
		Consider the scaled pattern $\delta P = \{ \delta C_{v} \mid C_{v} \in P \}$. Let $G_1$ be the subgraph of $G$ corresponding to the subpattern $\pac_1 \subset \delta \pac$ consisting of disks that intersect $\partial \mathbb{U}$, and let $P_1^* \subset P^*$ be the corresponding subpattern of $P^*$.
		
		Since $P$ is locally finite in $\mathbb{U}$, $G_1$ is finite. For each boundary vertex $v$ of $G_1$, the disk $D_1(v)$ intersects the boundary of $\mathbb{U}$, so 
		\[
		\rho_{\text{hyp}}\left(D_1(v)\right) > \rho_{\text{hyp}}\left(C^*_{v}\right).
		\]
		
		Applying Lemma~\ref{MaximumPrincipleHyper}, we conclude that
		\[
		\rho_{\text{hyp}}\left(D_1(v_0)\right) \geq \rho_{\text{hyp}}\left(D_1^*(v_0)\right),
		\]
		which contradicts the fact that $D_1(v_0) = \delta D(v_0)$ and $D_1^*(v_0) = D^*(v_0)$. Therefore, the assumption must be false, and we conclude that 
		\[
		\rho_{\text{hyp}}\left(D(v_0)\right) = \rho_{\text{hyp}}\left(D^*(v_0)\right).
		\]
		
		Since $v_0$ was arbitrary, the result follows from the rigidity of hyperbolic isometries.
	\end{proof}

	\subsection{The rigidity of parabolic RCPs}
	Denote the 1-skeleton of $\mathcal{T}$ by $G$.
	We  establish the following maximum principle for generalized hyperbolic circle patterns:
	\begin{lem}\label{maximum_principle_euc}(Maximum Principle in Euclidean plane)
		Let $\mathcal{T}=(V,E,F)$ be a disk triangulation, and $\Theta\in [0 ,\pi)^E$ be the prescribed intersection angle. If (Z4) holds with two RCPs $\mathcal{P}= \{C_i\}_{i\in V}$ and $\mathcal{P^*}= \{C^*_i\}_{i\in V}$  realizing $(\mathcal{T}, \Theta)$. Then the maximum (or minimum) of $r^*_i/ r_i$ is attained at a boundary vertex.
	\end{lem}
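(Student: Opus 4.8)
The plan is to work with logarithmic radii and reduce the statement to the discrete maximum principle for an honest nonnegative-weight Laplacian. I would set $u_i=\ln r_i$, $u_i^\ast=\ln r_i^\ast$, and $w_i=u_i^\ast-u_i=\ln(r_i^\ast/r_i)$, so that maximizing or minimizing $r_i^\ast/r_i$ is the same as maximizing or minimizing $w_i$. Because both $\pac$ and $\pac^\ast$ realize $(\mathcal T,\Theta)$ and are embedded, every interior vertex $i$ carries total angle $2\pi$ in each pattern:
$$\sum_{[j,k]\in\tilde E(i)}\vartheta_i^{jk}(u^\ast)=\sum_{[j,k]\in\tilde E(i)}\vartheta_i^{jk}(u)=2\pi .$$
Subtracting the two identities and integrating along the segment $t\mapsto(1-t)u+tu^\ast$ turns this into
$$\sum_{[j,k]\in\tilde E(i)}\int_0^1\Big(\pp{\vartheta_i^{jk}}{u_i}\,w_i+\pp{\vartheta_i^{jk}}{u_j}\,w_j+\pp{\vartheta_i^{jk}}{u_k}\,w_k\Big)\dis t=0 .$$
The essential Euclidean feature is that $\vartheta_i^{jk}$ is homogeneous of degree $0$ in $(r_i,r_j,r_k)$, so Euler's identity gives $\pp{\vartheta_i^{jk}}{u_i}+\pp{\vartheta_i^{jk}}{u_j}+\pp{\vartheta_i^{jk}}{u_k}=0$ and the diagonal term drops out (in contrast to the hyperbolic Theorem \ref{MaximumPrincipleHyper}, where a nonzero $B_i$ survives). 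The identity then collapses to $\sum_{j\sim i}A_{ij}(w_j-w_i)=0$ with $A_{ij}:=\sum_{k\in\tilde V([i,j])}\int_0^1\pp{\vartheta_i^{jk}}{u_j}\dis t$, so that $w$ is $A$-harmonic at every interior vertex.

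It then remains to check that $A$ is a legitimate weight for Lemma \ref{maxharmonic}. Lemma \ref{vari} gives $\pp{\vartheta_i^{jk}}{u_j}=r_j\,\pp{\vartheta_i^{jk}}{r_j}\ge 0$, hence $A_{ij}\ge 0$, while the symmetry $\pp{\vartheta_i^{jk}}{u_j}=\pp{\vartheta_j^{ik}}{u_i}$ coming from \eqref{conformalfactor} makes $A_{ij}=A_{ji}$. Once $w$ is known to be $A$-harmonic, and thus $A$-subharmonic (as is $-w$), Lemma \ref{maxharmonic} shows that if $\max_V w$ (respectively $\min_V w$) were attained at an interior vertex, then $w$ would be constant; in either case $\max_V w=\max_{\partial V}w$ and $\min_V w=\min_{\partial V}w$, which is exactly the assertion.

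The one delicate point, and the step I expect to require the most care, is that $A_{ij}$ may vanish: indeed $\pp{\vartheta_i^{jk}}{r_j}=H_k/l_{ij}$ (as in Lemma \ref{partialthetabound}) is zero precisely when the radical center of $\{C_i,C_j,C_k\}$ lies on the edge $v_iv_j$, so the positive-weight subgraph could a priori be disconnected and the maximum principle could fail to propagate to $\partial V$. I would dispose of this using the strict inequality $\pp{\vartheta_i^{jk}}{u_i}<0$ from Lemma \ref{vari}: combined with the Euler relation it yields, in each face $[i,j,k]$, the three positivities $\pp{\vartheta_i^{jk}}{u_j}+\pp{\vartheta_i^{jk}}{u_k}>0$ together with its two cyclic analogues, whence at most one of the three edge-weights contributed by that face can vanish. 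Thus every face contributes strictly positive weight to at least two of its three edges, and two positive edges already connect the three vertices of the face in the positive-weight subgraph. Since the disk triangulation is a connected union of faces, the positive-weight subgraph is therefore connected on all of $V$, and Lemma \ref{maxharmonic} applies without degeneration. The remaining estimates are routine consequences of Lemmas \ref{vari} and \ref{maxharmonic}.
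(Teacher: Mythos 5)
Your proof is correct, but it takes a genuinely different route from the paper's. The paper argues pointwise at a maximizing interior vertex $v_0$: after normalizing by degree-zero homogeneity so that $r^*_{v_0}=r_{v_0}$, the hypothesis $r^*_j/r_j\le r^*_{v_0}/r_{v_0}$ for all neighbours together with the monotonicity in Lemma \ref{vari} forces $\vartheta^*_{0,j}\le\vartheta_{0,j}$ for every incident face; since both angle collections sum to $2\pi$, all these inequalities are equalities, and the paper then concludes $r^*_j/r_j=r^*_{v_0}/r_{v_0}$ and propagates by connectivity. You instead linearize: integrating the angle-sum identity along $(1-t)u+tu^*$ and killing the diagonal term by Euler's identity turns $w=\ln(r^*/r)$ into an $A$-harmonic function for a symmetric nonnegative weight, and Lemma \ref{maxharmonic} finishes. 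This is precisely the strategy the paper uses for the hyperbolic maximum principle (Theorem \ref{MaximumPrincipleHyper}), so your argument makes the Euclidean case uniform with the hyperbolic one. What your version buys is rigor at the degenerate points: the paper's step ``all equalities imply $r^*_j/r_j=r^*_{v_0}/r_{v_0}$'' silently uses strictness of $\partial\vartheta_i/\partial r_j\ge 0$, which can fail when the radical center lies on an edge of the triangle; your observation that Euler's identity combined with $\partial\vartheta_i/\partial u_i<0$ forces at least two of the three edge-weights contributed by each face to be positive, hence connectedness of the positive-weight subgraph, closes exactly this gap (and the same remark would be needed to make the paper's own equality case airtight). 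One shared caveat, not a defect of your argument relative to the paper's: both proofs, and indeed the statement itself, only make sense for a finite triangulation with nonempty boundary (as in the truncations $\mathcal{T}^{[n]}$ where the lemma is actually applied), since an infinite disk triangulation has no boundary vertices and the extrema need not be attained.
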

	\begin{proof}
		If the maximum is attained at an interior vertex $v_0$. Let $v_1, v_2, \ldots, v_n, v_{n+1}=v_1$ be the vertices connected to $v_0$, whose subscripts are labeled clockwise. 
		
		Denote $\vartheta_{0,j}$ be the angle $\angle v_f v_0 v_j$ to $\mathcal{P}$, and $\vartheta^*_{0,j}$ be the angle $\angle v_f v_0 v_j$ to $\mathcal{P^*}$. Since $\mathcal{P}$ and $\mathcal{P^*}$ realize $(\mathcal{T}, \Theta)$, we have
		\begin{equation}\label{max_proof_1}
			\sum_{j=1}^n 2\vartheta_{0,j}= 2\pi,
		\end{equation}
		and
		\begin{equation}\label{max_proof_2}
			\sum_{j=1}^n 2 \vartheta^*_{0,j}= 2\pi.
		\end{equation}
		Since $r^*_i/ r_i$ attained its maximum at $v_0$, from Lemma \ref{vari}, we know
		\begin{equation}\label{max_proof_3}
			\vartheta^*_{0,j}\leq  \vartheta_{0,j}, \quad j = 1, 2,\cdots, n.
		\end{equation}
		It follows from \eqref{max_proof_1} and \eqref{max_proof_2} that all the 
        inequalities involved must in fact be equalities. By Lemma~\ref{vari}, 
        equality can occur only if
        \[
        \frac{r_j^*}{r_j}=\frac{r_0^*}{r_0}
        \]
        for every vertex \(v_j\sim v_0\). Since \(G\) is connected, this equality 
        propagates along paths in \(G\), and hence
        \[
        \frac{r_i^*}{r_i}=\frac{r_0^*}{r_0}
        \]
        for every \(v_i\in V\).
	\end{proof}

Before proving the rigidity, let us first introduce a combinatorial lemma for counting circles, which is essential in our proof.
\begin{lem}[Circle counting lemma]\label{circle_counting}
    Let $D(0,\rho)$ be the ball with radius $\rho$ whose center is the origin. Then for each $k>0$, there exists a constant $C=C(k)$ such that for any RCP $\pac=\{D(v)\}_{v\in V}$ with (Z2) holds, there are at most $C(k)$ disks in $\pac$ with radius greater than $k\rho$ which intersect $D(0,\rho)$.

\end{lem}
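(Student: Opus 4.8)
The plan is to run an area‑counting argument inside the slightly enlarged ball $D(0,2\rho)$. Fix $k>0$, write $r_v$ for the radius of $D_v$, and set
$$W=\{v\in V:\ r_v>k\rho,\ D_v\cap D(0,\rho)\neq\emptyset\};$$
the goal is to bound $|W|$ by a constant depending only on $k$. I would balance two estimates over $D(0,2\rho)$: an upper bound for the total covered area coming from a bounded covering multiplicity, and a uniform lower bound for the area that each single disk of $W$ contributes to $D(0,2\rho)$.

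First I would establish the multiplicity bound: every point $p\in\mathbb{C}$ lies in at most four disks of $\pac$. Indeed, if $D_{v_1},\dots,D_{v_m}$ all contain $p$, then these disks pairwise intersect, so $v_1,\dots,v_m$ are pairwise adjacent in the contact graph of $\pac$. Since $\pac$ is an RCP, its contact graph is a disk triangulation graph, hence a simple planar graph, and a simple planar graph contains no $K_5$; therefore $m\le 4$. (If in addition $(Z_2)$ holds, Lemma~\ref{mostthree} improves this to $m\le 3$, but $m\le 4$ already suffices.) Integrating the covering multiplicity over $D(0,2\rho)$ then gives
$$\sum_{v\in V}\operatorname{area}\!\big(D_v\cap D(0,2\rho)\big)\ \le\ 4\,\operatorname{area}\!\big(D(0,2\rho)\big)=16\pi\rho^2.$$

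Next I would prove the per‑disk lower bound: there is a universal constant $c_0>0$ such that $\operatorname{area}(D_v\cap D(0,2\rho))\ge c_0\,\min(1,k)^2\,\rho^2$ for each $v\in W$. Pick $q\in D_v\cap D(0,\rho)$ and set $s=\rho\min(1,k)$, so that $s\le r_v$ and $B(q,s)\subset D(0,2\rho)$. Placing $q$ at the origin and the center $o_v$ on the positive real axis, the worst case is $q\in\partial D_v$, where $D_v=\{x^2+y^2\le 2r_v x\}$; using $s^2/(2r_v)\le s/2$ one checks that $\{(x,y)\in B(q,s):x\ge s/2\}\subset D_v\cap B(q,s)$, a circular segment of area $c_0 s^2$ with $c_0=\pi/3-\sqrt3/4$. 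Hence $\operatorname{area}(D_v\cap D(0,2\rho))\ge c_0\min(1,k)^2\rho^2$. Summing over $W$ and combining with the multiplicity bound yields $|W|\,c_0\min(1,k)^2\rho^2\le 16\pi\rho^2$, that is $|W|\le 16\pi/(c_0\min(1,k)^2)=:C(k)$, which depends only on $k$.

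The only routine calculations are the half‑disk area estimate and the verification $B(q,s)\subset D(0,2\rho)$. The conceptual point I expect to be the main obstacle is the per‑disk lower bound in the regime where a large disk merely clips $D(0,\rho)$ tangentially, contributing almost no area to $D(0,\rho)$ itself; this is exactly why I pass to the enlarged ball $D(0,2\rho)$ and exploit the low curvature of a large circle, and why the choice $s=\rho\min(1,k)$ is needed to make the estimate uniform across both small $k$ (many moderately sized disks, each of guaranteed area $\gtrsim k^2\rho^2$) and large $k$ (few huge disks, each contributing area $\gtrsim\rho^2$).
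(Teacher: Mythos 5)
Your proof is correct, and it takes a genuinely different route from the paper's. The paper first isolates the regime of very large radii by a blow-down/compactness argument: it defines $I(k)$ as a supremum over all RCPs, rescales a hypothetical sequence of counterexamples so that the offending disks converge to half-planes through the origin, and derives a contradiction from the bounded covering multiplicity of an RCP; only the intermediate radii $r_v\in[k\rho,s\rho]$ are then handled by an area count inside an enlarged ball. You instead run a single area-versus-multiplicity count inside $D(0,2\rho)$, which works uniformly for all radii because of your per-disk lower bound $\operatorname{area}(D_v\cap D(0,2\rho))\ge c_0\min(1,k)^2\rho^2$ (the circular-segment estimate around a point $q\in D_v\cap D(0,\rho)$, which I checked is valid for every position of the center $o_v$, not only the extreme case $q\in\partial D_v$). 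Two further differences are worth noting: your multiplicity bound (at most four disks through a point, since a $K_5$ cannot sit inside the planar contact graph) avoids invoking Lemma~\ref{mostthree} and hence condition $(Z_2)$, which the paper's count implicitly uses even though the lemma statement does not assume it; and your argument produces an explicit constant $C(k)=16\pi/\bigl(c_0\min(1,k)^2\bigr)$, whereas the paper's compactness step is non-constructive. In short, your proof is more elementary, more self-contained, and at least as strong as the one in the paper.
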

\begin{proof}
    Let     \[
    I(k):=\sup_{\pac=\{D(v)\}_{v\in V}}\#\{v:D(v)\cap D(0,\rho)\neq\emptyset, ~r_v\ge k\rho\}.
    \]
    It is easy to see that $I(k)$ is independent of $\rho$ since it is a scaling invariant, so it is well-defined. Without loss of generality, we assume that $\rho=1$. It is easy to see that $I(k)$ is non-increasing with respect to $k$. Let
    \[
    I(\infty)=\lim_{k\rightarrow\infty}I(s).
    \]
    We first show that $I(\infty)<\infty$. If not, we have $I(s)=\infty,~\forall s>0$. Then there exists a sequence of RCPs $$\pac_i=\{D^i(v)\}_{v\in V_i},~i\in\mathbb{Z}_+$$ such that there exist (at least) $9$ disks $\{D^i(v_j^i)\}_{j=1}^{9}$ whose radii are all greater than $4^i$ such that 
    \[
    D^i(v_j^i)\cap D(0,1)\neq\emptyset,~\forall~i\in\mathbb{Z}_+,~\forall j=1,\cdots, 9.
    \]
    Now consider the packing $2^{-i}\pac_i$. It is easy to see that, by subtracting a subsequence, those selected disks will converge to $9$ half spaces whose boundaries pass through the origin. 
    However, by the property of RCP, every point in the plane is contained in at most $3$ half-planes in the limit figure. By the pigeonhole principle, however, there must be a point that is contained in at least four half-planes, which leads to a contradiction.

    Therefore, there exists $s>0$ such that $I(s)<\infty$. For $0<k<s$, we see that for every disk with radius in $[k,s]$, which intersects $D(0,1)$, is contained in the ball $D(0,1+2s)$. Then for a fixed RCP, the number of such disks is less than or equal to 
    $
    {3(1+2s)^2}/{s^2}.$

    Therefore, 
    \[
    I(k)\le I(s)+\frac{3(1+2s)^2}{s^2}<\infty.
    \]
    This is because for every RCP satisfying (Z2), every point is contained in at most three circles.
    
\end{proof}

	For any circle \( C \subset \mathbb{C} \), we define its \textbf{normalized radius} (or distortion) by
	\[
	\tau(C) = \frac{r(C)}{d_{\mathbb{R}^2}(0, C)},
	\]
	where \( r(C) \) denotes the radius of the circle \( C \), and \( d_{\mathbb{R}^2}(0, C) \) is the Euclidean distance from the origin to \( C \).
	
	Given an infinite circle pattern \( \mathcal{P} = \{ C_i \}_{i \in V} \), we define its \textbf{distortion coefficient} by
	\[
	\tau(\mathcal{P}) = \limsup_{i \rightarrow +\infty} \tau(C_{v_i}),
	\]
	where \( \{v_i\}_{i \in \mathbb{N}} \) is an arbitrary enumeration of the vertex set \( V \). It is clear that \( \tau(\mathcal{P}) \) is independent of the choice of enumeration, and moreover, it is invariant under Euclidean similarities \( f: \mathbb{C} \to \mathbb{C} \), that is,
	\[
	\tau(f(\mathcal{P})) = \tau(\mathcal{P}).
	\]
	
	The quantity \( \tau(\mathcal{P}) \) measures how uniformly the radii of the circles are controlled relative to their Euclidean distance from the origin.

	\begin{lem}\label{rigidity_bound_tau_finite}
		Let $\mathcal{P}$ and $\mathcal{P}^*$ be as in Lemma~\ref{uniform_bound_infinite}. If $\tau(\mathcal{P}) < +\infty$, then there exists a uniform constant $C \geq 1$ such that for any vertex $v$,
		\[
		\frac{1}{C} \leq \frac{r^*_{v}}{r_{v}}  \leq C.
		\]
	\end{lem}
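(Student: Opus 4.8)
The plan is to argue by contradiction, reducing the global two-sided bound on $s_v := r^*_v/r_v$ to a statement about the behaviour of $s_v$ at infinity, and then to extract a renormalized limit that violates the regularity of an RCP. Let $\mathcal{P},\mathcal{P}^*$ be the two locally finite RCPs of Lemma~\ref{uniform_bound_infinite}, both realizing $(\mathcal{T},\Theta)$. Using the similarity invariance of $\tau$ and of the quantities $s_v$, I would first normalize by a euclidean similarity so that $C_{v_0}$ is the unit circle centered at the origin in both patterns. The Euclidean maximum principle (Lemma~\ref{maximum_principle_euc}) forbids $s_v$ from attaining an interior extremum on any finite subcomplex, so exhausting $\mathcal{T}$ by combinatorial balls $B_n=\{v:d(v_0,v)\le n\}$, the maximum of $s$ over $B_n$ is attained on $\partial B_n$ and is nondecreasing in $n$; since both patterns are locally finite in $\mathbb{C}$, the boundaries $\partial B_n$ escape every compact set. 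Hence $\sup_V s=\lim_n\max_{\partial B_n}s$, and it suffices to bound $\limsup_{v\to\infty}s_v$, the lower bound $\liminf_{v\to\infty}s_v>0$ being treated symmetrically.

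Suppose $\sup_V s_v=+\infty$. Then I can choose vertices $w_n$ with $|o_{w_n}|\to\infty$, $s_{w_n}\to\infty$, and $s_{w_n}=\max_{B_{d(v_0,w_n)}}s$, so that $s_u\le s_{w_n}$ for every $u$ no farther from $v_0$ than $w_n$. Renormalize $\mathcal{P}$ by the similarity $f_n(z)=(z-o_{w_n})/r_{w_n}$, so that in $\mathcal{Q}_n:=f_n(\mathcal{P})$ the disk $D_{w_n}$ becomes the unit disk at the origin; $\mathcal{Q}_n$ is again an RCP realizing $(\mathcal{T},\Theta)$. This is the step where the hypothesis $\tau(\mathcal{P})<+\infty$ enters decisively: together with the circle counting Lemma~\ref{circle_counting}, it bounds the number of disks of each size meeting a fixed ball and prevents disks from escaping to infinity under renormalization, so that a diagonal normal-families argument yields a subsequence along which $\mathcal{Q}_n$ converges, disk by disk, to a nondegenerate RCP $\mathcal{Q}_\infty$ locally finite in all of $\mathbb{C}$, in which the $w_n$-disk is the unit disk surrounded by neighbours of radii in a fixed interval $[c,1/c]$ (both bounds coming from the ring lemma, Lemma~\ref{ringlemma}, applied to exhaustions).

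Now I would track $\mathcal{P}^*$ under the same maps. For each neighbour $u\sim w_n$ the ring lemma in $\mathcal{P}^*$ gives $r^*_u\ge c\,r^*_{w_n}$, so in $f_n(\mathcal{P}^*)$ the radius of $D^*_u$ is at least $c\,s_{w_n}\to\infty$, and likewise $D^*_{w_n}$ has radius $s_{w_n}\to\infty$. Thus the whole star of $w_n$ in $\mathcal{P}^*$ blows up, and after passing to a limit the disk $D^*_{w_n}$ together with each incident $D^*_u$ degenerate to half-planes meeting at the prescribed angles. The contradiction is then drawn from the regularity of this limit: since $w_n$ is an interior vertex of the triangulation, the angle-sum identity $\sum_{[u_i,u_{i+1}]}\vartheta_{w_n}^{u_iu_{i+1}}=2\pi$ holds in every $f_n(\mathcal{P}^*)$, while the simultaneous degeneration of $D^*_{w_n}$ and its star to half-planes forces a point covered by four or more of the limiting half-planes, contradicting the RCP covering bound of Lemma~\ref{mostthree} (equivalently, the pigeonhole half-plane argument used in the proof of Lemma~\ref{circle_counting}). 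This rules out $\sup_V s=+\infty$; applying the same scheme to $1/s_v$, with the distortion-controlled pattern $\mathcal{P}$ again renormalized to a nondegenerate full-plane limit, yields $\inf_V s_v>0$, and the two bounds provide the constant $C$.

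I expect the main obstacle to be making the limiting arguments of the last two paragraphs fully rigorous. One must verify that the renormalized patterns $\mathcal{Q}_n$ converge to a genuine, locally finite full-plane RCP rather than to a partial or degenerate configuration — this is precisely what $\tau(\mathcal{P})<+\infty$ is designed to guarantee through Lemma~\ref{circle_counting} — and one must show that the blow-up of the star of $w_n$ in $\mathcal{P}^*$ produces a configuration genuinely forbidden for RCPs. For the latter, the covering bound (Lemma~\ref{mostthree}) and the interior angle-sum condition must be combined carefully, keeping track of which neighbours of $w_n$ actually lie in $B_{d(v_0,w_n)}$ so that the maximality $s_u\le s_{w_n}$ can legitimately be invoked.
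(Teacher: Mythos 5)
There is a genuine gap at the contradiction step. Your map $f_n(z)=(z-o_{w_n})/r_{w_n}$ is adapted to $\mathcal{P}$, so applying it to $\mathcal{P}^*$ merely produces a similarity copy of $\mathcal{P}^*$, which is a perfectly valid RCP at every scale: the angle-sum identity at $w_n$ and the covering bound of Lemma~\ref{mostthree} are scale-invariant, so neither can be violated in a rescaled copy or in its Hausdorff limit. Concretely, if you zoom into the star of $w_n$ in $\mathcal{P}^*$ at the scale $r_{w_n}\ll r^*_{w_n}$, you are looking at a small neighborhood of a single point of $\mathcal{P}^*$ blown up; the limit consists of at most the three disks containing that point (by Lemma~\ref{mostthree}), degenerating to at most three half-planes. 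Your claim that this ``forces a point covered by four or more of the limiting half-planes'' does not follow, and the contradiction evaporates. More fundamentally, the divergence $s_{w_n}=r^*_{w_n}/r_{w_n}\to\infty$ is a statement comparing the two patterns \emph{across} the normalization $D^*(v_0)=D(v_0)$ at the far-away basepoint $v_0$; a blow-up centered at $w_n$ forgets this normalization entirely (indeed your argument never uses it), and there is no \emph{local} obstruction at $w_n$: the stars of $w_n$ in $\mathcal{P}$ and $\mathcal{P}^*$, each viewed at its own scale, are both admissible configurations with the same combinatorics and angles, with neighbor ratios controlled by the Ring Lemma in both patterns. Note also that you do not have a genuine interior local maximum of $s$ at $w_n$ (only $s_u\le s_{w_n}$ for $u$ in the combinatorial ball, not for all neighbors), so the maximum-principle rigidity of Lemma~\ref{maximum_principle_euc} cannot be triggered either.

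The paper's proof is necessarily global: the hypothesis $\tau(\mathcal{P})<+\infty$ is used to produce infinitely many disjoint separating annuli $V_n=V_{C(\delta^n\tilde r)}(\mathcal{P})$, whose vertex extremal lengths add up (Proposition~\ref{addlemma} and Lemma~\ref{es}) to make $\mathrm{VEL}(V_2,V_{2k-1})$ large; the argument of Lemma~\ref{annuilemma} then transfers this separation structure to $\mathcal{P}^*$, locating a round annulus in $\mathcal{P}^*$ that separates the same combinatorial sets. Only then is the hyperbolic maximum principle (Theorem~\ref{MaximumPrincipleHyper}) applied twice to suitably rescaled finite subpatterns containing both $v_0$ and $v_1$, yielding the two-sided bound with a constant $\tilde C^2\delta^{2k-1}$ independent of $v_1$. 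If you want to repair your approach you would need some mechanism of exactly this kind to couple the geometry of $\mathcal{P}^*$ near $w_n$ back to its geometry near $v_0$; the renormalized limits alone cannot supply it.
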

	
	\begin{proof}
		Without loss of generality, we assume $D^*(v_0) = D(v_0)$ and the center $o_0$ of $C_0$ lies at the origin.
		
		Let $v_1$ be an arbitrary vertex different from $v_0$, and let $V_0 \subset V$ be a finite set of vertices containing both $v_0$ and $v_1$. Since $\tau(\mathcal{P}) < +\infty$, we can choose $\delta > 3$ such that
		\begin{equation} \label{taufiniteP}
			\tau(C_i) = \frac{r_i}{d_{\mathbb{R}^2}(0, C_i)} \leq \frac{\delta}{3}, \quad \forall i \in V \setminus V_0.
		\end{equation}
		
		Let $\tilde{r} > 0$ be such that $\mathcal{P}(V_0) = \bigcup_{i \in V_0} D_i \subset D(\tilde{r})$. Then for any $r > \tilde{r}$, there is no disk $D_i$ in $\mathcal{P}$ intersecting both $C(r)$ and $C(\delta r)$. Otherwise, for some $i \in V_0$, which contradicts $\mathcal{P}(V_0) = \bigcup_{i \in V_0} D_i \subset D(\tilde{r})$; for some $i \in V \setminus V_0$, we would have $d_{\mathbb{R}^2}(0, C_i) \leq r$ and $\delta r \leq d_{\mathbb{R}^2}(0, C_i) + 2r_i$, which contradicts $\delta > 3$. Hence, for any $r > \tilde{r}$,
		\begin{equation} \label{taufiniteP1}
			V_{C(r)}(\mathcal{P}) \cap V_{C(\delta r)}(\mathcal{P}) = \emptyset.
		\end{equation}
		
		Let $\tilde{r}_n := \delta^n \tilde{r}$ and $V_n := V_{C(\tilde{r}_n)}(\mathcal{P})$ for $n \geq 1$. From \eqref{taufiniteP1}, the sets $\{V_n\}_{n \in \mathbb{N}}$ are mutually disjoint and satisfy the property that $V_{n_2}$ separates $V_{n_1}$ from $V_{n_3}$ for $0 \leq n_1 < n_2 < n_3$.
		By Proposition \eqref{addlemma} and Lemma~\ref{es}, we have
		\begin{align*}
			\mathrm{VEL}(V_2, V_{2k-1}) &\geq \sum_{n=1}^{k-1} \mathrm{VEL}(V_{2n}, V_{2n+1}) \\
			&\geq \frac{k-1}{4(24+36\pi^2)}.
		\end{align*}
		Let $k$ be an integer larger than
		\[
		144\pi^2(24 + 36\pi^2)^2 + 1.
		\]
		Then,
		\[
		\mathrm{VEL}(V_2, V_{2k-1}) \geq 36\pi^2(24 + 36\pi^2).
		\]

        Let $\hat{r}=\mathrm{diam}(\pac^*(V_2))$.
		Similar to the proof in Lemma~\ref{annuilemma}, we can prove $\forall\rho \in [\hat{r}, 2\hat{r}]$, the vertex set $V_{C(0, \rho)}(\mathcal{P}^*)$ separates $V_2$ from $V_{2k-1}$, and hence separates $V_1$ from $V_{2k}$. By the definition of $\hat{r}$, it is clear that
		\begin{align}
			P^*(V_1) &\subset D(0, \hat{r}), \label{taufiniteP3} 
            \end{align}

		
		Let $G_1$ be the subgraph of $G$ consisting of vertices $V_{D(0, \tilde{r}_1)}(\mathcal{P})$. Let $W$ be the connected component of $V\backslash V_{C(0,2\hat{r})}$ that containing $V_2$. We denote by $W'$ the set of vertices that belong to or are connected with $W$. It is easy to see that $\partial W'\subset  V_{C(0,2\hat{r})}$. Let $G_2$ be the induced subgraph of $G$. It is clear that $\partial W'$ separates $V_2$ and $V_{2k-1}$. Therefore, by the definition of $\partial W'$, we see that $D(v)\cap D(0,\tilde r_{2k-1})\neq\emptyset,~\forall v\in \partial W'$. Therefore, we have $\partial P(W')\subset D(0,\tilde{r}_{2k})$.

        Since $V_{C(0,2\hat{r})}$ separates $V_2$ and $V_{2k-1}$,
        Considering the sub-pattern decided by $G_1$, by Lemma~\ref{MaximumPrincipleHyper}, for each $v \in V_{D(0, \tilde{r}_1)}(\mathcal{P})$,
		\begin{equation} \label{taufiniteP5}
			\rho_{\mathrm{hyp}}\left(\frac{1}{2\hat{r}} C^*_{v}\right) \leq \rho_{\mathrm{hyp}}\left(\frac{1}{\tilde{r}_1} C_{v}\right).
		\end{equation}
		Similarly, considering the sub-pattern decided by $G_2$, for $v \in W'$,
		\begin{equation} \label{taufiniteP6}
			\rho_{\mathrm{hyp}}\left(\frac{1}{2\hat{r}} C^*_{v}\right) \geq \rho_{\mathrm{hyp}}\left(\frac{1}{\tilde{r}_{2k}} C_{v}\right).
		\end{equation}
		
		Since the disks involved in \eqref{taufiniteP5} and \eqref{taufiniteP6} are all contained in $(1/2)D(0,1)$, the hyperbolic and Euclidean radii are uniformly comparable. Thus, there exists a universal constant $\tilde{C} > 0$ such that for $v = v_0, v_1$,
		\begin{align}
			\frac{1}{2\hat{r}} C^*_{v} &\leq \tilde{C} \cdot \frac{1}{\tilde{r}_1} C_{v}, \label{taufiniteP7} \\
			\tilde{C} \cdot \frac{1}{2\hat{r}} C^*_{v} &\geq \frac{1}{\tilde{r}_{2k}} C_{v}. \label{taufiniteP8}
		\end{align}
		Combining \eqref{taufiniteP7} and \eqref{taufiniteP8}, and using $P^*(v_0) = D(v_0)$ and $\tilde{r}_n = \delta^n \tilde{r}$, we obtain
		\[
		\frac{1}{\tilde{C}^2 \delta^{2k-1}} \leq  \frac{r^*_{v_1}}{r_{v_1}} \leq \tilde{C}^2 \delta^{2k-1}.
		\]
		
		Letting $C := \tilde{C}^2 \delta^{2k-1}$ completes the proof, as this constant does not depend on the choice of $v_1$.
	\end{proof}
	
	Given an infinite circle pattern \( \mathcal{P} = \{ C_i \}_{i \in V} \), we define $W\subset V$ be the set consisting of vertex $v$ such that $\tau(C_v) \geq 1$. 
	
	\begin{lem}\label{tau_P_bound}
		Let $\mathcal{P}$ and $\mathcal{P}^*$ be as in Lemma~\ref{uniform_bound_infinite}. If $\tau(\mathcal{P}) = +\infty$, then
		$$\limsup_{i \rightarrow +\infty}\tau(C^*_{v_i})>0,$$
		where \( \{v_i\}_{i \in \mathbb{N}} \) is an arbitrary enumeration of the vertex set \( W \).
	\end{lem}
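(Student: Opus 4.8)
The plan is to argue by contradiction and reduce the statement to the Euclidean maximum principle of Lemma~\ref{maximum_principle_euc} together with the vertex-extremal-length machinery already developed. Suppose instead that $\limsup_{i\to+\infty}\tau(C^*_{v_i})=0$ as $v_i$ ranges over $W$; then for every $\varepsilon>0$ all but finitely many $v\in W$ satisfy $\tau(C^*_v)<\varepsilon$, i.e. $d_{\mathbb{R}^2}(0,C^*_v)>r^*_v/\varepsilon$, so the circles $C^*_v$ indexed by $W$ sit very far from the origin relative to their own radii. I would normalize as in Lemma~\ref{uniform_bound_infinite} so that $D^*(v_0)=D(v_0)$ with common center at the origin.

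First I would extract from the hypothesis $\tau(\mathcal{P})=+\infty$ a sequence $v_i\in W$ with $\tau(C_{v_i})\to+\infty$, which by definition means $d_{\mathbb{R}^2}(0,C_{v_i})/r_{v_i}\to 0$, equivalently $d_{\mathbb{R}^2}(0,o_{v_i})/r_{v_i}\to 1$, so each $C_{v_i}$ passes within $o(r_{v_i})$ of the origin. Because $\mathcal{P}$ is locally finite in the plane, only finitely many disks of any bounded radius can crowd a fixed neighbourhood of the origin; hence the radii must blow up, $r_{v_i}\to+\infty$. Applying the Circle Counting Lemma~\ref{circle_counting} to $D(0,1)$ then shows that for each threshold only boundedly many of these large circles meet a fixed small ball, which is exactly the control needed to keep the forthcoming extremal-length sums finite.

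The heart of the argument is to transport the near-origin crowding of $\mathcal{P}$ to $\mathcal{P}^*$ via the maximum principle, exactly as in the proof of Lemma~\ref{rigidity_bound_tau_finite}. Using Lemma~\ref{sectionlemma} I would realize the vertex sets $V_{C(\rho)}(\mathcal{P})$ as combinatorial rings around $v_0$, choose a geometrically increasing sequence of radii $\rho_n$ with the $V_{C(\rho_n)}(\mathcal{P})$ pairwise disjoint and mutually separating, and combine Lemma~\ref{es} with Proposition~\ref{addlemma} to obtain $\mathrm{VEL}(V_{C(\rho_1)}(\mathcal{P}),V_{C(\rho_N)}(\mathcal{P}))\ge cN$ for a fixed $c>0$. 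Since each $C_{v_i}$ meets $C(\rho)$ for all $\rho$ in the long range $[d_{\mathbb{R}^2}(0,C_{v_i}),\,d_{\mathbb{R}^2}(0,o_{v_i})+r_{v_i}]$, the indices in $W$ populate these rings, so the rings are combinatorially thick around $W$. Feeding this separating ring structure into Lemma~\ref{maximum_principle_euc} (or its hyperbolic counterpart Lemma~\ref{MaximumPrincipleHyper} after rescaling, as in Lemma~\ref{rigidity_bound_tau_finite}) sandwiches the radius ratios $r^*_v/r_v$ on the inside against their values on the boundary rings, forcing the circles $C^*_{v_i}$ to remain of size comparable to their distance from the origin, i.e. $\tau(C^*_{v_i})$ bounded below, which contradicts $\tau(C^*_{v_i})\to 0$.

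The \emph{main obstacle} is precisely the construction of clean separating rings under the hypothesis $\tau(\mathcal{P})=+\infty$: because distortions are unbounded, circles of wildly different sizes accumulate near the origin, and the naive annular decomposition used in the finite-distortion case can fail to separate, as a single huge $C_{v_i}$ meets many concentric circles at once. I expect to need the Circle Counting Lemma~\ref{circle_counting} in an essential way to bound, uniformly in $\rho$, the number of oversized disks meeting each $C(\rho)$, so that the estimate of Lemma~\ref{es}, the duality Theorem~\ref{veldual}, and the nesting hypothesis of Proposition~\ref{nestlemma} still apply. Verifying that the rings genuinely separate the $W$-indices from infinity, and that the maximum principle can be applied on finite truncations without boundary contributions swamping the estimate, is the delicate point that the routine calculations must confirm.
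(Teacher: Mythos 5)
Your opening reductions are fine (extracting $v_i\in W$ with $r_{v_i}\to\infty$, and using Lemma~\ref{circle_counting} to bound the number of oversized disks per annulus), and the ring/VEL lower bound via Lemma~\ref{es} and Proposition~\ref{addlemma} is indeed part of the paper's argument. But the step you call the heart of the proof --- feeding the rings into Lemma~\ref{maximum_principle_euc} to ``sandwich the radius ratios $r^*_v/r_v$'' and conclude $\tau(C^*_{v_i})$ is bounded below --- has a genuine gap, in two ways. First, the maximum principle only transfers control of $r^*_v/r_v$ (or geodesic curvatures) from a boundary ring inward, and at this stage you have no a priori control on any boundary ring: obtaining such control is exactly the content of Lemmas~\ref{rigidity_bound_tau_finite}--\ref{uniform_bound_infinite}, and the mechanism used there (the separating-annulus admissibility estimate of Lemma~\ref{annuilemma}) is precisely what breaks when $\tau(\mathcal{P})=+\infty$; Lemma~\ref{tau_P_bound} is the device designed to replace it, so invoking the conclusion's machinery here is circular. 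Second, even granting a two-sided bound on $r^*_v/r_v$, that does not bound $\tau(C^*_v)=r^*_v/d_{\mathbb{R}^2}(0,C^*_v)$ from below, since nothing controls the positions $d_{\mathbb{R}^2}(0,C^*_v)$ of the circles of $\mathcal{P}^*$; the conclusion of the lemma is about normalized distortion, not radius ratios.

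The paper's proof never touches the maximum principle. It stays entirely within extremal length: under the contradiction hypothesis $\tau(C^*_v)<\epsilon$ on $W\setminus V_0$, the vertex metric $m(i)=\mathrm{diam}(D^*_i\cap D(0,3\tilde r^*))/\tilde r^*$ built from $\mathcal{P}^*$ is at most $6\epsilon$ on those vertices; Lemma~\ref{circle_counting} bounds the number $\tilde C_1$ of large-distortion vertices in the relevant annulus, so after deleting them the rescaled metric $\sqrt{2}\,m$ is still admissible for $\Gamma^*_{\tilde G}(V_2,V_{2k-1})$ in the subgraph $\tilde G$ with $W'$ removed. This gives $\mathrm{VEL}_{\tilde G}(V_2,V_{2k-1})\le 18(24+36\pi^2)$, contradicting the lower bound of order $k$ from the ring decomposition, and hence proves that $V_{C(0,\rho)}(\mathcal{P}^*)$ separates $V_2$ from $V_{2k-1}$ for $\rho\in[\hat r,2\hat r]$. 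The punchline is then that $V_2\cap V_{2k-1}$ must be empty, i.e. consecutive far-out rings of $\mathcal{P}$ are disjoint for arbitrarily large $\tilde r$, which contradicts $\tau(\mathcal{P})=+\infty$. To repair your write-up you would need to replace the maximum-principle step by this quantitative admissibility argument, and in particular actually use the smallness of $\tau(C^*_v)$ on $W\setminus V_0$, which your proposal never exploits beyond stating it.
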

	\begin{proof}
		Without loss of generality, we assume $D^*(v_0) = D(v_0)$ and the center $c_0$ of $D(v_0)$ is located at the origin.
		
		We prove this lemma by contradiction. If $\limsup_{i \rightarrow +\infty}\tau(C^*_{v_i})=0$, then for any $0<\epsilon < 1/2$, there is a finite vertex set $V_0\subset V$ such that $v_0\in V_0$ and 
		\begin{equation} \label{tauinfinite_P}
			\tau(C_i^*) = \frac{r_i}{d_{\mathbb{R}^2}(0, C_i^*)} < \epsilon \quad \forall i \in W \setminus V_0.
		\end{equation}

		Let $W^{\prime}=W-V_0$, and let $\tilde{G}$ be the subgraph of $G$ obtained by removing the vertices in $W^{\prime}$ as well as the edges with an end in $W^{\prime}$.
		Let $\tilde{r} > 0$ be such that $\mathcal{P}(V_0) = \bigcup_{i \in V_0} D_i \subset D(0, \tilde{r})$. Let $\tilde{r}_n := 4^n \tilde{r}$ and $V_n := V_{C(0, \tilde{r}_n)}(\mathcal{P})$ for $n \geq 1$. For $i< j$, we have 
		\begin{equation} \label{tauinfinite_P1}
			V_i \cap V_j \subset W^{\prime}=W-V_0
		\end{equation}
		and for any $0 \leq n_1<n_2<n_3$, $V_{n_2}$ separates $V_{n_1}$ from $V_{n_3}$ in $G$. Here \eqref{tauinfinite_P1} holds due to the fact that for any $v\in V_i\cap V_j$ we have 
        \[
        d_{\mathbb{R}^2}(0,C^*_v)\le 4^i\tilde r;~ d_{\mathbb{R}^2}(0,C^*_v)+2r_v\ge 4^j\tilde r,
        \]
        which implies
        \[
        r_v\ge \frac{1}{2}(4^{j-i}-4^{-1})d_{\mathbb{R}^2}(0,C_v)>d_{\mathbb{R}^2}(0,C_v).
        \]
        	The following proof is divided into two steps.
		
		\medskip
		\emph{Step 1. We first prove that  there exists a  $\hat{r} > 0$ such that for any $\rho \in [\hat{r}, 2\hat{r}]$, the vertex set $V_{C(0, \rho)}(\mathcal{P}^*)$ separates $V_2$ from $V_{2k-1}$ in $G$.}
        \medskip
		Let $k$ be an integer such that
		\begin{align}\label{k_assumption}
		k\ge 72(24 + 36\pi^2)^2 + 1.
		\end{align}
		Let $U_{2,2 k-1}$ be the set of vertices in $G$ which is separated from $V_0$ by $V_2$, and separated from $\infty$ by $V_{2 k-1}$. In fact, let $A(0,a, b)= \{z\in \mathbb C: a\leq |z|\leq b \}$, then 
		\begin{equation}\label{tauinfinite_P3}
			U_{2,2 k-1}= \{v\in V : D_v \cap A(0, \tilde r_2, \tilde r_{2k-1})\neq\emptyset\}.
		\end{equation}
		Let $W^{\prime \prime}=U_{2,2 k-1} \cap W$. 
        
        Since for every $W''$, we have $$3r_v>d_{\mathbb{R}^2}(0,C_v)+2r_v\ge \tilde r_2=16\tilde r,$$
by Lemma \ref{circle_counting}, the cardinality of $W^{\prime \prime}$ is bounded by a universal constant $\tilde C_1$.
		
		Now, we use the method similar to the proof of Lemma \ref{annuilemma}.
		
		Let
		\begin{equation}\label{tauinfinite_P4}
		\tilde r^* = \min \{r: D(0, r) \cap D_i \neq \emptyset, \forall i \in V_2 \}.
		\end{equation}
		
		Let $\gamma^* \in \Gamma^*(V_2, V_{2k-1})$.  Thus, by the minimality of $\tilde r$, we have 
		\begin{equation}\label{tauinfinite_P5}
			\mathrm{diam} (\mathcal P^*(\gamma^*))\geq \tilde r^*.
		\end{equation}
		
		By contradiction, we assume there is a $\rho \in [\tilde r^*, 2\tilde r^*]$ such that $V_{C(0, \rho)}(\mathcal{P}^*)$ does not  separate $V_2$ from $V_{2k-1}$ in $G$. Thus, there is a $\gamma_0 \in \Gamma(V_2, V_{2k-1})$ such that
		\begin{equation}
			\gamma_0 \cap V_{C(0, \rho)}(\mathcal{P}^*) = \emptyset.
		\end{equation}
		By the definition of $V_{C(0, \rho)}(\mathcal{P}^*)$, we have
		\begin{equation}
			D_i^* \cap C(0, \rho)   = \emptyset, \quad \forall i \in \gamma_0.
		\end{equation}
		Hence 
		\begin{equation}\label{tauinfinite_P6}
			\mathcal{P}^*(\gamma_0) \cap C(0, \rho)     = \emptyset.
		\end{equation}
		Since $\gamma_0 \in \Gamma(V_2, V_{2k-1})$, we know $\gamma_0$ contains a vertex in $V_2$. By the definition of $	\tilde r^*$ and $\mathcal{P}^*(\gamma_0)$, we know
		\begin{equation}\label{tauinfinite_P7}
			\mathcal{P}^*(\gamma_0) \cap C(0, \tilde r^* )   \neq  \emptyset.
		\end{equation}
		Combining \eqref{tauinfinite_P6} and \eqref{tauinfinite_P7}, with $\tilde r^* \leq \rho$ we know 
		\begin{equation}
			\mathcal{P}^*(\gamma_0) \subset \mathbb D(0, \rho).
		\end{equation}
		Hence, since $\gamma^* \in \Gamma^*(V_2, V_{2k-1})$ and $\gamma_0 \in \Gamma (V_2, V_{2k-1})$, we have
		\begin{equation}\label{tauinfinite_P8}
			\mathcal{P}^*(\gamma^*) \cap D(0, \rho) \supset	\mathcal{P}^*(\gamma^*) \cap 	\mathcal{P}^*(\gamma_0) \supset \mathcal{P}^*(\gamma^* \cap \gamma_0) \neq \emptyset.
		\end{equation}
		
		Define $m(i)={ \mathrm{diam} (D_i^* \cap D(0, 3\tilde r^*))}/{\tilde r^*}$, which is a vertex metric in the cellular decomposition $\mathcal{T}$. 
		If $\gamma^*$ is connected, \eqref{tauinfinite_P8} implies two cases:
        
		\medskip
        \emph{Case 1: $\mathcal{P}^*(\gamma^*) \subset D(0, 3\tilde r^*)$}. From \eqref{tauinfinite_P5}, we have $$\int_{\gamma^*} d m=\sum_{i \in \gamma^*} \mathrm{diam} (D_i^* \cap D(0, 3\tilde r^*)) /{\tilde r^*}=\sum_{i \in \gamma^*} \mathrm{diam} (D_i^*) /{\tilde r^*} \geq \mathrm{diam} (\mathcal P^*(\gamma^*))  /{\tilde r^*} \geq 1.$$
			
		\medskip
        \emph{Case 2: $\mathcal{P}^*(\gamma^*)$ is not completely contained $D(0, 3\tilde r^*)$}. From \eqref{tauinfinite_P8}, we have $\mathcal{P}^*(\gamma^*)$ joining $D(0, \rho)$ and $D(0, 3\tilde r^*))$. Then we have
			$$\int_{\gamma^*} d m=\sum_{i \in \gamma^*} \mathrm{diam} (D_i^* \cap D(0, 3\tilde r^*)) /{\tilde r^*}=(3\tilde r^*- \rho) /{\tilde r^*} \geq 1.$$
		Since every $\gamma^*\in \Gamma^*(V_2, V_{2k-1})$ contains a connected sub-curve in $\Gamma^*(V_2, V_{2k-1})$, it follows that $m$ is $\Gamma^*(V_2, V_{2k-1})$-admissible.
		
		From \eqref{tauinfinite_P}, for any $v\in W^{\prime} \supset W^{\prime \prime}$, we have
		$$
		m (v)\leq 6\epsilon.
		$$
		Since the cardinality of $W^{\prime \prime}$ is at most $\tilde C_1$, we see that
		$$
		\sum_{v \in W^{\prime \prime}} m(v) \leq 6 C_1 \varepsilon.
		$$
      Let $\epsilon$ be small enough such that 
      \begin{align}\label{choose_epsilon}
          2\tilde C_1 \arcsin\frac{\epsilon}{1+\epsilon}<2\pi.
      \end{align}
          
       Then vertices in $W'$ do not separate $V_2- W'$ and $V_{2k-1}- W'$.
Moreover, by \eqref{tauinfinite_P1} and \eqref{choose_epsilon} we see that $V_1-W',\cdots,V_{2k}-W'$ are disjoint non-empty vertex sets in $\tilde G$.

		Now let $\beta^* \in \Gamma_{\tilde{G}}^*\left(V_2 \cap\left(V-W^{\prime}\right), V_{2 k-1} \cap\left(V-W^{\prime}\right)\right)$. Then, the union $\gamma^*=\beta^* \cup W^{\prime \prime}$ is a vertex curve in $\Gamma_G^*\left(V_2, V_{2 k-1}\right)$. 
		$$
		\sum_{v \in \beta^*} m(v) =  \sum_{v \in \gamma^*} m(v) -\sum_{v \in W^{\prime \prime}} m(v)  \geq 1-6 C_1 \varepsilon
		$$
		
		We choose $\varepsilon>0$ so small that $1-6 C_1 \varepsilon \geq 1 / \sqrt{2}$. Then $\sqrt{2} \eta$ is $\Gamma_{\tilde{G}}^*(V_2 \cap(V-W^{\prime},$ $ V_{2 k-1} \cap\left(V-W^{\prime}\right))$-admissible. Thus, 
		$$
		\begin{aligned}
			\operatorname{VEL}_{\tilde{G}}\left(V_2, V_{2 k-1}\right) & =\operatorname{MOD}\left(\Gamma_{\tilde{G}}^*\left(V_2 \cap\left(V-W^{\prime}\right), V_{2 k-1} \cap\left(V-W^{\prime}\right)\right)\right) \\
			& \leq \operatorname{area}(\sqrt{2} m)
		\end{aligned}
		$$
		
		By a similar argument as in the proof of Lemma \ref{es}, we see that
		$$
		\mathrm{area}(\sqrt{2} m)\le 18(24 + 36\pi^2).
		$$
		This contradicts \eqref{tauinfinite_P2}.

		\medskip

        Let $\operatorname{VEL}_{\tilde{G}}\left(V_i, V_j\right)$ denote the vertex extremal length between $V_i \cap\left(V-W^{\prime}\right)$ and $V_j \cap\left(V-W^{\prime}\right)$ in the subgraph $\tilde{G}$.
		By Proposition \eqref{addlemma} and Lemma~\ref{es}, we have
		\begin{align*}
			\mathrm{VEL}_{\tilde{G}}(V_2, V_{2k-1}) &\geq \sum_{n=1}^{k-1} \mathrm{VEL}_{\tilde{G}}(V_{2n}, V_{2n+1}) \\
			&\geq \frac{k-1}{4(24+36\pi^2)}.
		\end{align*}
		Since $k\ge
		72(24 + 36\pi^2)^2 + 1.$
		Then,
		\begin{equation}\label{tauinfinite_P2}
			\mathrm{VEL}_{\tilde{G}}(V_2, V_{2k-1}) > 18(24 + 36\pi^2).
		\end{equation}

		\emph{Step 2. we finish the proof.}
		\medskip
		
		We have proved that there exists a $\hat{r} > 0$ such that for any $\rho \in [\hat{r}, 2\hat{r}]$, the vertex set $V_{C(0, \rho)}(\mathcal{P}^*)$ separates $V_2$ from $V_{2k-1}$ in $G$. Then we have 
		\begin{equation}
			V_2 \cap V_{2 k-1} \subset V_{C(0, \hat{r})}(\mathcal{P}^*)\cap V_{C(0,2  \hat{r})}(\mathcal{P}^*).
		\end{equation}
		From  \eqref{tauinfinite_P} and \eqref{tauinfinite_P1}, we have
		\begin{equation}
			V_2 \cap V_{2 k-1} \subset  V_{C(0, \hat{r})}(\mathcal{P}^*)\cap V_{C(0,2  \hat{r})}(\mathcal{P}^*) \cap (W-V_0) = \emptyset,
		\end{equation}
		which implies
		$$
		V_{C(0, 4^2\tilde{r})}(\mathcal{P})\cap V_{C(0, 4^{2k-2}\tilde{r})}(\mathcal{P}) =\emptyset.
		$$
		Since $\tilde{r}$ is arbitrarily large, this contradicts $\tau(\mathcal{P}) = +\infty$.
		
	\end{proof}
	
	\begin{lem}\label{rigidity_bound_tau_infinite}
		Let $\mathcal{P}$ and $\mathcal{P}^*$ be as in Lemma~\ref{uniform_bound_infinite}. If $\tau(\mathcal{P}) = +\infty$, then there exists a uniform constant $C \geq 1$ such that for any vertex $v$,
		\[
		\frac{1}{C} \leq  \frac{r^*_{v}}{r_{v}}  \leq C.
		\]
	\end{lem}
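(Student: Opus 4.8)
The plan is to run the scheme of Lemma~\ref{rigidity_bound_tau_finite} essentially verbatim, with the one modification forced by $\tau(\mathcal{P})=+\infty$: we can no longer pick a single scale $\delta$ controlling the distortion of all but finitely many circles of $\mathcal{P}$, so instead we delete the sparse set of large-distortion circles and carry out the extremal-length estimates on the punctured graph, exactly as in Lemma~\ref{tau_P_bound}. Throughout, Lemma~\ref{tau_P_bound} supplies the essential input that $\mathcal{P}^*$ still carries circles of definite relative radius, so that the comparison below does not degenerate.

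As before, I would normalize so that $D^*(v_0)=D(v_0)$ with center at the origin, fix an arbitrary $v_1$, and set $W=\{v:\tau(C_v)\ge 1\}$. Choose a finite $V_0\ni v_0,v_1$, put $W'=W\setminus V_0$, and let $\tilde G$ be $G$ with $W'$ and its incident edges removed. On $\tilde G$ every surviving circle satisfies $\tau(C_v)<1$, so with $\tilde r$ chosen so that $\mathcal{P}(V_0)\subset D(0,\tilde r)$, and with $\tilde r_n=4^n\tilde r$, $V_n=V_{C(\tilde r_n)}(\mathcal{P})$, the shells $\{V_n\}$ are mutually separating in $\tilde G$ and the intersection of two consecutive shells lies in $W'$, as in \eqref{tauinfinite_P1}. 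Proposition~\ref{addlemma} and Lemma~\ref{es} then give $\operatorname{VEL}_{\tilde G}(V_2,V_{2k-1})\ge (k-1)/(4(24+36\pi^2))$, which exceeds $18(24+36\pi^2)$ once $k$ is large enough. The circle-counting Lemma~\ref{circle_counting} bounds, by a universal constant, the number of deleted vertices of $W$ meeting any fixed annulus $A(0,\tilde r_2,\tilde r_{2k-1})$; feeding this into the admissible-metric argument of Lemma~\ref{annuilemma} (as in Step~1 of Lemma~\ref{tau_P_bound}) produces a radius $\hat r>0$ such that, for every $\rho\in[\hat r,2\hat r]$, the set $V_{C(0,\rho)}(\mathcal{P}^*)$ separates $V_2$ from $V_{2k-1}$ in $G$. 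In particular $\mathcal{P}^*(V_1)\subset D(0,\hat r)$, while the outer shell $V_{2k-1}$ confines the relevant $\mathcal{P}$-disks to $D(0,\tilde r_{2k})$.

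With these containments in place, the conclusion is identical to Lemma~\ref{rigidity_bound_tau_finite}: apply the hyperbolic maximum principle (Lemma~\ref{MaximumPrincipleHyper}) to the two finite subpatterns cut out by $V_{D(0,\tilde r_1)}(\mathcal{P})$ and by the separating shell, obtaining hyperbolic-radius comparisons as in \eqref{taufiniteP5}--\eqref{taufiniteP8}; since all disks involved sit well inside a fixed disk, hyperbolic and Euclidean radii are uniformly comparable, and one concludes $\tilde C^{-2}4^{-(2k-1)}\le r^*_{v_1}/r_{v_1}\le \tilde C^2 4^{2k-1}$, so that $C:=\tilde C^2 4^{2k-1}$ works and is independent of $v_1$. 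The main obstacle is the bookkeeping on $\tilde G$: one must check that deleting $W'$ neither disconnects the nested shells nor destroys the separation required by the maximum principle, and that, thanks to Lemma~\ref{circle_counting}, the retained large-distortion circles are so few in each annulus that they contribute only a bounded amount to every admissible vertex metric. Here Lemma~\ref{tau_P_bound} is precisely what prevents $\mathcal{P}^*(V_1)$ from collapsing, guaranteeing that $\hat r$ is comparable to a genuine diameter and hence that both the upper and the lower bound survive simultaneously.
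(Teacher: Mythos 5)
There is a genuine gap at the separation step. You propose to obtain the radius $\hat r$ (such that $V_{C(0,\rho)}(\mathcal{P}^*)$ separates $V_2$ from $V_{2k-1}$ for all $\rho\in[\hat r,2\hat r]$) by ``feeding the circle-counting bound into the admissible-metric argument of Step 1 of Lemma~\ref{tau_P_bound}''. But that argument works only under the contradiction hypothesis \eqref{tauinfinite_P} of Lemma~\ref{tau_P_bound}, namely $\tau(C^*_i)<\epsilon$ for all large-distortion vertices $i\in W\setminus V_0$: it is precisely this smallness that makes $\sum_{v\in W''}m(v)\le 6\tilde C_1\varepsilon$ negligible, so that the metric $m$ remains admissible (up to the factor $\sqrt2$) on the punctured graph $\tilde G$ after deleting $W''$. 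In the setting of the present lemma that hypothesis is not available --- indeed the whole point of Lemma~\ref{tau_P_bound} is that it is \emph{false}: along a subsequence $\tau(C^*_{u_k})\ge 3\delta>0$. A deleted vertex $v\in W''$ may then have $m(v)$ of order $1$, the at most $\tilde C_1$ deleted vertices can absorb the entire unit length of a curve $\gamma^*\in\Gamma^*_G(V_2,V_{2k-1})$, and no positive lower bound on $\int_{\beta^*}dm$ over $\beta^*=\gamma^*\setminus W''$ survives. Consequently the contradiction with the lower bound on $\operatorname{VEL}_{\tilde G}(V_2,V_{2k-1})$ cannot be derived and $\hat r$ is never produced; everything downstream (the containment $\mathcal{P}^*(V_1)\subset D(0,\hat r)$ and the two applications of Lemma~\ref{MaximumPrincipleHyper}) therefore has no foundation.

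The paper's proof takes a different route that uses Lemma~\ref{tau_P_bound} \emph{positively} rather than trying to repair the shell argument. From its conclusion one extracts a single vertex $\tilde u$, far from the origin, with $\tau(C_{\tilde u})\ge 1$ and $\tau(C^*_{\tilde u})\ge 3\delta$, and applies the M\"obius transformations $F$, $F^*$ sending $\hat{\mathbb{C}}\setminus D(\tilde u)$, resp.\ $\hat{\mathbb{C}}\setminus D^*(\tilde u)$, onto $\mathbb{U}$ while fixing $0$. Both renormalized patterns then lie in the unit disk with $|F(\infty)|,|F^*(\infty)|>\delta$, the images of $D(v_0),D(v_1),D^*(v_0),D^*(v_1)$ land in $(\delta/2)\,\mathbb{U}$, and a derivative estimate shows that $F$ and $F^*$ distort radii on $D(0,\rho_0)$ by at most a factor of $2$. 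One application of Lemma~\ref{MaximumPrincipleHyper} in each direction, comparing $(1/\delta)F(\mathcal{P})$ with $F^*(\mathcal{P}^*)$ and vice versa, then yields the two-sided bound. If you want to salvage your outline, this inversion through a distant large-distortion disk is the missing idea.
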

	
	\begin{proof}
		Let $v_0$ and $v_1$ be two fixed vertices.
		Without loss of generality, we assume $D^*(v_0) = D(v_0)$ and the center $c_0$ of $C_0$ lies at the origin. 
		
		As shown in Lemma~\ref{tau_P_bound}, let $\tau^* := \limsup_{i \to +\infty} \tau(C^*_{v_i})$. Fix $\delta \in (0, 1/3)$ such that $\delta < \tau^*$. Then there exists a sequence $\{u_k\}$ of mutually distinct vertices such that $\tau\left(C_{u_k}\right) \geq 1$ and $\tau\left(C^*_{u_k}\right) \geq 3\delta$. Since both $P$ and $P^*$ are locally finite in $\mathbb{C}$, we have
		\[
		d_{\mathbb{R}^2}\left(0, C_{u_k}\right) \to \infty \quad \text{and} \quad d_{\mathbb{R}^2}\left(0, C^*_{u_k}\right) \to \infty.
		\]
		
		Let $\rho_0 > 0$ be such that
		\[
		D_{v_0} \cup D_{v_1} \cup D^*_{v_0} \cup D^*_{v_1} \subseteq D(0,\rho_0).
		\]
		Choose one $\tilde{u}$ from the sequence $\{u_k\}$ such that
		\[
		d := d_{\mathbb{R}^2}(0, P(\tilde{u})) \geq \frac{100 \rho_0}{\delta^2}, \quad
		d^* := d_{\mathbb{R}^2}(0, P^*(\tilde{u})) \geq \frac{100 \rho_0}{\delta^2}.
		\]
		Define 
		\begin{align*}
			F(z)&=\frac{r(C_{\tilde u})z}{(r(C_{\tilde u})+d)z-(r(C_{\tilde u})+d)^2+r(C_{\tilde u})^2},
			\\F^*(z)&=\frac{r(C^*_{\tilde u})z}{(r(C^*_{\tilde u})+d^*)z-(r(C^*_{\tilde u})+d^*)^2+r(C^*_{\tilde u})^2}.
		\end{align*}
		
		Then $F$ is a M\"obius transformation with $F(0) = 0$ and \( F(\hat{\mathbb{C}} \setminus D(\tilde{u})) = D(0,1) \), and similarly $F^*$ is a M\"obius transformation with $F^*(0) = 0$ and \( F^*(\hat{\mathbb{C}} \setminus D^*(\tilde{u})) = D(0,1) \). 
		
		Observe that the hyperbolic distance between $F(\infty)$ and $0$ is greater than the hyperbolic distance between $0$ and
		\[
		\frac{r(C_{\tilde u})}{d + \rho_0 + r(C_{\tilde u})}.
		\]
		It follows that
		\[
		|F(\infty)| = \frac{r(C_{\tilde u})}{d + r(C_{\tilde u})} = \frac{\tau(C_{\tilde u})}{1 + \tau(C_{\tilde u})} > \frac{1}{3} > \delta.
		\]
		Similarly,
		\[
		|F^*(\infty)| = \frac{\tau(C^*_u)}{1 + \tau(C^*_u)} \geq \frac{3\delta}{1 + 3\delta}\ge\frac{3\delta}{2} > \delta.
		\]
		
		We write $r$ and $r^*$ for $r(C_{\tilde u})$ and $r(C_{\tilde u}^*)$, from \( \rho_0 \leq \frac{\delta^2 d}{100} \) and \( \rho_0 \leq \frac{\delta^2 d^*}{100} \), for $z\in \mathbb{C}$ with $|z|\le\rho_0$, we have
		\begin{align*}
			|F(z)|&<\frac{\frac{r d\delta^2}{100}}{|2r d+d^2-(r+d)z|}\\&\le\frac{\frac{\rho d\delta^2}{100}}{2r d+d^2-(r+d)|z|}\\
			&\le\frac{\frac{r d\delta^2}{100}}{2r d+d^2-(r+d)\frac{ d\delta^2}{100}}\\
			&\le \frac{\delta}{2}.
		\end{align*}
		Similarly, $|F^*(z)|\le\frac{\delta}{2}$. Therefore,
		\[
		F(D(\rho_0)), \quad F^*(D(\rho_0)), \quad F(D(v_0)), \quad F(D(v_1)), \quad F^*(D^*(v_0)), \quad F^*(D^*(v_1))
		\]
		are all contained in the disk \( (\delta/2) \cdot \mathbb{U} \), where $\mathbb{U}$ denotes the unit disk.
		By direct computation, we have 
		\begin{align}\label{est_deri1}
			F'(z)=\frac{r }{(r+d)z-d(2r+d)}-\frac{r(r+d)z}{[(r+d)z-d(2r+d)]^2}.
		\end{align}
		Since \( d=d_{\mathbb{R}^2}(0, C_{\tilde u}) \geq 100 \rho_0 \), for the first term of \eqref{est_deri1} satisfies
		\begin{align}\label{est_deri2}
			\frac{100r}{101d(2r+d)}\le\left|\frac{r }{(r+d)z-d(2r+d)}\right|\le\frac{100r}{99d(2r+d)},~\forall z~\text{s.t}~|z|<\rho_0.
		\end{align}
		For the second term, we also have 
		\begin{align}\label{est_deri3}
			\left|\frac{r(r+d)z}{[(r+d)z-d(2r+d)]^2}\right|\le\frac{r d(r+d)}{(99d(2r+d))^2},~\forall z~\text{s.t}~|z|<\rho_0.
		\end{align}
		Combining \eqref{est_deri2} and \eqref{est_deri3}, we have
		\[
		\frac{r d(2r+d)}{(d(2r+d))^2}(\frac{100}{101}-\frac{1}{99^2})\le|F'(z)|\le \frac{r d(2r+d)}{(d(2r+d))^2}(\frac{100}{101}+\frac{1}{99^2}),~\forall z~\text{s.t}~|z|<\rho_0.
		\]
		Therefore, $\forall z,z'\in D(0,\rho_0),$ we have
		\[
		\frac{1}{2}\le\frac{|F'(z)|}{|F'(z')|}\le2.
		\]
		Thus,
		\[
		\frac{r(F(D(v_1)))r(D(v_1))}{r(D(v_1))r(F(D(v_0)))} \in [1/4, 4].
		\]
		Similarly, we have
		\[
		\frac{r(F^*(D^*(v_1)))r(D^*(v_1))}{r(D^*(v_1))r(F^*(D^*(v_0)))} \in [1/4, 4].
		\]
		
		Since \( D(v_0) = D^*(v_0) \), in order to compare \( r(D^*(v_1)) / r(D(v_1)) \), it suffices to compare
		\[
		\frac{r(F(D(v_1)))}{r(F(D(v_0)))} \quad \text{and} \quad \frac{r(F^*(P^*(v_1)))}{r(F^*(P^*(v_0)))}.
		\]
		
		Now consider the comparison of the pattern \( (1/\delta) F(P) \) with \( F^*(P^*) \). Since \( |F(\infty)| > \delta \) and \( F(P) \) are locally finite in \( \hat{\mathbb{C}} \setminus \{F(\infty)\} \), only finitely many disks intersect the unit disk $U$. Then, using Lemma~\ref{MaximumPrincipleHyper}, we get, for \( j = 0, 1 \),
		\[
		r_{\mathrm{hyp}}\left((1/\delta) F(D(v_j))\right) \geq r_{\mathrm{hyp}}\left(F^*(D^*(v_j))\right).
		\]
		Conversely, comparing \( F(P) \) with \( (1/\delta) F^*(P^*) \), and noting that \( |F^*(\infty)| > \delta \), we obtain
		\[
		r_{\mathrm{hyp}}\left(F(D(v_j))\right) \leq r_{\mathrm{hyp}}\left((1/\delta) F^*(D^*(v_j))\right).
		\]
		The lemma then follows, since all involved disks are contained in \( (1/2) \mathbb{U} \).
	\end{proof}

	\begin{lem}\label{uniform_bound_infinite}
		Let $\mathcal{T}=(V,E,F)$ be a disk triangulation, and $\Theta\in [0 ,\pi)^E$ be the prescribed intersection angle. Assume that condition (Z2) and (Z4) hold. Let $\mathcal{P}= \{C_i\}_{i\in V}$ and $\mathcal{P^*}= \{C^*_i\}_{i\in V}$ be two RCP that realize $(\mathcal{T}, \Theta)$. There is a uniform constant $C \geq 1$ such that for any vertex $v$,
		$$
		\frac{1}{C} \leq \frac{r^*_{v}}{r_{v}} \leq C.
		$$
	\end{lem}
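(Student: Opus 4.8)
The plan is to prove the uniform two-sided comparison of radii by a clean dichotomy on the \emph{distortion coefficient} $\tau(\mathcal{P})=\limsup_{i\to+\infty}\tau(C_{v_i})$ defined above, reducing the statement to the two technical lemmas already in hand. Since $\tau$ is invariant under Euclidean similarities and the quantity to be bounded, $r^*_v/r_v$, is itself a ratio, I first normalize by a similarity so that $D^*(v_0)=D(v_0)$ with common center at the origin; this costs nothing and matches the normalization used in the proofs of Lemmas~\ref{rigidity_bound_tau_finite} and \ref{rigidity_bound_tau_infinite}. With this setup, exactly one of the two mutually exclusive alternatives $\tau(\mathcal{P})<+\infty$ or $\tau(\mathcal{P})=+\infty$ holds, and I handle them separately.

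In the finite regime $\tau(\mathcal{P})<+\infty$, Lemma~\ref{rigidity_bound_tau_finite} applies directly and furnishes a uniform $C\ge 1$, depending only on $(\mathcal{T},\Theta)$ and not on the vertex, with $1/C\le r^*_v/r_v\le C$ for all $v$. In the degenerate regime $\tau(\mathcal{P})=+\infty$, Lemma~\ref{tau_P_bound} first guarantees that the starred pattern cannot itself degenerate everywhere, i.e.\ $\limsup_i\tau(C^*_{v_i})>0$ along the vertices of $W$; feeding this into Lemma~\ref{rigidity_bound_tau_infinite} produces the same two-sided bound with a (possibly larger) uniform constant. Taking the constant output in whichever case actually occurs establishes the lemma. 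Thus the present statement is, strictly speaking, only the assembly of the two regimes.

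The substantive difficulty is not in this combination step but in the two case lemmas, and it is worth recording the common mechanism so the logic is transparent. In each regime one manufactures, from the vertex-extremal-length lower bound of Lemma~\ref{es} together with the separation argument of Lemma~\ref{annuilemma} (in the infinite case also the combinatorial circle-counting bound of Lemma~\ref{circle_counting}), a nested family of concentric round circles $C(r)$ whose induced vertex rings $V_{C(r)}(\cdot)$ separate $v_0$ and $v_1$ from $\infty$; choosing the separating radii so that the intervening VEL exceeds a fixed numerical threshold forces the two patterns to be geometrically comparable on the finite sub-triangulations trapped between consecutive rings. The hyperbolic maximum principle of Lemma~\ref{MaximumPrincipleHyper}, applied to these finite subpatterns after rescaling so that all relevant disks lie inside a fixed compact subdisk (where hyperbolic and Euclidean radii are uniformly comparable), then sandwiches the geodesic curvatures at $v_0$ and $v_1$ and pins down the ratio $r^*_{v_1}/r_{v_1}$ independently of $v_1$. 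The only genuine obstacle, already resolved upstream, is ruling out the escape of mass to infinity at wildly different rates in $\mathcal{P}$ and $\mathcal{P}^*$; the dichotomy on $\tau(\mathcal{P})$ is precisely the device that isolates this possibility and, through Lemmas~\ref{tau_P_bound}, \ref{rigidity_bound_tau_finite} and \ref{rigidity_bound_tau_infinite}, eliminates it.
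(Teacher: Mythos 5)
Your proposal is correct and follows exactly the paper's own argument: the proof of Lemma~\ref{uniform_bound_infinite} is precisely the dichotomy on $\tau(\mathcal{P})$, invoking Lemma~\ref{rigidity_bound_tau_finite} when $\tau(\mathcal{P})<+\infty$ and Lemma~\ref{rigidity_bound_tau_infinite} (which internally uses Lemma~\ref{tau_P_bound}) when $\tau(\mathcal{P})=+\infty$. The additional paragraph describing the internal mechanism of the two case lemmas is accurate but not needed for this assembly step.
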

	\begin{proof}
		Combining Lemmas~\ref{rigidity_bound_tau_finite} and~\ref{rigidity_bound_tau_infinite}, we obtain the desired result.
	\end{proof}
	
	Finally, we obtain the rigidity theorem for parabolic RCPs.
	\begin{thm}\label{rigidity_parabolic}
		Let $\mathcal{T}=(V,E,F)$ be a disk triangulation and let $\Theta\in [0 ,\pi)^E$ with $\sup_{e\in E}\Theta(e)<\pi$ be an intersection angle. Assume that condition (Z2) and (Z4) hold. Let $\mathcal{P}= \{C_i\}_{i\in V}$ and $\mathcal{P^*}= \{C^*_i\}_{i\in V}$ be two RCP that realize $(\mathcal{T}, \Theta)$. Suppose that $P$ is locally finite in the plane. Then there is a Euclidean similarity $f: \mathbb{C} \rightarrow \mathbb{C}$ such that $P^*=f(P)$.
	\end{thm}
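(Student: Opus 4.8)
The plan is to show that the two radius functions differ only by a global multiplicative constant, and then to invoke the rigidity of the developing map. Write $u_i=\ln r_i$ and $u_i^*=\ln r_i^*$, and set $f_i=u_i^*-u_i=\ln(r_i^*/r_i)$. Since $\pac$ is locally finite in the plane, Lemma~\ref{parabolic_equivalent} shows that $\mathcal{T}$ is VEL-parabolic; and by Lemma~\ref{uniform_bound_infinite} there is a constant $C\ge 1$ with $1/C\le r_v^*/r_v\le C$ for all $v$, so $f$ is a \emph{bounded} function on $V$ (and $\pac^*$ is locally finite in the plane as well). The whole problem reduces to proving $f\equiv\mathrm{const}$.

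Both $\pac$ and $\pac^*$ realize $(\mathcal{T},\Theta)$, hence each produces a flat cone metric \emph{without} cone singularities, i.e. the discrete curvature vanishes everywhere. Writing $\Phi_i:=\sum_{[i,j,k]\in F}\vartheta_i^{jk}$ for the cone angle at $i$, this says $\Phi_i(u)=\Phi_i(u^*)=2\pi$. Interpolating along $u(t)=u+tf$, $t\in[0,1]$, and applying the fundamental theorem of calculus gives
\[
0=\Phi_i(u^*)-\Phi_i(u)=\sum_{j\sim i}A_{ij}(f_j-f_i),\qquad A_{ij}:=\int_0^1\frac{\partial\Phi_i}{\partial u_j}\big(u(t)\big)\,dt .
\]
Three facts make this a genuine discrete elliptic equation: first, $\partial\Phi_i/\partial u_j\ge 0$ for $j\sim i$ by Lemma~\ref{vari}, so $A_{ij}\ge 0$; second, dilating all radii keeps every triangle similar, so $\sum_m\partial\Phi_i/\partial u_m=0$ and thus $A_{ii}=-\sum_{j\sim i}A_{ij}$; third, the symmetry \eqref{conformalfactor} yields $A_{ij}=A_{ji}$. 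Therefore $f$ is a bounded function that is \emph{harmonic} on the weighted graph $(G,A)$ with nonnegative symmetric weights $A_{ij}$.

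The conclusion then follows from the network machinery: if $(G,A)$ is \emph{recurrent}, Lemma~\ref{harmonic_current} forces the bounded harmonic function $f$ to be constant. To get recurrence I would feed VEL-parabolicity of $\mathcal{T}$ into Lemma~\ref{VEL_current}, whose hypothesis is a uniform bound $\sum_{j\sim i}A_{ij}\le C'$. This is exactly where Lemma~\ref{partialthetabound} is used: the estimate $0\le r_j\,\partial\vartheta_i/\partial r_j<C(\epsilon)\vartheta_i$ (available because $\Theta\le\pi-\epsilon_{\mathcal{T}}$) bounds each individual weight, and the uniform ratio bound keeps the interpolated radii $r_i(t)=r_i e^{tf_i}$ within a factor $C$ of those of $\pac$, so that the cone angles $\Phi_i(u(t))$ stay controlled along the path and the row sums $\sum_{j\sim i}A_{ij}$ remain uniformly bounded. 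Granting this, $(G,A)$ is recurrent, hence $f\equiv\lambda_0$ and $r_v^*=\lambda r_v$ for all $v$ with $\lambda=e^{\lambda_0}$. Finally, once $r^*=\lambda r$, the pattern $\lambda^{-1}\pac^*$ has the same radii, the same angles $\Theta$, and the same combinatorics as $\pac$, so corresponding triangles have identical edge lengths $l_{ij}=\sqrt{r_i^2+r_j^2+2r_ir_j\cos\Theta_{ij}}$ and are congruent; laying them out by the embedded developing map shows $\lambda^{-1}\pac^*$ and $\pac$ agree up to the placement of one initial triangle, i.e. up to a rigid motion $g$, whence $\pac^*=f(\pac)$ with $f=\lambda\cdot g$ a Euclidean similarity.

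The main obstacle I anticipate is precisely the recurrence step: bounding the row sums $\sum_{j\sim i}A_{ij}$ uniformly when $\mathcal{T}$ is allowed to have unbounded vertex degrees. The per-edge bound from Lemma~\ref{partialthetabound} is immediate, but controlling the total cone angle $\Phi_i(u(t))$ along the interpolation (so that the hypothesis of Lemma~\ref{VEL_current} holds) is the delicate point; the uniform ratio bound of Lemma~\ref{uniform_bound_infinite} is what keeps the interpolated configuration comparable to $\pac$ and should be the mechanism that supplies this control, thereby completing the passage from VEL-parabolicity to recurrence.
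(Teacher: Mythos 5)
Your overall strategy is the same as the paper's: reduce to showing that $f=\ln(r^*/r)$ is constant, recognize $f$ as a bounded harmonic function on a weighted graph with weights $A_{ij}=-\int_0^1\partial K_i/\partial u_j\,dt$, and conclude via VEL-parabolicity $\Rightarrow$ recurrence (Lemma \ref{VEL_current}) $\Rightarrow$ Liouville property (Lemma \ref{harmonic_current}). However, the step you yourself flag as "the main obstacle" is a genuine gap as written, and it is precisely the point where your argument diverges from the paper's. Along your naive linear interpolation $u(t)=u+tf$ the configuration is \emph{not} a flat circle pattern for $0<t<1$: the cone angle $\Phi_i(u(t))$ need not equal $2\pi$, and a priori it is only bounded by $\pi\deg(i)$, which is useless when $\mathcal{T}$ has unbounded degrees. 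Lemma \ref{partialthetabound} bounds the row sum by $C(\epsilon)\,\Phi_i(u(t))$, so without a degree-independent bound on $\Phi_i(u(t))$ the hypothesis of Lemma \ref{VEL_current} is not verified, and the passage to recurrence fails. Saying that the uniform ratio bound "should be the mechanism that supplies this control" is not an argument; comparability of radii within a factor $C$ does not by itself control a sum of $\deg(i)$ angles.

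The paper sidesteps this entirely by choosing a different path $u(t)$: for each $t$ it solves the flat-pattern problem on a finite exhaustion $\mathcal{T}^{[n]}$ with boundary radii given by the linear interpolation (Theorem \ref{exist_boundary_radius}), uses the Euclidean maximum principle (Lemma \ref{maximum_principle_euc}) to get the uniform Lipschitz bound $|u_i^{[n]}(t+h)-u_i^{[n]}(t)|\le\ln C\cdot|h|$, and passes to a limit. The resulting path satisfies $K_i(u(t))=0$, hence $\sum_{[i,j,k]\in F}\vartheta_i^{jk}(u(t))=2\pi$ identically in $t$, and Lemma \ref{partialthetabound} then gives the row-sum bound $\sum_{j\sim i}\omega_{ij}\le 2\pi C_0$ with no degree hypothesis. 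Your gap is in fact fillable within your own framework — using the scaling identity $\partial\Phi_i/\partial u_i=-\sum_{j\sim i}\partial\Phi_i/\partial u_j$ and $|f|\le\ln C$ one gets $|\tfrac{d}{dt}\Phi_i(u(t))|\le 4C(\epsilon)\ln C\cdot\Phi_i(u(t))$, whence $\Phi_i(u(t))\le 2\pi C^{4C(\epsilon)}$ by Gronwall, which restores the uniform row-sum bound — but that estimate does not appear in your proposal, and without it the proof is incomplete.
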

	\begin{proof}
		
		Denote $\mathcal{P}(0)=\mathcal{P}$ and $\mathcal{P}(1)=\mathcal{P^*}$. For $i \in V$, denote $r_i(0)$ and $r_i(1)$ by  the radius of $C_i$ and $C^*_i$, respectively. Then we construct a $r(t)$ connecting $r(0)$ and $r(1)$. Let $u_i (t)= \ln r_i(t)$. Set $\tilde u_i (t)= u_i (0) + (u_i(1)- u_i(0))t$. Lemma \ref{uniform_bound_infinite} tells us that $|u_i(1)- u_i(0)|\leq \ln C$, $\forall i \in V$. Hence, we have 
		\begin{equation}\label{harmonic_proof_1}
			|\tilde u_i (t+h)- \tilde u_i (t)| \leq |u_i(1)- u_i(0)||h| \leq \ln C \cdot |h|, \quad \forall i \in V.
		\end{equation}
		
        Since $\mathcal T=(V,E,F)$ is a locally finite triangulation of the open disk,
        we choose an increasing exhaustion by finite triangulated topological disks
        \[
        \left\{\mathcal T^{[n]}=\left(V^{[n]},E^{[n]},F^{[n]}\right)\right\}_{n=1}^{\infty}
        \]
        such that each $|\mathcal T^{[n]}|$ is homeomorphic to a closed disk,
        \[
        |\mathcal T^{[n]}|\subset \operatorname{int} |\mathcal T^{[n+1]}|,
        \qquad
        \bigcup_{n=1}^{\infty}|\mathcal T^{[n]}|=|\mathcal T|,
        \]
        and every compact subset of $|\mathcal T|$ is contained in $|\mathcal T^{[n]}|$
        for all sufficiently large $n$.
        Here $|\mathcal T^{[n]}|$ denotes the underlying polyhedron of the finite
        subcomplex $\mathcal T^{[n]}$.
        Let $\partial V^{[n]}$ be all boundary vertices in $\mathcal{T}^{[n]}$, and let $\mathrm{int} V^{[n]}$ be all the interior vertices in $\mathcal{T}^{[n]}$. For any $t \in [0,1]$, from Theorem \ref{exist_boundary_radius}, we know there is a unique $\mathcal{P}^{[n]}(t)$ that realizes $(\mathcal{T}^{[n]}, \Theta)$ and its radius $r^{[n]}_i(t)= \tilde r_i(t)$ for any boundary vertex $i \in \partial V^{[n]}$. In particular, the circle pattern $\mathcal{P}^{[n]}(0)$ (resp. $\mathcal{P}^{[n]}(1)$) is exactly the $\mathcal{P}(0)$ (resp. $\mathcal{P}(1)$) restricted on $\mathcal{T}^{[n]}$. 
		
		By Lemma \ref{maximum_principle_euc}, we know $u_i^{[n]}(t+h)- u_i^{[n]}(t)$ attains its maximum and minimum at the vertex in $ \partial V^{[n]}$. Hence, by \eqref{harmonic_proof_1}, we have
		\begin{equation}\label{harmonic_proof_2}
			|u_i^{[n]}(t+h)- u_i^{[n]}(t)| \leq \ln C\cdot |h|, \quad \forall i \in V^{[n]}.
		\end{equation}
		Since $t \in [0,1]$, we have 
		$$|u_i^{[n]}(t)- u_i^{[n]}(0)| \leq  \ln C$$
		Thus, for any $t \in (0, 1)$, there is a sub-sequence $\{u^{[n_k]}(t)\}_{k}$ of $\{u^{[n]}(t)\}$ that converges as $k \rightarrow + \infty$. Let $u(t)$ be the limit of $\{u^{[n_k]}(t)\}$.  From \eqref{harmonic_proof_2}, we have
		\begin{equation}\label{harmonic_proof_3}
			|u_i(t+h)- u_i(t)| \leq \ln C\cdot |h|, \quad \forall i \in V.
		\end{equation}
		Therefore, $u_{i}(t)$ is absolutely continuous and 
		\begin{equation}
			\left|\dfrac{d u_i(t)}{dt}\right|\leq \ln C~a.e., \quad \forall i \in V.
		\end{equation}
		It is obvious that the circle pattern $\mathcal{P}(t)$ consists of circles $C_i=C_i(t)$ with radius $e^{u_i(t)}$ is an immersed circle pattern weakly realizing  $(\mathcal{T}, \Theta)$. Therefore, 
		$$
		K_i(t) =0, \quad \forall i \in V.
		$$
		So, 
		\begin{equation}\label{harmonic_proof_4}
			\begin{aligned}
				0 &= \dfrac{d K_i(u(t))}{dt}= \dfrac{\partial K_i}{\partial u_i} \dfrac{d u_i(t)}{dt}+ \sum_{j\sim i} \dfrac{\partial K_i}{\partial u_j}\dfrac{d u_j(t)}{dt}\\
				&=  \sum_{j\sim i} \dfrac{\partial K_i}{\partial u_j}\Big(\dfrac{d u_j(t)}{dt}- \dfrac{d u_i(t)}{dt}\Big)  +  \Big(\sum_{j\sim i} \dfrac{\partial K_i}{\partial u_j}+\dfrac{\partial K_i}{\partial u_i}\Big) \dfrac{d u_i(t)}{dt}\\
			\end{aligned}
		\end{equation}
		From Lemma \ref{vari}, we know 
		$$
		\sum_{j\sim i} \dfrac{\partial K_i}{\partial u_j}+\dfrac{\partial K_i}{\partial u_i}=0.
		$$
		Let $f(t)= {d u(t)}/{dt}$, by \eqref{harmonic_proof_4}, 
		$$
		\Delta_G f_i= \sum_{j \sim i} \omega_{i j}\left(f_j-f_i\right)=0
		$$
		where
		$$
		\omega_{ij}(t) = -\dfrac{\partial K_i}{\partial u_j}(t).
		$$
		From Lemma \ref{vari}, we know $\omega_{ij}(t)=\omega_{ji}(t)>0$. So, $f$ is  a harmonic function for the weighted graph $(\mathcal{T}, \omega)$.
	   Since $\mathcal{T}$ is VEL-parabolic, by Lemma \ref{partialthetabound}, and \ref{VEL_current}, we know that $(\mathcal{T}, \omega(t))$ is recurrent. Since $f$ is  a harmonic function for the weighted graph $(\mathcal{T}, \omega)$, by Lemma \ref{harmonic_current}, we know $f_i(t)= c(t)$, $\forall i \in V$. It follows that
		$$
		u_i(1)-u_i(0)= \int_0^1 c(t) dt.
		$$
		Since $ \int_0^1 c(t) dt$ is a constant, we know $r_i(1)/r_i(0)= e^{ \int_0^1 c(t) dt}$ is a constant, for any $i \in V$. This implies that $\pac^*$ and $\pac$ are images of each other by Euclidean similarities.
	\end{proof}
	
	\section{Convex trivalent hyperbolic polyhedra in $\HH^3$}\label{polyhedra_section}
	In this section, we apply the previous results on the existence and uniqueness of RCPs to characterize a class of hyperbolic polyhedra, namely infinite trivalent hyperbolic polyhedra. 
	We first give a precise definition of $3$-dimensional trivalent hyperbolic polyhedra (THP) and introduce some related concepts.   After that, we establish the correspondence between THP and RCPs. Finally, we characterize the existence and rigidity of the infinite THP.  
	
	\subsection{Basic definitions and notations}
	A \textbf{half-space} in $\mathbb{H}^3$ is the closure of one of the two regions determined by a hyperbolic plane. 
	A set of half-spaces is called \textbf{locally finite} if every compact subset of $\mathbb{H}^3$ intersects only finitely many of their boundary planes. In this section, we regard  $\mathbb{S}^2=\partial\HH^3$. The following definition of a hyperbolic polyhedron is natural and has appeared in \cite[Section 3.2]{Martelli} and \cite[Section 6.3]{Ratcliffe}. 
	
	\begin{defn}\label{def_plolyhedron}
		A \textbf{hyperbolic polyhedron} $P$ in $\mathbb{H}^3$ is defined as $P =\bigcap_{i \in I} H_i$, where $\{H_i\}_{i \in I}$ is a locally finite set of half-spaces.
	\end{defn}
	
	Clearly, every hyperbolic polyhedron is convex since it is the intersection of convex subsets in $\mathbb{H}^3$. Consider Klein's projective model for $\HH^3$, where $\HH^3$ is identified as the open unit ball in $\mathbb{R}^3\subset RP^3$, and a hyperbolic plane in $\mathbb{H}^3$ is the intersection of a hyperplane in $\mathbb{R}^3$ with $\HH^3$.
	For a convex hyperbolic polyhedron $P\subset\HH^3$, if $P=\tilde{P}\cap \HH^3$ for some locally finite convex polyhedron $\tilde{P}\subset RP^3$, we say $P$ is the hyperbolic polyhedron induced by $\tilde{P}$. Consider a vertex $v$ of $\tilde{P}$, it is called a \textbf{vertex}, \textbf{ideal vertex}, or 
	\textbf{hyperideal vertex} of $P$ if and only if each side incident to $v$ of $\tilde{P}$ intersects $\HH^3$ and $v$ is contained in $\HH^3$, $\partial\HH^3$, or $RP^3\setminus(\HH^3\cup\partial\HH^3)$ respectively.
	Let $H \subset \mathbb{H}^3$ be a half-space containing $P$.
	The non-empty intersection $F = \partial H \cap P$
	is called an \textbf{edge} or \textbf{face} of $P$ if $F$ is $1$ or $2$-dimensional, respectively. $P$ is called \textbf{trivalent} if $\tilde{P}$ is trivalent, that is, each vertex of $\tilde{P}$ is incident to exactly three faces in $\tilde{P}$. A face $f_P\subset \partial P$ is called a \textbf{compact face}, if it is a compact subset in $\HH^3$ (or equivalently, if its corresponding face in $\tilde{P}$ does not contain any ideal or hyperideal vertex). 
	
	\subsection{Characterizations of finite hyperbolic polyhedra}
    \label{section-character-finite-hp}
	Characterizing polyhedra is an ancient problem. As early as the ancient Greek period, the famous thinker Plato had discovered that there were only five regular polyhedra. An important early result was Cauchy's rigidity theorem, which states that convex polyhedra in three dimensions with congruent corresponding faces must be congruent to each other. In 1832, Steiner asked for a combinatorial characterization of convex polyhedra inscribed in the sphere, which is equivalent to the characterization of ideal hyperbolic polyhedra. About 100 years ago, Steinitz discovered that every 3-connected planar graph is a 1-skeleton of a convex polyhedron, which is an early result of the existence of polyhedra. In 1970's, Andreev \cite{An70A} characterized trivalent compact hyperbolic polyhedra in $\HH^3$ with non-obtuse dihedral angles. Andreev's result is essential for proving Thurston's hyperbolization theorem for Haken 3-manifolds \cite{Ot98}. Actually, in the famous book \cite[Chapter 13]{Th76}, Thurston interprets Andreev's result as a theorem about circle packings (i.e. the previously mentioned Koebe-Andreev-Thurston theorem\footnote{for tangential circle packings, the circle packing theorem was first discovered by Koebe \cite{Ko36} in 1936, and it takes an extremely simple and beautiful form: for every connected simple planar finite graph $G$ there is a circle packing in the plane whose intersection graph is (isomorphic to) $G$.}), which characterizes the existence and uniqueness of circle packings with non-obtuse intersection angles on the sphere. This shows deep connections between circle packings, hyperbolic polyhedra, and three-dimensional geometric topology. 
	

	\begin{thm}(\cite[Andreev]{An70A})
		Let $C$ be an abstract polyhedron with more than $4$ faces, and suppose that a dihedral angle $0<\Theta_i\leq\frac{\pi}{2}$ is given corresponding to each edge $e_i$ of $C$. There is a compact hyperbolic polyhedron $P$ whose faces realize $C$ with dihedral angle $\Theta_i$ for each edge $e_i$ if and only if the following four conditions hold:
		\begin{enumerate}
			\item [(1)] If three distinct edges $e_i$, $e_j$, $e_k$ meet at a vertex, $\Theta_i + \Theta_j + \Theta_k>\pi$.
			\item [(2)] If $\gamma$ is a prismatic $3$-circuit intersecting edges $e_i$, $e_j$, $e_k$, $\Theta_i + \Theta_j + \Theta_k<\pi$.
			\item [(3)] If $\gamma$ is a prismatic $4$-circuit intersecting edges $e_i$, $e_j$, $e_k$, $e_l$, $\Theta_i + \Theta_j + \Theta_k+\Theta_l<2\pi$.
			\item [(4)] If there is a four sided face bounded by edges $e_1$, $e_2$, $e_3$, and $e_4$, enumerated successively, with edges $e_{12}$, $e_{23}$, $e_{34}$, $e_{41}$ entering the four vertices (edge $e_{ij}$ connects to the ends of $e_i$ and $e_j$), then:
			$$\Theta_1 +\Theta_3 +\Theta_{12}+\Theta_{23} +\Theta_{34} +\Theta_{41} < 3\pi,$$
			and
			$$\Theta_2 +\Theta_4 +\Theta_{12}+\Theta_{23} +\Theta_{34} +\Theta_{41} < 3\pi.$$
			Furthermore, this polyhedron is unique up to isometries of $\HH^3$.
		\end{enumerate}
	\end{thm}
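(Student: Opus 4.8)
The plan is to split the statement into its necessity direction and its harder sufficiency-and-uniqueness direction. Throughout I identify the abstract polyhedron $C$ with its dual, a triangulation $\mathcal{T}$ of $\partial\mathbb{H}^3$ whose vertices are the faces of $P$, whose edges are the edges of $P$, and whose triangles are the trivalent vertices of $P$. Each face of $P$ lies in a geodesic plane meeting $\partial\mathbb{H}^3$ in a circle, and the dihedral angle along an edge equals the intersection angle of the two corresponding circles, so a compact polyhedron of type $C$ is exactly a circle pattern with contact graph $\mathcal{T}$ all of whose dual vertices are finite. I will prove existence and uniqueness together by the method of continuity on the space of such polyhedra; this route yields rigidity as a byproduct, and in the nonobtuse range $0<\Theta\le\pi/2$ it runs parallel to the circle-packing existence-and-rigidity theorem for triangulations of the sphere (compare Theorem~\ref{RCP_finite}).

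For necessity, each inequality is read off from a local picture. At a finite vertex the link is a spherical triangle whose interior angles are the incident dihedral angles $\Theta_i,\Theta_j,\Theta_k$; a spherical triangle has angle sum strictly greater than $\pi$, which is condition (1), the strictness encoding finiteness (a sum of $\pi$ or less would force an ideal or hyperideal vertex). For a prismatic $3$-circuit the three face-planes are pairwise intersecting but their edges do not concur; were $\Theta_i+\Theta_j+\Theta_k\ge\pi$, the three-plane trichotomy underlying Corollary~\ref{cor-three-one} and Lemma~\ref{angleleqpilemma} would force them to share a common point of $\overline{\mathbb{H}^3}$, i.e.\ a (possibly ideal) vertex at which all three faces meet, contradicting prismaticity and compactness; this gives condition (2). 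Condition (3) is the four-plane analogue for a prismatic $4$-circuit, and condition (4) follows by combining the relation at a quadrilateral face with the vertex inequalities entering its four corners. I expect necessity to be routine once the identification ``dihedral angle $=$ circle intersection angle'' is fixed, the only delicate bookkeeping being the two inequalities of (4).

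For sufficiency and uniqueness, let $\mathcal{M}_C$ be the space of compact hyperbolic polyhedra realizing $C$ modulo isometry, and let $\alpha\colon\mathcal{M}_C\to\mathbb{R}^E$ record the dihedral angles; the target is the region $A_C\subset(0,\pi/2]^E$ cut out by (1)--(4). I would carry out four steps. (i) \emph{Injectivity (rigidity)}: via a maximum principle for the weighted Laplacian whose edge weights are the mixed partials $-\partial K_i/\partial u_j$ of Thurston's discrete curvature, which are positive and symmetric by Lemma~\ref{vari} and \eqref{conformalfactor}; two polyhedra of the same combinatorics with identical angles then have equal metric data, hence are isometric. (ii) \emph{Local homeomorphism}: the differential $d\alpha$ is this same Laplacian-type Jacobian, nonsingular on the slice transverse to the isometry group, so the inverse function theorem applies. (iii) \emph{Properness into $A_C$}: a family of polyhedra whose angles stay in a compact subset of $A_C$ cannot degenerate. (iv) \emph{Connectedness of $A_C$}. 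Granting these, $\alpha$ is a proper local homeomorphism onto the connected set $A_C$, hence a covering map; injectivity upgrades it to a homeomorphism, and since the image is nonempty and, by necessity, contained in $A_C$, it is all of $A_C$---giving existence for every admissible datum and uniqueness simultaneously.

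The main obstacle is step (iii), the degeneration analysis. I must enumerate the ways a compact convex polyhedron of type $C$ can degenerate---a vertex escaping to $\partial\mathbb{H}^3$ or beyond, an edge or face collapsing, two nonadjacent faces becoming tangent---and match each to the failure of one of (1)--(4), so that no degeneration is possible while the angles remain interior to $A_C$. Here the three-circle dichotomy of Corollary~\ref{cor-three-one} ($\sum\Theta<\pi$, $=\pi$, $>\pi$ according as an interstice is present, a point, or absent) is precisely the detector of a vertex limiting onto $\partial\mathbb{H}^3$, and its four-circle analogue controls the prismatic $4$-circuit and quadrilateral-face degenerations governed by (3) and (4). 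The secondary difficulty is step (iv): showing that the linear-inequality region $A_C$ has no disconnecting features, which I would handle by an explicit deformation retraction of $A_C$ onto a base point, treating the coupled inequalities of (4) with extra care.
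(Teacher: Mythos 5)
The paper does not prove this theorem: it is quoted verbatim as a classical result of Andreev, attributed to \cite{An70A}, with \cite{Hodgson,RHD07,Zhou23} cited elsewhere for detailed proofs and generalizations; it appears only as background in the survey of finite hyperbolic polyhedra. There is therefore no in-paper proof to compare against, and your proposal has to be judged on its own.

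Your outline is the standard continuity-method strategy (essentially that of Andreev and of Roeder--Hubbard--Dunbar), and the necessity direction together with steps (i) and (ii) is sound in outline. Two remarks on the rest. First, step (iv) is easier than you make it: conditions (1)--(4) and the constraints $0<\Theta_i\le\pi/2$ are all linear in the $\Theta_i$, so the region $A_C$ is convex, hence connected; no deformation retraction is needed. Second --- and this is the genuine gap --- the phrase ``since the image is nonempty \ldots it is all of $A_C$'' is precisely the point that cannot be waved through. A proper local homeomorphism onto a connected target is a covering map only once the domain $\mathcal{M}_C$ is known to be nonempty, and producing even one compact polyhedron realizing $C$ for some admissible angle vector is a substantial part of the proof; historically this is where Andreev's original argument contained its error, and Roeder--Hubbard--Dunbar repair it by an induction on the combinatorics of $C$ via Whitehead moves and explicit seed polyhedra. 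Without an argument for the nonemptiness of $\mathcal{M}_C$, and without actually carrying out the degeneration analysis of step (iii) that you correctly identify as the main obstacle, the proposal is a plausible roadmap rather than a proof.
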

	
	Here $\gamma$ is called a \textbf{prismatic $k$-circuit}, if it is a simple closed curve formed by $k$ edges of the dual graph $C^*$, and the endpoints of the edges of $C$ intersected by $\gamma$ are all distinct. We remark that (under the above four conditions) $P$ is trivalent, moreover, the circle pattern corresponding to $P$ is precisely a RCP (see the analysis in the last subsection). Andreev also characterized finite convex ideal hyperbolic polyhedra with non-obtuse dihedral angles in \cite{An70B}. See \cite{Hodgson,RHD07,Zhou23} for detailed proofs and more recent generalizations.
	
	Breakthroughs to Steiner's question due to Rivin. In 1990s, Rivin-Hodgson \cite{RH93} completely characterized general compact convex polyhedra in $\HH^3$. They first characterized compact convex polyhedral surfaces in the de Sitter space $S^2_1$: i.e., a spherical cone metric $(M,g)$ homeomorphic to $\mathbb{S}^2$ can be isometrically embedded into the de Sitter space $S^2_1$ as a convex polyhedral surface if and only if (a) the cone angle at each cone point is greater than $2\pi$, and (b) the lengths of closed geodesics of $(M,g)$ are all strictly greater than $2\pi$. Since points in the de Sitter space $S^2_1$ correspond to half spaces in $\HH^3$, a convex polyhedron in $\HH^3$ gives rise to a convex polyhedron surface in $S^2_1$ by the polar map and vice versa. Consequently, they obtained the characterization of the extrinsic geometry of compact convex polyhedra in $\HH^3$. Rivin-Hodgson's work greatly generalizes Andreev's results above, and was proven by translating the de Sitter geometry to hyperbolic geometry via the polar map (see \cite{GHZ-advance,Gueritaud} for other proofs). Based on this work, Rivin \cite{Ri96} obtained a particularly elegant characterization for ideal hyperbolic polyhedra: 
	\begin{thm}(\cite[Rivin]{Ri96})
		Suppose that a convex ideal polyhedron $P$ in $\HH^3$ is given. Let $P^*$ denote the Poincar\'e dual of $P$, and assign to each edge $e^*$ of $P^*$ a weight $0<\Theta(e^*)<\pi$ equal to the interior dihedral angle at the corresponding edge $e$ of $P$. Then the following result holds: 
		\begin{enumerate}
			\item [(1)] If the edges $e_1^*,\cdots,e_n^*$ form the boundary of a face of $P^*$, then 
			$\sum_{i=1}^n\Theta(e_i^*)=(n-2)\pi$;
			\item [(2)] If $e_1^*,\cdots,e_n^*$ form a simple loop that is not the boundary of any 
			face of $P^*$, then 
			\[\sum_{i=1}^n\Theta(e_i^*)<(n-2)\pi.
			\]
		\end{enumerate}
		Conversely, any abstract polyhedron $P^*$ with weighted edges satisfying the above two conditions is the Poincar\'e dual of a convex ideal polyhedron $P$ with the interior dihedral angles equal to the weights.
	\end{thm}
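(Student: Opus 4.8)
The plan is to prove the two assertions separately: the necessity of conditions (1) and (2) for a given convex ideal polyhedron $P$, and the converse existence-and-uniqueness statement.

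For necessity I would argue locally at the ideal vertices and globally along dual loops. Each face of $P^*$ is dual to an ideal vertex $v$ of $P$; intersecting $P$ with a small horosphere $H$ centered at $v$ produces a Euclidean polygon (the horospherical link of $v$), because each face through $v$ meets $H$ in a Euclidean straight line. The interior angle of this polygon at the corner cut out by an edge $e_i$ equals the dihedral angle $\Theta(e_i^*)$ of $P$ along $e_i$, so the Euclidean angle-sum formula for an $n$-gon gives $\sum_{i=1}^n\Theta(e_i^*)=(n-2)\pi$, which is (1). For (2) I would pass to the polar dual in de Sitter space $S^2_1$ as in Rivin--Hodgson \cite{RH93}: a face of $P$ becomes a vertex of a spacelike convex polyhedral surface, an edge $e$ becomes a dual edge of length $\pi-\Theta(e^*)$ (in the standard normalization), and a simple loop $e_1^*,\dots,e_n^*$ of $P^*$ becomes a closed polygonal curve of de Sitter length $\sum_{i=1}^n(\pi-\Theta(e_i^*))$. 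Convexity forces every such closed curve that does not contract across a single face to exceed the length $2\pi$ of a closed geodesic, i.e. $\sum_{i=1}^n(\pi-\Theta(e_i^*))>2\pi$, which rearranges to $\sum_{i=1}^n\Theta(e_i^*)<(n-2)\pi$; a facial loop instead contracts to an ideal vertex and yields the equality already recorded in (1). Alternatively one can develop the faces met by the loop directly and read off the total angle defect.

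For the converse, fix the combinatorial type $P^*$ and let $\mathcal{A}\subset(0,\pi)^E$ be the set of weight functions satisfying the linear equalities (1) and the strict inequalities (2). After checking $\mathcal{A}\neq\emptyset$, one sees that $\mathcal{A}$ is a bounded, open, convex, hence connected polytope inside the affine subspace cut out by (1). Let $\Phi$ denote the map sending an ideal polyhedron of type $P^*$, taken up to isometry of $\HH^3$, to its dihedral-angle assignment; by the necessity part $\Phi$ lands in $\mathcal{A}$, and the aim is to show $\Phi$ is a homeomorphism onto $\mathcal{A}$. Equivalently, by the face-circle correspondence on $\hat{\mathbb{C}}$ this is the problem of realizing a circle pattern with prescribed exterior intersection angles $\pi-\Theta$ whose contact graph is the $1$-skeleton of $P^*$, so the variational machinery attached to Thurston's construction is available.

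The analytic heart is a variational principle. To each $\Theta\in\mathcal{A}$ I would associate, face by face, the unique ideal hyperbolic polygon whose dihedral data are prescribed by $\Theta$ (condition (1) guarantees its existence), and consider the total hyperbolic volume $\mathrm{Vol}(\Theta)$ of the resulting decomposition into ideal tetrahedra, a functional assembled from the Lobachevsky function. The Schl\"afli formula $d\,\mathrm{Vol}=-\tfrac12\sum_e \ell_e\,d\Theta_e$ (with the lengths $\ell_e$ suitably regularized at the ideal vertices) identifies the critical points of $\mathrm{Vol}$ on $\mathcal{A}$ with exactly those $\Theta$ whose edge-length data close up into a genuine ideal polyhedron. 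Two facts then finish the argument: (i) $\mathrm{Vol}$ is \emph{strictly concave} on $\mathcal{A}$, which gives infinitesimal rigidity, the local-homeomorphism property of $\Phi$, and uniqueness of any critical point; and (ii) $\mathrm{Vol}$ attains an interior maximum on $\mathcal{A}$. I expect (ii), the properness step, to be the main obstacle: one must prevent maximizing sequences from escaping to $\partial\mathcal{A}$, and this is exactly where the strict inequalities (2) enter, since any degeneration of the realizing polyhedron forces some non-facial dual loop to saturate $\sum\Theta=(n-2)\pi$, i.e. pushes $\Theta$ to $\partial\mathcal{A}$ rather than to an interior critical point. The strictly concave functional then has a unique interior maximizer, which is the desired ideal polyhedron, unique up to isometry; this simultaneously yields existence, surjectivity of $\Phi$ onto $\mathcal{A}$, and rigidity.
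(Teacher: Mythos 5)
This theorem is quoted from Rivin \cite{Ri96} as background; the paper gives no proof of it (the surrounding text only notes that it is obtained from the Rivin--Hodgson characterization \cite{RH93}), so there is nothing internal to compare your argument against, and I can only assess the proposal on its own terms against the known proofs in the literature.

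Your outline follows legitimate routes but leaves the two genuinely hard steps as assertions. The necessity of (1) via the horospherical link is correct and essentially complete. For (2), the sentence ``convexity forces every such closed curve that does not contract across a single face to exceed the length $2\pi$'' is the conclusion of the Rivin--Hodgson necessity theorem restated, not a derivation: a non-facial dual loop gives a closed \emph{edge path} of length $\sum_{i=1}^n(\pi-\Theta(e_i^*))$ on the de Sitter dual surface, and one must still argue (by curve shortening in the cone metric with cone angles $>2\pi$, or by Gauss--Bonnet applied to the disk the path bounds) that such a path is strictly longer than every facial link, which has length exactly $2\pi$ in the ideal case; this is where the work lies. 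For the converse, the volume-maximization scheme is a genuinely different route from the Rivin--Hodgson polar-dual embedding theorem and is the one actually carried out by Rivin \cite{Ri94}, Bobenko--Springborn \cite{BS04}, Springborn \cite{Springborn2} and, in an elementary form, Gu\'eritaud \cite{Gueritaud}; it buys existence and rigidity simultaneously from a single concave functional. But two points in your sketch are real gaps rather than routine verifications: (a) the functional is naturally a function of the shape parameters of the ideal tetrahedra in a chosen decomposition subject to angle-sum constraints, and identifying its critical points with coherent gluings realizing the prescribed $\Theta$ requires the Schl\"afli/Lobachevsky bookkeeping you only allude to, together with an argument that the answer is independent of the decomposition; and (b) the properness step --- ruling out degeneration of a maximizing sequence to $\partial\mathcal{A}$, which is precisely where hypothesis (2) must enter quantitatively --- is acknowledged but not proved, and it is the heart of every variational proof of this theorem. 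As written, the proposal is a correct roadmap rather than a proof.
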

	Later, Bao-Banahon \cite{Bao} generalized Rivin's theorem from ideal polyhedra to hyperideal case (note that in \cite{Bao}, both ideal vertex and hyperideal vertex are all called ``hyperideal vertex"). 
	\begin{thm}(\cite[Bao-Banahon]{Bao})
		\label{bao}
		Given a convex hyperideal polyhedron $P$ in $\HH^3$. Let $P^*$ denote the Poincar\'e dual of $P$, and assign to each edge $e^*$ of $P^*$ a weight $0<\Theta(e^*)<\pi$ equal to the interior dihedral angle at the corresponding edge $e$ of $P$. Then the following result holds: 
		\begin{enumerate}
			\item [(1)] If $e_1^*,\cdots, e^*_n$ is the boundary of a face $v^*$ in $P^*$, then $\sum_{i=1}^n\Theta(e_i^*)\le(n-2)\pi$. Moreover, the equality holds if and only if $v$ is an ideal vertex.
			\item [(2)] If $e_1^*,\cdots,e_n^*$ is a simple loop that is not the boundary of any face in $P^*$, then 
			\[\sum_{i=1}^n\Theta(e_i^*)<(n-2)\pi.
			\]
			\item [(3)]
			If $e_1^*,\cdots,e_n^*$ 
			form a simple path starting and ending on the boundary of the same face of $P^*$, but not contained in the boundary of that face, then 
			\[
			\sum_{i=1}^n\Theta(e_i^*)<(n-1)\pi.
			\]
		\end{enumerate}
		Conversely, any abstract polyhedron $P^*$ with weighted edges satisfying the above two conditions is the Poincar\'e dual of a convex hyperideal polyhedron $P$ with the interior dihedral angles equal to the weights.
	\end{thm}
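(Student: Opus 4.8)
The plan is to prove this via the continuity (deformation) method, reducing the hyperideal problem to the compact case through truncation and invoking de Sitter duality in the spirit of Rivin--Hodgson. First I would fix the combinatorial type: let $C$ be the abstract polyhedron dual to $P^{*}$, and let $\mathcal{C}$ be the space of all convex hyperideal polyhedra of type $C$ in $\HH^3$, taken modulo isometry. The central object is the dihedral angle map $\Phi\colon\mathcal{C}\to(0,\pi)^{E}$ sending each polyhedron to the vector of its interior dihedral angles, where $E$ is the edge set of $P$. The theorem asserts exactly that $\Phi$ is a homeomorphism onto the region $A\subset(0,\pi)^{E}$ cut out by conditions (1)--(3): the forward direction packages necessity of the conditions together with uniqueness (injectivity of $\Phi$), and the converse is surjectivity of $\Phi$ onto $A$.

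For necessity I would work with the truncation $P^{\mathrm{trun}}$, obtained by cutting $P$ along the polar plane $v^{\perp}$ of each hyperideal vertex $v$. Since $v^{\perp}$ is orthogonal to every face incident to $v$, the result is a compact convex polyhedron whose truncation faces meet the original faces at right angles. Each truncation face is then a hyperbolic polygon whose interior angle at the point where an edge $e_{i}$ enters it equals the dihedral angle $\Theta(e_{i}^{*})$ (because both adjacent faces are orthogonal to $v^{\perp}$), so Gauss--Bonnet gives $\mathrm{Area}=(n-2)\pi-\sum_{i}\Theta(e_{i}^{*})>0$, while an ideal vertex yields a degenerate Euclidean (horospherical) cross-section and hence equality; this is precisely condition (1). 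Conditions (2) and (3) I would read off from the polar dual surface in de Sitter space $S^{2}_{1}$: a prismatic loop in $P^{*}$ corresponds to a closed geodesic of the dual surface of length $\sum_{i}(\pi-\Theta(e_{i}^{*}))$, which must exceed $2\pi$ by the Rivin--Hodgson length estimate, giving $\sum_{i}\Theta(e_{i}^{*})<(n-2)\pi$; condition (3) arises analogously from geodesic arcs running between truncation faces and encodes that the hyperideal vertices truncate properly rather than collapsing.

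For existence and uniqueness I would show that $\Phi$ is a proper local homeomorphism onto the connected region $A$, hence a global homeomorphism. Injectivity (uniqueness) and the bijectivity of $d\Phi$ (openness) both follow from \emph{infinitesimal rigidity}: using the Schl\"afli formula together with a Pogorelov/Cauchy-type argument run on $P^{\mathrm{trun}}$ with the right-angle truncation constraints tracked, one shows that no nontrivial first-order deformation fixes all dihedral angles, so $d\Phi$ is an isomorphism between spaces of equal dimension $|E|$. Properness then requires classifying every degeneration of a sequence in $\mathcal{C}$ --- vertices colliding, faces collapsing, or the moduli becoming unbounded --- and showing each such degeneration drives the angle vector to $\partial A$, i.e. forces equality in one of (1)--(3). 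One subtlety to handle is the stratification of $A$: the relative interior, where all vertices are hyperideal, is open of dimension $|E|$, whereas ideal vertices sit on the strata where (1) holds with equality, and the continuity argument must be applied compatibly on each stratum (or the ideal case recovered as a boundary limit of the strictly hyperideal one).

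I expect the \textbf{properness/degeneration analysis} to be the main obstacle, since one must rule out escape of the angle vector into the interior of $A$ and match each boundary stratum of $\mathcal{C}$ with a specific defining inequality among (1)--(3); this is where conditions (2) and (3) earn their keep. The infinitesimal rigidity step is also delicate because the non-compactness of $P$ forces the Schl\"afli argument onto the truncated polyhedron, where the fixed $\pi/2$ truncation angles must be carefully excluded from the deformation variables. Once both ingredients are established, connectedness of $A$ promotes the proper local homeomorphism to a genuine homeomorphism and closes the argument.
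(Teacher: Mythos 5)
The paper does not prove this statement at all: Theorem \ref{bao} is quoted verbatim from Bao--Bonahon \cite{Bao} as background in the survey of finite hyperbolic polyhedra, so there is no internal proof to compare your proposal against. Judged against the actual proof in \cite{Bao}, your outline does follow the same broad strategy (a deformation/continuity method for the dihedral angle map $\Phi$, with necessity extracted from the truncated polyhedron and the polar dual), so the architecture is the right one.

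That said, as a proof the proposal has real gaps rather than mere omitted routine steps. First, surjectivity via ``open plus closed in the connected set $A$'' requires the image of $\Phi$ to be nonempty, i.e.\ you must exhibit at least one hyperideal polyhedron realizing the given combinatorial type; this is a genuine step in \cite{Bao} and is nowhere addressed in your plan. Second, the infinitesimal rigidity of hyperideal polyhedra is asserted by appeal to ``Schl\"afli plus Pogorelov/Cauchy on $P^{\mathrm{trun}}$,'' but the whole difficulty is that the truncated polyhedron has prescribed right angles along the truncation faces, so the compact Rivin--Hodgson rigidity does not apply directly; Bao--Bonahon handle this by a doubling/reflection argument across the truncation planes, and without some such device your rigidity step does not close. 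Third, your derivation of conditions (2) and (3) from the de Sitter length estimate implicitly treats the polar dual of a hyperideal polyhedron as if it were the compact polyhedral surface of Rivin--Hodgson, which it is not: the dual surface has boundary (coming from the hyperideal vertices), and condition (3) is precisely the statement about geodesic arcs meeting that boundary orthogonally, so the length estimates must be reproved in the bordered setting rather than cited. Finally, the properness analysis, which you correctly flag as the crux, is only described, not carried out; matching each degeneration of a sequence in $\mathcal{C}$ to failure of a specific inequality among (1)--(3) is where most of the work in \cite{Bao} lies. In short, the skeleton is correct and consistent with the known proof, but none of the load-bearing steps is actually established.
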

	
	Recently, there have been some other outstanding works, such as \cite{HL17} and \cite{Zhou23}, which consider circle patterns that allow obtuse overlaps, as well as convex hyperbolic polyhedra that may have obtuse dihedral angles. The difference is that \cite{HL17} uses the Teichm\"uller space of interstices to represent the quasiconformal deformation space of circle patterns and the corresponding finite hyperbolic polyhedra, while \cite{Zhou23} extends Andreev's theorem to obtuse dihedral angles. Finally, we emphasize that the polyhedra appearing in this subsection are all finite polyhedra. We will consider infinite polyhedra in the following section.
	

	\subsection{Infinite trivalent hyperbolic polyhedra}
    \label{section-correspond-cp-hcp}
	
	Inspired by Huang-Liu \cite{HL17} and Zhou \cite{Zhou21, Zhou23}, we focus on trivalent hyperbolic polyhedra (THP) that naturally correspond to RCPs in this section. 
	Intuitively, a polyhedron $P$ with infinitely many faces is not ``complete'' in $\HH^3$, because there exists in $\overline{\HH^3}$ at least one such point on the ``boundary" of $P$: it does not belong to any face, and every neighborhood of it contains at least a vertex of $P$. More precisely, a point $p\in\overline{\HH^3}$ is called an \textbf{accumulation point} of $P$ if every neighborhood of $p$ 
	intersects infinitely many boundary planes of $\{H_i\}_{i \in I}$.  
	Because $\{H_i\}_{i \in I}$ is locally finite, all accumulation points necessarily lie in $\partial \HH^3$. In particular, a polyhedron with finitely many faces admits no accumulation points, whereas an infinite polyhedron always does. This constitutes an essential distinction between finite polyhedra and infinite polyhedra.
	
	Due to the (potential) existence of ideal vertices and hyperideal vertices, we first truncate the polyhedron $P$ at these vertices.
	This allows us to extract the geometric features of the polyhedron at infinity—such as ``non-compact" and ``incomplete"—which arise from its infinite number of faces rather than from its ideal or hyperideal vertices. Given an infinite hyperbolic polyhedron $P$, there is a canonical way to associate a truncated polyhedron $P^{\text{trun}}$, that is, at each hyperideal vertex, truncate it with its dual plane; at each ideal vertex, truncate it with a sufficiently small horoball (see section 1 in \cite{Bao} for details). If $P$ is a finite polyhedron, then $P^{\text{trun}}$ is compact in $\HH^3$. For an infinite hyperbolic polyhedron $P$, due to the existence of accumulation points, we know $P^{\text{trun}}\setminus K\neq \emptyset$ for any compact subset $K \subset \mathbb{H}^3$. This motivates the following definition.

	\begin{defn}\label{infinite_HP}
		Let $P$ be an infinite trivalent hyperbolic polyhedron. 
		\begin{itemize}
			\item [(1)] $P$ is called \textbf{parabolic} if $P^{\text{trun}}\setminus K_n$ tends to a unique point in $\partial\mathbb{H}^3$, where $\{K_n\}$ are compact in $\mathbb{H}^3$ and exhausts $\mathbb{H}^3$, or equivalently, if the accumulation points of the boundary faces of $P$ in $\HH^3$ consists of a single point $\{pt\}\in\partial\HH^3$;
			\item  [(2)] $P$ is called \textbf{hyperbolic} if $P^{\text{trun}}\setminus K_n$ tends to a half sphere in $\partial\mathbb{H}^3$, where $\{K_n\}$ are compact in $\mathbb{H}^3$ and exhausts $\mathbb{H}^3$, 
			or equivalently, if the accumulation points of the boundary faces form a circle on $\partial\HH^3$, and all the faces of $P$ are contained in a half space bounded by the hyperbolic plane induced by the circle. 
		\end{itemize}
		If all faces of $P$ are compact (meaning there are no ideal vertices or hyperideal vertices), it is called a \textbf{compact infinite THP}. Moreover, ideal vertices and  hyperideal vertices of $P$ correspond to non-compact ends of $P$.
	\end{defn}
	\begin{figure}[h]
		\centering
		\includegraphics[width=0.8\textwidth]{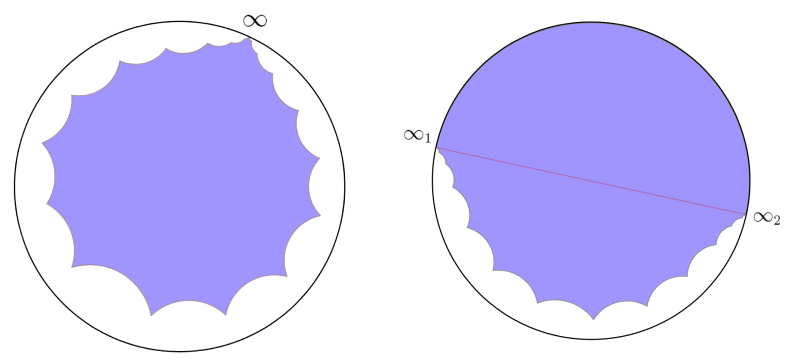}
		\caption{ Two types of THPs}
		\label{twotypesofcompactness}
	\end{figure}
	It is clear that this definition does not depend on the choice of exhausting sequence. Since it is easy to draw in two dimensions, one can use this simple two-dimensional case to imagine the three-dimensional situation. 
	In two dimensions, a parabolic ``hyperbolic polygon" has a single accumulation point, whereas a hyperbolic ``hyperbolic polygon" has two accumulation points, which are antipodal on $\partial\HH^2$ (see Figure \ref{twotypesofcompactness}). By analogy, in three dimensions, a parabolic THP again has exactly one accumulation point, while a hyperbolic THP has its accumulation points lying along an equator of the sphere $\partial\HH^3$.
	
	\medskip
	
    In the following, we establish the correspondence between THP and RCPs and prove our main results related to THP. Given an RCP $\mathcal{P} = \{C_i\}_{i \in V}$, whose contact graph $G=G(\pac)$ is induced by a disk triangulation $\mathcal{T}=(V,E,F)$. Consider the half space model of $\mathbb{H}^3$. Each circle $C_i$ corresponds to a region $H_i \subset \mathbb{H}^3$, defined as the closed convex hull of the ideal boundary points in $\mathbb{C} \setminus D_i$, where $D_i$ is the disk bounded by $C_i$. Then
	\begin{equation}\label{P_pac}
		P = \bigcap_{i \in V} H_i
	\end{equation}
	is the corresponding infinite hyperbolic polyhedron associated with the circle pattern $\mathcal{P}$. 

    \begin{prop}[Local correspondence]
    \label{prop-local-correspond}
    Let $[i,j,k]\in F$ and consider the three circle configuration $D_i$, $D_j$, $D_k$ in $\pac$. Suppose $\Theta_i$, $\Theta_j$, $\Theta_k$ satisfy at least one of (\ref{sum<pi}) or (\ref{angle-condition}). Then locally the triangle $[i,j,k]$ corresponds to a (compact) vertex, ideal vertex, or hyperideal vertex of $P$, if and only if the quantity $\Theta_i+\Theta_j+\Theta_k$ is greater than, equal to, or less than $\pi$ respectively. 
    \end{prop}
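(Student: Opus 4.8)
The plan is to pass to the upper half-space model $\mathbb{H}^3=\{(z,t):z\in\mathbb{C},\,t>0\}$, where each plane $\partial H_i$ is the hemisphere $\{(z,t):|z-o_i|^2+t^2=r_i^2\}$ spanned by $C_i$, and $H_i$ is the region lying outside this hemisphere. By construction \eqref{P_pac}, the corner of $P$ associated with the face $[i,j,k]$ is (the projective completion of) the common intersection of the three planes $\partial H_i,\partial H_j,\partial H_k$; since $\pac$ is an RCP these three faces are pairwise adjacent, and locally no other half-space is relevant, so it suffices to analyse these three hemispheres. The strategy is to locate their common point explicitly and decide whether it lies in $\mathbb{H}^3$, on $\partial\mathbb{H}^3$, or in $RP^3\setminus\overline{\mathbb{H}^3}$, which by the definitions in Section~\ref{polyhedra_section} are precisely the compact, ideal, and hyperideal cases.

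First I would carry out the intersection computation. A point $(z,t)$ lies on all three hemispheres iff $|z-o_i|^2+t^2=r_i^2$ holds for $i,j,k$; subtracting these equations in pairs kills the $t^2$ term and yields exactly the radical axes of the three pairs of circles, so the horizontal coordinate $z$ is forced to equal their common radical center $q$. The height then satisfies
\[
t^2=r_i^2-|q-o_i|^2=-\mathrm{pow}(q),
\]
where $\mathrm{pow}(q)$ is the (common) power of $q$ with respect to $C_i,C_j,C_k$. Hence the three planes meet at the genuine point $(q,\sqrt{-\mathrm{pow}(q)})\in\mathbb{H}^3$ when $\mathrm{pow}(q)<0$, at the ideal point $(q,0)\in\partial\mathbb{H}^3$ when $\mathrm{pow}(q)=0$, and, when $\mathrm{pow}(q)>0$, at no point of $\overline{\mathbb{H}^3}$; in the last case the three planes, extended in $RP^3$, meet at the projective point polar to the hemisphere over the circle orthogonal to $C_i,C_j,C_k$, which lies outside $\overline{\mathbb{H}^3}$. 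Thus the vertex type is governed entirely by the sign of $\mathrm{pow}(q)$, and each plane $\partial H_i$ genuinely meets $\mathbb{H}^3$, as required by the definitions.

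It then remains to match the sign of $\mathrm{pow}(q)$ with the trichotomy for $\Theta_i+\Theta_j+\Theta_k$, which is where the earlier geometric lemmas enter. Recall from the discussion after Corollary~\ref{cor-cos-three-angle-config} (\cite{Xu18,Zhou19}) that $q\in\overline{\Delta v_iv_jv_k}$, and that $\mathrm{pow}(q)<0$, $=0$, $>0$ correspond respectively to $q$ lying in the interior of $D_i\cap D_j\cap D_k$, on $C_i\cap C_j\cap C_k$, or strictly outside $D_i\cup D_j\cup D_k$ (and thus in the interstice, since $q\in\overline{\Delta v_iv_jv_k}$). By Lemma~\ref{angleleqpilemma} and Corollary~\ref{cor-three-one} these three alternatives are equivalent to $\Theta_i+\Theta_j+\Theta_k$ being $>\pi$, $=\pi$, and $<\pi$ respectively: when the sum is $<\pi$ there is an interstice and $D_i\cap D_j\cap D_k=\emptyset$, forcing $\mathrm{pow}(q)>0$; when the sum equals $\pi$ the intersection is a single point, which must lie on all three circles (an interior point of any $D_i$ would make $D_j\cap D_k$ two-dimensional near it), hence equals $q$ and gives $\mathrm{pow}(q)=0$; and when the sum is $>\pi$ there is no interstice, ruling out $\mathrm{pow}(q)\ge0$ and leaving $\mathrm{pow}(q)<0$. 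Combining this with the previous paragraph yields the claimed correspondence.

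I expect the main obstacle to be the careful bookkeeping in this last matching step — in particular, verifying that the unique common point in the $=\pi$ case lies on all three circles (so that it is indeed the radical center), and giving a clean, model-independent description of the hyperideal vertex in the $\mathrm{pow}(q)>0$ case, where the meeting point escapes $\overline{\mathbb{H}^3}$ and one must appeal to the Klein projective model of Definition~\ref{def_plolyhedron}. A secondary point requiring care is the justification that the local vertex type is unaffected by the remaining half-spaces of $P$, i.e. that the global polyhedron does not truncate this corner, which should follow from local finiteness together with the dispensability results (Lemmas~\ref{mostthree} and~\ref{indispensable}).
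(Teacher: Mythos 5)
Your proposal follows the same overall strategy as the paper's proof: both reduce the statement to the trichotomy of Lemma~\ref{angleleqpilemma} and Corollary~\ref{cor-three-one} (interstice / single common point / neither) and then identify the three alternatives with hyperideal, ideal, and compact vertices of $P$. Where you differ is in how that identification is carried out. The paper argues qualitatively: the three planes meet in $\mathbb{H}^3$, or at a point of $\partial\mathbb{H}^3$, or — in the interstice case — at a projective point handled in Klein's model via a projective transformation moving that point into $\mathbb{R}^3$. You instead compute in the upper half-space model that the three hemispheres can only meet above the radical center $q$, at height $\sqrt{-\mathrm{pow}(q)}$, so the vertex type is read off from the sign of the common power; the hyperideal vertex is then the pole of the plane over the circle orthogonal to $C_i,C_j,C_k$. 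Your version is more explicit and easier to verify; the paper's is shorter. Both rest on the same supporting lemmas, and your appeal to Lemmas~\ref{mostthree} and~\ref{indispensable} for the irrelevance of the other half-spaces matches what the paper does in the proof of Theorem~\ref{RCPTHP}.

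One step needs tightening. In the case $\Theta_i+\Theta_j+\Theta_k>\pi$ you assert that the absence of an interstice ``rules out $\mathrm{pow}(q)\ge 0$''; it only rules out $\mathrm{pow}(q)>0$ (a radical center lying outside all three disks would be an interstice point, since the sides of $\Delta v_iv_jv_k$ are covered by the disks). The possibility $\mathrm{pow}(q)=0$, i.e.\ all three circles passing through $q$, is not excluded by the non-existence of an interstice alone. To close this, observe that if the three circles pass through a common point then the inward tangent half-planes there have pairwise normal angles $\pi-\Theta_i,\pi-\Theta_j,\pi-\Theta_k$; under the hypothesis (\ref{sum<pi}) or (\ref{angle-condition}) these normals must positively span the plane (otherwise some $\Theta_i+\Theta_j=\pi+\Theta_k$, which is incompatible with either hypothesis when the angle sum exceeds $\pi$), forcing $\sum(\pi-\Theta)=2\pi$, i.e.\ $\Theta_i+\Theta_j+\Theta_k=\pi$ — a contradiction. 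The same observation also disposes cleanly of the tangency sub-case you flag in the $=\pi$ case, where the single intersection point must be shown to lie on all three circles. With this addition your argument is complete.
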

    \begin{proof}
	If the three circles $C_i$, $C_j$, $C_k$ neither form an interstice nor intersect at a common point, then the corresponding vertices lie in $\HH^3$; if these three circles intersect at a common point, then the corresponding point lies in $\partial\HH^3$; if the three circles form an interstice, they correspond to a hyperideal vertex. The last case requires some more explanation: from the perspective of Klein's projective model $\HH^3\subset \mathbb{R}^3\subset RP^3$, the projective plane containing three pairwise intersecting circles $C_i$, $C_j$, $C_k$ in $RP^3$ must have a common intersection point $v\in RP^3$. If $v\in\mathbb{R}^3$, it is clear that $v$ is a hyperideal vertex; if $v\notin\mathbb{R}^3$, one can always appropriately choose a projective transformation that preserves the intersection relationships and angles while mapping $v$ into $\mathbb{R}^3$, so $v$ remains a hyperideal vertex. In particular, if $\Theta_i$, $\Theta_j$, and $\Theta_k$ satisfy either (\ref{sum<pi}) or (\ref{angle-condition}), then based on the analysis in the last paragraph of Section \ref{section-three-circle-config}, we can conclude that the triangle face $f=[i,j,k]\in F$ corresponds to a vertex, ideal vertex, or hyperideal vertex of $P$, if and only if the quantity $\Theta_i+\Theta_j+\Theta_k$ is greater than, equal to, or less than $\pi$ respectively.
    \end{proof}
	\begin{thm}\label{RCPTHP}
    	Let $\mathcal{P}=\left\{C_i\right\}_{i \in V}$ be an RCP whose contact graph is $G = (V, E)$. Let $\Theta$ be the intersection angles of $\pac$.  
        If $(G, \Theta)$ satisfies ($Z_1$) and ($Z_2$), then the polyhedron $P$ defined in \eqref{P_pac} is a THP that is combinatorially isomorphic to the Poincaré dual of $G$, and the dihedral angle between the hyperbolic planes $\partial H_i$ and $\partial H_j$ is equal to $\Theta([i, j])$ for each $[i, j] \in E$.
	\end{thm}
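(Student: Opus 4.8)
The plan is to read the cell structure of $P=\bigcap_{i\in V}H_i$ directly off the combinatorics of $\mathcal{T}$, matching faces of $P$ with vertices of $G$, edges with edges, and vertices with triangles, i.e. with the Poincar\'e dual $\mathcal{T}^*$. Two elementary plane-arrangement facts start the argument. For $[i,j]\in E$ the circles $C_i,C_j$ cross, so the geodesic planes $\Pi_i=\partial H_i$ and $\Pi_j=\partial H_j$ cross along a geodesic $\gamma_{ij}$, and the interior dihedral angle of $P$ along $\gamma_{ij}$ equals the intersection angle $\Theta([i,j])$ (the intersection angle of two circles on $\partial\mathbb{H}^3$ is exactly the dihedral angle of the spanning planes, measured on the side exterior to both disks, which is the side containing $P$). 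For $[i,j]\notin E$, the RCP hypothesis forces $D_i\cap D_j=\emptyset$, so $C_i\subset\hat{\mathbb{C}}\setminus D_j$ and hence $\Pi_i\subset H_j$; thus non-neighbours impose no constraint near $\Pi_i$, and it suffices to intersect $\Pi_i$ with the finitely many half-spaces $\{H_j\}_{j\in N(i)}$. This already delivers the dihedral-angle claim once the $\gamma_{ij}$ are shown to carry genuine edges.

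Next I would verify that $\{H_i\}_{i\in V}$ is locally finite in $\mathbb{H}^3$, so that $P$ is a polyhedron in the sense of Definition \ref{def_plolyhedron}. Working in the upper half-space model, enclose a compact set $K$ in $\{|z|\le M,\ \epsilon\le t\le\epsilon^{-1}\}$; the hemisphere $\Pi_i$ over $C_i=C(o_i,r_i)$ can meet $K$ only if $r_i\ge\epsilon$. For radii in $[\epsilon,R_0]$ the centre satisfies $o_i\in\{|z|\le M+R_0\}$ while $D(o_i,\epsilon)\subset D_i\subset\Omega:=\mathrm{Carr}(\mathcal{P})$, so $o_i$ lies in a fixed compact subset of $\Omega$, and local finiteness of $\mathcal{P}$ in $\Omega$ leaves only finitely many such $D_i$. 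For radii $r_i\ge R_0$ the Circle counting Lemma \ref{circle_counting}, applied with $\rho=M$, bounds the number of disks meeting $\{|z|\le M\}$. Hence only finitely many $\Pi_i$ meet $K$.

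The heart of the matter is to show that each $H_i$ is indispensable and to pin down the faces, edges and vertices. The face carried by $\Pi_i$ is $f_i=\Pi_i\cap P=\Pi_i\cap\bigcap_{j\in N(i)}H_j$. By Lemma \ref{indispensable}, which is precisely where $(Z_1)$ and $(Z_2)$ enter, there is a point $z^*\in\mathrm{int}\,D_i$ covered by no other disk; the point $w^*\in\Pi_i$ lying above $z^*$ then satisfies $w^*\in\mathrm{int}\,H_j$ for every $j\neq i$, so $w^*$ is an interior point of $f_i$ and $f_i$ is a genuine $2$-dimensional face. For the vertices, Proposition \ref{prop-local-correspond} shows that each triangle $[i,j,k]\in F$ produces a vertex $v_{ijk}=\Pi_i\cap\Pi_j\cap\Pi_k\cap P$, compact, ideal, or hyperideal according as $\Theta_i+\Theta_j+\Theta_k$ is $>,=,<\pi$; and Lemma \ref{mostthree} (again using $(Z_2)$) guarantees that no point lies on four of the planes, so every vertex of $P$ arises from a unique triangle and is incident to exactly three faces, which is the trivalence.

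Finally I would assemble the pieces: the geodesics $\gamma_{ij}$ with $[i,j]\in E$ cut $f_i$ into sides in the cyclic order given by the link of $i$ in $\mathcal{T}$, consecutive sides meeting at the vertices $v_{ijk}$ coming from the triangles of the star of $i$; equivalently, the partition of $\Omega$ into the regions $A_i$, $B_{[i,j]}$, $C_{[i,j,k]}$ appearing in the proof of Lemma \ref{sectionlemma} is exactly the radial shadow of the cell decomposition of $\partial P$. This produces an incidence-preserving bijection between $\{f_i\}$, $\{\gamma_{ij}\cap P\}$, $\{v_{ijk}\}$ and the cells of $\mathcal{T}^*$, so $P$ is a THP combinatorially isomorphic to the Poincar\'e dual of $G$ with dihedral angle $\Theta([i,j])$ along each edge. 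I expect the main obstacle to be precisely this indispensability-and-gluing step: proving that no half-space is redundant (so the face set is in bijection with all of $V$, not a proper subset) and that the local vertex pictures glue into a polyhedron with no collapsed edges. The decisive inputs are concentrated in Lemmas \ref{indispensable} and \ref{mostthree} and Proposition \ref{prop-local-correspond}; once these are available the rest is combinatorial bookkeeping, with local finiteness (via Lemma \ref{circle_counting}) the only remaining genuinely analytic point.
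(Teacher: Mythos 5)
Your proposal is correct and follows essentially the same route as the paper's proof: the same four ingredients appear in both (local finiteness of $\{H_i\}$, trivalence via Lemma \ref{mostthree}, non-degeneracy of faces and vertices via Lemma \ref{indispensable} and the three-circle trichotomy of Proposition \ref{prop-local-correspond}, and the combinatorial matching of faces/edges/vertices with $\mathcal{T}^*$). The only real divergence is the local-finiteness step, where you give a direct upper-half-space estimate combined with the circle counting Lemma \ref{circle_counting}, while the paper separates a compact set from the accumulation set by a hyperbolic plane and invokes local finiteness of the RCP in $\hat{\mathbb{C}}$ minus its accumulation points; your version has the mild advantage of not splitting into parabolic and hyperbolic cases, but both arguments are routine.
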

	
	\begin{proof}
		
		The proof is inspired by Zhou \cite{Zhou23} and is divided into four parts.
		
		\medskip
		\emph{Part 1. Locally-finiteness:}
		\medskip
		
		We only prove the parabolic-THP case, since the proof of the hyperbolic-THP case is similar. We prove this by contradiction. Suppose that there exists a compact set $K\subset\HH^3$ such that there are infinite half-spaces intersecting $K$. Let $H^*$ be a hyperbolic plane separating $K$ from the ideal point $pt$. Then there are infinitely many hyperbolic planes, induced by $\pac$, which intersect $\partial H^*$. Let $C^*$ be the ideal boundary of $H^*$, then the circles in $\pac$ must have an accumulation point in $S^2=\partial\HH^3$. This contradicts the fact that the RCP is locally finite in $\mathbb{S}^2\backslash\{pt\}$.

		\medskip
		\emph{Part 2. Trivalence:}
		\medskip
		
		According to the definition of RCP, two disks $D_i$ and $D_j$ intersect if and only if $i\sim j$. Hence for any two faces $f_i$ and $f_j$ of $P$, the corresponding projective planes in $RP^3$ intersect if and only if the corresponding disks intersect. Thereby, for any four projective planes $f_1,f_2,f_3,f_4$ in $RP^3$ corresponding to different faces of $P$, we have
		\[
		f_1\cap f_2\cap f_3 \cap f_4=\emptyset.
		\]

		\medskip
		\emph{Part 3. Existence of an interior point:}
		\medskip
		
		Take any triangle face $[i,j,k] \in F$, it corresponds to a (compact) vertex, ideal, or hyperideal vertex of $P$. Next, we will prove these three cases separately. 

        \medskip
		\emph{Case 1}: $[i,j,k]$ corresponds to a compact vertex. By Proposition~\ref{prop-local-correspond} we have
        \[
        \Theta([i,j]) + \Theta([j,k]) + \Theta([k,i]) > \pi.
        \]
        By Lemma~\ref{sanyuangouxing_yinli}, there exists a unique point
        \[
        p = \partial H_i \cap \partial H_j \cap \partial H_k,
        \]
        which lies in the interior of $\mathbb{H}^3$.
        From Lemma~\ref{mostthree}, for any $l \in V \setminus \{i,j,k\}$ we have
        \[
        D_l \cap D_i \cap D_j \cap D_k = \emptyset.
        \]
        Hence, there exists a neighborhood $B_p$ of $p$ such that $B_p \subset H_l$ for all $l \notin \{i,j,k\}$.
        Consequently,
        \[
        B_p \cap H_i \cap H_j \cap H_k \subset P
        \]
        contains an interior point, implying that $P$ is non-degenerate, i.e., the interior of $P$ is non-empty.
        
		  \medskip
		\emph{Case 2}: $[i,j,k]$ corresponds to an ideal vertex. By Proposition~\ref{prop-local-correspond} we have
        \[
        \Theta([i,j]) + \Theta([j,k]) + \Theta([k,i]) = \pi.
        \]
        By Lemma \ref{sanyuangouxing_yinli}, there exists a unique point $p \in \partial \mathbb{H}^3$ contained in $D_i \cap D_j \cap D_k$.
        By Lemma \ref{mostthree} and the same reasoning as in the first case, we conclude that the interior of $P$ is non-empty.
        
        \medskip
        \emph{Case 3}: $[i,j,k]$ corresponds to a hyperideal vertex. By Proposition~\ref{prop-local-correspond} we have
        \[
        \Theta([i,j]) + \Theta([j,k]) + \Theta([k,i]) < \pi.
        \]
        By Lemma~\ref{sanyuangouxing_yinli}, the set $\Delta_{i j k} \backslash\left(D_i \cup D_j \cup D_k\right)$ contains an interior point $p$ in the topology of $\partial \mathbb{H}^3$.
        Hence, there exists a neighborhood $B_p$ of $p$ such that
        \[
        B_p \subset H_i \cap H_j \cap H_k.
        \]
        By Lemma~\ref{containinFLlemma}, we further have $B_p \subset H_l$ for any $l \notin \{i,j,k\}$.
        Therefore,
        \[
        B_p \subset P
        \]
        contains an interior point, which again implies that the interior of $P$ is non-empty.

		\medskip
		\emph{Part 4. Combinatorial equivalence}
		\medskip

		We claim that $P$ is combinatorial equivalent to the Poincar\'e dual of the triangulation graph $G$ determined by $\mathcal{P}$. By Lemma \ref{indispensable} we have
		\[
		D_i \setminus \bigcup_{j \neq i} D_j \neq \emptyset,
		\]
		which implies that $\partial H_i \cap \partial P \neq \emptyset$ for all $i \in V$. By the definition of RCP, we have two cases:

        \medskip
		\emph{Case 1}. if $[i,j] \notin E$, then $D_i \cap D_j = \emptyset$ and hence $\partial H_i \cap \partial H_j = \emptyset$;

        \medskip
		\emph{Case 2}. if $[i,j] \in E$, then $D_i \cap D_j \neq \emptyset$, so $\partial H_i \cap \partial H_j \neq \emptyset$.
		\medskip
        
		Therefore, each face $\partial H_i$ contributes a portion of $\partial P$, and the adjacency of faces in $P$ matches the edge connectivity in $\mathcal{P}$. Additionally, in the half-space model, the dihedral angle between $\partial H_i$ and $\partial H_j$ equals the intersection angle $\Theta([i,j])$.
        
	\end{proof}

    \begin{rem}\label{remark-rcp-notimply-z1z2}
    	\begin{figure}[h]
		\centering
		\includegraphics[width=0.8\textwidth]{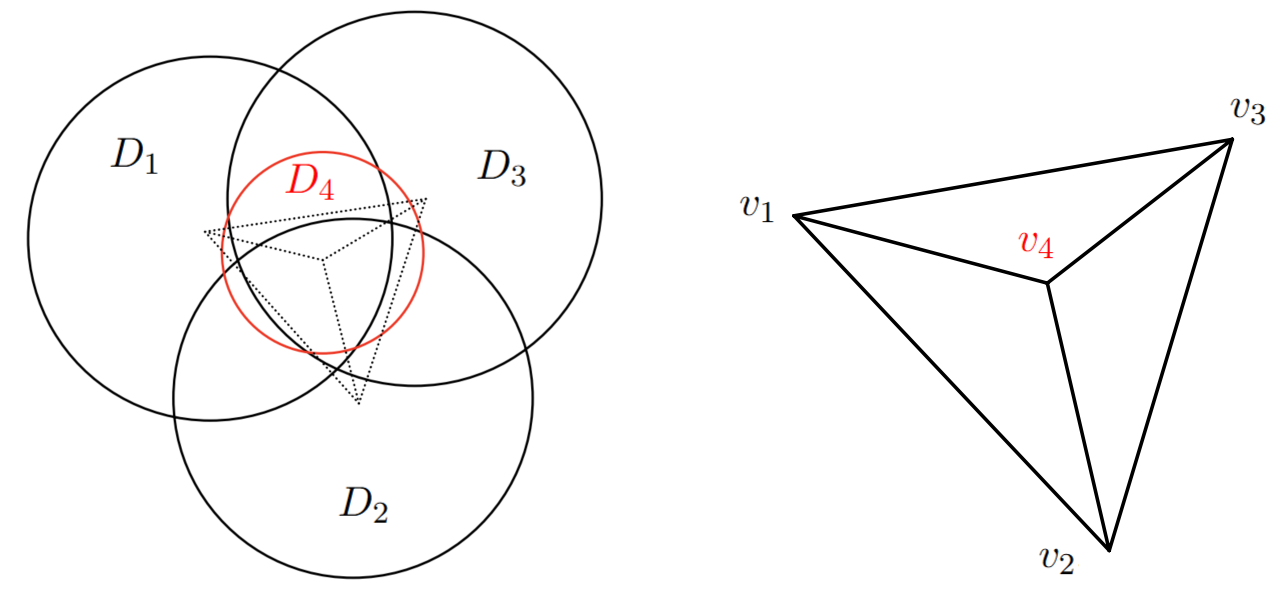}
		\caption{an example that violates conditions ($Z_1$) and ($Z_2$)}
		\label{RCPZ1Z2}
	\end{figure}
	As mentioned by He~\cite{He}, for each connected planar graph $G$ with $\Theta \in [0, \pi / 2]^E$, if there is a disk pattern $\mathcal{P}$ that realizes the data $(G, \Theta)$, then conditions ($C_1$) and ($C_2$) (see the paragraph before Definition \ref{RCP_def}, or see page 5 in \cite{He}) must hold. However, such a phenomenon does not occur in the case where $\Theta \in [0, \pi)^E$, and Figure \ref{RCPZ1Z2} provides an example that violates conditions ($Z_1$) and ($Z_2$). In fact, $\Theta_{12} + \Theta_{23} + \Theta_{31} > \pi$ by Lemma~\ref{angleleqpilemma} and $\Theta_{14}+ \Theta_{34}> \pi + \Theta_{13}$ by Lemma~\ref{containlemma}, which contradicts ($Z_1$) and ($Z_2$) respectively. In addition, Example \ref{ex-four-circle} shows that if the intersection angles are all $2\pi/3$, the radius can be selected appropriately to produce an RCP, which does not satisfy ($Z_4$). These facts can also be used to explain why He’s rigidity theorem does not require conditions ($C_1$) and ($C_2$), while our Rigidity Theorem \ref{thm-intro-rigidity-cp} cannot omit conditions ($Z_2$) and ($Z_4$).
    \end{rem}

	Conversely, we consider the inverse process of the above construction, i.e. how to go from THP to RCPs. Given a THP $P = \bigcap_{i \in V} H_i$, which is combinatorially equivalent to the Poincar\'e dual of $\mathcal{T}$, with dihedral angles that satisfy $\Theta(e^*) = \Theta(e)$ for $e\in E$.  Each region $H_i \subset \mathbb{H}^3$ corresponds to a circle $C_i$, defined as $\partial H_i \cap \partial \mathbb{H}^3$. Then
	\begin{equation}\label{pac_P}
		\mathcal{P} = \{C_i\}_{i \in V}
	\end{equation}
	is obviously a CP associated with $P$. Generally, $\pac$ is \emph{not regular} due to the reason that there may exist $[i,j]\notin E$ with $\partial H_i\cap \partial H_j\neq\emptyset$.  Next, we investigate which conditions are suitable.
	
	For any polyhedron $P = \bigcap_{i \in V} H_i$, the \textbf{dual contact graph} $G(P) = (V, E)$ is defined as the graph whose vertices correspond to the faces of $P$, with an edge connecting two vertices if and only if the corresponding faces are adjacent.  
    The \textbf{dihedral angles} $\Theta(P)$ of $P$ are given by the dihedral angles $\Theta([i,j])$ between the planes $\partial H_i$ and $\partial H_j$ for each $[i,j] \in E$. It is evident that, for any THP $P$, the dual contact graph $G(P)$ is a disk triangulation graph, and the circle pattern $\mathcal{P}$ defined in \eqref{pac_P} \emph{weakly realizes} $(G(P), \Theta(P))$, but does not \emph{realize} $(G(P), \Theta(P))$, since it may not be an RCP. The following result shows that conditions ($Z_1$) and ($Z_2$) are satisfied for every THP. This indicates that the conditions in Theorem \ref{RCPTHP} are natural and necessary.

	\begin{prop}\label{prop-thp-z1+z2}
        Let $P = \bigcap_{i \in V} H_i$ be a THP with dual contact graph $G=G(P)$ and dihedral angle $\Theta=\Theta(P)$. Then conditions ($Z_1$) and ($Z_2$) hold for $(G, \Theta)$ .
	\end{prop}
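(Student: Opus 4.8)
The plan is to read off both conditions from the local geometry of the genuine convex polyhedron $P$, using the dictionary (from Theorem~\ref{RCPTHP}) between faces/edges/vertices of $P$ and vertices/edges/triangles of $G=\mathcal{T}$. For ($Z_1$), fix a triangle $[i,j,k]$ of $G$. Since $P$ is trivalent and $G$ is its Poincar\'e dual, this triangle corresponds to a vertex $w$ of $\tilde P$ where the faces $F_i,F_j,F_k$ (planes $\Pi_i,\Pi_j,\Pi_k$) meet, and $w$ is compact, ideal, or hyperideal. I first record the classification by slicing $P$ near $w$: a small geodesic sphere (compact $w$), a horosphere (ideal $w$), or the polar truncating plane $\Pi_w$ (hyperideal $w$) cuts out a spherical, Euclidean, or hyperbolic triangle respectively, and in each case the three angles of this cross-sectional triangle are exactly the interior dihedral angles $\Theta([i,j]),\Theta([j,k]),\Theta([k,i])$, since a plane perpendicular to an edge measures the dihedral angle there. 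Hence $\Theta_i+\Theta_j+\Theta_k$ is $>\pi$, $=\pi$, or $<\pi$ according as $w$ is compact, ideal, or hyperideal, reproving the forward direction of Proposition~\ref{prop-local-correspond}. In particular the hypothesis $\sum\Theta>\pi$ of ($Z_1$) forces $w$ to be a compact vertex, so the link is a genuine spherical triangle, and the classical spherical-triangle inequalities give $\Theta_i+\Theta_j<\pi+\Theta_k$ with its cyclic permutations, which is precisely (\ref{angle-condition}), i.e.\ ($Z_1$). When $\sum\Theta\le\pi$ the hypothesis of ($Z_1$) fails and there is nothing to prove.

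For ($Z_2$), let $e_1,\dots,e_s$ be a simple closed curve in $G$ that is not a triangle face. Dually these are $s$ faces $F_1,\dots,F_s$ of $P$ in a cyclic chain, with $F_m\cap F_{m+1}$ an edge carrying interior dihedral angle $\Theta(e_m)$; the non-facial hypothesis means precisely that this is a prismatic $s$-circuit (the $F_m$ share no common vertex, and the $s$ crossed edges have pairwise distinct endpoints). Because these finitely many planes already appear on $\partial P$ and $P$ has nonempty interior, the finite intersection $W=\bigcap_{m=1}^{s}H_m\supset P$ is a convex region with nonempty interior on whose boundary each $\Pi_m$ still contributes a face meeting its two neighbours. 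Thus it suffices to establish $\sum_m\Theta(e_m)<(s-2)\pi$ for the \emph{finite} convex configuration $W$, which removes the difficulty that $P$ is infinite.

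The main argument for ($Z_2$) is polar duality into the de Sitter space $S^2_1$, in the spirit of Rivin--Hodgson~\cite{RH93}. Assign to each plane $\Pi_m$ its outward unit spacelike normal $\nu_m\in S^2_1$; since consecutive faces meet at interior dihedral angle $\Theta(e_m)$, one has $\langle\nu_m,\nu_{m+1}\rangle=-\cos\Theta(e_m)=\cos(\pi-\Theta(e_m))$, so $\nu_m,\nu_{m+1}$ are joined by a spacelike geodesic of length $\pi-\Theta(e_m)$. The points $\nu_1,\dots,\nu_s$ then trace a closed spacelike polygon on the polar dual surface $W^{*}$ of total length $\sum_m(\pi-\Theta(e_m))=s\pi-\sum_m\Theta(e_m)$, and $W^{*}$ carries a spherical cone metric with all cone angles $>2\pi$. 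The prismatic hypothesis is exactly what makes this polygon homotopically essential on $W^{*}$, encircling the core of the prism rather than bounding a single dual vertex. The convexity input---the de Sitter analogue of the fact that a closed convex curve on $S^2$ has length $\ge2\pi$, which is Rivin--Hodgson's condition that closed geodesics on $W^{*}$ have length $>2\pi$---then forces every essential closed curve to have length $>2\pi$. Hence $s\pi-\sum_m\Theta(e_m)>2\pi$, i.e.\ $\sum_m\Theta(e_m)<(s-2)\pi$, which is ($Z_2$).

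I expect the last step of ($Z_2$) to be the main obstacle: rigorously importing the length bound $>2\pi$ for essential closed curves when $P$ may carry ideal and hyperideal vertices (whose presence pushes some normals toward null/ideal directions and makes $W^{*}$ non-compact) and when the circuit polygon is a priori not a geodesic. I would resolve this either by first passing to the truncation $P^{\mathrm{trun}}$, so the relevant part of the dual becomes a genuine compact spacelike convex polygonal surface to which the Rivin--Hodgson geodesic-length inequality applies verbatim, or by replacing the polygon with the shortest curve in its free homotopy class on $W^{*}$ (of length $\le\sum_m(\pi-\Theta(e_m))$) and bounding that geodesic below by $2\pi$ via the cone-angle condition. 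As consistency checks: for $s=3$ the claim $\Theta_i+\Theta_j+\Theta_k<\pi$ is equivalent, by Lemma~\ref{angleleqpilemma}, to $D_i\cap D_j\cap D_k=\emptyset$, which is forced because three separating faces can share neither a finite vertex nor an ideal point; and the degenerate equality $\sum_m(\pi-\Theta(e_m))=2\pi$ corresponds exactly to the $s$ circles passing through a common point, i.e.\ the circuit collapsing to a single ideal vertex and becoming facial---the case excluded by hypothesis, in agreement with how ($Z_2$) is invoked in the Ring Lemma (Lemma~\ref{ringlemma}).
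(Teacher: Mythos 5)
Your treatment of ($Z_1$) is correct and is essentially the paper's argument: for an ideal or hyperideal vertex the angle sum is $\le\pi$ so the hypothesis of ($Z_1$) is vacuous, and for a compact vertex the three dihedral angles are the angles of the spherical link triangle in $T^1_x\mathbb{H}^3$, whence (\ref{angle-condition}) follows from spherical geometry.

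For ($Z_2$), however, your argument has a genuine gap precisely at the step you yourself flag as ``the main obstacle,'' and neither of your proposed repairs closes it. The inequality you need, $\sum_m(\pi-\Theta(e_m))>2\pi$, is exactly Rivin--Hodgson's condition that closed geodesics on the polar dual surface have length $>2\pi$; in \cite{RH93} this is a \emph{necessary condition proved for compact convex polyhedra}, i.e.\ for a closed spacelike convex polyhedral surface in $S^2_1$ homeomorphic to the sphere. Your configuration $W=\bigcap_{m=1}^{s}H_m$ is a non-compact convex region whose polar dual is not such a surface, so the theorem does not apply verbatim; and passing to $P^{\mathrm{trun}}$ does not help, since $P$ is infinite and the truncation is still non-compact. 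Your second fallback --- bounding the length of the shortest essential closed curve below by $2\pi$ ``via the cone-angle condition'' --- cannot work: in the Rivin--Hodgson characterization the cone-angle condition $(>2\pi)$ and the geodesic-length condition $(>2\pi)$ are independent, and the latter is not a consequence of the former for spherical cone metrics. There is also a secondary unaddressed point: your circuit polygon $\nu_1,\dots,\nu_s$ is not a priori a geodesic, so even granting a length bound for closed geodesics you would still need the comparison with the polygon in its free homotopy class on a surface you have not constructed.

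The paper avoids all of this with an elementary argument on the sphere at infinity: the chain of disks $D_1,\dots,D_s$ dual to the circuit forms a topological annulus in $\mathbb{C}$ after stereographic projection; the boundary of the compact complementary component consists of circular arcs $\gamma_m$ of \emph{negative} geodesic curvature meeting at exterior angles $\pi-\Theta(e_m)$, and the Gauss--Bonnet formula
\[
2\pi=\sum_{m=1}^{s}\int_{\gamma_m}k_m+\sum_{m=1}^{s}\bigl(\pi-\Theta(e_m)\bigr)
\]
immediately yields $\sum_m\Theta(e_m)<(s-2)\pi$. This is the planar shadow of the de Sitter length estimate you were aiming for, but it requires no input beyond convexity of the disks. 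If you want to keep the de Sitter framing, you would have to actually prove the length inequality for the relevant non-compact configuration rather than cite it, at which point you would in effect be redoing this Gauss--Bonnet computation.
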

	\begin{proof}
		Firstly let us consider the necessity of the condition ($Z_1$). If a face corresponds to an ideal or hyperideal vertex of $P$, then the summation of the dihedral angles is less than or equal to $\pi$, therefore the condition ($Z_1$) holds. If the face corresponds to a hyperbolic vertex $x\in\HH^3$, then the dihedral angles associated with the vertex are inner angles of a spherical triangle in the unit tangent sphere $T_x^1\HH^3$. Therefore, the condition ($Z_1$) holds by basic spherical geometry.
		
		The necessity of ($Z_2$) follows from the Gauss-Bonnet Theorem directly. Let $e_1,\cdots e_n$ be a simple closed loop in $\mathcal{T}$ that is not the boundary of a face counting index modulo $n$. Let $v_i$ be the common point of edge $e_i$ and $e_{i+1}$ respectively. Let $H_i$ be the hyperbolic half space corresponding to the hyperbolic plane associate with $v_i$ containing the vertex $v_i$, since each $v_i$ corresponds to a hyperbolic plane of $P$. Let $P'=\cap_{i=1}^n H_i$. Then it is clear that $P\subset P'$ and $\partial P'$ is an annulus. Moreover, the intersection of the closure of $P'$ and the infinite boundary $\partial\HH$ in the Poincar\'e ball model is the boundary of a sequence of closed disks $D_1,\cdots,D_n$, where the intersection angle of $D_i$ and $D_{i+1}$ is $\Theta_{v_i}$. Using the stereographic projection $\phi$, we obtain a sequence of Euclidean disks $\{\phi(D_i)\}_{i=1}^n$, whose union is a topologically annulus in the plane, which divides $\mathbb{R}^2$ into two components. The compact component, which is not a single point, consists of closed arc $\gamma_1,\cdots,\gamma_n$ whose geodesic curvature is $k_i$ which is negative. Then with the Gauss-Bonnet Theorem, we have
        \[
        2\pi=\sum_{i=1}^n\int_{\gamma_i}k_i+\sum_{i=1}^n(\pi-\Theta_{v_i}),
        \]
        which indicates that $\sum_{i=1}^n\Theta_{v_i}<(n-2)\pi.$
	\end{proof}
	
	\begin{thm}\label{THPRCP}
		Let $P = \bigcap_{i \in V} H_i$ be a THP, and denote $G = (V, E)$ by its dual contact graph. Let $\Theta$ be the dihedral angles of $P$.  
		If  $(G, \Theta)$ satisfies ($Z_3$), then the CP $\mathcal{P} = \{C_i\}_{i \in V}$ defined as in \eqref{pac_P} is an RCP realizing $(G,\Theta)$.
	\end{thm}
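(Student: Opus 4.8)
The goal is to show that the weakly realizing circle pattern $\mathcal{P}=\{C_i\}_{i\in V}$ of \eqref{pac_P} is in fact regular, i.e. that its contact graph equals $G$ and its intersection angles equal $\Theta$. Since $\mathcal{P}$ weakly realizes $(G,\Theta)$ we already have $E\subseteq E(G(\mathcal{P}))$, so it suffices to rule out every \emph{extra} adjacency: I would argue by contradiction, assuming there are two vertices $u,v$ with $[u,v]\notin E$ but $D_u\cap D_v\neq\emptyset$. By Proposition \ref{prop-thp-z1+z2} the pair $(G,\Theta)$ satisfies ($Z_1$) and ($Z_2$), so together with the assumed ($Z_3$) the three-circle results of Section \ref{section-three-circle-config} and the covering Lemmas \ref{containinFLlemma} and \ref{onepointlemma} are available for the local triangle configurations of $\mathcal{T}$, which are present already under weak realization. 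The plan is to produce, from the hypothetical overlap $D_u\cap D_v$, a common neighbour $w$ of $u$ and $v$ realizing a containment configuration of Lemma \ref{containlemma}, and then to contradict ($Z_3$).

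First I would dispose of the tangency case $D_u\cap D_v=\{p\}$, mirroring Case 2 in the proof of Theorem \ref{RCPe}. Since $u$ and $v$ are non-adjacent, Lemma \ref{onepointlemma} furnishes a vertex $w\neq u,v$ with $p\in D_w$; then $D_u\cap D_v=\{p\}\subset D_w$ is exactly the containment configuration of Lemma \ref{containlemma}, whence $\Theta([u,w])+\Theta([v,w])\ge\pi+\Theta_{uv}\ge\pi$. On the other hand $[u,w]$ and $[w,v]$ form an arc whose endpoints $u,v$ are not joined by an edge, that is, a homologically non-adjacent arc, so ($Z_3$) gives $\Theta([u,w])+\Theta([v,w])<\pi$, a contradiction.

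The substantial case is the transversal one, where $C_u$ and $C_v$ cross and the lens $L=\mathrm{int}(D_u)\cap\mathrm{int}(D_v)$ has non-empty interior; equivalently $\partial H_u\cap\partial H_v\neq\emptyset$ in $\HH^3$, which is precisely the configuration that can destroy regularity. Here I would first exploit the convexity of the given polyhedron. From $P\subseteq H_u\cap H_v$ one has $\mathrm{int}(P)\subseteq\mathrm{int}(H_u)\cap\mathrm{int}(H_v)$, so the crossing geodesic $\ell=\partial H_u\cap\partial H_v$ cannot meet $P$ at all: a point of $\ell\cap P$ would lie on both faces $f_u=\partial H_u\cap P$ and $f_v=\partial H_v\cap P$, forcing them to share an edge or a vertex, and hence (the polyhedron being trivalent) forcing $u\sim v$, contrary to $[u,v]\notin E$. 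Thus $P$ sits inside the dihedral wedge $H_u\cap H_v$ away from its edge $\ell$, and the lens $L$ is excluded from the ideal boundary of $H_u\cap H_v$. Using Lemma \ref{onepointlemma} at points of $\overline{L}$ (in particular at the two cusps of $C_u\cap C_v$) together with the covering Lemma \ref{containinFLlemma} and the local finiteness of $\mathcal{T}$, I would extract a single common neighbour $w$ of $u$ and $v$ whose disk caps the lens, i.e. $D_u\cap D_v\subset D_w$. Granting this, Lemma \ref{containlemma} again yields $\Theta([u,w])+\Theta([v,w])\ge\pi+\Theta_{uv}>\pi$, contradicting ($Z_3$) exactly as before.

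The main obstacle is precisely the extraction of the capping neighbour $w$ in the transversal case: unlike Theorem \ref{RCPe}, there is no approximating sequence of regular patterns to invoke a continuity argument, so the containment $D_u\cap D_v\subset D_w$ must be produced directly from the interplay between the convex geometry of $P$ (the wedge picture above) and the combinatorics of the disk triangulation, namely how the disks of the faces adjacent to $u$ and to $v$ cover the closed lens $\overline{L}$, using that consecutive neighbours of $u$ span triangles of $\mathcal{T}$. Once regularity is established the remaining assertions are immediate: the contact graph of $\mathcal{P}$ is exactly $G$, the intersection angle on each $[i,j]\in E$ is the prescribed $\Theta([i,j])$, and hence $\mathcal{P}$ is an RCP realizing $(G,\Theta)$.
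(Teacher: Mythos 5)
Your tangency case is essentially the paper's Case 2 and is fine in spirit, though note that to invoke ($Z_3$) and Lemma \ref{containlemma} you need the vertex $w$ to be a \emph{common neighbour} of $u$ and $v$, which Lemma \ref{onepointlemma} alone does not give; the paper first reduces to $d(u,v)=2$ and upgrades Lemma \ref{containinFLlemma} to the containment $D_u\cap D_v\subset\bigcup_{k\in N(u)\cap N(v)}D_k$ before running this argument.

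The genuine gap is in the transversal case, and it is exactly where you flagged it: the single capping neighbour $w$ with $D_u\cap D_v\subset D_w$ does not exist in the configurations you need to exclude, so the strategy cannot be completed. Indeed, if some common neighbour $k$ contained the whole lens, Lemma \ref{containlemma} would give $\Theta([u,k])+\Theta([v,k])\ge\pi+\Theta_{uv}$, and the paper uses precisely this observation in the \emph{opposite} direction: under ($Z_3$) no disk $D_k$ with $k\in N(u)\cap N(v)$ can contain both corner points $p_1,p_2$ of the lens, so the covering of $D_u\cap D_v$ necessarily involves at least two disks, each containing exactly one corner and neither containing the lens. The paper's proof therefore picks two \emph{adjacent} common neighbours $k_1,k_2$ with $p_1\in D_{k_1}$, $p_2\in D_{k_2}$, uses Lemma \ref{sanyuangouxing_yinli} to show $D_{k_1}\cap D_{k_2}\subset D_u\cup D_v$, and then splits on the relative position of the two bigons $D_u\cap D_v$ and $D_{k_1}\cap D_{k_2}$: if $D_{k_1}\cap D_{k_2}$ sits inside $D_u$ or inside $D_v$ one contradicts ($Z_1$), and if the two bigon boundaries cross in four points one gets $\Theta([u,k_1])+\Theta([u,k_2])+\Theta([v,k_1])+\Theta([v,k_2])\ge 2\pi$, contradicting ($Z_3$). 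Your wedge picture in $\HH^3$ (the edge $\ell=\partial H_u\cap\partial H_v$ missing $P$) is correct but does not by itself produce the combinatorial configuration needed; to close the argument you would have to replace the one-disk capping step by a two-disk analysis of this kind.
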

	\begin{proof}
		The locally-finiteness of the corresponding circle pattern can be deduced directly from the definition of infinite parabolic and hyperbolic THP. By proposition \ref{prop-thp-z1+z2}, conditions ($Z_1$) and ($Z_2$) hold for $(G, \Theta)$.
        \begin{figure}[h]
			\centering
			\includegraphics[width=0.42\textwidth]{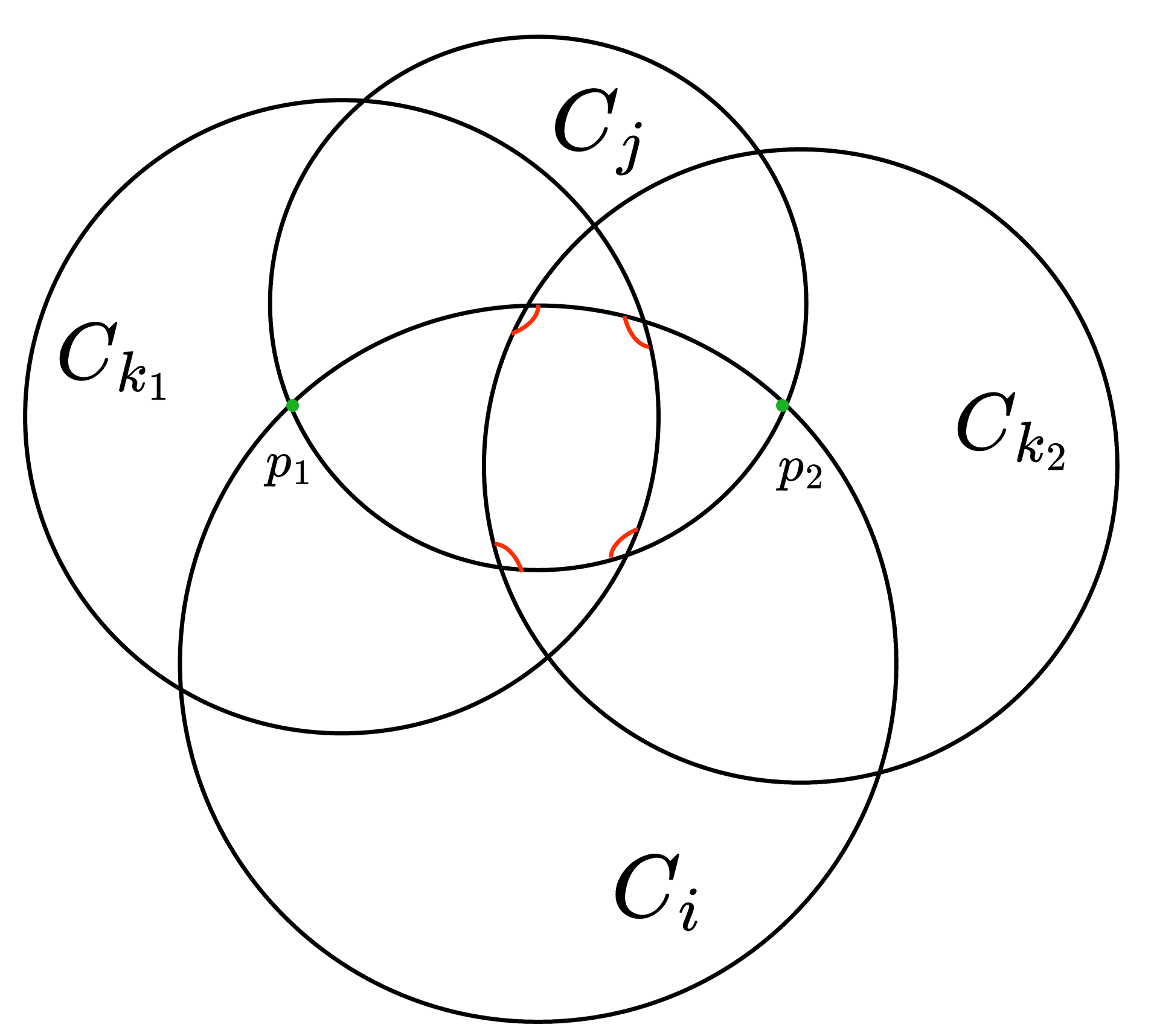}
			\caption{a four circle configuration}
			\label{RCPLEMMA}
		\end{figure}
		
		For any $i,j\in V$, recall $d(i,j)$ is the combinatorial distance in the graph induced by $\mathcal{T}$. By Lemma  \ref{containinFLlemma}, we only need to prove $$D_i \cap D_j= \emptyset,$$
		for any $i,j\in V$ with $d(i,j)=2$.
		We prove this by contradiction. If the previous property is not true, then we assume $D_i \cap D_j \neq  \emptyset.$ From Lemma \ref{containinFLlemma}, we have
		$$D_i \cap D_j \subset \bigcup_{k \in N(i)} D_k$$
		and
		$$D_i \cap D_j \subset \bigcup_{k \in N(j)} D_k.$$
		  Then it follows that 
		\begin{equation}\label{containformula}
			D_i \cap D_j \subset \bigcup_{k \in N(i)\cap N(j)} D_k.
		\end{equation}

        \medskip
		\emph{Case 1}. If $C_i\cap C_j=\emptyset$, since we are assuming $D_i\cap D_j\ne\emptyset$,
one disk must be contained in the other. Without loss of generality, we may well assume $D_i\subset D_j$. It is easy to check  that it  contradicts  Lemma \ref{containinFLlemma}.

        \medskip
		\emph{Case 2}. If $C_i\cap C_j$ contains only one point $p$, then the above property \eqref{containformula} implies that there exist a $k\in N(i)\cap N(j)$ such that
		$$p \in D_k.$$
        By Lemma \ref{containlemma}, we have
        \begin{equation}\label{RCPl1}
            \Theta([i,k])+ \Theta([k,j])\geq\pi.
        \end{equation}
		Using the condition ($Z_3$), it follows that $\Theta([i,k])+ \Theta([k,j])<\pi,$ which contradicts \eqref{RCPl1}.

        \medskip
		\emph{Case 3}. If $C_i\cap C_j$ contains two different points $p_1$ and $p_2$. From Lemma \ref{containlemma}, we know for any $k \in N(i)\cap N(j)$, if $D_k \cap D_i \cap D_j \neq \emptyset$, then $D_k$ contains exactly one point in $\{p_1, p_2\}$. Thus, there exist $k_1, k_2\in N(i)\cap N(j)$ such that $[k_1,k_2]\in E$ and $D_{k_1}, D_{k_2}$ contains $p_1, p_2$ respectively.
		Now we consider  two bigons $D_i\cap D_j$ and $D_{k_1} \cap D_{k_2}$. By Lemma \ref{containinFLlemma}, Lemma \ref{sanyuangouxing_yinli} and Corollary \ref{containlemmacor}, we know 
		$$
		D_{k_1} \cap D_{k_2} \subset D_i\cup D_j.
		$$
		Thus, there are only three cases to consider:

        \medskip
		\emph{Case 3-1}. If $D_{k_1} \cap D_{k_2} \subset D_i$, from Lemma \ref{containlemma}, we have
			$$ 
			\Theta([i,k_1])+ \Theta([i, k_2]) \geq  \Theta([k_1, k_2])+  \pi,
			$$
			which contradicts  the condition ($Z_1$).

        \medskip
        \emph{Case 3-2}. If $D_{k_1} \cap D_{k_2} \subset D_j$, from Lemma \ref{containlemma}, we have
			$$ 
			\Theta([j,k_1])+ \Theta([j, k_2]) \geq  \Theta([k_1, k_2])+  \pi,
			$$
			which contradicts   the condition ($Z_1$).
            
        \medskip		
        \emph{Case 3-3}. If $\partial (D_{i} \cap D_{j}) \cap \partial (D_{k_1} \cap D_{k_2})$ contains four points as shown in Figure \ref{RCPLEMMA}. It is clear that  
			$$ 
			\Theta([i,k_1])+ \Theta([i, k_2])+\Theta([j,k_1])+ \Theta([j, k_2]) \geq  2 \pi,
			$$
			which contradicts  the condition ($Z_3$).
	\end{proof}
	
	Based on the previous discussions, we give final proofs of Theorems \ref{thm-exist-IP}, \ref{uniformization}, and \ref{thm-rigidity-IP}.

	\begin{proof}[Proof of Theorem \ref{thm-exist-IP}]
		This theorem can be obtained directly by Theorem \ref{infinite_existence} and Theorem \ref{RCPTHP}. 
	\end{proof}
	
	\begin{proof}[Proof of Theorem \ref{uniformization}]
		The equivalence between ($U_1$) and ($U_2$) was established in Theorem \ref{uniformization_RCP}, while the equivalence between ($U_1$) and ($U_4$) was previously obtained by He-Schramm in \cite{He2}. Deriving ($U_3$) from ($U_2$) can be directly seen from Theorem \ref{RCPTHP}; conversely, deriving ($U_2$) from ($U_3$) comes directly from Theorem \ref{THPRCP}.
	\end{proof}

	\begin{proof}[Proof of Theorem \ref{thm-rigidity-IP}]
		Given two hyperbolic (or parabolic, resp.) THP $P_1$ and $P_2$, Theorem \ref{THPRCP} implies that $P_1$ and $P_2$ correspond to two RCPs $\mathcal{P}_1$ and $\mathcal{P}_2$, which are hyperbolic (or parabolic, resp.) RCPs. Hence this theorem follows directly from Theorem \ref{thm-intro-rigidity-cp}.
	\end{proof}

        At the end of the article, we show the non-rigidity phenomenon for non-parabolic THPs. Let $\Omega \neq \mathbb{U}$ be a Jordan domain with smooth boundary $\partial \Omega$, it is clear that we can construct a circle pattern $\mathcal{P}_1 = \{ C_i \}_{i \in V}$ which is locally finite in $\Omega$. We can require all of its intersection angles to be acute, thereby making it an RCP. Obviously, $(G(\mathcal{P}_1),\Theta(\mathcal{P}_1))$ satisfies all the conditions ($Z_1$), ($Z_2$) and ($Z_3$). By Theorem \ref{RCPTHP}, there is a THP $P_1$ that corresponds to $\pac_1$. Since $\pac_1$ is not locally finite in $\mathbb{C}$, by Lemma \ref{parabolic_equivalent} we know that $G(\pac_1)$ is VEL-hyperbolic. According to Theorem \ref{uniformization}, there exists an RCP $\mathcal{P}_2$ with $\carrier(\mathcal{P}_2) = \mathbb{U}$ that realizes the same data $(G(\pac_1), \Theta(\pac_1))$. From Theorem \ref{RCPTHP} again, there exists a THP $P_2$ corresponding to $\pac_2$. Moreover, the two infinite THPs $P_1$ and $P_2$ share the same combinatorial structure and corresponding dihedral angles. However, $P_1$ and $P_2$ are not isometric since $\mathcal{P}_1$ and $\mathcal{P}_2$ are not Möbius equivalent (as long as $\Omega$ is not a disk). This indicates that rigidity phenomena generally do not occur in infinite THPs, which is in sharp contrast to the case of finite polyhedra. In addition, the above construction process also indicates that 
        for infinite THPs with non-circular Jordan curve boundary, according to the Uniformization Theorem \ref{uniformization}, they can generally be understood by selecting a hyperbolic THP as their representative (having the same combinatorial structure and dihedral angle structure).

	\bigskip
	
	\bibliographystyle{plain}

	\noindent Huabin Ge, hbge@ruc.edu.cn\\
	\emph{School of Mathematics, Renmin University of China, Beijing 100872, P. R. China.}\\[-8pt]
	
	\noindent Longsong Jia, jialongsong@stu.pku.edu.cn\\
	\emph{School of Mathematical Sciences, Peking University, Beijing, 100871, P. R. China.}\\[-8pt]
	
	\noindent Hao Yu, yoho@ruc.edu.cn\\
	\emph{School of Mathematics, Renmin University of China, Beijing 100872, P. R. China.}\\[-8pt]
	
	\noindent Puchun Zhou, phzpc98@gmail.com\\
	\emph{School of Mathematical Institute, University of Oxford, UK}	
	
\end{document}